\DeclareMathOperator{\re}{Re}
\newcommand{\cp}[1]{^{\circ{#1}}}
\newcommand{\conj}[1]{\overline{#1}}
\newcommand{\der}[2]{\dfrac{\partial{#1}}{\partial{#2}}}
\newcommand{\rest}[1]{{\big\arrowvert_{#1}}}
\newcommand{\mc}{\mathcal}
\newcommand{\mb}{\mathbb}
\newcommand{\mf}{\mathfrak}
\newcommand{\ms}{\mathscr} 		%%%% Math formal script --- must load mathrsfs package
\theoremstyle{plain}
\newtheorem{theorem}{Theorem}[section]
\newtheorem{corollary}[theorem]{Corollary}
\newtheorem{lemma}[theorem]{Lemma}
\newtheorem{proposition}[theorem]{Proposition}
\newtheorem{convention}[theorem]{Convention}
\newtheorem{assumption}[theorem]{Assumption}
\newtheorem{condition}[theorem]{Condition}
\theoremstyle{definition}
\newtheorem{definition}[theorem]{Definition}
\newtheorem*{notation_nonumber}{Notation}
\newtheorem*{standingAssumption}{Standing Assumption}
\theoremstyle{remark}
\newtheorem{remark}[theorem]{Remark}
\newtheorem{observation}[theorem]{Observation}
\newtheorem{reduction}[theorem]{Reduction}
\numberwithin{equation}{section}
\definecolor{red}{rgb}{1.,0.,0.}
\definecolor{darkred}{cmyk}{0.,1.,1.,.3}
\definecolor{cyan}{cmyk}{1.,0.,0.,.5}
\definecolor{green}{cmyk}{1.,0.,1.,.5}
\definecolor{lightgreen}{cmyk}{.5,0.,.5,.0}
\definecolor{blue}{cmyk}{1.,1.,0.,0.}
\definecolor{lightblue}{cmyk}{.5,.5,0.,0.}
\definecolor{LIGHTBLUE}{cmyk}{.5,.5,0.,0.}
\definecolor{paleblue}{cmyk}{.2,.2,0.,0.}
\definecolor{paleyellow}{cmyk}{0.,0.,.2,0.}
\definecolor{red}{cmyk}{1.,0.,1.,0.}
\renewcommand{\mb}{\mathbb}
\newcommand{\sectionDone}[1]{\section{#1}}
\newcommand{\subsectionDone}[1]{\subsection{#1}}
\newcommand{\name}[1]{
	\label{#1}
%	\textcolor{green}{\fontsize{8pt}{8pt}\selectfont [#1]} %% Uncomment the beginning of this line for labels to show.}
}
\newcommand{\personalNote}[1]{
	%\protect{\textcolor{green}{#1}}  % Comment out this line if you don't want personal notes to appear.
}
\newcommand{\inlinefrac}[2]{{\dfrac{#1}{#2}}}
\newcommand{\inlinefracp}[2]{{\dfrac{#1}{#2}}}
\newcommand{\shortInlineFrac}[2]{#1 / #2}
\newcommand{\inlineDilaFrac}[2]{{\dfrac{\partial #1}{\partial \conj{#2}}\Big/\dfrac{\partial #1}{\partial #2}}}
\newcommand{\disk}{\mathbb{D}}
\newcommand{\tube}{\mc{T}^{\boldsymbol{\circ}}}         
\newcommand{\tubeCore}{\mc{T}^{\hspace{1pt}\displaystyle{\boldsymbol{\cdot}}}} 
\newcommand{\tubeFilled}{\mc{T}^{\boldsymbol{\bullet}}}            
\newcommand{\tubeX}{Q}
\newcommand{\tubeXHole}{\Omega}
\newcommand{\tubeY}[1]{A_{#1}\setminus \overline{B}_{#1}}
\newcommand{\tubeYCore}{B}
\newcommand{\tubeYFilled}{A}
\newcommand{\degq}{{d'}}
\newcommand{\expDSum}[1]{\sigma_{#1}}
\newcommand{\critLoc}{\ms{C}}
\newcommand{\delHO}{\boldsymbol{\delta}}
\newcommand{\rHubOb}{\mathbf{r}}
\newcommand{\pmnHole}{\mf{r}}
\newcommand{\nax}[1]{\underline{#1}} % Notation for points in the Natural Extension
\newcommand{\Nax}[1]{\hat{#1}} % Notation for the Natural Extension of a set
\newcommand{\gpl}[1]{\mc{G}^{\scriptscriptstyle+}_{#1}}
\newcommand{\gmn}[1]{\mc{G}^{\scriptscriptstyle-}_{#1}}
\newcommand{\conjMap}{\mf{c}}
\newcommand{\head}[1]{}
\newcommand{\smallhead}[1]{}
\newcommand{\pr}{\operatorname{pr}}
\newcommand{\crmap}{\underset{\scriptscriptstyle{\times}}{\to}}
\newcommand{\henI}{\cite{hubbard_oberste-vorth:henon1}}  % When you use this macro put it in braces.
\newcommand{\henII}{\cite{hubbard_oberste-vorth:henon2}}  % Put braces around this macro
\newcommand{\vt}[1]{\begin{pmatrix} #1 \end{pmatrix}}
\newcommand{\plstyle}[2]{{#1}^{\scriptscriptstyle+}_{#2}}
\newcommand{\mnstyle}[2]{{#1}^{\scriptscriptstyle-}_{#2}}
\newcommand{\plstyleAlt}[2]{{#1}_{#2,\scriptscriptstyle+}}
\newcommand{\mnstyleAlt}[2]{{#1}_{#2,\scriptscriptstyle-}}
\newcommand{\ppl}[1]{\plstyleAlt{\varphi}{#1}}
\newcommand{\pmn}[1]{\mnstyleAlt{\varphi}{#1}}
\newcommand{\pplPlain}{{\varphi}_{\scriptscriptstyle+}}
\newcommand{\pmnPlain}{{\varphi}_{\scriptscriptstyle-}}
\newcommand{\jpl}[1]{\plstyle{J}{#1}}
\newcommand{\jmn}[1]{\mnstyle{J}{#1}}
\newcommand{\kpl}[1]{\plstyle{K}{#1}}
\newcommand{\kmn}[1]{\mnstyle{K}{#1}}
\newcommand{\spl}[1]{\plstyle{s}{#1}}
\newcommand{\smn}[1]{\mnstyle{s}{#1}}
\newcommand{\hpl}{\plstyle{h}{}}
\newcommand{\hmn}{\mnstyle{h}{}}
\newcommand{\Fpl}[1]{\plstyle{\mc{F}}{#1}}
\newcommand{\Fmn}[1]{\mnstyle{\mc{F}}{#1}}
\newcommand{\lfpl}[1]{\plstyle{\mc{L}}{#1}}
\newcommand{\lfmn}[1]{\mnstyle{\mc{L}}{#1}}
\newcommand{\leaf}{\mc{L}}
\newcommand{\upl}[1]{\plstyle{U}{#1}}
\newcommand{\umn}[1]{\mnstyle{U}{#1}}
\newcommand{\bigUPl}{\tilde{U}^+}
\newcommand{\bigUMn}{\tilde{U}^-}
\newcommand{\bigDiskWithHoles}[3]{f_{#2}^{-1}\bigl(\Delta_{#1,#2}(r)\bigr)\setminus\bigcup_{#3\in p^{-1}(#1)}\Delta_{#3,#2}(r)}
\newcommand{\De}{{\Delta}}
\newcommand{\de}{{\delta}}
\newcommand{\la}{{\lambda}}
\newcommand{\si}{{\sigma}}
\newcommand{\D}{{\Bbb D}}
\newcommand{\R}{{\Bbb R}}
\newcommand{\T}{{\Bbb T}}
\newcommand{\Z}{{\Bbb Z}}
\newcommand{\ra}{\rightarrow}
\newcommand{\tl}{\tilde}
\def\ssk{\smallskip}
\def\msk{\medskip}
\def\nin{\noindent}
\newcommand{\comm}[1]{}
\newcommand{\QED}{\rlap{$\sqcup$}$\sqcap$\smallskip}
\newcommand{\ed}{\mathbf{d}}
\begin{document}

\title{The Critical Locus and Rigidity of Foliations of Complex H\'enon Maps}
\author{Misha Lyubich and  John W. Robertson}
\maketitle

\begin{abstract}
We study H\'enon maps which are perturbations of a hyperbolic polynomial
$p$ with connected Julia set. We give a complete description of the critical
locus of these maps. In particular, we show that for each critical
point $c$ of $p$, there is a primary component of the critical locus asymptotic
to the line $y=c$.  Moreover, primary components are conformally equivalent to  the punctured disk, 
and their orbits cover the whole critical set.
We also describe the holonomy maps from such a component 
to itself along the leaves of two natural foliations.
Finally, we show that a quadratic H\'enon map 
taken along with the natural pair of foliations, is a rigid object, 
in the sense that a conjugacy between two such maps respecting the foliations 
% sends the natural foliations
% of the first map to the natural foliations of the second, then the two H\'enon
% maps are conjugate by 
is a holomorphic or antiholomorphic affine map.
\end{abstract}

\tableofcontents

\newpage
  
  \section*{Preamble}

  This paper was written in 2005--2006,
but has never appeared even as a preprint.
Meanwhile, the results have been  developed further and have found 
some applications, see  \cite{F,Tanase,FL}. We are grateful to Tanya Firsova
for insisting that this paper should be made available and for helping
with the proof reading.

%\newpage
\section*{Introduction}
\addcontentsline{toc}{part}{Introduction}

The family of H\'enon maps are a basic example of nonlinear dynamics. Both the real
and the holomorphic versions of these maps
have been studied extensively, and yet there is still a great deal that 
is not well understood about them. Some of the sources of fundamental results about
H\'enon maps are \cite{friedland-milnor}, \cite{forn:henon}, \cite{hubbard_oberste-vorth:henon1},
\cite{hubbard_oberste-vorth:henon2}, \cite{bedfordsmillie1}, \cite{bedfordsmillie2},
\cite{bedfordsmillie3}, \cite{bedfordlyubichsmillie4}, \cite{bedfordsmillie5},
\cite{bedfordsmillie6}, and
\cite{bedfordsmillie7}.

In this article we study holomorphic H\'enon maps of $\mb{C}^2$. These are maps of the form \name{defOff}
\[f_a\vt{x \\ y} = \vt{ p(x) - a y \\ x }\] 
where $p$ \name{defOfp} is a monic polynomial of degree $d > 1$.\name{defOfd} 
%Each such map is invertible with inverse
%\[f_a^{-1}\vt{x \\ y} = \vt{y \\ \frac{1}{a}(p(y)-x) }.\]  
H\'enon maps have constant Jacobian, and the parameter
$a$ \name{defOfa} is the value of the Jacobian. In the degenerate case where $a=0$ the map reduces to
$f_0\vt{x \\ y} = \vt{ p(x) \\ x}$ and we see that the H\'enon map degenerates to
the polynomial map $p(x)$, acting on the copy of the complex plane given by
the curve $x=p(y)$.

The H\'enon maps we study here
are perturbations of hyperbolic polynomial maps with connected Julia set.
The Julia sets and {\em natural foliations}
of these maps was described in great detail by Hubbard and Oberste-Vorth
in {\henI} and {\henII}.
% Plurisubharmonic rate of escape functions giving rise to invariant currents
% were also studied extensively in {\henI}.
In this paper we will describe the {\em tangency locus} between the natural foliations
and will derive from it that the H\'enon map endowed with the pair of
foliations is a {\em rigid}  object.  

\medskip
Let us now outline the content of the paper in more detail.

Throughout Section~\ref{sectionFolitaionsNearDegeneracy}
we will recap results of Hubbard \& Oberste-Vorth {\henI} keeping careful
track of what happens as the Jacobian of the H\'enon map goes to zero.
%There is a lot of structure associated with a H\'enon map. There are natural holomorphic foliations
%$\Fpl{a}$ and $\Fmn{a}$
%associated to a given H\'enon map, as well as natural plurisubharmonic functions describing the rate
%at which points escape to infinity under either forward or backward iteration.
%H\'enon maps have been studied extensively. 
When the Jacobian is equal to zero, the map degenerates, 
but the foliations and plurisubharmonic functions
associated to the map persist, and become easy to analyze. 

In Section~\ref{sectionTheCriticalLocusNearInfinity}
we present basic facts about the critical locus and, by
direct calculation, obtain a description of
the tangent spaces to its  primary components 
at infinity.

In Section~\ref{sectionStableAndUnstableManifolds}
we recall relevant material from {\henII}
concerning the stable and unstable foliations
and describe the critical locus when the Jacobian is zero.

In Section~\ref{sectionComponentsOfTheCriticalLocus}
we construct tubes that trap the components of 
the critical locus as the Jacobian varies away from zero.
This allows us to  prove that the primary horizontal components
of the critical locus are punctured disks. We then show
that every component of the critical locus is an iterate
of a primary  component.

In Section~\ref{sectionHolonomyInTheCriticalLocus} we 
describe the holonomy maps
% one obtains by following
% leaves of the natural foliations from one location
on a primary horizontal component of the critical locus % to another.
along the natural foliations. 

The pair of natural  of foliations of a H\'enon map
can be thought of as giving natural coordinates near infinity.
In Section~\ref{section-conjugacies} we prove
 that if a conjugacy between two  H\'enon  maps
 in question respects these foliations
% (i.e., it has a product structure in the natural coordinates)
then it is forced
to be holomorphic or antiholomorphic  near infinity.
For degree two maps, this implies that it is actually  
affine, which is our main rigidity result.
%
%Specifically, if there is a conjugacy between 
%two such H\'enon maps of degree two
%which maps the natural foliations of the first H\'enon map
%to the natural foliations of the second map, then
%the H\'enon maps must be conjugate by an affine holomorphic
%or antiholomorphic map.

A list of notations is provided as a reference at the end of the paper.
These notations are used with the following convention.

\begin{convention}
When we extend a certain object from $\mb{C}^2$ to $\mb{P}^1\times\mb{P}^1$
we add a hat to the symbol to distinguish it, unless there is
no chance for confusion. When referring to a set with the subset
$\{a=0\}$ removed we append the symbol $*$ as a superscript to the symbol denoting that set.
\end{convention}

\newpage
\part{Foliations.}

\section{The Foliations near Degeneracy}

\name{sectionFolitaionsNearDegeneracy}

\subsectionDone{The Foliations.}
\name{subsectionTheFoliations}

In this section we define the foliations associated to a H\'enon map. These foliations
are not new, they were introduced and studied in {\henI}. We give a careful development
of them from scratch, following the same methods as {\henI}, in order to
study what happens in the degenerate case, and to have a good handle on these
foliations as the Jacobian $a$ is allowed to vary.

In studying H\'enon maps it is common to define domains $V_+$ and
$V_-$ \name{defOfV} such that $f_a(V_+)\subset V_+$ and $f_a^{-1}(V_-) \subset V_-$
and such that every point that has unbounded forward orbit eventually
enters $V_+$ and every point with unbounded backward orbit eventually
enters $V_-$. We will give precise definitions of these domains shortly.

We first recap the construction of the functions $\ppl{a}\colon V_+\to\mb{C}$ and
$\pmn{a}\colon V_-\to\mb{C}$, both of which are holomorphic for $a$ in some disk
such that $\ppl{a}\circ f_a = \ppl{a}^d$
and $\ppl{a}(x,y) \sim x$ for $\vert x \vert > \vert y\vert $ as $\vert x\vert\to \infty$
and $\pmn{a}\circ f_a^{-1}=
\pmn{a}^d/a$ holds\footnote{
The definition of $\pmnPlain$ given in {\henI} has an inconsistency that
is trivial to correct, but is essential to our calculations (specifically the 
conditions $\pmn{a} \sim  y$ and
$\pmn{a}\circ f^{-1} = \pmn{a}^d$ are incompatible).}
for $a\not=0$
and $\pmn{a}(x,y)\sim y$ for $\vert y\vert > \vert x \vert$ as $\vert y\vert\to \infty$.
\name{defOfPplAndPmn}

Throughout this paper it will be convenient
to consider the highest term of $p(x)$ separately, thus
we write $p(x)=x^d + q(x)$\name{defOfq} where $d\geq 2$ and $\deg q(x) < d$.
We also let $\degq = \deg q(x)$.\name{defOfDegq}

We want to construct domains $V_+$ and $V_-$ where functions $\ppl{a}$ and
$\pmn{a}$ are defined for all $a$ in the disk $\disk_R$ of radius $R$. We will need
to control convergence of an infinite product to construct
$\ppl{a}$ and $\pmn{a}$  and will choose a value $r$ which will control the rate
of convergence of this series.

Fix values $0 < r < 1$
and $R > 0$,\name{defOfR} and choose $\alpha > 0$ such that 
\begin{itemize}
\item $\Big\arrowvert \dfrac{q(y)}{y^d}\Big\arrowvert + \dfrac{R+1}{\vert y^{d-1}\vert}< r$,
\item $\vert p(y)\vert > (2R+1)\vert y\vert$
\end{itemize}
whenever $\vert y\vert \geq \alpha$.

We then define the domains $V_+$ and $V_-$ %and $V_0$ 
to be given by

\[V_+\equiv\{(x,y)\vert \vert x\vert > \vert y\vert\ \text{and}\ \vert x\vert > \alpha\},\]
\[V_-\equiv\{(x,y)\vert \vert y\vert > \vert x\vert\ \text{and}\ \vert y\vert > \alpha\},\]
%\[V_0\equiv\{(x,y)\vert \vert x\vert \leq \alpha\ \text{and}\ \vert y\vert \leq \alpha\}.\]

We let $\vt{x_n \\ y_n} = f_a\cp{n}\vt{x \\ y}$ for $n\in\mb{Z}$\name{defOfXAndY} 
so $x_n$ and $y_n$ are polynomial functions in $x$, $y$ and $\inlinefrac{1}{a}$ for $n <0$
and $x_n$ and $y_n$ are polynomial functions in $x$, $y$ and $a$ for $n >0$.

\begin{lemma}
\name{preimageOfVmnInsideVmn}
Let $a\in \disk_R$.
If $(x,y)\in V_-$ then
$\vert x_{-1}\vert = \vert y\vert > \vert x \vert$ and $\vert y_{-1}\vert >2\vert y\vert =2\vert x_{-1}\vert$.
Thus $f_a^{-1}(V_-)\subset V_-$.
\end{lemma}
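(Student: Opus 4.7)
The plan is to compute the preimage $f_a^{-1}(x,y)$ explicitly and then read off the two inequalities directly from the defining conditions of $\alpha$. Since $f_a(u,v) = (p(u) - av, u)$, inverting gives $u = y$ and $v = (p(y) - x)/a$, so
\[
f_a^{-1}(x,y) = \Bigl(\,y,\ \tfrac{p(y) - x}{a}\,\Bigr), \qquad \text{i.e.\ } x_{-1} = y,\ \ y_{-1} = \tfrac{p(y)-x}{a}.
\]
(This presumes $a \neq 0$, which is the regime where $f_a^{-1}$ makes sense.)

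The first claim $|x_{-1}| = |y| > |x|$ is then immediate from $x_{-1} = y$ together with the defining condition $|y| > |x|$ of $V_-$.

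For the second claim, I would combine the two bullet-point conditions on $\alpha$. Since $(x,y) \in V_-$ gives $|y| \geq \alpha$, the second bullet yields $|p(y)| > (2R+1)|y|$. Combining this with $|x| < |y|$ via the reverse triangle inequality gives
\[
|p(y) - x| \ \geq\ |p(y)| - |x|\ >\ (2R+1)|y| - |y|\ =\ 2R\,|y|.
\]
Dividing by $|a| \leq R$ (since $a \in \disk_R$) produces
\[
|y_{-1}|\ =\ \frac{|p(y) - x|}{|a|}\ >\ \frac{2R|y|}{R}\ =\ 2|y|\ =\ 2|x_{-1}|,
\]
which is the second stated inequality.

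To conclude $f_a^{-1}(V_-) \subset V_-$ I would just observe that these two inequalities verify the two defining conditions for $V_-$ at the preimage: $|y_{-1}| > 2|x_{-1}| > |x_{-1}|$, and $|y_{-1}| > 2|y| \geq 2\alpha > \alpha$. There is really no main obstacle here; the whole content of the argument is that the choice of $\alpha$ was rigged precisely to make the polynomial $p(y)$ dominate $x$ and the Jacobian factor $a$ simultaneously. The only subtlety worth flagging is keeping strict versus non-strict inequalities straight when passing from $|a| \leq R$ to the strict bound $|y_{-1}| > 2|y|$, which works because the prior inequality $|p(y) - x| > 2R|y|$ is already strict.
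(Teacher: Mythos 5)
Your proof is correct and follows essentially the same route as the paper's: compute $f_a^{-1}$ explicitly, note $x_{-1}=y$ handles the first inequality, and then bound $|y_{-1}| = |p(y)-x|/|a|$ from below via the reverse triangle inequality together with the two defining conditions on $\alpha$. You add a bit more detail than the paper (explicitly noting $a\neq 0$, and spelling out that $|y_{-1}|>\alpha$ in the final step), but the argument is the same one.
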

\begin{proof}
The statement about $x_{-1}$ is obvious. For $y_{-1}$ we have
\[\vert y_{-1}\vert = 
\big\arrowvert \frac{p(y)-x}{a} \big\arrowvert \geq 
 \frac{\vert p(y)\vert -\vert y\vert}{\vert a\vert} \geq
\frac{2R\vert y\vert}{\vert a\vert}  > 2\vert y\vert.\]
\end{proof}

\begin{lemma}
Let $a\in \disk_R$. If $(x,y)\in V_+$ then
$\vert y_1\vert = \vert x\vert > \vert y \vert$ and 
$\vert x_1\vert >(R+1)\vert y_1\vert =(R+1)\vert x\vert$.
Thus $f_a(V_+)\subset V_+$.
\end{lemma}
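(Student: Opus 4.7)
The plan is to run the same kind of direct estimate used in Lemma~\ref{preimageOfVmnInsideVmn}, but now with $f_a$ in place of $f_a^{-1}$. By definition of the H\'enon map, $y_1 = x$ and $x_1 = p(x) - ay$, so the statement $|y_1| = |x| > |y|$ is immediate from the defining inequality $|x| > |y|$ of $V_+$.

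For the estimate on $|x_1|$, I would apply the triangle inequality to $x_1 = p(x) - ay$ to get
\[|x_1| \geq |p(x)| - |a|\,|y|.\]
Since $(x,y) \in V_+$ we have $|x| \geq \alpha$, so the second bullet in the choice of $\alpha$ yields $|p(x)| > (2R+1)|x|$. For the subtracted term, $a \in \mathbb{D}_R$ gives $|a| < R$, and $|y| < |x|$ gives $|a|\,|y| < R|x|$. Combining these yields
\[|x_1| > (2R+1)|x| - R|x| = (R+1)|x| = (R+1)|y_1|,\]
which is the inequality claimed.

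Finally, to conclude $f_a(V_+) \subset V_+$, I would check both defining conditions of $V_+$ for $(x_1,y_1)$: the inequality $|x_1| > (R+1)|y_1| \geq |y_1|$ gives the first, and $|x_1| > (R+1)|x| > |x| \geq \alpha$ gives the second.

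There is no real obstacle here: the lemma is the forward-direction analog of the previous one, and the entire argument is a two-line application of the triangle inequality together with the two bullet conditions defining $\alpha$. The only thing to be mindful of is that the relevant hypothesis $|p(y)| > (2R+1)|y|$ is applied at the point $x$ (using $|x| \geq \alpha$), not at $y$, which is why the asymmetry of $V_+$ (namely $|x| > |y|$, as opposed to the reversed condition in $V_-$) enters in exactly the right way.
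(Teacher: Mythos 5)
Your proof is correct and is essentially the same two-line triangle-inequality estimate the paper gives; you simply make explicit the use of the second bullet defining $\alpha$ (applied at the point $x$, using $|x| > \alpha$), which the paper leaves implicit in the chain $\vert p(x)\vert - R\vert x\vert \geq (R+1)\vert x\vert$.
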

\begin{proof}
The statement about $y_1$ is obvious. For $x_1$ we have
\[\vert x_1\vert \geq \vert p(x)\vert -\vert ay\vert
\geq \vert p(x)\vert -R\vert x\vert\geq (R+1)\vert x\vert = (R+1)\vert y_1\vert.\]
\end{proof}

\subsectionDone{Degeneration of $\pmn{a}$.}

\head{Leading terms.}

We are chiefly interested in the degenerate case ($a=0$) and perturbations of this case
($a$ small). 
We start by working out
the degree of $x_{-k}$ and $y_{-k}$ as polynomials in $y$ and in $\inlinefrac{1}{a}$.
In doing so it will be convenient to make the definition
$\expDSum{k}= 1+d+d^2+\dotsb+d^{k-1}$ for $k\geq 1$ and $\expDSum{k}=0$ for $k\leq 0$.

By an easy induction we obtain
\begin{lemma}
\name{degreesMn}
Given that $k\geq 1$
then the leading term of $y_{-k}(x,y,a)$ is 
$$
\inlinefrac{1}{a^{\expDSum{k}}}{\bigl(p(y)-x\bigr)^{d^{k-1}}}
$$
if $y_{-k}$ is considered as a polynomial in $\shortInlineFrac{1}{a}$. The leading
term of $y_{-k}(x,y,a)$ considered as a polynomial in $y$ is just the term
$\shortInlineFrac{y^{d^k}}{a^{\expDSum{k}}}$ of it's leading term 
in $\frac{1}{a}$. Since $x_{-k}(x,y,a)=y_{-(k-1)}(x,y,a)$ this also gives us
the leading terms of $x_{-k}(x,y,a)$ in $\frac{1}{a}$ and in $y$ except that
$x_{-1}(x,y,a)=y$. 
\end{lemma}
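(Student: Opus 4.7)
The plan is to prove both assertions together by induction on $k$, since the two leading-term statements reinforce each other.

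For the base case $k=1$, I simply read off from $f_a^{-1}(x,y)=\bigl(y,(p(y)-x)/a\bigr)$ that $y_{-1}=(p(y)-x)/a$. Since $\sigma_1=1$ and $d^{0}=1$, this is literally $(p(y)-x)^{d^{0}}/a^{\sigma_1}$, giving the leading term in $1/a$. Writing $p(y)=y^{d}+q(y)$ with $\deg q<d$, the highest-degree-in-$y$ term of $y_{-1}$ is $y^{d}/a$, which is exactly the $y^{d^{1}}/a^{\sigma_1}$ claimed.

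For the inductive step, I assume the statement for indices up to $k$, and I use the recursion
\[
y_{-(k+1)}\;=\;\frac{p(y_{-k})-x_{-k}}{a}\;=\;\frac{p(y_{-k})-y_{-(k-1)}}{a}.
\]
For the leading term in $1/a$, I observe that the dominant contribution to $p(y_{-k})$ comes from $(y_{-k})^{d}$, whose leading-in-$1/a$ part is the $d$-th power of the inductive leading term, namely $(p(y)-x)^{d^{k}}/a^{d\sigma_k}$. The competing term $y_{-(k-1)}$ only contributes a power $1/a^{\sigma_{k-1}}$, which is dominated. Dividing by $a$ gives $(p(y)-x)^{d^{k}}/a^{d\sigma_k+1}$, and the identity $d\sigma_k+1=\sigma_{k+1}$ yields the desired form.

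For the leading term in $y$, I note that $y_{-k}$ has $y$-degree $d^{k}$ with leading $y$-coefficient $1/a^{\sigma_k}$ (by hypothesis), so $(y_{-k})^{d}$ has $y$-degree $d^{k+1}$ with leading coefficient $1/a^{d\sigma_k}$. The lower-order piece $q(y_{-k})$ contributes $y$-degree at most $(d-1)d^{k}<d^{k+1}$, and $y_{-(k-1)}$ has $y$-degree $d^{k-1}<d^{k+1}$, so neither interferes with the top term. Dividing by $a$ produces $y^{d^{k+1}}/a^{\sigma_{k+1}}$, as required. Finally, the equation $x_{-k}=y_{-(k-1)}$ is immediate from the definition of $f_a^{-1}$ (and gives $x_{-1}=y$ trivially), so the analogous statement for $x_{-k}$ follows at once.

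I do not anticipate a genuine obstacle here: the only point requiring a little care is making sure that, at each inductive step, the term subtracted off (namely $y_{-(k-1)}$) is negligible both with respect to the $1/a$-grading and with respect to the $y$-grading, and that the non-leading piece $q$ of $p$ does not raise the $y$-degree. Both follow from the inductive bounds and the strict inequality $\deg q<d$.
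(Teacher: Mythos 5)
Your induction is correct and is exactly the ``easy induction'' the paper alludes to without writing out: base case $k=1$ from $y_{-1}=(p(y)-x)/a$, then use the recursion $y_{-(k+1)}=\bigl(p(y_{-k})-y_{-(k-1)}\bigr)/a$ together with the identity $d\sigma_k+1=\sigma_{k+1}$, noting that $q(y_{-k})$ and $y_{-(k-1)}$ are subdominant in both the $1/a$-grading and the $y$-grading because $\deg q<d$. Nothing further is needed.
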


\head{Definition of $\pmn{a}$.}

The function $\pmn{a}$ is constructed as a limit
\[\pmn{a}=\lim_{n\to\infty} (y_{-n}\cdot a^{\expDSum{n}})^\frac{1}{d^n},\]
with an appropriate choice of the branch of the root.
We are interested in this as a function of $V_-\times \disk_R$ where $(x,y)\in V_-$ and
$a\in \disk_R$.  Sense is made of the above limit using the telescoping formula
\begin{equation}
\name{telescopingMn}
\pmn{a}(x,y)=\lim_{n\to\infty} y \cdot \exp\Bigl(\dfrac{1}{d}\log\dfrac{ay_{-1}}{y^d} + 
\dfrac{1}{d^2}\log\dfrac{ay_{-2}}{y_{-1}^d} +\dotsb
%+\dfrac{1}{d^n}\log\dfrac{ay_{-n}}{y_{-(n-1)}^d}
\Bigr)
\end{equation}

We are most interested in this about the point $a=0$ (where the map $f_a$ degenerates
and $\pmn{a}$ can no longer be defined using its relationship with $f_a$).

\head{Introduction of $\smn{k}$.}

We note that
\[\dfrac{ay_{-k}}{y_{-(k-1)}^d}=\dfrac{p(y_{-(k-1)})-x_{-(k-1)}}{y_{-(k-1)}^d} =
1+\dfrac{q(y_{-(k-1)})-x_{-(k-1)}}{y_{-(k-1)}^d}.\]
Let 
\begin{equation}
\name{definingSmn}
\smn{k}(x,y,a)=\dfrac{q(y_{-(k-1)})-x_{-(k-1)}}{y_{-(k-1)}^d}.
\end{equation} 
\name{defOfSmn}

\head{Controlling the Size of $\smn{k}$.}

\begin{lemma}
\name{smnBound}
$\vert \smn{k}(x,y,a)\vert <r$ for $x,y\in V_-, a\in \disk_R, k\geq 1$.
\end{lemma}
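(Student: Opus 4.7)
The plan is a direct estimate using the invariance $f_a^{-1}(V_-)\subset V_-$ established in Lemma~\ref{preimageOfVmnInsideVmn}, reducing the case of arbitrary $k$ to the case $k=1$, which is a triangle-inequality computation that exactly matches the two defining conditions on $\alpha$.

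First, I would observe that by Lemma~\ref{preimageOfVmnInsideVmn} and induction, $f_a^{-(k-1)}(x,y)=(x_{-(k-1)},y_{-(k-1)})\in V_-$ whenever $(x,y)\in V_-$ and $a\in\disk_R$. In particular, $\vert y_{-(k-1)}\vert\geq\alpha$ and $\vert x_{-(k-1)}\vert<\vert y_{-(k-1)}\vert$. Hence it suffices to prove the case $k=1$ with the single assumption $(x,y)\in V_-$.

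For $k=1$, write
\[
\smn{1}(x,y,a)=\frac{q(y)}{y^d}-\frac{x}{y^d},
\]
and apply the triangle inequality. Since $(x,y)\in V_-$ gives $\vert x\vert<\vert y\vert$, we get
\[
\Big\vert \frac{x}{y^d}\Big\vert<\frac{1}{\vert y^{d-1}\vert}<\frac{R+1}{\vert y^{d-1}\vert},
\]
and combined with the first defining inequality on $\alpha$, which is applicable because $\vert y\vert\geq\alpha$, we conclude
\[
\vert\smn{1}(x,y,a)\vert<\Big\vert\frac{q(y)}{y^d}\Big\vert+\frac{R+1}{\vert y^{d-1}\vert}<r.
\]
Applying the same estimate at the point $(x_{-(k-1)},y_{-(k-1)})\in V_-$ (with $a$ playing no role in the final bound) then yields $\vert\smn{k}(x,y,a)\vert<r$ for all $k\geq 1$.

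I do not expect any real obstacle here; the whole statement has been engineered by the choice of $\alpha$. The only subtlety to watch is that the $(R+1)$ slack factor in the condition on $\alpha$ is designed precisely to absorb the contribution of $\vert x/y^d\vert$, so the argument works uniformly in $a\in\disk_R$ without any $a$-dependent loss. Note that the estimate requires only $\vert a\vert\leq R$ through the invariance lemma; the bound on $\smn{k}$ itself depends only on the position of $(x_{-(k-1)},y_{-(k-1)})$ in $V_-$.
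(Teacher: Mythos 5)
Your proof is correct and matches the paper's argument; the paper simply writes the same estimate in one chain, bounding $\vert\smn{k}\vert\leq \big\arrowvert\frac{q(y_{-(k-1)})}{y_{-(k-1)}^d}\big\arrowvert + \frac{1}{\vert y_{-(k-1)}^{d-1}\vert} < r$ using $f_a^{-1}(V_-)\subset V_-$ and the first defining condition on $\alpha$, rather than explicitly reducing to the $k=1$ case.
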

\begin{proof}
We have: \[\vert \smn{k}\vert
%=\Big\arrowvert\dfrac{q(y_{k-1})-x_{k-1}}{y_{k-1}^d}\Big\arrowvert 
\leq \dfrac{\vert q(y_{-(k-1)})\vert  + \vert x_{-(k-1)}\vert}{\vert y_{-(k-1)}^d\vert}\leq
\Big\arrowvert \dfrac{q(y_{-(k-1)})}{y_{-(k-1)}^d}\Big\arrowvert + 
\dfrac{1}{\vert y_{-(k-1)}^{d-1}\vert} < r\]
\end{proof}

\head{Estimates and convergence for the telescoping sum.}

We evaluate $\log\inlinefrac{ay_{-k}}{y_{-(k-1)}^d}=\log(1+\smn{k})$ using the principal branch
of $\log$.
By Lemma~\ref{smnBound}, $\log(1+\smn{k})\leq -\log(1-r)$.
Hence the series

\begin{equation}
\name{infiniteSumMn}
\frac{1}{d}\log\frac{ay_{-1}}{y^d} + \frac{1}{d^2}\log\frac{ay_{-2}}{y_{-1}^d} + \dotsb
\end{equation}
converges uniformly and absolutely to a holomorphic function
bounded by \newline
$\inlinefrac{-\log(1-r)}{d-1}$.
Letting $B=(r-1)^{-\frac{1}{d-1}}$, we conclude:

\head{Existence, bounds and holomorphy of $\pmn{a}$.}

\begin{corollary}
\name{corPmnBound}
$\pmn{a}$ as defined by equation (\ref{telescopingMn})
is holomorphic as a function on $V_-\times \disk_R$ with
$(x,y)\in V_-$ and $a\in \disk_R$. Moreover
$B^{-1} < \Big\arrowvert\dfrac{\pmn{a}}{y}\Big\arrowvert < B$.
\end{corollary}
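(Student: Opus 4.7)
The plan is to read off the corollary essentially from the estimates already assembled in the preceding paragraphs, by checking three things in order: (1) each summand in the series (\ref{infiniteSumMn}) is holomorphic on $V_-\times\disk_R$; (2) the series converges uniformly on compact subsets to a holomorphic bounded function; (3) exponentiation and multiplication by $y$ then yields both the holomorphy of $\pmn{a}$ and the stated two-sided bound.

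For (1), the only potential obstruction is division by $y_{-(k-1)}$ in the definition (\ref{definingSmn}) of $\smn{k}$ and a bad branch for the logarithm. Iterating Lemma~\ref{preimageOfVmnInsideVmn} we have $|y_{-(k-1)}|\geq\alpha>0$ on $V_-$ for all $k\geq 1$ and all $a\in\disk_R$, so $\smn{k}$ is holomorphic on $V_-\times\disk_R$; and Lemma~\ref{smnBound} gives $|\smn{k}|<r<1$, hence $1+\smn{k}$ stays in the slit half-plane $\{|\zeta-1|<1\}$ on which the principal branch of $\log$ is holomorphic. Thus each term $d^{-k}\log(1+\smn{k})$ is a holomorphic function on $V_-\times\disk_R$.

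For (2), the standard power-series estimate $|\log(1+z)|\leq-\log(1-|z|)$ for $|z|<1$, combined with $|\smn{k}|<r$, gives
\begin{equation*}
\Big|d^{-k}\log(1+\smn{k})\Big| \;<\; d^{-k}\bigl(-\log(1-r)\bigr),
\end{equation*}
which is a convergent majorant independent of $k$, $x$, $y$, $a$. By the Weierstrass $M$-test the partial sums of (\ref{infiniteSumMn}) converge uniformly on $V_-\times\disk_R$, so by Weierstrass's theorem the limit $S(x,y,a)$ is holomorphic, and summing the geometric majorant gives $|S|<-\log(1-r)/(d-1)=\log B$.

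For (3), $\pmn{a}(x,y)=y\exp(S(x,y,a))$ is then a product of holomorphic functions, hence holomorphic on $V_-\times\disk_R$, and the bound follows from $|\exp(S)|=\exp(\re S)$ together with $|\re S|\leq|S|<\log B$, giving $B^{-1}<|\pmn{a}/y|<B$. The only mildly delicate point — more bookkeeping than obstacle — is checking that the principal-branch convention used in (\ref{telescopingMn}) is consistent with the implicit $d^n$-th root in the original definition $\pmn{a}=\lim (y_{-n}a^{\expDSum{n}})^{1/d^n}$; this amounts to observing that the telescoping identity $ay_{-k}/y_{-(k-1)}^d=1+\smn{k}$ lands in a region where the principal $\log$ is the only sensible choice, so the branches match term by term.
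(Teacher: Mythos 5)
Your proof is correct and takes essentially the same route as the paper, which derives the corollary directly from the uniform, absolute convergence of the series \eqref{infiniteSumMn} and the bound $\lvert\smn{k}\rvert<r$ of Lemma~\ref{smnBound}; you are simply writing out the Weierstrass $M$-test and the $\lvert\exp S\rvert=\exp(\re S)$ step that the paper leaves implicit. One small caveat worth flagging (the paper glosses over it too): your claim in step (1) that iterating Lemma~\ref{preimageOfVmnInsideVmn} gives $\lvert y_{-(k-1)}\rvert\geq\alpha$ ``for all $a\in\disk_R$'' is not literally meaningful at $a=0$, since for $k\geq 2$ the quantity $y_{-(k-1)}$ involves division by $a$; holomorphy of $\smn{k}$ (and hence of each summand) across $a=0$ actually relies on the cancellation recorded in Lemmas~\ref{degreesMn} and~\ref{vanishingOrderMn}, or equivalently on the boundedness plus a Riemann removable-singularity argument, so a cleaner phrasing would invoke that rather than the pointwise estimate on $y_{-(k-1)}$.
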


\head{Extension of $\inlinefrac{\pmn{a}}{y}$ to $\mb{P}^1\times\mb{P}^1$.}

It will be convenient at times to understand the behavior of $\pmn{a}$
in a suitable compactification of $\mb{C}^2$.
By the Riemann Extension Theorem (see e.g. \cite{cas} page 132),

\begin{corollary}
\name{corBoundednessMn}
Let $\hat{V}_-$ denote the union of
$V_-$ and the line $y=\infty$ in $\mb{P}^1\times\mb{P}^1$,
with the point $(\infty,\infty)$ excluded. Then
$\inlinefrac{\pmn{a}}{y}$ extends holomorphically to $\hat{V}_-\times \disk_R$
and the norm of the extension is bounded above by $B$ and below by $B^{-1}$
\end{corollary}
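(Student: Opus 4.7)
The plan is to extend $\pmn{a}/y$ across the line $\{y = \infty\}$ by an application of the Riemann extension theorem, with the uniform upper bound from Corollary~\ref{corPmnBound} supplying the required boundedness hypothesis.

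First I would fix a point $(x_0, \infty) \in \hat{V}_-$ with $x_0 \in \mb{C}$ and pass to the local coordinate $v = 1/y$ near it, so that the line at infinity becomes the smooth analytic hypersurface $\{v = 0\}$ in a polydisk neighborhood $U$ of the form $\{|x - x_0| < \epsilon\} \times \{|v| < \delta\} \times \disk_R$ in the $(x, v, a)$-chart. Since $x_0$ is finite, choosing $\delta$ small enough forces both $|y| = 1/|v| > |x|$ and $|y| > \alpha$ throughout $U \setminus \{v = 0\}$, so that $U \cap (V_- \times \disk_R) = U \setminus \{v = 0\}$. On this set, $\pmn{a}/y$ is a holomorphic function of $(x, v, a)$ whose modulus is bounded above by $B$ by Corollary~\ref{corPmnBound}.

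Next I would invoke the Riemann extension theorem in several variables (the version cited from \cite{cas}): a bounded holomorphic function on the complement of a smooth analytic hypersurface in a polydisk extends holomorphically across the hypersurface. This produces a local holomorphic extension whose modulus remains $\leq B$ by continuity. The local extensions all agree with the original function on $V_- \times \disk_R$ and so patch consistently to a globally defined holomorphic function on $\hat{V}_- \times \disk_R$. For the lower bound I would apply the same argument to the reciprocal $y/\pmn{a}$, which by Corollary~\ref{corPmnBound} is holomorphic on $V_- \times \disk_R$ and bounded in modulus by $B$; its extension then satisfies $|y/\pmn{a}| \leq B$, which gives $|\pmn{a}/y| \geq B^{-1}$.

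No step poses a genuine obstacle; the only subtlety worth flagging is why the corner $(\infty, \infty)$ must be excluded. Near that point $V_-$ is carved out by the real inequality $|y| > |x|$, a one-sided condition whose boundary is not a complex analytic hypersurface, so Riemann extension does not directly apply there and a separate bound on $\pmn{a}/y$ along the diagonal at infinity would be needed, which we avoid by simply excising the point.
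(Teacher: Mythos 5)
Your proof is correct and fills in precisely the details that the paper leaves implicit — the paper's own proof is a one-line invocation of the Riemann Extension Theorem, and your version supplies the local coordinate change $v = 1/y$, the polydisk argument, the patching, and the observation that the lower bound follows by running the argument on $y/\pmn{a}$. Same approach, just spelled out.
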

\name{defOfHatVMn}

\head{Vanishing of $\smn{k}$ in $v$ at infinity.}
Later, when we study the extension of $\inlinefrac{\pmn{a}}{y}$ described above
it will be useful to understand the behavior of $\smn{k}$ near infinity.
Lemma~\ref{degreesMn} implies:

\begin{lemma}
\name{vanishingInVmn}
Letting $v=\inlinefrac{1}{y}$ then
$\smn{k}(x,y,a)$ vanishes to order at least $d^{k-1}$
in~$v$.
\end{lemma}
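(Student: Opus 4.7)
The plan is to prove the bound by a direct $y$-degree count, viewing $\smn{k}(x,y,a)$ as a rational function of $y$ with $x$ and $a$ treated as parameters. If such a rational function $N(y)/D(y)$ has numerator of $y$-degree $m$ and denominator of $y$-degree $n$ with $n \geq m$, then substituting $y = 1/v$ and clearing $v^n$ from top and bottom shows that the resulting expression vanishes at $v = 0$ to order exactly $n - m$ (the numerator contributes $v^{n-m}$ times a polynomial in $v$ with nonzero constant term, and the denominator has nonzero constant term equal to the leading coefficient of $D$). So it suffices to show that the numerator of $\smn{k}$ has $y$-degree at most $d^k - d^{k-1}$ while the denominator has $y$-degree exactly $d^k$.

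For the denominator: $\smn{k}$ has denominator $y_{-(k-1)}^d$, and Lemma~\ref{degreesMn} tells us $y_{-(k-1)}$ has $y$-degree $d^{k-1}$, giving the denominator $y$-degree $d \cdot d^{k-1} = d^k$. For the numerator $q(y_{-(k-1)}) - x_{-(k-1)}$: the summand $q(y_{-(k-1)})$ has $y$-degree $\degq \cdot d^{k-1} \leq (d-1) d^{k-1} = d^k - d^{k-1}$ since $\degq < d$. The summand $x_{-(k-1)}$ equals $y_{-(k-2)}$ for $k \geq 2$ (with $y$-degree $d^{k-2}$) and equals $x$ for $k = 1$ (with $y$-degree $0$); in either case this is strictly less than $d^k - d^{k-1}$ whenever $d \geq 2$. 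Since these two summands have leading terms of distinct $y$-degrees there is no danger of cancellation, so the numerator has $y$-degree at most $d^k - d^{k-1}$.

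Combining these gives an order of vanishing at least $d^k - (d^k - d^{k-1}) = d^{k-1}$ in $v = 1/y$, which is the claim. The argument is purely degree bookkeeping, and there is no real obstacle; the only edge cases to note are $\degq = 0$ (where $q$ is constant, so $q(y_{-(k-1)})$ drops out entirely and the bound is only strengthened) and $k = 1$ (where the numerator is simply $q(y) - x$ of $y$-degree $\degq < d$). Both are handled immediately by the counting above.
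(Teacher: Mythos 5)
Your argument is correct and is exactly the deduction the paper has in mind: the paper omits the proof and simply notes that Lemma~\ref{degreesMn} implies the result, and your proof fills in precisely that degree bookkeeping (denominator $y_{-(k-1)}^d$ of $y$-degree $d^k$, numerator $q(y_{-(k-1)})-x_{-(k-1)}$ of $y$-degree at most $d^k-d^{k-1}$, hence vanishing order at least $d^{k-1}$ after the substitution $y=1/v$). One small remark: for a lower bound on the vanishing order you do not actually need the observation that the two numerator summands have distinct leading $y$-degrees, since cancellation could only lower the numerator degree and hence only increase the vanishing order; that remark matters for the sharper claim in Lemma~\ref{vanishingOrderMn} but is superfluous here.
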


\head{Asymptotics of $\pmn{a}(x,y)$ as $\vert y\vert \to \infty$.}

\begin{corollary}
$\pmn{a}(x,y)\sim y$ as $\vert y\vert \to \infty$.
\end{corollary}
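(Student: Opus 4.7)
The plan is to show that the quotient $\pmn{a}(x,y)/y$ tends to $1$ as $|y|\to\infty$ within $V_-$, by proving that the logarithmic telescoping sum in (\ref{infiniteSumMn}) tends to zero. The key input is Lemma~\ref{vanishingInVmn}, which says $\smn{k}(x,y,a)$ vanishes to order at least $d^{k-1}$ in $v = 1/y$ at the line $y=\infty$.

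First, combine the extension statement of Corollary~\ref{corBoundednessMn} with the bound $|\smn{k}|<r$ from Lemma~\ref{smnBound}. By an argument analogous to Corollary~\ref{corBoundednessMn} (the Riemann Extension Theorem applied in the variable $v = 1/y$), each $\smn{k}$ extends holomorphically across $\{v=0\}$ in $\hat V_-\times \disk_R$, the extension still bounded by $r$. Since the extension vanishes to order $\geq d^{k-1}$ in $v$, a Schwarz-type estimate on each slice gives a uniform bound of the form $|\smn{k}(x,y,a)|\leq C\,|v|^{d^{k-1}}$ on the region where $|v|$ is small (with $C$ depending on the size of the disk in $v$ but independent of $k$, using that we can absorb the uniform bound $r$).

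Second, plug this into the telescoping sum. Using $|\log(1+\smn{k})|\leq \frac{|\smn{k}|}{1-r}$ (valid since $|\smn{k}|<r<1$), the $k$-th term of (\ref{infiniteSumMn}) is bounded by
\[
\frac{1}{d^k}\cdot \frac{C\,|v|^{d^{k-1}}}{1-r}.
\]
The $k=1$ term dominates: it is $O(|v|)$, while all later terms are $O(|v|^d)$ or smaller. Hence the whole sum tends to $0$ as $v\to 0$, i.e.\ as $|y|\to \infty$, and in fact the sum is $O(1/y)$.

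Finally, exponentiating gives $\pmn{a}(x,y)/y \to 1$, which is exactly the asymptotic statement $\pmn{a}(x,y)\sim y$. The only real subtlety is the uniform control across $(x,a)$ and across all $k$ in step one; but since the $\smn{k}$ are all bounded by the single constant $r$ and all vanish to order at least $d^{k-1}\geq 1$ in $v$, a single Schwarz estimate on the extended domain $\hat V_-\times\disk_R$ handles every $k$ simultaneously.
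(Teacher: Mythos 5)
Your proof is correct and takes essentially the same approach as the paper: both proofs rest on Lemma~\ref{vanishingInVmn}, which gives the vanishing of $\smn{k}$ in $v$, and then conclude that the telescoping sum in (\ref{infiniteSumMn}) vanishes at $v=0$. The paper simply notes that each $\log(1+\smn{k})$ vanishes at $v=0$ and that the uniformly convergent sum therefore does too, whereas you add a quantitative Schwarz bound $|\smn{k}|\lesssim |v|^{d^{k-1}}$ and then estimate the series directly; the extra machinery is not needed for the qualitative asymptotic $\pmn{a}\sim y$, but it is a valid and slightly more explicit route to the same conclusion.
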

\begin{proof}
We know that $\inlinefrac{\pmn{a}(x,y)}{y}=
\exp\Bigl(\inlinefrac{1}{d}\log(1+\smn{1}) + \inlinefrac{1}{d^2}\log(1+\smn{2}) + \dotsb\Bigr)$.
Since $\smn{k}$ vanishes in $v$ for all $k$ we see that this infinite sum
is a holomorphic function in $V_-\times \disk_R$ 
that vanishes in~$v$. 
\end{proof}

\head{Vanishing of $\smn{k}$ in $a$.}

Let us now study the behavior of $a\mapsto \pmn{a}$ near $a=0$.

\begin{lemma}
\name{vanishingOrderMn}
$\smn{k}(x,y,a)$ vanishes in $a$ precisely to order
\begin{itemize}
\item $(d-\degq)\cdot \expDSum{k-1}$ for $q$ nonconstant;
\item $d\cdot\expDSum{k-1}-\expDSum{k-2}$ for $q$ constant.
\end{itemize}
Moreover, $\smn{k}(x,\shortInlineFrac{1}{v},a)$ vanishes in $v$ precisely to order 
\begin{itemize}
\item $(d-\degq) d^{k-1}$ for $q$ nonconstant.
\item $d^k-d^{k-2}$ for $q$ constant and $k\geq 2$,
\item $d$ for $q$ constant and $k=1$.
\end{itemize}
\end{lemma}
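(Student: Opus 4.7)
The plan is a direct computation from the defining formula
\[\smn{k}=\dfrac{q(y_{-(k-1)})-x_{-(k-1)}}{y_{-(k-1)}^d},\]
using Lemma~\ref{degreesMn} to track the leading behavior of $y_{-j}$ both in $1/a$ and in $y$. In both parts of the statement, $\smn{k}$ is a rational expression whose numerator and denominator each blow up at $a=0$ (respectively at $v=0$), and the order of vanishing of $\smn{k}$ equals the pole order of the denominator minus that of the numerator, provided their leading coefficients do not cancel.

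For the $a$-statement, Lemma~\ref{degreesMn} gives that $y_{-j}$ has a pole of order $\expDSum{j}$ at $a=0$ with nonzero leading coefficient $(p(y)-x)^{d^{j-1}}$ for $j\geq 1$, so $y_{-(k-1)}^d$ has pole of order $d\,\expDSum{k-1}$. When $q$ is nonconstant of degree $\degq$, the top term $q_{\degq}y_{-(k-1)}^{\degq}$ of $q(y_{-(k-1)})$ gives a pole of order $\degq\,\expDSum{k-1}$, which strictly dominates the pole order $\expDSum{k-2}$ of the competing term $x_{-(k-1)}=y_{-(k-2)}$; with the convention $\expDSum{j}=0$ for $j\leq 0$, this absorbs the base cases $k=1,2$ uniformly. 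When $q$ is constant, the numerator has no $a$-pole from $q_0$, so it is $x_{-(k-1)}$ alone that contributes the numerator pole of order $\expDSum{k-2}$. Subtraction yields $(d-\degq)\expDSum{k-1}$ and $d\,\expDSum{k-1}-\expDSum{k-2}$ respectively. For the $v$-statement, Lemma~\ref{degreesMn} tells us that $y_{-j}$ has leading term $y^{d^j}/a^{\expDSum{j}}$ in $y$ and hence has pole of order $d^j$ in $v$; the analysis is then entirely parallel, with the special case of $q$ constant and $k=1$ handled directly from $\smn{1}=(q_0-x)v^d$, of order $d$ in $v$.

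The only content beyond this bookkeeping is checking that the leading coefficients in the subtraction do not happen to cancel, which would raise the vanishing order. After clearing appropriate powers of $a$ (respectively of $y$), the denominator's leading coefficient is a nonzero power of $p(y)-x$ (respectively of $y$), and the numerator's leading coefficient is a nonzero scalar times a power of $p(y)-x$ (or, in the small-$k$ subcases, simply one of $q_0-x$, $-y$, or $-x$), none of which vanishes identically as a rational function on $V_-$. I expect this nonvanishing verification, together with the attendant case split between $q$ constant and nonconstant (and between small and large $k$), to be the only real work in the proof; everything else is accounting of pole orders.
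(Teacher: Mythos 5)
Your proof is correct and follows essentially the same route as the paper: read off the pole orders in $1/a$ (resp.\ in $v$) of the numerator and denominator of $\smn{k}$ from Lemma~\ref{degreesMn}, subtract, and verify that the leading coefficients do not cancel. One small imprecision: for $k=1$ (and, when $q$ is constant, for $k=2$) the two numerator terms have the \emph{same} $1/a$-degree, so there is no ``strict dominance'' and the non-cancellation must be checked directly --- which your closing paragraph does, and which the paper likewise flags as exactly the cases requiring separate inspection.
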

\begin{proof}
It follows from Lemma~\ref{degreesMn} that the
denominator of $\smn{k}$ is a polynomial
in $\inlinefrac{1}{a}$ of degree $d\cdot\expDSum{k-1}$.

The numerator of $\smn{k}$ is a polynomial
in $\inlinefrac{1}{a}$ of degree
\begin{itemize}
\item $\degq\cdot\expDSum{k-1}$ for $q$ nonconstant;
\item $\expDSum{k-2}$ for $q$ constant.
\end{itemize}

To justify that the highest degree terms
in the numerator never cancel, observe that it is impossible for the degree
of $q(y_{-(k-1)})$ to match the degree of $x_{-(k-1)}$ as polynomials
in $\shortInlineFrac{1}{a}$ except when $k=1$ or when
$q$ is constant and $k=2$. It is easy to check that the Lemma still holds in these cases.

The last assertion also easily follows from
Lemma~\ref{degreesMn}.
\end{proof}

\head{Precise behavior of $\pmn{a}$ near $a=0$.}

\begin{lemma}
\name{pmnNearZeroA}
$\pmn{a}(x,y)=\bigl(p(y)-x\bigr)^\frac{1}{d}+ah(x,y,a)$ for some holomorphic
function $h$ on $V_-\times \disk_R$.
\end{lemma}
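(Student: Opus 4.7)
The plan is to evaluate $\pmn{a}(x,y)$ at $a=0$ directly from the telescoping formula~(\ref{telescopingMn}), and then to absorb the remainder into $a\cdot h(x,y,a)$ using holomorphy in $a$.

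The crucial point is that by Lemma~\ref{vanishingOrderMn} every $\smn{k}$ with $k\ge 2$ vanishes in $a$ to positive order: for $q$ nonconstant the order is $(d-\degq)\expDSum{k-1}\ge 1$, and for $q$ constant with $k\ge 2$ the order is $d\,\expDSum{k-1}-\expDSum{k-2}\ge d\ge 2$. Consequently $\smn{k}(x,y,0)=0$ for every $k\ge 2$, so the infinite sum inside the exponential in~(\ref{telescopingMn}) collapses at $a=0$ to its first term. Unpacking~(\ref{definingSmn}) gives $\smn{1}(x,y,a)=(q(y)-x)/y^d$, which is in fact independent of $a$, so $1+\smn{1}(x,y,0)=(p(y)-x)/y^d$.

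Substituting at $a=0$ into~(\ref{telescopingMn}) then yields
\[\pmn{0}(x,y) \;=\; y\exp\!\Bigl(\tfrac1d\log\tfrac{p(y)-x}{y^d}\Bigr) \;=\; y\cdot\Bigl(\tfrac{p(y)-x}{y^d}\Bigr)^{1/d},\]
where by Lemma~\ref{smnBound} the argument of the $d$-th root lies in the disk $\{|z-1|<r\}$, so the principal branch is unambiguous. This is precisely the branch of $(p(y)-x)^{1/d}$ that is asymptotic to $y$ as $|y|\to\infty$, which is the meaning of the notation in the statement.

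To finish, Corollary~\ref{corPmnBound} tells us $\pmn{a}(x,y)$ is jointly holomorphic on $V_-\times\disk_R$; therefore the function $h(x,y,a):=(\pmn{a}(x,y)-\pmn{0}(x,y))/a$, which is a priori holomorphic off $\{a=0\}$ and bounded near it (again by Corollary~\ref{corPmnBound}), extends holomorphically across $\{a=0\}$ by the Riemann extension theorem. The only real bookkeeping is pinning down the branch of $(p(y)-x)^{1/d}$, but that is forced by the convergent product defining $\pmn{a}$, so I do not anticipate any serious obstacle.
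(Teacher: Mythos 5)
Your argument is essentially the paper's own: both invoke Lemma~\ref{vanishingOrderMn} to see that every $\smn{k}$, $k\geq 2$, vanishes at $a=0$, compute the surviving first term to get $\pmn{0}(x,y)=(p(y)-x)^{1/d}$, and then split off the $a$-dependent remainder. One inaccuracy at the very end: boundedness of $h=(\pmn{a}-\pmn{0})/a$ near $\{a=0\}$ does not follow from Corollary~\ref{corPmnBound} (which bounds $\pmn{a}/y$, not a difference quotient); what you actually want is the elementary fact that a holomorphic function of $(x,y,a)$ vanishing identically on $\{a=0\}$ is divisible by $a$ in the ring of holomorphic functions --- the same step the paper uses when it writes the tail of the log-series as $a\,g(x,y,a)$.
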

\begin{proof}
According to Lemma~\ref{vanishingOrderMn} all $\smn{k}$, $k\geq 2$,
vanish at $a=0$ and hence the series
(\ref{infiniteSumMn})
takes the form $\inlinefrac{1}{d}\log\inlinefrac{ay_{-1}}{y^d} + a g(x,y,a)$ for
some holomorphic function $g(x,y,a)$ on $V_-\times \disk_R$.
Hence by (\ref{telescopingMn})
 \[\pmn{a}(x,y)=y \cdot\exp\Bigl(\inlinefrac{1}{d}\log\inlinefrac{ay_1}{y^d}\Bigr) 
\cdot \exp\bigl(ag(x,y,a)\bigr)=\bigl(p(y)-x\bigr)^\frac{1}{d}\exp\bigl( ag(x,y,a)\bigr),\]
and the conclusion follows.
\end{proof}

\head{Swelling nature of $f_a(V_-)$ as $a\to 0$.}

The domain $f_a(V_-)$ swells as $a\to 0$ to include all of $\mb{C}^2$
except the curve $C(p)$. We make this precise:

\begin{lemma}
\name{imageOfVmnIsFilling}
Given $(x,y)\not\in \jmn{0}=C(p)$ then $(x,y)\in f_a(V_-)$
for all sufficiently small values $a$. More generally if $K \Subset \mb{C}^2\setminus C(p)$
then $K\subset f_a(V_-)$ for all sufficiently small $a$.
\end{lemma}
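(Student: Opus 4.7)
The plan is to reduce the inclusion $(x,y)\in f_a(V_-)$ to an explicit inequality by computing $f_a^{-1}$ directly, and then use compactness. Solving $f_a(u,v)=(x,y)$ for $a\neq 0$ gives $u=y$ and $v=(p(y)-x)/a$, so $f_a^{-1}(x,y)=(y,\,(p(y)-x)/a)$. Consequently $(x,y)\in f_a(V_-)$ if and only if the point $(y,\,(p(y)-x)/a)$ lies in $V_-$, which by the definition of $V_-$ amounts to the two inequalities
$$\frac{|p(y)-x|}{|a|}>|y| \qquad\text{and}\qquad \frac{|p(y)-x|}{|a|}>\alpha.$$

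For the first assertion, fix $(x,y)\notin C(p)$. Then $p(y)-x\neq 0$, so both inequalities above are satisfied as soon as $|a|<|p(y)-x|/\max(|y|,\alpha)$, which is a strictly positive bound. Hence $(x,y)\in f_a(V_-)$ for all sufficiently small $|a|$.

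For the second assertion, I would upgrade this pointwise argument to a uniform one. Since $K$ is compact and the closed algebraic curve $C(p)=\{x=p(y)\}$ is disjoint from $K$, the continuous function $(x,y)\mapsto |p(y)-x|$ attains a strictly positive minimum $\epsilon>0$ on $K$. Moreover $|y|$ is bounded above on $K$ by some constant $M<\infty$. Taking any $a$ with $0<|a|<\epsilon/\max(M,\alpha)$, both required inequalities hold simultaneously at every point of $K$, so $K\subset f_a(V_-)$.

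The only point requiring a moment's attention (rather than a genuine obstacle) is the identification $\jmn{0}=C(p)$; this is immediate from Lemma~\ref{pmnNearZeroA}, which at $a=0$ gives $\pmn{0}(x,y)=(p(y)-x)^{1/d}$, whose vanishing locus is exactly the curve $x=p(y)$. Beyond this, the lemma is essentially a direct calculation, with compactness of $K$ supplying the uniformity needed for the stronger formulation.
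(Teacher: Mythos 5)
Your proof is correct and follows essentially the same route as the paper's: both compute $f_a^{-1}(x,y)=(y,(p(y)-x)/a)$ and reduce membership in $f_a(V_-)$ to the two inequalities $|p(y)-x|>\alpha|a|$ and $|p(y)-x|>|y||a|$, with compactness supplying the uniform bound for $K$. You simply spell out the compactness step that the paper leaves implicit.
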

\begin{proof}
This follows because
$(x,y)\in f_a(V_-)$ iff $\vert p(y)-x\vert\geq \alpha\vert a\vert$ and
$\vert p(y)-x\vert \geq \vert y\vert\vert a\vert$.
\end{proof}

\head{Extending powers of $\pmn{a}$ to $\umn{}$.}

We let
$\mf{V}_{k,-}^\circ=\{(x,y,a)\vert (x,y)\in f_a\cp{k}(V_-), a\in \disk_R^*\}$\name{defOfMfVMn}
and we let $\mf{V}_{k,-}$
be the union of $\mf{V}_{k,-}^\circ$ and the set $\{(x,y,0)\vert (x,y)\not\in C(p)\}$.

\head{Showing $\pmn{0}^d=p(y)-x$ on $\mb{C}^2\setminus C(p)$.}

\begin{lemma}
\name{pmnForZeroA}
Given $k\geq 1$ then
$\pmn{a}^{d^k}$ extends from
a holomorphic function on $V_-$, to a holomorphic function on $\mf{V}_{k,-}$ by defining
$\pmn{a}^{d^k}(x,y)\equiv a^{\expDSum{k}} \pmn{a}(f_a^{-k}(x,y))$
for $a\not=0$ and $\pmn{0}^{d^k}(x,y)=\bigl(p(y)-x\bigr)^{d^{k-1}}$.
\end{lemma}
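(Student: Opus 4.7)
The plan is to handle the two strata of $\mf{V}_{k,-}$ in turn: first the open part $\mf{V}_{k,-}^\circ$ where $a\neq 0$, then the slice $\{a=0\}\cap\mf{V}_{k,-}$ via a Riemann extension argument.

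On $\mf{V}_{k,-}^\circ$, iterating the functional equation $\pmn{a}\circ f_a^{-1}=\pmn{a}^d/a$ of Section~\ref{subsectionTheFoliations} $k$ times yields the identity $\pmn{a}^{d^k}(x,y)=a^{\expDSum{k}}\pmn{a}(f_a^{-k}(x,y))$ on $V_-\times\disk_R^*$. Since $\pmn{a}$ is holomorphic on $V_-\times\disk_R$ by Corollary~\ref{corPmnBound}, the right-hand side is a holomorphic function wherever $f_a^{-k}(x,y)\in V_-$, that is, on all of $\mf{V}_{k,-}^\circ$, and this is the desired extension.

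To extend across $a=0$, I would first combine Lemma~\ref{imageOfVmnIsFilling} with the backward invariance of $V_-$ (Lemma~\ref{preimageOfVmnInsideVmn}) to show that every $(x_0,y_0)\notin C(p)$ admits a compact neighborhood $K$ with $K\subset f_a^k(V_-)$ for all sufficiently small $a\neq 0$; equivalently, $\mf{V}_{k,-}^\circ$ fills a full three-dimensional punctured neighborhood of each slice point of $\mf{V}_{k,-}$. Substituting the telescoping formula~\eqref{telescopingMn} for $\pmn{a}$ at $f_a^{-k}(x,y)$ and using the identity $\smn{n}(f_a^{-k}(x,y),a)=\smn{n+k}(x,y,a)$, which is immediate from definition~\eqref{definingSmn}, the extension can be rewritten as
\[\bigl(a^{\expDSum{k}}\,y_{-k}(x,y,a)\bigr)\cdot\exp\!\left(\sum_{n\geq 1}d^{-n}\log\bigl(1+\smn{n+k}(x,y,a)\bigr)\right).\]
By Lemma~\ref{degreesMn} the prefactor is a polynomial in $(x,y,a)$ taking the value $(p(y)-x)^{d^{k-1}}$ at $a=0$, and by Lemma~\ref{smnBound} applied at $f_a^{-k}(x,y)\in V_-$, each $\smn{n+k}$ is bounded by $r<1$ on this punctured neighborhood, so the exponential factor is uniformly bounded there.

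With boundedness in hand, the Riemann extension theorem carries the function holomorphically across the codimension-one slice $\{a=0\}$. To identify the limit, apply Lemma~\ref{vanishingOrderMn} to the indices $n+k\geq 2$: each $\smn{n+k}$ vanishes at $a=0$, so the exponential factor equals $1$ on the slice and the polynomial prefactor evaluates to $(p(y)-x)^{d^{k-1}}$, giving the claimed boundary value. The main obstacle is producing a genuinely holomorphic (not merely continuous) extension over $\{(x,y,0):(x,y)\notin C(p)\}$, a set much larger than $V_-$; this is exactly what Riemann extension delivers, provided the swelling step via Lemma~\ref{imageOfVmnIsFilling} first secures a bounded holomorphic representative on a full three-dimensional punctured neighborhood of each point of the slice.
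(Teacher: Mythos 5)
Your proof is correct and follows essentially the same strategy as the paper's: iterate the functional equation $\pmn{a}\circ f_a^{-1}=\pmn{a}^d/a$ to define the extension for $a\neq 0$, recognize the factor $a^{\expDSum{k}}y_{-k}$ as a polynomial in $(x,y,a)$ whose value at $a=0$ is $(p(y)-x)^{d^{k-1}}$ (Lemma~\ref{degreesMn}), use the vanishing in $a$ of the relevant $\smn{j}$ (Lemma~\ref{vanishingOrderMn}) to identify the limit, and conclude holomorphy across the codimension-one slice $\{a=0\}$. The one place where you diverge from the paper is in how the remainder factor is organized: the paper's proof substitutes the truncated expansion from Lemma~\ref{pmnNearZeroA}, $\pmn{a}=(p(y)-x)^{1/d}+ah$, which produces an error term $a^{1+\expDSum{k}}h(x_{-k},y_{-k},a)$ where $h$ is evaluated at a point escaping to infinity as $a\to 0$; the paper asserts continuity without explicitly dispatching this term. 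You instead carry the full telescoping product~\eqref{telescopingMn} through the index shift $\smn{n}(f_a^{-k}(x,y),a)=\smn{n+k}(x,y,a)$, which keeps the non-polynomial part an explicitly bounded exponential factor controlled by Lemma~\ref{smnBound}. That makes the boundedness needed for Riemann extension immediate rather than implicit, and is arguably the cleaner way to write the argument; combined with Lemma~\ref{imageOfVmnIsFilling} (and the backward invariance $V_-\subset f_a(V_-)$, which gives $f_a(V_-)\subset f_a^k(V_-)$) to secure a full punctured three-dimensional neighborhood of each slice point, the Riemann extension step is airtight. No gaps.
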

\begin{proof}
If $a\not=0$ then
we can extend the function $\pmn{a}^{d^k}$
to be holomorphic on $f_a\cp{k}(V_-)$ by defining
$\pmn{a}^{d^k}(x,y)\equiv a^{\expDSum{k}} \pmn{a}(f_a^{-k}(x,y))$.
This agrees with $\pmn{a}^{d^k}$ on $V_-$.

According to our definition and Lemma~\ref{pmnNearZeroA}
\begin{equation}
\name{extensionMn}
\begin{split}
\pmn{a}^{d^k}(x,y)& =  a^{\expDSum{k}}\pmn{a}(x_{-k},y_{-k}) \\
& =a^{\expDSum{k}}y_{-k}\cdot \Bigl( 1 + \inlinefrac{q(y_{-k})-x_{-k}}{y_{-k}^d}\Bigr)^\frac{1}{d} +
a^{1+\expDSum{k}}h(x_{-k},y_{-k}) \\
& =a^{\expDSum{k}} y_{-k}(1+\smn{k}(x,y))^\frac{1}{d} + a^{1+\expDSum{k}}h(x_{-k},y_{-k})
\end{split}
\end{equation}
for $(x,y,a)\in \mf{V}^\circ_{k,-}$.
Now $(x_{-k},y_{-k})\in V_-$ when $(x,y,a)\in \mf{V}^\circ_{k,-}$ 
and so $y_{-k}\not=0$.
Also $\smn{k}(x_{-k},y_{-k})$ is defined 
and holomorphic since $y_{-k}\not=0$. From Lemma~\ref{degreesMn}
we see that $a^{\expDSum{k}} y_k$ is a polynomial in $x,y,a$ whose
only term not divisible by $a$ is $\bigl(p(y)-x\bigr)^{d^{k-1}}$. 
By Lemma~\ref{vanishingOrderMn} $\smn{k}$ vanishes in $a$ and so
$\pmn{0}^{d^k}(x,y)\equiv\bigl(p(y)-x\bigr)^{d^{k-1}}$
gives a continuous, and therefore, a holomorphic, extension of $\pmn{a}^{d^k}$
to $\mf{V}_{k,-}$.
\end{proof}

\head{We define $\jmn{0}$ and $\kmn{0}$.}

We let both $\jmn{0}$ and $\kmn{0}$
denote the curve $p(y)-x=0$. It follows from the previous result
that this is consistent with the convention that
$\jmn{a}$ and $\kmn{a}$ will denote the sets $J_-$
and $K_-$ for the parameter value $a$.

\subsectionDone{Degeneration of $\ppl{a}$}

\head{Leading terms.}

Here we include the corresponding constructions for forward iterates.

\begin{lemma}
\name{degreesPl}
The leading term of $x_k(x,y,a)$ is $x^{d^k}$
if $x_k$ is considered as a polynomial in $x$.
The leading term of $y_k(x,y,a)$ is $x^{d^{k-1}}$
as a polynomial in $x$.
\end{lemma}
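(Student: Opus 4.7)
The plan is to prove both statements simultaneously by induction on $k \geq 1$, using the recursion $(x_{k+1}, y_{k+1}) = f_a(x_k, y_k) = (p(x_k) - a y_k, \, x_k)$ directly. The key observation is simply that $p(x) = x^d + q(x)$ with $\deg q < d$, so when we raise a polynomial with leading term $x^{d^k}$ to the $d$-th power, the lower-order contributions $q(x_k)$ and $a y_k$ cannot compete with the top term.

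For the base case $k=1$, we have $x_1 = p(x) - ay = x^d + q(x) - ay$, which has leading term $x^d = x^{d^1}$ as a polynomial in $x$, and $y_1 = x = x^{d^0}$. For the inductive step, assume $x_k$ has leading term $x^{d^k}$ in $x$. Then $y_{k+1} = x_k$ immediately has leading term $x^{d^k} = x^{d^{(k+1)-1}}$, giving the second assertion. For the first, write
\[
x_{k+1} = p(x_k) - a y_k = x_k^d + q(x_k) - a y_k.
\]
Since $x_k$ has leading term $x^{d^k}$, the quantity $x_k^d$ has leading term $x^{d \cdot d^k} = x^{d^{k+1}}$. The correction terms have strictly smaller $x$-degree: $q(x_k)$ has $x$-degree at most $\degq \cdot d^k \leq (d-1) d^k < d^{k+1}$, and by the inductive hypothesis $y_k$ has $x$-degree $d^{k-1} < d^{k+1}$. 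Therefore the leading term of $x_{k+1}$ in $x$ is $x^{d^{k+1}}$, completing the induction.

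There is essentially no obstacle here; the statement is purely a bookkeeping claim about the top $x$-degrees in the iterates, and it follows formally from the recursion together with the fact that $\deg q < d$. The only minor point to flag is the asymmetry with Lemma~\ref{degreesMn}: in the forward direction the parameter $a$ plays no role in the leading behavior (the map $f_a$ is polynomial in all variables), so we do not need to separately track degrees in $a$.
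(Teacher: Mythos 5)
Your proof is correct and is exactly the induction the paper has in mind: the paper's own proof simply says the result "follows from an easy induction" using the recursion $(x_k,y_k)=(p(x_{k-1})-ay_{k-1},\,x_{k-1})$, which is precisely the argument you have spelled out in detail.
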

\begin{proof}
This follows from an easy induction using
$\vt{x_1 \\ y_1}=\vt{p(x)-ay \\ x}$
and $\vt{x_k \\ y_k} = \vt{ p(x_{k-1})-ay_{k-1} \\ x_{k-1} }$.
\end{proof}

\head{Definition of $\ppl{a}$.}

The function $\ppl{a}$ is  constructed as a limit
$\ppl{a}=\lim_{n\to\infty} {x_n}^\frac{1}{d^n}$ with an appropriate choice of root
on the domain $V_+\times \disk_R$.
Sense is made of this using the telescoping formula,

\begin{equation}
\name{telescopingPl}
\ppl{a}(x,y)=\lim_{n\to\infty} x \cdot \exp\Bigl(\inlinefrac{1}{d}\log\inlinefrac{x_1}{x^d} +
\inlinefrac{1}{d^2}\log\inlinefrac{x_2}{x_1^d} +
\dotsb 
%+\inlinefrac{1}{d^n}\log\inlinefrac{x_n}{x_{n-1}^d}
\Bigr)
\end{equation}

\head{Controlling the size of $\ppl{a}$.}

Letting $\spl{k}=\dfrac{q(x_{k-1})-ay_{k-1}}{x_{k-1}^d}$ \name{defOfSpl} 
so $\inlinefrac{x_k}{x_{k-1}^d}=1+\spl{k}$ we see that
\begin{lemma}
\name{splBound}
$\vert \spl{k}(x,y,a)\vert < r$ for $(x,y)\in V_+, a\in \disk_R$.
\end{lemma}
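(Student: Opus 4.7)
The plan is to reduce this lemma to the choice of $\alpha$ from the start of the section, via the forward invariance $f_a(V_+) \subset V_+$ established in the unlabeled lemma immediately following Lemma~\ref{preimageOfVmnInsideVmn}. This is exactly parallel to the proof of Lemma~\ref{smnBound}, just with the roles of $x$ and $y$ (and of forward and backward iterates) swapped.

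First, I would observe that by induction on $k$, forward invariance gives $(x_{k-1}, y_{k-1}) \in V_+$ for every $k \geq 1$ whenever $(x,y) \in V_+$ and $a \in \disk_R$. In particular, $|x_{k-1}| > |y_{k-1}|$ and $|x_{k-1}| \geq \alpha$ for all $k \geq 1$.

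Next, using $|a| < R$ and $|y_{k-1}| < |x_{k-1}|$, I would estimate
\[
|\spl{k}(x,y,a)| = \dfrac{|q(x_{k-1}) - a y_{k-1}|}{|x_{k-1}^d|}
\leq \dfrac{|q(x_{k-1})|}{|x_{k-1}^d|} + \dfrac{R|y_{k-1}|}{|x_{k-1}^d|}
\leq \Big\arrowvert \dfrac{q(x_{k-1})}{x_{k-1}^d}\Big\arrowvert + \dfrac{R+1}{|x_{k-1}^{d-1}|},
\]
which is strictly less than $r$ by the defining property of $\alpha$, since $|x_{k-1}| \geq \alpha$.

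There is no real obstacle here; the only subtlety is the mild asymmetry with the $\smn{k}$ case. In the $-$ case one uses $|x| < |y|$ to bound the $x$-term by $|y|$; in the $+$ case the analogous bound on the $ay_{k-1}$ term naturally produces an extra factor of $R$ rather than a $1$. This is exactly why the defining inequality for $\alpha$ contains $R+1$ in the numerator rather than just $1$, so the same $\alpha$ works in both directions.
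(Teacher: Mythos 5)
Your proof is correct and follows exactly the paper's argument: invoke forward invariance of $V_+$ to get $(x_{k-1},y_{k-1})\in V_+$, bound $|a|$ by $R$ and $|y_{k-1}|$ by $|x_{k-1}|$, and finish with the defining inequality for $\alpha$. The only cosmetic difference is that your intermediate bound uses $(R+1)/|x_{k-1}^{d-1}|$ where the paper's uses the tighter $R/|x_{k-1}^{d-1}|$ (both are dominated by the $\alpha$-condition), so your closing remark about $R+1$ arising ``exactly'' from this term is slightly loose --- the $+$-case needs only $R$ and the $-$-case only $1$, with $R+1$ chosen large enough to cover both.
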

\begin{proof}
\[\vert \spl{k}\vert  <
\dfrac{\vert q(x_{k-1})\vert  + R\vert y_{k-1}\vert}{x_{k-1}^d}\leq
\Big\arrowvert \dfrac{q(x_{k-1})}{x_{k-1}^d}\Big\arrowvert + \inlinefrac{R}{\vert x_{k-1}^{d-1}\vert} < r\]
\end{proof}

We evaluate
$\log\inlinefrac{x_k}{x_{k-1}^d}=\log(1+\spl{k})$ using the principal branch
of $\log$. Since $\vert\spl{k}\vert < r$ then
$\vert\log(1+\spl{k})\vert < -\log(1-r)$ exactly as before for $\smn{k}$.
It follows that the series
\begin{equation}
\name{infiniteSumPl}
\inlinefrac{1}{d}\log\inlinefrac{x_1}{x^d} + \inlinefrac{1}{d^2}\log\inlinefrac{x_2}{x_1^d} + \inlinefrac{1}{d^3}\log\inlinefrac{x_3}{x_2^d} +\dotsb
\end{equation}
converges absolutely and uniformly.

\head{Existence, holomorphy and boundedness of $\ppl{a}$.}

Since $\vert \inlinefrac{1}{d^k}\log\inlinefrac{x_k}{x_{k-1}^d}\vert < \inlinefrac{-\log(1-r)}{d^k}$
then the infinite sum (\ref{infiniteSumPl}) is no larger than $\log B=\inlinefrac{-\log(1-r)}{d-1}$.
We conclude that

\begin{corollary}
\name{corPplBound}
The function $\ppl{a}$ defined by
equation (\ref{telescopingPl}) is
well defined and holomorphic for all $(x,y)\in V_+$ and all $a\in \disk_R$.
Additionally $B^{-1} < \Bigl\arrowvert\inlinefrac{\ppl{a}}{x}\Bigr\arrowvert < B$.
\end{corollary}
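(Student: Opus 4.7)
The plan mirrors the construction of $\pmn{a}$ carried out earlier in the section. The whole issue is to make sense of the telescoping product (\ref{telescopingPl}) by controlling the infinite sum (\ref{infiniteSumPl}) that defines $\log(\ppl{a}/x)$.

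First I would invoke Lemma~\ref{splBound}, which gives $|\spl{k}| < r < 1$ uniformly for $(x,y)\in V_+$ and $a\in\disk_R$. Because $1+\spl{k}$ stays inside the disk of radius $r$ around $1$, the principal branch of $\log$ is safe to apply, and the standard estimate $|\log(1+z)|\le -\log(1-|z|)$ for $|z|<1$ yields $|\log(1+\spl{k})|\le -\log(1-r)$. Next I would bound the $k$-th term of (\ref{infiniteSumPl}):
\[
\Bigl|\inlinefrac{1}{d^k}\log\inlinefrac{x_k}{x_{k-1}^d}\Bigr|
= \inlinefrac{|\log(1+\spl{k})|}{d^k} \le \inlinefrac{-\log(1-r)}{d^k},
\]
and summing the resulting geometric series over $k\ge 1$ gives the total bound $\inlinefrac{-\log(1-r)}{d-1}=\log B$.

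Since $f_a(V_+)\subset V_+$ for every $a\in\disk_R$ (by the invariance lemma proved earlier), each iterate $(x_{k-1},y_{k-1})$ lies in $V_+$, so each $\spl{k}$ is a holomorphic function of $(x,y,a)\in V_+\times\disk_R$. The uniform bound above upgrades the termwise holomorphy to uniform absolute convergence of (\ref{infiniteSumPl}) on $V_+\times\disk_R$, so the sum defines a holomorphic function of modulus at most $\log B$. Exponentiating and multiplying by $x$ then produces $\ppl{a}$ as a well-defined holomorphic function on $V_+\times\disk_R$, and the bound $|\cdot|\le\log B$ on the exponent translates into $B^{-1}<|\ppl{a}/x|<B$.

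I do not expect any real obstacle: the argument runs in exact parallel to the derivation of Corollary~\ref{corPmnBound} for $\pmn{a}$, the only difference being that on $V_+$ the role of the small quantity is now played by $\spl{k}$ rather than $\smn{k}$, and the invariance $f_a(V_+)\subset V_+$ (rather than $f_a^{-1}(V_-)\subset V_-$) is what keeps the orbit inside the region where Lemma~\ref{splBound} applies.
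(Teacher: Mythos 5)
Your argument is correct and is essentially the paper's own proof: the paper likewise applies Lemma~\ref{splBound} to bound $|\log(1+\spl{k})|$ by $-\log(1-r)$, sums the geometric series to bound (\ref{infiniteSumPl}) by $\log B$, and reads off both the holomorphy and the two-sided estimate $B^{-1}<|\ppl{a}/x|<B$ from the telescoping formula (\ref{telescopingPl}). The only cosmetic difference is that you spell out the uniform-convergence and invariance steps a bit more explicitly, while the paper treats them as already established in the surrounding text.
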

\begin{proof}
The final claim follows immediately from the expression (\ref{telescopingPl})
and the bounds just derived on the series (\ref{infiniteSumPl}).
\end{proof}

\head{Extension of $\inlinefrac{\ppl{a}}{x}$ to $\mb{P}^1\times\mb{P}^1$.}

For the next lemma we consider $\mb{C}^2$ as lying in $\mb{P}^1\times \mb{P}^1$.

\begin{corollary}
\name{corBoundednessPl}
Let $\hat{V}_+$ denote the union of $V_-$ and the line
$x=\infty$ in $\mb{P}^1\times \mb{P}^1$, with the point $(\infty,\infty)$
excluded. Then
$\inlinefrac{\ppl{a}}{x}$ extends holomorphically to $\hat{V}_+\times \disk_R$
and $B^{-1}\leq \Big\arrowvert \inlinefrac{\ppl{a}}{x}\Big\arrowvert \leq B$.
\end{corollary}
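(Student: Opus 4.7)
The plan is to reproduce, with the roles of $x$ and $y$ interchanged, the argument used for Corollary \ref{corBoundednessMn}. Corollary \ref{corPplBound} already furnishes the two essential facts on $V_+\times\disk_R$: holomorphy of $\ppl{a}/x$ and the two-sided bound $B^{-1}\le|\ppl{a}/x|\le B$. The only remaining work is to extend holomorphically across the portion of the projective hypersurface $\{x=\infty\}$ contained in $\hat{V}_+$, namely $\{(\infty,y):y\in\mb{C}\}$.

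First I would introduce the coordinate $u=1/x$ in the chart of $\mb{P}^1\times\mb{P}^1$ covering this line. In these coordinates the inequalities $|x|>|y|$ and $|x|>\alpha$ defining $V_+$ become $|u||y|<1$ and $|u|<1/\alpha$, each of which holds throughout a full neighborhood of any point $(0,y_0,a_0)$ with $y_0\in\mb{C}$ and $a_0\in\disk_R$. Consequently every such point admits an open neighborhood $W\subset\mb{P}^1\times\mb{P}^1\times\disk_R$ with $W\setminus\{u=0\}\subset V_+\times\disk_R$, where $\{u=0\}$ is a smooth analytic hypersurface. Since $\ppl{a}/x$ is holomorphic and bounded by $B$ on $W\setminus\{u=0\}$, the Riemann Extension Theorem (the same result from \cite{cas} cited for Corollary \ref{corBoundednessMn}) supplies a unique holomorphic extension to $W$. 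Patching these local extensions together yields a holomorphic function on a neighborhood of $\hat{V}_+\times\disk_R$, and the bounds $B^{-1}\le|\ppl{a}/x|\le B$ pass to the extension by continuity.

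The only subtlety, and the reason the point $(\infty,\infty)$ must be excluded, is that in any neighborhood of it the condition $|x|>|y|$ cuts out only a proper subset: there are approach directions along which one exits $V_+$, so no uniform ambient bound on $\ppl{a}/x$ is available there. Away from this single excluded point the argument is a verbatim analogue of the one already written for $\pmn{a}/y$, so I anticipate no substantive obstacle beyond being careful with the change of coordinates $u=1/x$ at the line at infinity.
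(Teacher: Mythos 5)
Your proof is correct and takes exactly the paper's approach: the paper's proof is the single sentence ``This follows from the Riemann extension theorem,'' and what you have written is the honest unpacking of that sentence, using Corollary~\ref{corPplBound} for the two-sided bound, the chart $u=1/x$, and the observation that each point $(0,y_0,a_0)$ of $\{u=0\}$ with $y_0$ finite has a neighborhood whose punctured part lies in $V_+\times\disk_R$ so the extension theorem applies across the smooth hypersurface $\{u=0\}$. The aside about why $(\infty,\infty)$ must be excluded is not required by the statement, but it is accurate in spirit and does no harm.
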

\begin{proof}
This follows from the
Riemann extension theorem (see e.g. \cite{cas} page 132).
\end{proof}
\name{defOfHatVPl}

\head{Vanishing of $\spl{k}$ in $u$ at infinity.}
In order to better understand this extension
we will need to understand $\spl{k}$ on
this extension. We extract the relevant information in the following lemma.

\begin{lemma}
\name{vanishingInUpl}
Letting $u=\inlinefrac{1}{x}$ then
$\spl{k}(x,y,a)$ vanishes to order at least $d^{k-1}$
in~$u$.
\end{lemma}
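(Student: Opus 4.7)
The approach I would take is to reduce the claim to a degree count in the variable $x$, reading off orders in $u=1/x$ via the substitution $x=1/u$. All the input data I need is contained in Lemma~\ref{degreesPl}, which gives the top $x$-degrees of $x_{k-1}$ and $y_{k-1}$.

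First, I recall the definition
\[
\spl{k}(x,y,a)=\frac{q(x_{k-1})-a\,y_{k-1}}{x_{k-1}^{d}},
\]
and I split the analysis into the two summands of the numerator. By Lemma~\ref{degreesPl}, as a polynomial in $x$ the function $x_{k-1}$ has leading term $x^{d^{k-1}}$, so $x_{k-1}^{d}$ has leading term $x^{d^{k}}$, and $q(x_{k-1})$ has $x$-degree $\degq\cdot d^{k-1}$. Substituting $x=1/u$ and clearing powers of $u$ therefore shows that $q(x_{k-1})/x_{k-1}^{d}$ vanishes in $u$ to order exactly $d^{k}-\degq\cdot d^{k-1}=(d-\degq)\,d^{k-1}\ge d^{k-1}$, the last inequality because $\degq\le d-1$.

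Next I handle the $a\,y_{k-1}/x_{k-1}^{d}$ summand. For $k\ge 2$, Lemma~\ref{degreesPl} says $y_{k-1}$ has $x$-degree $d^{k-2}$, so $ay_{k-1}/x_{k-1}^{d}$ vanishes in $u$ to order at least $d^{k}-d^{k-2}\ge d^{k-1}$. For $k=1$, $y_{0}=y$ has $x$-degree $0$ and $x_{0}^{d}=x^{d}$, giving vanishing of order $d\ge 1=d^{k-1}$. In both cases the bound $d^{k-1}$ is achieved, and combining it with the bound on the first summand shows that $\spl{k}(x,y,a)$ vanishes in $u$ to order at least $d^{k-1}$.

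The only step that requires any care is checking that the leading terms in $x$ do not cancel when forming the difference $q(x_{k-1})-ay_{k-1}$: this is automatic because the two leading $x$-degrees $\degq\cdot d^{k-1}$ and $d^{k-2}$ are distinct (the first exceeds the second as soon as $\degq\ge 1$, and for $\degq=0$ the first term has $x$-degree $0$ while the second is independent of $k$), so no cancellation can raise the order in $u$. I expect no real obstacle here; the whole lemma is a direct bookkeeping consequence of Lemma~\ref{degreesPl}, entirely parallel to Lemma~\ref{vanishingInVmn} on the $-$ side.
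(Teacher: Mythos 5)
Your proof is correct and follows essentially the same route as the paper's: reduce to the $x$-degree count of $x_{k-1}$ and $y_{k-1}$ from Lemma~\ref{degreesPl}, substitute $x=1/u$, and clear the factor $u^{d^k}$ from numerator and denominator. The closing remark about non-cancellation of leading terms is superfluous for the stated ``at least $d^{k-1}$'' bound (cancellation could only increase the order of vanishing), but it does no harm.
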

\begin{proof}
Writing $\spl{k}=
\dfrac{q\bigl(x_{k-1}(\shortInlineFrac{1}{u},y,a)\bigr)-ay_{k-1}(\shortInlineFrac{1}{u},y,a)}{x_{k-1}^d(\shortInlineFrac{1}{u},y,a)}$
and multiplying the numerator and denominator by $u^{d^k}$ this
follows from Lemma~\ref{degreesPl}.
\end{proof}

\head{Showing $\ppl{a}(x,y,a)\sim x$ as $\vert x\vert\to\infty$.}

\begin{corollary}
$\ppl{a}(x,y,a)\sim x$ as $\vert x\vert \to\infty$.
\end{corollary}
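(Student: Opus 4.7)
The plan is to mirror verbatim the argument just given for $\pmn{a}$. Starting from the telescoping formula (\ref{telescopingPl}), we have
\[\frac{\ppl{a}(x,y)}{x} = \exp\Bigl(\frac{1}{d}\log(1+\spl{1}) + \frac{1}{d^2}\log(1+\spl{2}) + \dotsb\Bigr),\]
so it suffices to show that the series in the exponent tends to zero as $|x|\to\infty$, uniformly in $(y,a)$ on compacta of $V_+\times\disk_R$.

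The key tool is Lemma~\ref{vanishingInUpl}: in the coordinate $u=1/x$, each $\spl{k}$ vanishes to order at least $d^{k-1}\geq 1$. Since $\log(1+z)$ is holomorphic at $z=0$ with $\log 1=0$, the composition $\log(1+\spl{k})$ also vanishes at $u=0$. Combining this with the uniform bound $|\log(1+\spl{k})|<-\log(1-r)$ from the paragraph preceding Corollary~\ref{corPplBound}, one sees that the series (\ref{infiniteSumPl}) converges uniformly on compacta of $\hat{V}_+\times\disk_R$ to a holomorphic function (this is essentially part of the content of Corollary~\ref{corBoundednessPl}), and that function vanishes at $u=0$ because each summand does. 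Exponentiating, $\ppl{a}/x\to 1$ as $x\to\infty$, which is precisely the claimed asymptotic.

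There is no real obstacle here: all the work has already been done in Corollary~\ref{corBoundednessPl} and Lemma~\ref{vanishingInUpl}. The only minor care needed is to note that the extension of $\ppl{a}/x$ guaranteed by Corollary~\ref{corBoundednessPl} is holomorphic on $\hat{V}_+\times\disk_R$ (with the corner $(\infty,\infty)$ excluded), so that restricting to the line $\{u=0\}\setminus\{(\infty,\infty)\}$ and evaluating the exponential series term by term is legitimate.
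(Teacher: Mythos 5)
Your proof is correct and follows essentially the same route as the paper: use the telescoping formula, note that each $\spl{k}$ (and hence each $\log(1+\spl{k})$) vanishes in $u$ by Lemma~\ref{vanishingInUpl}, and invoke uniform convergence of the series to conclude the sum vanishes at $u=0$. The extra attention you give to where the extension lives and the legitimacy of term-by-term evaluation is sound but not a different argument.
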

\begin{proof}
We know that $\inlinefrac{\ppl{a}(x,y)}{x}=\exp\Bigl(\inlinefrac{1}{d}\log(1+\spl{1}) + \inlinefrac{1}{d^2}\log(1+\spl{2}) + \dotsb\Bigr).$
Since $\spl{k}$ vanishes in $u$ for all $k$ we see that this infinite sum
is a holomorphic function on $V_+\times\disk_R$
that vanishes in $u$. 
\end{proof}

\head{Behavior of $\ppl{a}$ when $a=0$.}

We now have an expression for $\ppl{a}$ as $x$ multiplied by
the exponent of a uniformly convergent sum of functions which are holomorphic
for $(x,y)\in V_+$ and $a\in \disk_R$. 

\head{Extension of powers of $\ppl{a}$ to $\upl{}$.}

We let
$\mf{V}_{k,+}=\{(x,y,a)\vert f_a\cp{k}(x,y)\in V_+, a\in \disk_R\}$.

\begin{lemma}
\name{pplForZeroA}
For each $k\geq 0$ the function $\ppl{a}^{d^k}$ extends
to a holomorphic function on $\mf{V}_{k,+}$ given by
by $\ppl{a}^{d^k}\equiv \ppl{a}\circ f_a^{k}$.
The function $\ppl{0}$ is defined and holomorphic 
on all of $\mb{C}^2\setminus \kpl{0}$ and is the 
B\"ottcher coordinate $b_p(x)$ of $p(x)$.
\end{lemma}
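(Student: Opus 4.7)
The plan is to handle three steps: establish the functional identity $\ppl{a}\circ f_a=\ppl{a}^{d}$ on $V_+$, use it to extend $\ppl{a}^{d^k}$ to $\mf{V}_{k,+}$, and then analyze the degenerate case $a=0$ to identify $\ppl{0}$ with the B\"ottcher coordinate $b_p$ and extend it globally past $\kpl{0}$.

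First I would read the functional equation off the defining limit (\ref{telescopingPl}). Since $x_n\circ f_a=x_{n+1}$, the uniform convergence already established yields
\[
\ppl{a}\circ f_a=\lim_{n\to\infty}x_{n+1}^{1/d^n}=\Bigl(\lim_{n\to\infty}x_{n+1}^{1/d^{n+1}}\Bigr)^{d}=\ppl{a}^{d}
\]
on $V_+$, with the branch of the $d$-th power pinned down by the estimate $|\ppl{a}/x|<B$ from Corollary~\ref{corPplBound}. Iterating gives $\ppl{a}\circ f_a^{k}=\ppl{a}^{d^k}$ on $V_+$. By definition of $\mf{V}_{k,+}$ the map $f_a^{k}$ carries $\mf{V}_{k,+}$ into $V_+\times\disk_R$, and $f_a^{k}$ is polynomial in $(x,y,a)$; hence setting $\ppl{a}^{d^k}\equiv \ppl{a}\circ f_a^{k}$ delivers the claimed holomorphic extension.

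For $a=0$ the recursion collapses to $x_n=p^n(x)$ and $y_n=p^{n-1}(x)$, both independent of $y$, so substituting into (\ref{telescopingPl}) produces exactly the classical telescoping series for the B\"ottcher coordinate of $p$. Thus $\ppl{0}(x,y)=b_p(x)$ on $V_+$. Since $p$ is hyperbolic with connected Julia set, $b_p$ is known to extend holomorphically to all of $\mb{C}\setminus K(p)$; using $f_0^{n}(x,y)=(p^n(x),p^{n-1}(x))$ one gets $\kpl{0}=K(p)\times\mb{C}$, and setting $\ppl{0}(x,y):=b_p(x)$ on $\mb{C}^2\setminus \kpl{0}$ is compatible on each slice $\mf{V}_{k,+}\cap\{a=0\}$ with the $d^k$-th root extensions from the previous step because $b_p\circ p^k=b_p^{d^k}$.

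The main (mild) obstacle I anticipate is the branch bookkeeping when extending $\ppl{a}^{d^k}$: one must check that the holomorphic function produced from $\ppl{a}\circ f_a^k$ really matches $\ppl{a}^{d^k}$ with a coherent choice of $d^k$-th root. This is resolved by the estimate in Corollary~\ref{corPplBound}, which fixes the branch on $V_+$ once and for all and propagates consistently to $\mf{V}_{k,+}$ by the functional equation; everything else is a routine consequence of the telescoping formula already in hand.
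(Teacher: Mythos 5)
Your proof is correct and fills in, with explicit detail, exactly the argument the paper compresses into "it is immediate... it is clear... the rest is obvious": the functional equation $\ppl{a}\circ f_a=\ppl{a}^d$ read off the telescoping product, the extension by pullback under $f_a^k$, and the observation that at $a=0$ the coordinate $x_n=p^n(x)$ collapses the telescope to the B\"ottcher series. One remark worth noting: your explicit reliance on $J(p)$ being connected to extend $b_p$ to all of $\mb{C}\setminus K(p)$ is a genuine hypothesis that the paper only makes explicit later (in the Standing Assumption of Section~\ref{sectionComponentsOfTheCriticalLocus}); you are right to flag it, since without connectivity the global extension of $\ppl{0}$ (as opposed to its powers $\ppl{0}^{d^k}$) would fail.
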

\begin{proof}
Unlike the case of $\pmn{a}$, there is no
difficulty in case $a=0$ here.  It is immediate that $\ppl{a}$ is holomorphic
as a function on $V_+\times \disk_R$. It is clear from the definition
that $\ppl{0}$ is the B\"ottcher coordinate
$b_p(x)$ of $p(x)$. The rest of the Lemma is obvious.
\end{proof}

\subsectionDone{The Functions $\gpl{a}$ and $\gmn{a}$.}

\head{Definition of $\upl{a}$, $\umn{a}$, $\bigUPl$ and $\bigUMn$.}

\begin{definition}
We follow the standard convention
that $\upl{a}\equiv \mb{C}^2\setminus \kpl{a}$
and $\umn{a}\equiv \mb{C}^2\setminus \kmn{a}$.
We define $\bigUPl \equiv\{(x,y,a)\vert (x,y)\in \upl{a}, a \in \disk_R\}$
and $\bigUMn\equiv \{(x,y,a)\vert (x,y)\in \umn{a}, a \in \disk_R\}$ with
the convention that $\umn{0}\equiv \mb{C}^2\setminus C(p)$.
\end{definition}
\name{defOfU}

\head{$\bigUPl$ and $\bigUMn$ are open.}

\begin{lemma}
\name{bigUPlAndbigUMnOpen}
The sets $\bigUPl$ and $\bigUMn$ are open in $\mb{C}^3$.
\end{lemma}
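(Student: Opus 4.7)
The plan is to express each of $\bigUPl$ and $\bigUMn$ as a countable union of sets cut out by the condition that an iterate of $f_a$ lies in $V_+$ or $V_-$, and then deduce openness from the continuity of those iterates in $(x,y,a)$. For $\bigUPl$ this works uniformly over $a\in\disk_R$; for $\bigUMn$ the analogous argument succeeds cleanly on $\{a\neq 0\}$, but the parameter $a=0$ has to be treated separately because the backward iterates $f_a^{-n}$ degenerate there. That separate treatment is the only real obstacle, and Lemma~\ref{imageOfVmnIsFilling} is tailor-made for it.

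For $\bigUPl$, I would write
$$\bigUPl \;=\; \bigcup_{n\geq 0}\bigl\{(x,y,a)\in\mb{C}^2\times\disk_R : f_a\cp{n}(x,y)\in V_+\bigr\}.$$
The inclusion $\supseteq$ holds because $V_+$ is forward invariant and points in $V_+$ satisfy $|x_1|\geq (R+1)|x|$, so any orbit that visits $V_+$ escapes to infinity and its basepoint lies in $\upl{a}$. For $\subseteq$, if the forward orbit of $(x,y)$ is unbounded then a standard argument (the complement of $V_+\cup V_-$ in $\mb{C}^2$ is bounded, and $V_-$ is a repeller for $f_a$ by Lemma~\ref{preimageOfVmnInsideVmn}) forces the orbit eventually into $V_+$. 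Each set in the union is open in $\mb{C}^3$ because $V_+$ is open and $(x,y,a)\mapsto f_a\cp{n}(x,y)$ is polynomial in all three variables.

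The case of $\bigUMn$ breaks into two steps. On the locus $\{a\neq 0\}$ the same strategy yields
$$\bigUMn\cap\{a\neq 0\} \;=\; \bigcup_{n\geq 0}\bigl\{(x,y,a)\in\mb{C}^2\times\disk_R^* : f_a^{-n}(x,y)\in V_-\bigr\},$$
and each member is open in $\mb{C}^2\times\disk_R^*$ since $f_a^{-n}$ is holomorphic in $(x,y,a)$ away from $\{a=0\}$. The hard part is openness at a point $(x_0,y_0,0)\in\bigUMn$, which by convention means $(x_0,y_0)\notin C(p)$. For this I would choose an open polydisk $U\ni (x_0,y_0)$ with $\overline{U}\Subset\mb{C}^2\setminus C(p)$ and apply Lemma~\ref{imageOfVmnIsFilling} to obtain $\varepsilon>0$ such that $\overline{U}\subset f_a(V_-)$ for all $|a|<\varepsilon$. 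Since $V_-\subset\umn{a}$ (backward iteration on $V_-$ doubles $|y|$ by Lemma~\ref{preimageOfVmnInsideVmn}, so orbits escape) and $\umn{a}$ is $f_a$-invariant for $a\neq 0$, this gives $\overline{U}\subset\umn{a}$ for $0<|a|<\varepsilon$; combined with the convention $\umn{0}=\mb{C}^2\setminus C(p)$, the set $U\times\{|a|<\varepsilon\}$ is an open neighborhood of $(x_0,y_0,0)$ inside $\bigUMn$, completing the proof.
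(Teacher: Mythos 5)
Your proof is correct and follows essentially the same route as the paper: express $\bigUPl$ (and $\bigUMn$ over $a\neq 0$) as a countable union of preimages of $V_\pm$ under continuous iterates, and handle the degenerate slice $a=0$ of $\bigUMn$ via Lemma~\ref{imageOfVmnIsFilling}. You simply spell out the details the paper leaves to the reader.
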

\begin{proof}
$\bigUPl = \bigcup_{n=0}^\infty f^{-n}(V_+)$
(even when $a=0$) and therefore $\bigUPl$ is open.
The result follows for $\bigUMn$ by a similar construction
for $a\not=0$ and by Lemma~\ref{imageOfVmnIsFilling} for $a=0$.
\end{proof}

Another fact we will want is
\begin{lemma}
\name{inflatingKplZero}
If $p$ is hyperbolic then
no point interior to $\kpl{0}$ lies in the closure 
of $\bigUPl$.
\end{lemma}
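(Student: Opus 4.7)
The plan is to find, for each $(x_0,y_0) \in \operatorname{int}(\kpl{0})$, a compact forward-trapping region $L \subset \mb{C}^2$ containing $(x_0,y_0)$ in its interior and satisfying $f_0(L) \subset \operatorname{int}(L)$. Once such an $L$ is in hand, continuity of the evaluation map $(x,y,a) \mapsto f_a(x,y)$ together with the compactness of $L$ yields an $\varepsilon > 0$ with $f_a(L) \subset \operatorname{int}(L)$ for every $a \in \disk_\varepsilon$. Forward-invariance then gives $L \subset \kpl{a}$ for all such $a$, so $\operatorname{int}(L) \times \disk_\varepsilon$ is a neighborhood of $(x_0,y_0,0)$ in $\mb{C}^3$ lying entirely in the complement of $\bigUPl$, which is exactly the desired conclusion.

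To build $L$, first note that $f_0\cp{n}\vt{x \\ y}=\vt{p\cp{n}(x) \\ p\cp{n-1}(x)}$, so $\kpl{0} = K_p \times \mb{C}$ where $K_p$ is the filled Julia set of $p$; hence $\operatorname{int}(\kpl{0}) = \operatorname{int}(K_p) \times \mb{C}$. Since $p$ is hyperbolic with connected Julia set, $\operatorname{int}(K_p)$ is the union of the basins of attraction of the finitely many attracting cycles of $p$, so we may pick an attracting cycle $\{z_1,\dots,z_n\}$ whose basin contains $x_0$. Choose small closed disks $D_i \ni z_i$ with $p(D_i) \Subset D_{i+1}$ (indices taken modulo $n$); then $A := D_1 \cup \cdots \cup D_n$ satisfies $p(A) \subset \operatorname{int}(A)$, and some iterate $p\cp{N}(x_0)$ lies in $\operatorname{int}(A)$. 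Using uniform continuity of $p$, select small closed disks $\overline{B}_0,\dots,\overline{B}_{N-1}$ centered at $x_0,p(x_0),\dots,p\cp{N-1}(x_0)$ with $p(\overline{B}_j) \subset \operatorname{int}(B_{j+1})$ for $j<N-1$ and $p(\overline{B}_{N-1}) \subset \operatorname{int}(A)$. Then $L_x := \overline{B}_0 \cup \cdots \cup \overline{B}_{N-1} \cup A$ is compact, contains $x_0$ in its interior, and satisfies $p(L_x) \subset \operatorname{int}(L_x)$. Finally, let $E \subset \mb{C}$ be a closed round disk containing both $y_0$ and the compact set $L_x$ in its interior; then $L := L_x \times E$ satisfies $f_0(L) = p(L_x) \times L_x \subset \operatorname{int}(L_x) \times \operatorname{int}(E) = \operatorname{int}(L)$, as required.

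The place where the hyperbolicity hypothesis is essential is in producing the trapping disks $D_i$ with $p(D_i) \Subset D_{i+1}$. Without hyperbolicity, a point of $\operatorname{int}(K_p)$ could lie in a Siegel disk or in a parabolic basin, and there would be no compact $L_x \subset \operatorname{int}(K_p)$ with $p(L_x)$ in its interior, so the strategy would collapse at this point. Given hyperbolicity, the remaining passage from an $f_0$-trapping region to an $f_a$-trapping region for small $a$ is a routine compactness-continuity argument using continuous dependence of $f_a$ on $a$.
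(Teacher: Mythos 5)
Your proof is correct and takes essentially the same route as the paper. The paper dismisses the lemma as a ``trivial consequence of the fact that any point interior to $\kpl{0}$ is attracted to an attracting cycle,'' and your argument simply unpacks that observation: hyperbolicity puts the $x$-coordinate in the basin of an attracting cycle, which yields a compact trapping region $L$ with $f_0(L)\subset\operatorname{int}(L)$, and a routine compactness/continuity argument perturbs the trapping property to all sufficiently small $a$, producing a full neighborhood of $(x_0,y_0,0)$ in $\mb{C}^2\times\disk_R$ disjoint from $\bigUPl$.
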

\begin{proof}
This is a trivial consequence of the fact that any point interior to $\kpl{0}$
is attracted to an attracting cycle.
\end{proof}

\head{We define $\gpl{a}$ and $\gmn{a}$.}

We recall\footnote{Again with the correction in $\gmn{a}$, similar to the one made
  for $\pmn{a}$. Notice that it makes the second Green function to be a {\em non-zero} constant
on $K_a^-$.}
the Green's functions
\[\gpl{a}(x,y)\equiv
\left\{
\begin{array}{ll}
\log\vert \ppl{a}(x,y)\vert & (x,y)\in \upl{a} \\
0 & (x,y)\in \kpl{a}
\end{array} \right.
\]
and
\[\gmn{a}(x,y)\equiv
\left\{
\begin{array}{ll}
\log\vert \pmn{a}(x,y)\vert & (x,y)\in \umn{a} \\
\inlinefrac{1}{d-1}\log\vert a\vert & (x,y)\in \kmn{a}.
\end{array}\right.
\]
\name{defOfG}
We take the value of $\gmn{0}(x,y)$ to be $-\infty$ if $(x,y)\in C(p)$, i.e. if $x=p(y)$.
These satisfy 
\[ \gpl{a}\circ f_a = d\cdot \gpl{a} \]
and 
\[ \gmn{a} \circ f_a^{-1}= d\cdot \gmn{a} - \log\vert a\vert\quad\text{for $a\not=0$}.\]
This second relation is sometimes more conveniently written
\[\bigl(\gmn{a}-\inlinefrac{1}{d-1}\log\vert a\vert\bigr)\circ f_a^{-1} =
d\cdot\bigl (\gmn{a} - \inlinefrac{1}{d-1}\log\vert a\vert\bigr).\]

%The following lemma is obvious.
%
%\begin{lemma}
%\name{boundingGreensOnBox}
%For $(x,y)\in V_0$, $a\in \disk_R$ the function $\gpl{a}(x,y)$ is bounded.
%Given any $R_0 < R$ then for $(x,y)\in V_0$, $a\in \disk_R \setminus \disk_{R_0}$
%the function $\gmn{a}(x,y)$ is bounded.
%\end{lemma}
%

\begin{convention}
We will sometimes write $\gpl{}(x,y,a)$ for $\gpl{a}(x,y)$ and 
$\gmn{}(x,y,a)$ for $\gmn{a}(x,y)$. This will be convenient, for instance,
when postcomposing $\gpl{}$ with function whose output lies
in $\mb{C}^2\times \disk_R$.
\end{convention}

Hubbard \& Oberste-Vorth proved that the Green's functions are continuous when $f_a$
is nondegenerate and the same argument gives continuity in $x$, $y$ and $a$ for $\gpl{a}$
when $a=0$. We extend this to $\gmn{a}$ when $a=0$.

\begin{theorem}
\name{thm-greensContinuous}
The functions $\gpl{a}(x,y)$ and $\gmn{a}(x,y)$ are continuous
in $x$, $y$ and $a$ for $a\in \disk_R$.
\end{theorem}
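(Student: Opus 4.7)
For $\gpl{a}$: on $\bigUPl$ (open by Lemma~\ref{bigUPlAndbigUMnOpen}) one has $\gpl{a}(x,y) = \log|\ppl{a}(x,y)|$, and Corollary~\ref{corPplBound} combined with the functional equation $\ppl{a} \circ f_a = \ppl{a}^d$ and the joint continuity of $f_a^n$ in $(x,y,a)$ (valid even at $a = 0$, where $f_0(x,y) = (p(x), x)$) gives joint continuity of $\gpl{}$ there. At a point $(x_0, y_0, a_0)$ with $(x_0, y_0) \in \kpl{a_0}$, I would use the iterated equation $\gpl{a}(x, y) = d^{-n}\log|\ppl{a}(f_a^n(x,y))|$: the forward orbit of $(x_0, y_0)$ under $f_{a_0}$ is bounded, so continuity of $f_a^n$ traps nearby orbits in a common bounded region for $n$ iterations, on which $|\ppl{a}|$ is bounded by some $M$, yielding $0 \leq \gpl{a}(x,y) \leq M/d^n$, arbitrarily small for large $n$.

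For $\gmn{a}$ with $a \neq 0$ the Hubbard--Oberste-Vorth argument applies. At $a = 0$ with $(x_0, y_0) \notin C(p)$, Lemma~\ref{imageOfVmnIsFilling} places a compact neighborhood of $(x_0, y_0)$ inside $f_a(V_-) \subset \mf{V}_{1,-}^\circ$ for small $a \neq 0$, while Lemma~\ref{pmnForZeroA} extends $\pmn{a}^d$ holomorphically across $a = 0$ with $\pmn{0}^d(x,y) = p(y) - x$. The one-step functional equation gives $\gmn{a}(x,y) = \frac{1}{d}\log|\pmn{a}^d(x,y)|$ for $a \neq 0$ throughout this neighborhood, while at $a = 0$ the same formula specializes to $\frac{1}{d}\log|p(y) - x| = \gmn{0}(x,y)$. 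So $\gmn{}$ coincides with the continuous function $\frac{1}{d}\log|\pmn{a}^d|$ on the neighborhood.

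The hard part is continuity of $\gmn{}$ at $(x_0, y_0, 0)$ with $(x_0, y_0) \in C(p)$, where $\gmn{0} = -\infty$. Lower semi-continuity is automatic, so it suffices to show $\gmn{a_n}(x_n, y_n) \to -\infty$ for every sequence $(x_n, y_n, a_n) \to (x_0, y_0, 0)$. Sequences with $a_n = 0$ give $\gmn{0}(x_n, y_n) = \frac{1}{d}\log|p(y_n) - x_n| \to -\infty$; sequences with $(x_n, y_n) \in \kmn{a_n}$ and $a_n \neq 0$ give $\gmn{a_n} = \frac{1}{d-1}\log|a_n| \to -\infty$. The delicate case is $(x_n, y_n) \in \umn{a_n}$ with $a_n \neq 0$. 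Here, picking $k = k_n$ minimal with $f_{a_n}^{-k}(x_n, y_n) \in V_-$, equation~(\ref{extensionMn}) and Lemma~\ref{smnBound} yield $\gmn{a_n}(x_n, y_n) = \frac{1}{d^k}\log|P_k(x_n, y_n, a_n)| + O(1/d^k)$ uniformly, where $P_k(x, y, a) := a^{\expDSum{k}} y_{-k}(x, y, a)$ is a polynomial satisfying $P_k(x_0, y_0, 0) = (p(y_0) - x_0)^{d^{k-1}} = 0$ by Lemma~\ref{degreesMn}.

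The main obstacle is that $k_n$ could grow with $n$. To close the argument I would either show that $k$ can be taken uniformly bounded on a small compact neighborhood of $(x_0, y_0, 0)$ (by directly analyzing the blow-up of $f_a^{-k}(x,y)$ as $a \to 0$, which forces preimages into $V_-$ after only a few steps once $a$ is small enough), giving a continuous upper bound that tends to $-\infty$; or alternatively invoke joint plurisubharmonicity of $\gmn{}$ on $\mb{C}^2 \times \disk_R$ --- which it inherits from being the pointwise limit of jointly PSH functions built from the polynomials $P_k$ --- and deduce the required upper semi-continuity as a general property of PSH functions.
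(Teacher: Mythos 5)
Your reduction is aimed correctly --- the only delicate point is $\gmn{}$ near $(x_0,y_0,0)$ with $(x_0,y_0)\in C(p)$, and you correctly note that the issue is upper semi-continuity, i.e.\ showing $\gmn{a_n}(x_n,y_n)\to-\infty$. But the argument is not closed, and the gap is exactly where you say it is. Your first proposed fix --- that the escape time $k$ to $V_-$ can be taken uniformly bounded on a small neighborhood of $(x_0,y_0,0)$ --- is false: as $(x_n,y_n,a_n)\to(x_0,y_0,0)$, points with $(x_n,y_n)$ very close to $\jmn{a_n}$ (which for small $a_n$ hugs $C(p)$) take arbitrarily many backward iterates to reach $V_-$, so $k_n$ is genuinely unbounded. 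Your second proposed fix --- joint plurisubharmonicity of $\gmn{}$ in $(x,y,a)$ --- would indeed give upper semi-continuity, but it is asserted without justification: $\gmn{}$ is a pointwise limit of PSH functions $\tfrac{1}{d^n}\log|a^{\expDSum{n}}y_{-n}|$, and pointwise limits of PSH functions are not automatically PSH (one needs the limit to be decreasing, or some uniformity, plus an identification of the USC regularization with the actual limit, which is the crux of the matter and essentially equivalent to what is being proved).

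The paper's proof avoids the ``minimal $k$'' formulation entirely. It sets $z_{-n}=a^{\expDSum{n}}y_{-n}$, derives a recursion for $z_{-n}$, and proves by induction the bound $|z_{-n}|<(2C)^{\expDSum{n}}\epsilon_v^{d^n}$ for \emph{all} $n\geq 1$ on a neighborhood $\tilde{\mc{U}}^*(\epsilon_v)$ of $(x_0,y_0,0)$, without reference to whether $f_a^{-n}(x,y)\in V_-$. Separately, the inequality $\gmn{a}(x,y)<\tfrac{1}{d^n}\log B+\tfrac{1}{d^n}\log|z_{-n}|$ holds for any $n$ with $f_a^{-n}(x,y)\in V_-$, and for $(x,y)\notin\jmn{a}$ this holds for all large $n$, so one may pass to the limit $n\to\infty$ and obtain $\gmn{a}(x,y)\leq\log\bigl((2C)^{1/(d-1)}\epsilon_v\bigr)$, independent of how large $n$ had to be. Points in $\jmn{a}$ and in the slice $a=0$ are handled by direct inspection. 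The key technical idea you were missing is that the estimate on $z_{-n}$ is carried inductively \emph{for all} $n$, so one never has to control the escape time.
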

\begin{proof}
This follows by the same argument
as is used in {\henI} % using Lemma~\ref{boundingGreensOnBox}
except in the case of $\gmn{}$ when $a=0$.
For $(x',y')\not\in C(p)$ the continuity of
$\gmn{a}$ at $(x',y')$ and $a=0$ follows from Lemma~\ref{pmnForZeroA}.
For $(x',y')\in C(p)$ more work is required. 
If we restrict $\gmn{a}$ to
the slice $a=0$ then we already have shown continuity, so we will assume for
most of the rest of this proof that $a\not=0$ (so $f_a^{-1}$ is defined).

If $f^{-n}(x,y)\in V_-$ then
$B^{-1} < \big\arrowvert \inlinefrac{\pmn{a}(x_{-n},y_{-n})}{y_{-n}}\big\arrowvert < B$
by Corollary~\ref{corPmnBound}.
and so
$B^{-1} < \big\arrowvert \inlinefrac{ \pmn{a}^{d^n}(x,y) }{ a^{\expDSum{n}} y_{-n} }\big\arrowvert < B$.
Applying $\log$ to the right hand inequality yields
$\gmn{a}(x,y)  < \inlinefrac{1}{d^n}\log B +
\inlinefrac{1}{d^n}\log \big\arrowvert a^{\expDSum{n}} y_{-n} \big\arrowvert$.
Now $(x_{-n},y_{-n})=f_a^{-n}(x,y)=f_a^{-1}(x_{-(n-1)},y_{-(n-1)})=
(y_{-(n-1)},\inlinefrac{1}{a}\bigl( p(y_{-(n-1)}) - x_{-(n-1)}\bigr))$. Therefore
$y_{-n}=\inlinefrac{1}{a}\bigl( p(y_{-(n-1)}) - y_{-(n-2)}\bigr)$
for $n\geq 2$.

We let $z_{-n}\equiv a^{\expDSum{n}} y_{-n}$ from which it follows that
\[z_{-n} = p(z_{-(n-1)},a^{\expDSum{n-1}}) - a^{ \expDSum{n} - \expDSum{n-2} -1 }z_{n-2}\quad\text{for $n\geq 2$.}\]
where $z_0= y$ and $z_{-1} = p(y) - x$. Writing our bound for $\gmn{a}(x,y)$ in terms of $z_{-n}$
we get:
\begin{equation}
\name{controllingGmn}
\gmn{a}(x,y)  < \inlinefrac{1}{d^n}\log B +
\inlinefrac{1}{d^n}\log \arrowvert z_{-n} \arrowvert
\end{equation}
if $f_a^{-n}(x,y)\in V_-$.

\head{We estimate the recursion formula.}

We use the convention that $p(x,y)\equiv y^d p(x/y)$.\name{defOfHomogeneousP}
Now fix a constant $C> \frac{1}{2}$ to be greater than $(d+1)$ times the absolute value of the
largest coefficient of $p(x,y)$.
It is then evident that
$\vert p(x,y)\vert \leq C\max\{\vert x\vert^d,\vert y\vert^d\}$.
Thus from the recursion relation for $z_{-n}$ we have
$$\vert z_{-n}\vert \leq \max
\{2\cdot \big\arrowvert p(z_{-(n-1)},a^{\expDSum{n-1}})\big\arrowvert,
2\cdot \big\arrowvert   a^{ \expDSum{n} - \expDSum{n-2} -1 }z_{-(n-2)}\big\arrowvert \}
$$
$$
\leq \max\{ 2C\vert z_{-(n-1)}\vert^d ,
2C\vert a\vert^{d\cdot\expDSum{n-1}},
2 \vert a\vert^{d^{n-1} + d^{n-2} - 1}\vert  z_{-(n-2)}\vert\}.
$$

\head{We  choose a small value $\epsilon_v$, and construct a neighborhood of $(x',y'), a=0$.}

Let $\epsilon_v$ be an arbitrary positive number satisfying
$\epsilon_v < \inlinefrac{1}{2C}$.
Given a point $(x',y')\in C(p)$ consider the neighborhood 
\[\tilde{\mc{U}}(\epsilon_v)\equiv \Bigl\{(x,y,a)\Big\arrowvert\,\vert p(y)-x\vert < \epsilon_v^d,\ 
\vert a\vert < \min\{2C\epsilon_v^d,\inlinefrac{1}{2}\}\ \text{and}\ 
\vert a\vert^{d-1} \vert y\vert <  C\epsilon_v^d\Bigr\}\]
of $(x',y',0)\in\mb{C}^2\times\disk_R$.
Let  $\tilde{\mc{U}}^*(\epsilon_v)\equiv \tilde{\mc{U}}(\epsilon_v)\cap (\mb{C}^2\times\disk_R)$ 
It can be shown by induction that 
\begin{equation}
\name{induction}
\vert z_{-n} \vert < (2C)^{\expDSum{n}} \epsilon_v^{d^n}\quad\text{for $(x,y,a)\in\tilde{\mc{U}}^*(\epsilon_v)$ and $n\geq 1$.}
\end{equation}
%Before doing so we note that since
%$\epsilon_v < \inlinefrac{1}{2C}$ and $C > \inlinefrac{1}{2}$ then
%it is easy to check that $(2C)^{\expDSum{n}}\epsilon_v^{d^n} < 1$
%for all $n\geq 1$.

Combining equation (\ref{controllingGmn}) and equation (\ref{induction}) gives
$\gmn{a}(x,y) < \inlinefrac{1}{d^n}\log B
+ \inlinefrac{1}{d^n}\log\Big\arrowvert (2C)^{\expDSum{n}}\epsilon_v^{d^n}\Big\arrowvert$
if $(x,y,a)\in\tilde{\mc{U}}^*(\epsilon_v)$ and 
$f_a^{-n}(x,y)\in V_-$. If $(x,y)\not\in \jmn{a}$ then $f_a^{-n}(x,y)\in V_-$
for all sufficiently large $n$ so we can take the limit as $n\to \infty$ and conclude
that $\gmn{a}(x,y) < \log\bigl (\vert 2C\vert^\frac{1}{d-1}\epsilon_v\bigr)$.
On the other hand, if $(x,y)\in \jmn{a}$ then $\gmn{a}(x,y)=\inlinefrac{1}{d-1}\log\vert a\vert$
by definition.

\head{Collect our bounds on different sets.}

We conclude that given any $M > 0$ and any $(x',y')\in C(p)$ then
there is some $\epsilon_v$ such that
$\gmn{a}(x,y) < -M$ when $(x,y,a) \in \tilde{\mc{U}}^*(\epsilon_v)\setminus \jmn{a}$, and
some $\epsilon_a$ such that if $\vert a\vert < \epsilon_a$ and
$(x,y)\in \jmn{a}$ then $\gmn{a}(x,y) < -M$. Also there is an 
open subset of $a=0$ about
$(x',y')$ on which the values of $\gmn{0}$ are always smaller
than $-M$. Combining these sets gives
a neighborhood of $(x',y',0)$
on which $\gmn{a}(x,y) < -M$.
Hence $\gmn{a}$ is continuous in $x$, $y$ and $a$.
\end{proof}

\newpage
\part{The Critical Locus.}

\section{The Critical Locus near Infinity.}

\name{sectionTheCriticalLocusNearInfinity}

\subsection{The Foliations and the Critical Locus.}

\head{We introduce $\Fpl{a}$ and $\Fmn{a}$.}

The fibers of $\ppl{a}$ and $\pmn{a}$
form holomorphic foliations of $V_+$ and
$V_-$ respectively. These foliations naturally
extend to much larger sets using dynamics.

\head{We introduce $\Fpl{a}$ and $\Fmn{a}$ on $\bigUPl$ and $\bigUMn$
respectively.}

\begin{lemma}
The holomorphic foliations defined by $\ppl{a}$ on $V_+$ and 
$\pmn{a}$ on $V_-$ can be extended to
all of $\bigUPl$ and $\bigUMn$ respectively.
The resulting foliations, which we denote
$\Fpl{a}$ and $\Fmn{a}$ respectively, are respected by the dynamics.
\end{lemma}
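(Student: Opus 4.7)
The plan is to extend each foliation via dynamics, using the functional equations $\ppl{a}\circ f_a = \ppl{a}^d$ and $\pmn{a}\circ f_a^{-1} = \pmn{a}^d/a$ (valid for $a\neq 0$) together with Lemma~\ref{pmnForZeroA} to handle the degenerate case of $\Fmn{a}$.

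For $\Fpl{a}$: by the proof of Lemma~\ref{bigUPlAndbigUMnOpen}, every $(x,y,a)\in\bigUPl$ admits some $n\geq 0$ with $f_a^n(x,y)\in V_+$. Define the leaf through $(x,y,a)$ to be $f_a^{-n}(\mc{L})$, where $\mc{L}$ is the level set of $\ppl{a}$ in $V_+$ passing through $f_a^n(x,y)$. The functional equation $\ppl{a}\circ f_a = \ppl{a}^d$ means that $f_a$ sends a level set $\{\ppl{a}=c\}$ into $\{\ppl{a}=c^d\}$, so the definition is independent of the choice of $n$: if both $f_a^n(x,y)$ and $f_a^m(x,y)$ lie in $V_+$ with $n<m$, then the level set through $f_a^m(x,y)$ pulls back under $f_a^{m-n}$ to the level set through $f_a^n(x,y)$ (one uses here that $f_a$ is a biholomorphism for $a\neq 0$, and that for $a=0$ one has $\ppl{0}=b_p\circ\pr_x$ by Lemma~\ref{pplForZeroA}, so the level sets are globally defined cylinders). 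Regularity is automatic because $\ppl{a}$ is a submersion on $V_+$ (asymptotic to $x$ by Corollary~\ref{corPplBound}) and pullback by a biholomorphism preserves this; the collection of local level sets thus assembles into a holomorphic foliation. Invariance under $f_a$ is built into the construction.

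For $\Fmn{a}$ with $a\neq 0$: the identical argument works using $\bigUMn = \bigcup_{n\geq 0} f_a^n(V_-)$ and $f_a^{-1}$ in place of $f_a$, with the relation $\pmn{a}\circ f_a^{-1} = \pmn{a}^d/a$ guaranteeing coherent patching. For $a=0$, dynamics is not available because $f_0$ is not invertible; but by Lemma~\ref{pmnForZeroA}, $\pmn{0}^d = p(y)-x$ extends to a holomorphic function on all of $\mb{C}^2\setminus C(p) = \bigUMn\cap\{a=0\}$, so the level sets $\{p(y)-x=c\}$, $c\in\mb{C}^*$, furnish the foliation $\Fmn{0}$ directly. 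These are smooth graphs over the $y$-axis and hence genuinely foliate $\mb{C}^2\setminus C(p)$.

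The main obstacle is verifying well-definedness at the interface where several choices of $n$ are possible, and, for $\Fmn{}$, producing the foliation at $a=0$ in a manner compatible with the leafwise structure at nearby nonzero parameters. Both issues are dispatched by the functional equations together with Lemma~\ref{pmnForZeroA}: the latter shows that as $a\to 0$ the leaves $\{\pmn{a}=c\}$ limit precisely to the level sets of $p(y)-x$, so the two constructions glue into one holomorphic object on $\bigUMn$.
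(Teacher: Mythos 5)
Your proposal is correct and follows essentially the same route the paper takes: the paper's proof is a one-line citation of Lemma~\ref{pmnForZeroA} and Lemma~\ref{pplForZeroA} (which give the holomorphic extensions of $\ppl{a}^{d^k}$ and $\pmn{a}^{d^k}$ to $\mf{V}_{k,+}$ and $\mf{V}_{k,-}$, including at $a=0$), and you have simply unwound that citation by propagating the level sets via the functional equations and handling the $a=0$ slice of $\bigUMn$ with $\pmn{0}^d=p(y)-x$. One small caveat: the parenthetical claim that $\ppl{a}$ is a submersion ``because it is asymptotic to $x$ by Corollary~\ref{corPplBound}'' is not literally a consequence of that bound; non-vanishing of $\ed\ppl{a}$ on $V_+$ is taken as part of the hypothesis (the lemma assumes $\ppl{a}$ already defines a foliation on $V_+$), and what must actually be checked is that the pulled-back level sets of $\ppl{a}^{d^k}=\ppl{a}\circ f_a\cp{k}$ remain regular, which follows from $Df_a$ being invertible for $a\neq 0$ and from $b_p'\neq 0$ for $a=0$.
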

\begin{proof}
This is a consequence of Lemma~\ref{pmnForZeroA} and Lemma~\ref{pplForZeroA}.
\end{proof}

\begin{notation_nonumber}
We will use $\lfpl{a}(z)$ to mean the entire leaf of $\Fpl{a}$ passing through
$z$, and similarly for $\lfmn{a}(z)$.
Given a set $B\subset\mb{C}^2$ and a point $z\in B$,
we will use $\lfpl{a}(z,B)$ (resp. $\lfmn{a}(z,B)$)
to denote the connected component of $\lfpl{a}(z)\cap B$
(resp. $\lfmn{a}(z)$\ ) containing $z$. 
\end{notation_nonumber}

\begin{definition}
Given any holomorphic foliation $\mc{F}$ on a two dimensional 
complex manifold $M$ and a point $z\in M$ we will say that
a holomorphic function $g$ defined in a neighborhood $N$ of $z$
is a {\it local defining function} for $\mc{F}$ if for each leaf
$\leaf$ of $\mc{F}$, $g$ is constant on each connnect component of 
$\leaf\cap N$ and if $\ed g$ is never zero.
\end{definition}

Given a pair of holomorphic foliations $\mc{F}_1$ and
$\mc{F}_2$ on a two-dimensional complex manifold,
the critical locus $\mc{C}$ of $\mc{F}_1$ and $\mc{F}_2$ is the
complex variety given locally as the zero set (counting multiplicity)
of the holomorphic function $h$ satisfying 
$h \ed x\wedge \ed y=\ed g_1 \wedge \ed g_2$ for a pair of 
local definining functions $g_1$ and $g_2$ of $\mc{F}_1$
and $\mc{F}_2$ respectively. Equivalently
$h=\der{g_1}{x}\der{g_2}{y}-\der{g_2}{x}\der{g_1}{y}$ and hence
$\mc{C}$ is equal to the critical set (counting multiplicity of components)
of the map $(x,y)\mapsto \bigl(g_1(x,y),g_2(x,y)\bigr)$.

It is straightforward to check that altering the choice of local defining
functions for the foliations only results in multiplying $h$ by
a nonvanishing holomorphic function. It is clear that the critical locus
of a pair of foliations is exactly the set of points on which
the foliations are tangent. However, some care must be taken, as it is possible
that some components of the critical locus have multipliciy higher than one.
An example is given by the foliations with local defining functions $x$ and $x+x^2y$.
The critical locus is defined by $x^2$, which is the $y$ axis, but with multiplicity two.
Under the circumstances we are interested in we will
be able to verify that every component of the critical locus has multiplicity one.
Until that is done we will need to take into account multiplicity of components
when dealing with the critical locus.

%(given
%two different local defining functions about a point $z$, one chooses a sufficiently
%small transveral curve $L$ through $z$ such that both defining functions
%are biholomorphisms on $L$. This allows one to conclude that on some 
%open neighborhood of $z$ the two defining functions differ only by
%postcomposition by a biholomorphism.)

\begin{definition}
Let $\critLoc_a$ be the critical locus
of the foliations $\Fpl{a}$ and $\Fmn{a}$. 
\name{defOfCritLoc} It is easy to confirm that
$\critLoc_a$ is a closed analytic subvariety of $\upl{a}\cap \umn{a}$ invariant under $f_a$.
\end{definition}

Observe that since we can
define $\ppl{a}$ on all of $\upl{a}$ up to local choice of a root then
we can define $\log\ppl{a}$ on all of $\upl{a}$ up to local addition
of a constant, and therefore $\ed \log\ppl{a}$ is a global holomorphic
form on $\upl{a}$. Similarly for $\ed \log\pmn{a}$ on $\umn{a}$.
If we fix $a$ and consider $\ed \log\ppl{a}\wedge\ed \log\pmn{a}=w_a(x,y) \ed x \wedge \ed y$
we get the same result as if we compute $\ed$ with $a$ nonconstant
obtaining
$$
\ed \log\pplPlain\wedge\ed \log\pmnPlain\wedge \ed a=\tilde{w}(x,y,a)\ed x\wedge \ed y\wedge \ed a
$$
in the sense that $\tilde{w}(x,y,a)=w_a(x,y)$.
Since $\ppl{a}$ and $\pmn{a}$ do not vanish in $\mb{C}^2$ then
$\ed \log\ppl{a}\wedge\ed \log\pmn{a}\wedge \ed a$ and 
any local branch\footnote{We use $\pplPlain^{-1}$ and $\pmnPlain^{-d}$ because
these are single valued and extend holomorphically about 
$(\infty,c)\in \mb{P}^1\times \mb{P}^1$. 
See Section~\ref{subsection-criticalLocusNearInfinity}.}
 of
$\ed \pplPlain^{-1}\wedge \ed \pmnPlain^{-d}\wedge \ed a$
are just multiples
of each other by a nonvanishing holomorphic function.
Thus on the common domain of definition $\tilde{w}$ and $w_a$ are
multiples of each other by a nonvanishing holomorphic function.

\begin{definition}
The variety defined by $\tilde{w}$ in $\bigUPl\cap \bigUMn$ will also be called the critical
locus and will be denoted by $\critLoc$. 
For each $a$ it is exactly the locus along which
the foliations defined by $\ppl{a}$ and $\pmn{a}$ are tangent, and is 
equal to $\critLoc_a$.
\end{definition}

\begin{lemma}
\name{intersectionLemma}
Assume $X$ is a one dimensional complex variety in $\disk^2$
and $Y$ is a smooth holomorphic curve in $\disk^2$. Assume $Y$ is not 
contained in $X$ and that $z$ is an intersection point of $X$ and $Y$.
Let $r\colon\disk\to Y$ be a local parameterization of $Y$ about $z$
with $r(0)=z$ and let $h$ be a local defining function for $X$
about $z$. Then the order of contact, or intersection multiplicity,
of $X$ and $Y$ at $z$ (counting multiplicity
of the components of $Y$ at $z$) is the order of the zero of $h\circ r$ at $0\in \disk$.
\end{lemma}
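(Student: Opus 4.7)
The plan is to reduce the assertion to a one-variable computation by choosing coordinates adapted to the smoothness of $Y$, then invoke a standard intrinsic description of intersection multiplicity to match the two quantities. The lemma's parenthetical about ``counting multiplicity of the components of $Y$ at $z$'' is automatic in our setting, since $Y$ is smooth at $z$ and so has a single branch of multiplicity one.

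First I would record the intrinsic definition of intersection multiplicity that I want to compare against: for a defining function $h$ of $X$ and a local defining function $g_Y$ of $Y$, both holomorphic near $z$,
\[ i_z(X,Y) \;=\; \dim_{\mathbb{C}} \mathcal{O}_{\disk^2,z}\big/(h,g_Y). \]
The hypothesis $Y\not\subset X$ ensures $h$ does not vanish identically on $Y$, so this ideal is primary to the maximal ideal at $z$ and the dimension is finite.

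Next I would exploit smoothness: by the implicit function theorem there are local holomorphic coordinates $(u,v)$ centered at $z$ in which $g_Y=v$ and $Y=\{v=0\}$. In those coordinates the parameterization $r$ takes the form $r(t)=(\phi(t),0)$ for a local biholomorphism $\phi$ of a disk about $0$ fixing $0$, so
\[ h\circ r(t) \;=\; h(\phi(t),0). \]
Since $\phi$ is a biholomorphism, the order of vanishing of $h\circ r$ at $t=0$ equals the order of the zero of $u\mapsto h(u,0)$ at $u=0$. Call this common integer $m$; by $Y\not\subset X$ we have $h(u,0)\not\equiv 0$ and so $m<\infty$.

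Finally I would identify $m$ with $i_z(X,Y)$ by a direct local-algebra computation: modding $\mathcal{O}_{\disk^2,z}\cong\mathbb{C}\{u,v\}$ first by $v$ gives $\mathbb{C}\{u\}$, so
\[ \mathcal{O}_{\disk^2,z}\big/(h,v) \;\cong\; \mathbb{C}\{u\}\big/\bigl(h(u,0)\bigr), \]
whose $\mathbb{C}$-dimension is exactly $m$. The only genuine point to verify, rather than an obstacle, is the independence of the left-hand side from the choice of defining functions and parameterization; but for the smooth curve $Y$ the local ring $\mathcal{O}_{Y,z}$ is a discrete valuation ring whose uniformizer is any chart coordinate, and two parameterizations of $Y$ near $z$ differ by a biholomorphism of a disk fixing $0$, which preserves orders of vanishing. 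This collapses the whole statement to the chain of equalities above.
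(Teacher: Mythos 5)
Your proof is correct and takes essentially the same route as the paper's: straighten $Y$ to a coordinate axis by a local biholomorphism, express the intersection multiplicity as $\dim_{\mb{C}}\mc{O}_{\disk^2,z}/(h,g_Y)$, and collapse the quotient to $\mb{C}\{u\}/(h(u,0))$ to read off the vanishing order. The only difference is cosmetic (you keep the parameterization $\phi$ explicit and add a remark on the DVR structure of $\mc{O}_{Y,z}$, while the paper silently absorbs the reparameterization into ``easily seen'').
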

\begin{proof}
Since everything is local then without loss of generality we can 
assume that $Y$ is the $y$ axis and $z$ is the origin. Then the 
intersection multiplicity is 
$$  (X,Y)=\dim_\mb{C} \mc{O}_{\mb{C}^2,(0,0)}/\bigl(x,h(x,y)\bigr)=
\dim_\mb{C}\mc{O}_{\mb{C},0}/\bigl(h(x,y)\bigr)=
$$
$$
\dim_\mb{C}\mc{O}_{\mb{C},0}/\bigl(h(0,y)\bigr)=
\dim_\mb{C}\mb{C}[y]/(y^k)=k
$$ where 
$h(0,y)=a_ky^k + a_{k+1}y^{k+1}+\dotsb$, $a_k\not=0$.
This is easily seen to be the order of vanishing of $h\circ r$ at zero.
\end{proof}

\begin{lemma}
\name{smoothnessCritereon}
If the critical locus of a pair of holomorphic foliations in $\disk^2$
has a singularity at a point $z$ then the leaves of the foliations
must have order of contact greater than two at $z$.
\end{lemma}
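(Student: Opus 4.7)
The plan is to choose local coordinates that straighten $\mc{F}_2$, reduce the defining function for $\critLoc$ to a single partial derivative, and then invoke Lemma~\ref{intersectionLemma} to turn the vanishing of this function and of its differential into a statement about order of contact.

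First I would choose holomorphic coordinates $(x,y)$ centered at $z$ so that the leaf of $\mc{F}_2$ through $z$ is $\{y=0\}$, and take $g_2(x,y)=y$ as the local defining function for $\mc{F}_2$.  For any local defining function $g_1$ of $\mc{F}_1$, the Jacobian formula for $h$ then collapses to $h=\der{g_1}{x}\der{g_2}{y}-\der{g_2}{x}\der{g_1}{y}=\der{g_1}{x}$.  The hypothesis $z\in\critLoc$ forces $h(z)=\partial_x g_1(z)=0$; after subtracting a constant I may also assume $g_1(z)=0$, so that the leaf $\mathcal{L}_1$ of $\mc{F}_1$ through $z$ is locally cut out by $g_1$.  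In view of the paper's preceding discussion of multiplicities, I would read ``$\critLoc$ has a singularity at $z$'' as ``$h$ vanishes to order $\geq 2$ at $z$'', which yields the extra relation $\partial_{xx}g_1(z)=\partial_x h(z)=0$.

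To finish, parametrize $\mathcal{L}_2$ by $r(t)=(t,0)$ and apply Lemma~\ref{intersectionLemma} with $X=\mathcal{L}_1$ (local defining function $g_1$) and $Y=\mathcal{L}_2$.  The order of contact equals the vanishing order of $g_1\circ r$ at $t=0$, whose Taylor expansion
$$g_1(t,0)=g_1(0,0)+t\,\partial_x g_1(0,0)+\tfrac{t^2}{2}\,\partial_{xx}g_1(0,0)+O(t^3)$$
has each of its first three coefficients equal to zero, so the order is at least three, as required.  There is no substantive obstacle here; the argument is forced once $\mc{F}_2$ has been straightened.  The only genuine judgment call is the interpretation of ``singular'' for a locus carrying multiplicity, but the natural reading — namely, the vanishing of $dh$ at $z$ — is exactly what the Taylor argument uses and is also a necessary condition for the underlying analytic set to fail to be smooth, so the lemma is true under either reading.
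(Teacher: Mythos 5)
Your argument is correct and is essentially the paper's own proof up to a relabeling: the paper straightens $\mc{F}_1$ to the vertical foliation and analyzes $\partial h/\partial y$ for a defining function $h$ of $\mc{F}_2$, while you straighten $\mc{F}_2$ to the horizontal foliation and analyze $\partial g_1/\partial x$; both then identify ``singular at $z$'' with the vanishing of the gradient of the critical-locus defining function and finish with Lemma~\ref{intersectionLemma} exactly as you do. The Taylor expansion you write out is just an explicit unwinding of that lemma, and your remark on interpreting ``singular'' matches the paper's implicit use of the condition $\partial^2 h/\partial y^2\rvert_0=\partial^2 h/(\partial x\,\partial y)\rvert_0=0$.
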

\begin{proof}
We can assume without loss of generality that $z$ is the origin, 
that one foliation is the vertical complex lines, 
and that the other foliation has local defining 
function $h$ in a neighborhood of the origin.

One calculates that critical locus is defined by $\der{h}{y}=0$. If either 
$\frac{\partial^2 h}{\partial y^2}\rest{0}\not=0$ or 
$\frac{\partial^2 h}{\partial x \partial y}\rest{0}\not=0$
then $\der{h}{y}=0$ defines a smooth variety at zero. Hence if the critical locus
of the pair of foliations has a singularity at zero then $\frac{\partial^2 h}{\partial y^2}\rest{0}=0$,
and so by lemma \ref{intersectionLemma} the leaves of the two foliations have contact of
order at least three at zero.
\end{proof}

\begin{lemma}
\name{intersectionComparison}
Given two holomorphic foliations $\mc{F}_1$ and $\mc{F}_2$ 
of a two dimensional complex manifold and their 
critical locus $\mc{C}$ then:
\begin{enumerate}
\item Given any point $z\in\mc{C}$, the order of contact of the leaves of $\mc{F}_1$
and $\mc{F}_2$ at $z$ is one more than the order of contact of the leaf of either
foliation with $\mc{C}$ at $z$ (counting the multiplicity of components of $\mc{C}$).
\item The subset $\mc{K}_k\subset\mc{C}$ of points of $\mc{C}$ where the foliations
have contact of order at least $k$ is an analytic subset of $\mc{C}$.
\end{enumerate}
\end{lemma}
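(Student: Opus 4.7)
The plan is to reduce both parts to a direct local calculation by choosing good coordinates. Near a point $z \in \mc{C}$, pick local coordinates $(x,y)$ in which $\mc{F}_1$ is the foliation by vertical lines, so we can take $g_1(x,y) = x$. Let $g_2 = h(x,y)$ be a local defining function for $\mc{F}_2$, so that $\ed h$ never vanishes. Then $\ed g_1 \wedge \ed g_2 = (\partial h/\partial y)\,\ed x \wedge \ed y$, so $\mc{C}$ is the divisor cut out locally by $w = \partial h/\partial y$, with components counted according to the multiplicities of the factors of $w$ in its factorization into irreducibles.

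For part (1), I assume the leaf of $\mc{F}_1$ through $z = (x_0, y_0)$ is not contained in $\mc{C}$ (otherwise both orders of contact are infinite and the assertion is vacuous). Parameterize this leaf by $r(y) = (x_0, y)$, and let $k$ denote the order of contact of the two leaves at $z$. By Lemma \ref{intersectionLemma}, applied with $X$ equal to the leaf of $\mc{F}_2$ through $z$ (defined by $h = h(x_0, y_0)$), this $k$ equals the order of vanishing of $h(x_0, y) - h(x_0, y_0)$ at $y_0$. Differentiating termwise, the order of vanishing of $(\partial h/\partial y)(x_0, y)$ at $y_0$ equals $k - 1$, and another application of Lemma \ref{intersectionLemma}, this time with $X = \mc{C}$ (defined by $w$), identifies $k-1$ with the order of contact of the leaf of $\mc{F}_1$ with $\mc{C}$ at $z$, counted with multiplicity. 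The analogous statement for the leaf of $\mc{F}_2$ follows by symmetry: swap the roles of $\mc{F}_1$ and $\mc{F}_2$ when choosing coordinates.

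For part (2), work in the same local coordinates. A point $(x_0, y_0)$ has contact of order at least $k$ between the two leaves through it if and only if the Taylor expansion of $y \mapsto h(x_0, y) - h(x_0, y_0)$ at $y_0$ begins at order at least $k$. By the usual characterization of order of vanishing, this is equivalent to the $k-1$ conditions
\[
\frac{\partial h}{\partial y}(x_0, y_0) = \frac{\partial^2 h}{\partial y^2}(x_0, y_0) = \cdots = \frac{\partial^{k-1} h}{\partial y^{k-1}}(x_0, y_0) = 0.
\]
These are holomorphic equations on the ambient manifold, whose first member already cuts out $\mc{C}$ set-theoretically, so their common zero locus is an analytic subset of $\mc{C}$. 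Gluing over coordinate patches yields $\mc{K}_k$ globally as an analytic subset of $\mc{C}$.

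Neither step presents a genuine obstacle; the content lies in observing that the critical-locus divisor is cut out by precisely the function $\partial h/\partial y$ whose order of vanishing along a vertical line drops by one from that of $h(x_0, y) - h(x_0, y_0)$. This single observation simultaneously gives the ``one more'' relation of part (1) with multiplicities tracked correctly, and exhibits the higher contact loci of part (2) as zero sets of successive $y$-derivatives of $h$.
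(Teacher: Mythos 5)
Your proof is correct and follows essentially the same route as the paper's: choose coordinates so that $\mc{F}_1$ is vertical, represent $\mc{F}_2$ by a local defining function $h$, identify $\mc{C}$ with the divisor of $\partial h/\partial y$, and invoke Lemma~\ref{intersectionLemma} twice to relate the vanishing orders of $h(x_0,\cdot)-h(x_0,y_0)$ and $\partial h/\partial y(x_0,\cdot)$. The only additions beyond the paper's argument are the explicit handling of the degenerate case where the leaf lies in $\mc{C}$ and the appeal to symmetry for the leaf of $\mc{F}_2$, both of which the paper leaves implicit.
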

\begin{proof}
We will confirm both properties locally.
Without loss of generality we can assume that
$\mc{F}_1$ is the foliation of $\disk^2$ by vertical complex
lines and $\mc{F}_2$ is defined by some local defining function 
$h\colon\disk^2\to\mb{C}$. Assume $z=(z_1,z_2)\in\disk^2$ and
parameterize the leaf of $\mc{F}_1$ through $z$ by $y\mapsto (z_1,y)$.
Then by lemma \ref{intersectionLemma} the order of contact between the leaves
through $z$ is the order of vanishing of $h(z_1,y)-h(z_1,z_2)$ in $y$ at $y=z_2$. This is
precisely the smallest $k\in\mb{N}$ such that 
$\frac{\partial^j h}{\partial y^j}\rest{(z_1,z_2)}=0$ for $j=1,\dotsc,k-1$.
However, since $\mc{C}$ is defined by $\der{h}{y}$ we also conclude by
lemma \ref{intersectionLemma} that the order of contact of 
the vertical leaf throuth $z$ with $\mc{C}$ at $z$ is
the vanishing order of $\der{h}{y}(z_1,y)$ in $y$ at $y=z_2$ which
is exactly one less than the vanshing order of $h(z_1,y)-h(z_1,z_2)$
in $y$ at $y=z_2$. This completes the proof of part 1.

We also conclude that for each integer $\ell$, the set 
$\mc{K}_\ell \subset\mc{C}$ is the common zero set of 
$\frac{\partial^j h}{\partial y^j}$ for $j=1,\dotsc,\ell-1$. This completes the
proof of part 2.
\end{proof}

\begin{corollary}
\name{cor-leafContactToolbox}
Assume we are given two holomorphic foliations $\mc{F}_1$ and $\mc{F}_2$ defined on some
complex two dimensional manifold. Let $\mc{C}$ be the critical
locus. If the leaves of $\mc{F}_1$ and $\mc{F}_2$
have order of contact two at every point of some component $X$
of $\mc{C}$ then $X$ is smooth, $X$ meets no other component
of $\mc{C}$, $X$ is a component of $\mc{C}$ with multiplicity one,
and $X$ is everywhere transverse to both foliations.
\end{corollary}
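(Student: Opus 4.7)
The plan is to localize at an arbitrary point $z \in X$ and derive all four conclusions from Lemma~\ref{smoothnessCritereon} together with Lemma~\ref{intersectionComparison}(1). To start, I would observe that a point at which $\critLoc$ fails to be smooth is either a singular point of a single component or an intersection of two distinct components, so in either case $\critLoc$ is singular there. By Lemma~\ref{smoothnessCritereon}, any such singularity forces the leaves of $\mc{F}_1$ and $\mc{F}_2$ to have order of contact at least three, contradicting the standing hypothesis that the contact on $X$ is exactly two. Hence every $z \in X$ is a smooth point of the entire variety $\critLoc$, which yields simultaneously that $X$ is smooth and that $X$ is disjoint from every other component of $\critLoc$.

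Next I would extract the remaining two assertions from Lemma~\ref{intersectionComparison}(1). At any $z \in X$, that lemma says the order of contact at $z$ of a leaf of $\mc{F}_1$ with $\critLoc$, counted with component multiplicity, equals $2-1=1$. Since by the previous step $X$ is the unique local component of $\critLoc$ through $z$, this total multiplicity factors as $m \cdot k$, where $m$ is the multiplicity of $X$ as a component of $\critLoc$ and $k$ is the geometric order of contact of the leaf with the smooth curve $X$ at $z$. From $mk = 1$ one concludes $m = k = 1$, giving both that $X$ has component multiplicity one and that $X$ is transverse to $\mc{F}_1$ at $z$. The identical argument with the roles of $\mc{F}_1$ and $\mc{F}_2$ swapped gives transversality to $\mc{F}_2$.

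The only mildly subtle bookkeeping, and the point I would double-check, is the factorization of the contact multiplicity as $m \cdot k$: this follows by applying Lemma~\ref{intersectionLemma} to the local defining function $h^m$ of the non-reduced component $mX$, where $h$ is a reduced local defining function for $X$, so no additional machinery beyond this algebraic observation is needed.
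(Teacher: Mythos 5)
Your proposal is correct and follows essentially the same route as the paper: Lemma~\ref{smoothnessCritereon} for smoothness and disjointness, then Lemma~\ref{intersectionComparison}(1) to get multiplicity-counted contact one, from which transversality and multiplicity one follow. The only difference is that you make explicit the factorization of the multiplicity-counted contact as $m\cdot k$ via Lemma~\ref{intersectionLemma} applied to $h^m$, which the paper leaves implicit but which is exactly the right justification.
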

\begin{proof}
Since $\mc{C}$ is smooth everywhere the leaves have order of contact two then 
$X$ is smooth and meets no other component of $\mc{C}$ by
lemma \ref{smoothnessCritereon}.
It follows by part 1 of Lemma~\ref{intersectionComparison} that $X$ must meet each leaf with  
order of contact one, i.e. $X$ must be transverse to each leaf, and $X$ must have
multiplicity one as a component of $\mc{C}$.
\end{proof}

\subsectionDone{The Critical Locus near Infinity.}
\name{subsection-criticalLocusNearInfinity}

\head{The maps $f_a$ and $f_a^{-1}$ on $\mb{P}^1\times\mb{P}^1$.}

We consider the critical
locus for the extension of $f_a$ to $\mb{P}^1\times \mb{P}^1$. 
%We will use the convention that
%$u=\inlinefrac{1}{x}$ and $v=\inlinefrac{1}{y}$. On $\mb{P}^1\times\mb{P}^1$ the maps
%$f_a$ and $f_a^{-1}$ are given in homogeneous coordinates by
%\[f_a\vt{ x_0:x_1 \\ y_0:y_1} = \vt{ y_1p(x_0,x_1)-ay_0x_1^d:y_1x_1^d \\ x_0:x_1}\]
%and
%\[f_a^{-1}\vt{ x_0:x_1 \\ y_0:y_1} = \vt{ y_0:y_1 \\ x_1p(y_0,y_1)-x_0y_1^d:ax_1y_1^d}\]
%where we have used the convention that $p(x,y)\equiv y^d p(\shortInlineFrac{x}{y})$.
The map $f_a$ is well defined except at $(\infty,\infty)$. 
The map $f_a^{-1}$ is well defined except at
$(\infty,\infty)$ or when $a=0$ and 
$(x,y)\in C(p)$.

The map $f_a$ sends the line $y=\infty$ to the line $x=\infty$ and sends the line
$x=\infty$ in turn to the point $(\infty,\infty)$ at which $f_a$ is undefined.
Similarly, the map $f_a^{-1}$ sends the line $x=\infty$ to the line $y=\infty$ and
sends the line $y=\infty$ in turn to the point $(\infty,\infty)$
at which $f_a^{-1}$ is undefined.

In this section we will show that for each critical point $c$ of the polynomial
$p$, and for each $a\in \disk_R$ the critical locus has a branch asymptotic
to the curve $y=c$ as $\vert x\vert \to \infty$. We will do this by
showing that the critical locus (extended to $\mb{P}^1\times\mb{P}^1$)
contains the point $(\infty,c)$ and by computing
the tangent line to the critical locus at $(\infty,c)$.

\head{Reinterpreting our extensions to $\mb{P}^1\times\mb{P}^1$}

Because we will be working in $\mb{P}^1\times\mb{P}^1\times \disk_R$
it will be convenient at times to use the coordinate systems $(u,y,a)$ where $u=1/x$
or $(x,v,a)$ where $v=1/y$ instead of using $(x,y,a)$.
Corollary~\ref{corBoundednessPl}, written using 
$(u,y,a)$ coordinates, states that  
\begin{equation}
\name{hplDef}
\hpl(u,y,a)\equiv\frac{1}{u\ppl{a}(u,y,a)}
\end{equation}
is holomorphic on $\hat{V}_+\times \disk_R$ and $B^{-1} \leq \vert \hpl\vert \leq B$.
Similarly Corollary~\ref{corBoundednessMn}, 
written using $(x,v,a)$ coordinates, states that
\begin{equation}
\name{hmnDef}
\hmn(x,v,a)\equiv\frac{1}{v\pmn{a}(x,v,a)}
\end{equation}  
is holomorphic on $\hat{V}_-\times \disk_R$
and $B^{-1} \leq \vert \hmn\vert \leq B$.
We find it useful here to write $f_a^{-1}$ with input
written in the $(u,y,a)$ coordinates, but the output is written
in the coordinates $(x,v,a)$, i.e. 
$f_a^{-1}(u,y,a)=(y,\inlinefrac{au}{up(y)-1},a)$.

Let 
$\hat{\mf{V}}^*_-=\{(x,y,a)\vert (x,y)\in f_a(\hat{V}_-), a\in \disk_R^*\}$
and let 
\[\hat{\mf{V}}_-=\hat{\mf{V}}^*_- \cup 
\Bigl(\bigl(\umn{0}\cup (\{\infty\}\times \mb{C})\bigr) \times \{0\}\Bigr)
\subset \mb{P}^1\times \mb{P}^1\times\disk_R.\]
The set $\hat{\mf{V}}_-$ is just the interior of the closure of $\hat{\mf{V}}^*_-$
in $\mb{P}^1\times\mb{P}^1\times \disk_R$.
Then  from the recursion relation it follows that $\pmn{a}^{-d}(u,y,a)=
\inlinefrac{u}{up(y)-1}\hmn\bigl(y,\inlinefrac{au}{up(y)-1},a\bigr)$
on $\hat{\mf{V}}^*_{-}$. Since both sides of the equality are holomorphic
on $\hat{\mf{V}}_-$ they are equal on $\hat{\mf{V}}_-$.

Therefore
\begin{equation}
\name{dPplInverse}
\ed \pplPlain^{-1}(u,y,a)=\bigl(\hpl
+ u \der{\hpl}{u}\bigr)\ed u + u \der{\hpl}{y}\ed y + u\der{\hpl}{a}\ed a
\quad\text{on $\hat{V}_+\times\disk_R$.}
\end{equation}

\head{Calculating the formulas defining $\Fpl{a}$ and $\Fmn{a}$
in $(u,y,a)$ coordinates.}

%Let $z_0$ be the point with $(u,y,a)$ coordinates $(0,y_0,a_0)$
%and let $z_1$ be the point with $(x,v,a)$ coordinates $(y_0,0,a_0)$.
%Then $f_a^{-1}(z_0)=z_1$.
%
%It is easy to confirm that the Taylor series of $\pplPlain^{-1}$ about the 
%point $z_0$ is
%\begin{multline}
%\hpl(z_0) \cdot u + \der{\hpl}{u}\bigrest{z_0} u^2 +
%\der{\hpl}{y}\bigrest{z_0}u(y-y_0) + \der{\hpl}{a} u(a-a_0)
%+\frac{1}{2}\frac{\partial^2 \hpl}{\partial u^2}\bigrest{z_0} u^3
%+\frac{\partial^2 \hpl}{\partial u\partial y}\bigrest{z_0}
%+ \dotsb
%\end{multline}
%
%It can be checked that the Taylor series for $\pmnPlain^{-d}(u,y,a)$
%about $z_0$ is 
%\begin{multline}
%\hmn(z_1)u + 
%\Bigl(\der{\hmn}{v}\bigrest{z_1}\cdot a_0 - \hmn(z_1)p(y_0)\Bigr)u^2 
%-\der{\hmn}{a}\bigrest{z_1} \cdot u(a-a_0)
%-u\der{\hmn}{x}\bigrest{z_1}\cdot u(y-y_0) +    \\
%\Bigl(-\frac{1}{2}a_0^2 \frac{\partial^2 \hmn}{\partial v^2}\bigrest{z_1}
%     +\der{\hmn}{v}\bigrest{z_1} a_0p(y_0) +\der{\hmn}{v}\bigrest{z_1} a_0 p(y_0)
%     -\hmn(z_1)p(y_0)^2\Bigr) u^3     \\
%+\bigl(a_0 \frac{\partial^2\hmn}{\partial x \partial v}\bigrest{z_1}
%     -\hmn(z_1) p'(y_0) - \der{\hmn}{x}\bigrest{z_1}p(y_0)\bigr)\cdot u^2(y-y_0)  \\
%+\bigl(\der{\hmn}{v}\bigrest{z_1} + a_0 \frac{\partial^2 \hmn}{\partial v\partial a}\bigrest{z_1}
%     -\der{\hmn}{a}\bigrest{z_1} p(y_0)\bigr)\cdot (a-a_0)u^2    \\
%%-\frac{1}{2}\frac{\partial^2\hmn}{\partial x^2}\bigrest{z_0}\cdot u(y-y_0)^2
%%-\frac{1}{2}\frac{\partial^2\hmn}{\partial a^2}\bigrest{z_0}\cdot u(a-a_0)^2
%%-\frac{\partial^2\hmn}{\partial x\partial a}\bigrest{z_0}\cdot u(y-y_0)(a-a_0)
%+ \dotsb
%\end{multline}
%     

Also, letting $\lambda=up(y)-1$ we obtain,

\begin{multline}
\name{dPmnInverse}
\ed\pmnPlain^{-d}(u,y,a)= - \lambda^{-2}\cdot\hmn\circ f_a^{-1}\ed u \\
- \lambda^{-2}u^2p'(y)\cdot\hmn\circ f_a^{-1}\ed y     
+ \lambda^{-1}u\cdot\der{\hmn}{x}\circ f_a^{-1} \ed y \\
+ \lambda^{-1}u\cdot\der{\hmn}{v}\circ f_a^{-1}\cdot    
\Bigl(\lambda^{-1}u \ed a - \lambda^{-2}a\ed u -
\lambda^{-2}au^2p'(y) \ed y\Bigr)   \\
+ \lambda^{-1}u\cdot\der{\hmn}{a}\circ f_a^{-1} \ed a \quad\text{on $\hat{\mf{V}}_-$.}
\end{multline}

\head{Explicit formula describing the critical locus $\critLoc$.}

One then calculates

\begin{multline}
\name{formExpansion}
\ed \pplPlain^{-1} \wedge \ed \pmnPlain^{-d} \wedge\ed a= \\
\Biggl(
\bigl(\hpl + u \der{\hpl}{u}\bigr)
\Bigl(
-  \lambda^{-2}u^2p'(y)\cdot \hmn\circ f_a^{-1}
+u\lambda^{-1}\der{\hmn}{x}\circ f_a^{-1}
-\lambda^{-3}au^3p'(y) \cdot\der{\hmn}{v}\circ f_a^{-1}
\Bigr)\\
+
u \der{\hpl}{y}
\Bigl(
\lambda^{-2}\cdot \hmn\circ f_a^{-1}
+ au\lambda^{-3}\cdot\der{\hmn}{v}\circ f_a^{-1}
\Bigr)
\Biggr)
\ed u\wedge\ed y\wedge\ed a \\
\text{on $\hat{\mf{V}}_-\cap (\hat{V}_+\times \disk_R)$.}
\end{multline}

It is easy to show that the domain $\hat{\mf{V}}_-\cap(\hat{V}_+\times\disk_R)$
contains the plane $u=0$ in $\mb{P}^1\times\mb{P}^1\times \disk_R$.
\begin{corollary}
\name{cor-foliationsExtendToInfinity}
The Foliations $\Fpl{a}$ and $\Fmn{a}$ extend to holomorphic
foliations on $\hat{V}_+$ and $\hat{\mf{V}}_-$ respectively
and $\ppl{a}^{-1}$ and $\pmn{a}^{-d}$ are local defining
functions for these foliations. In both extended foliations
 the leaf through
$u=0,y=c$ is the line $u=0$ regardless of the value of $a$.
\end{corollary}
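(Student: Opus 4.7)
The plan is to produce $\ppl{a}^{-1}$ and $\pmn{a}^{-d}$ as holomorphic functions on the claimed domains whose differentials do not vanish along $\{u=0\}$, and then to verify that the line $\{u=0\}$ sits inside their common zero sets. Once we have local defining functions with nonvanishing differentials, the level sets produce the desired holomorphic foliations, and $\{u=0\}$ being a zero level set is automatically a single leaf containing every point $(0,c,a)$.

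For $\Fpl{a}$, Corollary~\ref{corBoundednessPl} tells us that $\hpl(u,y,a) = 1/(u\ppl{a})$ extends holomorphically to $\hat{V}_+\times\disk_R$ with $B^{-1}\le|\hpl|\le B$. Hence $\ppl{a}^{-1}(u,y,a) = u\,\hpl(u,y,a)$ is holomorphic on all of $\hat{V}_+\times\disk_R$ and vanishes precisely on $\{u=0\}$. Specializing equation (\ref{dPplInverse}) to $u=0$ gives
\[
\ed\ppl{a}^{-1}\big|_{u=0} \;=\; \hpl(0,y,a)\,\ed u,
\]
which is nowhere zero because $\hpl$ is bounded below. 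Thus $\ppl{a}^{-1}$ is a bona fide local defining function on a neighborhood of $\{u=0\}$ in $\hat{V}_+\times\disk_R$, its fibers agree with the leaves of $\Fpl{a}$ on $V_+\times\disk_R$, and $\{u=0\}$ is the single leaf corresponding to the value $0$.

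For $\Fmn{a}$ the work is slightly more delicate because $\hat V_-$ does not contain the line $x=\infty$; one must use $\hat{\mf{V}}_-$, and the extension of $\pmn{a}^{-d}$ to it is built from the formula
\[
\pmn{a}^{-d}(u,y,a) \;=\; \frac{u}{\lambda}\,\hmn\!\bigl(f_a^{-1}(u,y,a)\bigr), \qquad \lambda = up(y)-1,
\]
derived just above equation~(\ref{dPplInverse}). Since $f_a^{-1}(0,y,a)=(y,0,a)$ lies in $\hat V_-$ for every $a\in\disk_R$ and every finite $y$, and $\lambda=-1$ on $\{u=0\}$, the right-hand side is holomorphic on a neighborhood of $\{u=0\}$ inside $\hat{\mf{V}}_-$ and vanishes precisely along $\{u=0\}$. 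Evaluating (\ref{dPmnInverse}) at $u=0$, every term carrying a factor of $u$ drops out, leaving
\[
\ed\pmn{a}^{-d}\big|_{u=0} \;=\; -\,\hmn(y,0,a)\,\ed u,
\]
which is nowhere zero by the lower bound on $\hmn$. So $\pmn{a}^{-d}$ is a local defining function on a neighborhood of $\{u=0\}$ in $\hat{\mf{V}}_-$, extending $\Fmn{a}$; again $\{u=0\}$ is exactly the zero level set, hence a single leaf through each $(0,c,a)$.

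The main obstacle is purely bookkeeping: keeping the coordinate charts straight (using $(u,y,a)$ near $x=\infty$ versus $(x,v,a)$ near $y=\infty$), and making sure the formula used to extend $\pmn{a}^{-d}$ across $\{u=0\}$ evaluates $\hmn$ at a point that legitimately lies in $\hat V_-\times\disk_R$. The computation $f_a^{-1}(0,y,a)=(y,0,a)$ is the key identity that makes this work uniformly in $a$, including $a=0$. With that in hand, the nonvanishing differentials come directly from the explicit expressions (\ref{dPplInverse}) and (\ref{dPmnInverse}) specialized to $u=0$, and the rest is formal.
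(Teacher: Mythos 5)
Your proposal is correct and follows essentially the same route as the paper: observe that $\ppl{a}^{-1}$ and $\pmn{a}^{-d}$ are holomorphic on $\hat{V}_+\times\disk_R$ and $\hat{\mf{V}}_-$, vanish exactly on $\{u=0\}$ because $\hpl$ and $\hmn$ are bounded away from $0$, and have nonvanishing differentials there by specializing equations~(\ref{dPplInverse}) and~(\ref{dPmnInverse}) to $u=0$. You simply carry out the $u=0$ substitution explicitly (giving $\hpl(0,y,a)\,\ed u$ and $-\hmn(y,0,a)\,\ed u$), where the paper leaves it as an appeal to those equations.
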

\begin{proof}
Clearly, for each $a\in \disk_R$, $\ppl{a}^{-1}$ and $\pmn{a}^{-d}$ define
the same foliations as $\ppl{a}$ and $\pmn{a}$ on $V_+$ and
on $\hat{\mf{V}}_-\cap (\umn{a}\times\{a\})$. Since $\pplPlain^{-1}$ and $\pmnPlain^{-d}$
are holomorphic functions on $\hat{V}_+\times\disk_R$ and $\hat{\mf{V}}_-$
which both vanish on $u=0$ by equations
\eqref{hplDef} and \eqref{hmnDef}
then we need only confirm that $\ed \ppl{a}^{-1}$ and $\ed\pmn{a}^{-d}$
do not vanish on $u=0$. But this follows from equations \eqref{dPplInverse}
and \eqref{dPmnInverse}.
\end{proof}

\head{Necessary computations to remove excess factors of $u$ from our formula defining $\critLoc$.}

\begin{lemma}
\name{computationToRemoveExcessFactors}
$\hpl\rest{u=0}=1$, and
$\der{\hpl}{y}$ vanishes in $u$ to order at least $d$. Similarly,
$\hmn\rest{v=0}=1$, and
$\der{\hmn}{x}$ vanishes in $v$ to order at least $d$.
\end{lemma}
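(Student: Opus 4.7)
The plan is to unwind the definitions of $\hpl$ and $\hmn$ using the telescoping products already established for $\ppl{a}$ and $\pmn{a}$, then combine the existing vanishing lemmas for $\spl{k}$ in $u$ and $\smn{k}$ in $v$ with explicit computations in the case $k=1$.

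For $\hpl$, I would first observe that in $(u,y,a)$ coordinates, with $x=1/u$, the telescoping formula for $\ppl{a}$ gives
\[
u\,\ppl{a}(1/u,y,a)\;=\;\exp\Bigl(\sum_{k\geq 1}\frac{1}{d^k}\log(1+\spl{k})\Bigr),
\]
and hence
\[
\hpl(u,y,a)\;=\;\exp\Bigl(-\sum_{k\geq 1}\frac{1}{d^k}\log(1+\spl{k})\Bigr).
\]
Since each $\spl{k}$ vanishes at $u=0$ by Lemma~\ref{vanishingInUpl}, every summand vanishes there and $\hpl\rest{u=0}=e^0=1$. For the $y$-derivative, I would use logarithmic differentiation:
\[
\der{\hpl}{y}\;=\;-\hpl\sum_{k\geq 1}\frac{1}{d^k(1+\spl{k})}\der{\spl{k}}{y}.
\]
At $u=0$ the prefactors $\hpl$ and each $(1+\spl{k})^{-1}$ all equal $1$, so the vanishing order of $\der{\hpl}{y}$ in $u$ is governed by that of the $\der{\spl{k}}{y}$. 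For $k\geq 2$, Lemma~\ref{vanishingInUpl} gives $\spl{k}=O(u^{d^{k-1}})$ with $d^{k-1}\geq d$, and the $y$-derivative inherits the same order. The case $k=1$ needs a direct computation: $\spl{1}=(q(x)-ay)/x^d$, which in $(u,y,a)$-coordinates becomes $u^d q(1/u)-ayu^d$, so $\der{\spl{1}}{y}=-au^d$, vanishing to order exactly $d$. Combining these yields that $\der{\hpl}{y}$ vanishes in $u$ to order at least $d$.

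The argument for $\hmn$ would be strictly symmetric. Starting from the telescoping formula for $\pmn{a}/y$ established in Corollary~\ref{corPmnBound}, one has $v\,\pmn{a}(x,1/v,a)=\exp\bigl(\sum_k d^{-k}\log(1+\smn{k})\bigr)$ and therefore $\hmn\rest{v=0}=1$ by Lemma~\ref{vanishingInVmn}. Logarithmic differentiation in $x$ reduces the second claim to controlling $\der{\smn{k}}{x}$; for $k\geq 2$ one cites Lemma~\ref{vanishingInVmn} directly, and for $k=1$ the explicit form $\smn{1}=(q(y)-x)/y^d=v^d(q(1/v)-x)$ yields $\der{\smn{1}}{x}=-v^d$, again vanishing to order $d$.

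I do not anticipate a real obstacle. The only subtle point is that the general vanishing lemma only yields order $d^0=1$ at $k=1$, which is too weak for the claim; one must use the explicit linear $y$-dependence of $\spl{1}$ (and $x$-dependence of $\smn{1}$) combined with the factor of $u^d$ (respectively $v^d$) coming from the denominator $x^d$ (resp.\ $y^d$). This elementary computation supplies precisely the $d$-fold vanishing needed, and the rest of the proof is just careful bookkeeping of orders of vanishing in the telescoping series.
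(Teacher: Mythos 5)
Your argument is correct and follows essentially the same route as the paper: unwind the telescoping product to get $\hpl=\exp\bigl(-\sum_k d^{-k}\log(1+\spl{k})\bigr)$ (the paper writes the same thing with the $k=1$ factor pulled out as $(1+\spl{1})^{-1/d}$), cite Lemma~\ref{vanishingInUpl} to handle $k\geq 2$, and use the explicit formula $\der{\spl{1}}{y}=-au^d$ for the $k=1$ term, with the mirror argument for $\hmn$. The only addition you make is spelling out the logarithmic-differentiation identity, which the paper leaves implicit.
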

\begin{proof}
From equation (\ref{hplDef}), equation (\ref{telescopingPl}) 
and the definition of $\spl{k}$ we know 
\begin{equation}
\name{grep}
\hpl=(1+\spl{1})^{-\frac{1}{d}}\exp\bigl(-\frac{1}{d^2}\log(1+\spl{2}) +\dotsb\bigr)
\end{equation}
We know from Lemma~\ref{vanishingInUpl} that if
$k\geq 2$ then $\spl{k}$ vanishes to order at least $d$ in $u$.

Since $\spl{1}=\inlinefrac{q(x)-ay}{x^d}=u^d q(\inlinefrac{1}{u})-ayu^d$ 
so $\der{\spl{1}}{y}=-au^d$,
it follows from equation (\ref{grep})
that $\der{\hpl}{y}$ vanishes to order at least $d$ in $u$.
The claim about $\hpl\rest{u=0}$ follows from equation \ref{grep}.

The results for $\hmn$ are proven similarly.
\end{proof}

\head{Removal of excess factors of $u$ from our formula defining $\critLoc$.}

\begin{corollary}
\name{cor-spuriousDoubleComponent}
The critical locus of the extensions of $\Fpl{a}$ and $\Fmn{a}$
in $(\hat{V}_+\times \disk_R)\cap \hat{\mf{V}}_-$ contains the plane $u=0$ with multiplicity two.
\end{corollary}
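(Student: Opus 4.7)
The plan is to read off the multiplicity of $u=0$ directly from formula~(\ref{formExpansion}). Let $h(u,y,a)$ denote the coefficient of $\ed u\wedge\ed y\wedge\ed a$ on the right-hand side. By Corollary~\ref{cor-foliationsExtendToInfinity}, $\pplPlain^{-1}$ and $\pmnPlain^{-d}$ are local defining functions for the extended foliations on all of $(\hat{V}_+\times\disk_R)\cap\hat{\mf{V}}_-$, so $\critLoc$ is cut out there by $h$. It therefore suffices to check that $u^{2}\mid h$ while $u^{3}\nmid h$ generically.

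First I would collect the values at $u=0$ of the factors in (\ref{formExpansion}) that are not manifestly multiples of $u$. Since $\lambda=up(y)-1$ satisfies $\lambda\rest{u=0}=-1$, the factors $\lambda^{-k}$ contribute nothing to the $u$-order. By Lemma~\ref{computationToRemoveExcessFactors}, $\hpl\rest{u=0}=1$, whence $(\hpl+u\der{\hpl}{u})\rest{u=0}=1$. Using $f_a^{-1}(u,y,a)=\bigl(y,\,\inlinefrac{au}{up(y)-1},\,a\bigr)$, the $v$-coordinate after applying $f_a^{-1}$ equals $au/(up(y)-1)$, which vanishes on $\{u=0\}$; consequently $\hmn\circ f_a^{-1}\rest{u=0}=\hmn(y,0,a)=1$, again by Lemma~\ref{computationToRemoveExcessFactors}.

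Next I would bound the $u$-order of each summand in the coefficient. The summand $(\hpl+u\der{\hpl}{u})\bigl(-\lambda^{-2}u^{2}p'(y)\cdot\hmn\circ f_a^{-1}\bigr)$ is exactly of $u$-order two and, by the evaluations above, equals $-p'(y)\,u^{2}+O(u^{3})$. The summand $(\hpl+u\der{\hpl}{u})\cdot u\lambda^{-1}\der{\hmn}{x}\circ f_a^{-1}$ has $u$-order at least $1+d\geq 3$, since Lemma~\ref{computationToRemoveExcessFactors} gives that $\der{\hmn}{x}$ vanishes in $v$ to order $\geq d$ while the post-composition with $f_a^{-1}$ turns $v$ into a function vanishing to order one in $u$. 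The summand involving $-\lambda^{-3}au^{3}p'(y)\cdot\der{\hmn}{v}\circ f_a^{-1}$ carries an explicit $u^{3}$. Finally, the entire second big parenthesis is multiplied by $u\der{\hpl}{y}$, and since $\der{\hpl}{y}$ vanishes to order $\geq d$ in $u$ by Lemma~\ref{computationToRemoveExcessFactors}, that group contributes $u$-order at least $1+d\geq 3$.

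Assembling these estimates yields $h(u,y,a)=-p'(y)\,u^{2}+O(u^{3})$. Since $p'(y)$ is not identically zero, $u^{2}\mid h$ but $u^{3}\nmid h$ as a germ along $\{u=0\}$, and so the plane $u=0$ is a component of $\critLoc$ with multiplicity exactly two. The only delicate point is bookkeeping for the $u$-order of $\der{\hmn}{x}\circ f_a^{-1}$; the key observation that trivializes it is that $f_a^{-1}$ sends $\{u=0\}$ into $\{v=0\}$, so post-composition converts vanishing in $v$ into (at least equal) vanishing in $u$.
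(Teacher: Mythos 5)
Your proof is correct and uses the same key ingredients as the paper (formula~(\ref{formExpansion}), Lemma~\ref{computationToRemoveExcessFactors}, and the observation that the $v$-coordinate of $f_a^{-1}(u,y,a)$ is $au/(up(y)-1)$, which vanishes on $u=0$). The difference is that the paper's proof of this Corollary is a one-line argument establishing only divisibility by $u^2$ (i.e.\ multiplicity at least two); the fact that the multiplicity is \emph{exactly} two is left implicit and is really pinned down a few lines later by Lemma~\ref{tangentSpaceAtInfinity}, which computes $\tilde{w}(u,y,a)=-p'(y)-uH$. Your version does the full term-by-term bookkeeping in the Corollary itself, arriving at $h=-p'(y)u^{2}+O(u^{3})$, and so proves the sharper statement in one go. That is a modest gain in completeness for a slightly longer computation; there is no gap, and no conceptual departure from the paper's route.
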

\begin{proof}
Since the $v$ coordinate of $f_a^{-1}(u,y,a)$ is $\inlinefrac{au}{up(y)-1}$
one concludes from Lemma~\ref{computationToRemoveExcessFactors}
that equation (\ref{formExpansion})
is divisible by $u^2$.
\end{proof}

Corollary~\ref{cor-spuriousDoubleComponent} is one of the
reasons these calculations are done so carefully. If we had
simply approximated the critical locus at infinity
using a Taylor series we could not make this conclusion.

While we need to extend $\critLoc$ to a variety on $(\hat{V}_+\times\disk_R)\cap \hat{\mf{V}}_-$,
the double component $u=0$ is spurious for our purposes. 
We let $\tilde{w}$ be the holomorphic function satisfying
$\ed \pplPlain^{-1} \wedge \ed \pmnPlain^{-d} \wedge\ed a=u^2
\tilde{w}(u,y,a) \ed u\wedge\ed y\wedge\ed a$.
Then the zero set of $\tilde{w}$ gives an extension of $\critLoc$
to $(\hat{V}_+\times \disk_R)\cap \hat{\mf{V}}_-$. We will abuse notation and
refer to the extended variety as $\critLoc$
as well. It will be obvious from the context whether we are using
the extension or not. Extending $\critLoc$ automatically extends $\critLoc_a$
to $(\hat{V}_+\times\disk_R)\cap \hat{\mf{V}}_-$ for each $a\in\disk_R$ since
$\critLoc_a$ is the zero set of $w_a(u,y)\equiv\tilde{w}(u,y,a)$.

\begin{lemma}
\name{tangentSpaceAtInfinity}
Given $a\in \disk_R$ and a point $(0,y_0,a_0)$ (written in $(u,y,a)$ coordinates)
the defining function for $\critLoc$ takes the form
$\tilde{w}(u,y,a)=-p'(y)-uH$ for some holomorphic function $H$ defined
in a neighborhood of $(0,y_0,a_0)$. Thus $(0,c,a)$
is in $\critLoc$ iff $c$ is a critical point
of $p$. Moreover if $c$ is an order one
critical point of $p$ then
$\critLoc$ is smooth at $(0,c,a)$ and
the tangent plane to $(0,c,a)$ is given by
\[p''(c)\ed y + C \ed u=0\]
for some $C$ depending upon $c$ and $a$.
\end{lemma}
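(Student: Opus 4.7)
The strategy is to read off the leading coefficient of $u^2$ in the expression \eqref{formExpansion}, since by construction $\tilde w$ is obtained by dividing that expression by $u^2$. Once $\tilde w|_{u=0}$ is in hand, the description of the tangent plane at $(0,c,a)$ is an immediate derivative computation.

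First I would go through the five summands in \eqref{formExpansion} one at a time, evaluating their vanishing orders in $u$ at $u=0$. Recall that $\lambda=up(y)-1$, so $\lambda|_{u=0}=-1$; also $f_a^{-1}(u,y,a)=(y,au/\lambda,a)$, so the $v$-coordinate of $f_a^{-1}(u,y,a)$ vanishes to order one in $u$. By Lemma~\ref{computationToRemoveExcessFactors} we have $\hpl|_{u=0}=1$, $\hmn|_{v=0}=1$, and both $\der{\hpl}{y}$ and $\der{\hmn}{x}\circ f_a^{-1}$ vanish to order at least $d\geq 2$ in $u$. A term-by-term inspection then shows:
\begin{itemize}
\item the term $(\hpl+u\der{\hpl}{u})\bigl(-\lambda^{-2}u^2 p'(y)\cdot \hmn\circ f_a^{-1}\bigr)$ equals $-u^2 p'(y)+O(u^3)$;
\item the term $(\hpl+u\der{\hpl}{u})\cdot u\lambda^{-1}\der{\hmn}{x}\circ f_a^{-1}$ is $O(u^{d+1})$;
\item the term $(\hpl+u\der{\hpl}{u})\bigl(-\lambda^{-3}au^3 p'(y)\cdot \der{\hmn}{v}\circ f_a^{-1}\bigr)$ is $O(u^3)$;
\item both $u\der{\hpl}{y}\cdot\lambda^{-2}\hmn\circ f_a^{-1}$ and $u\der{\hpl}{y}\cdot au\lambda^{-3}\der{\hmn}{v}\circ f_a^{-1}$ are $O(u^{d+1})$.
\end{itemize}

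Summing, the bracketed expression in \eqref{formExpansion} equals $-u^2 p'(y)+O(u^3)$, so after dividing by $u^2$ I conclude that $\tilde w(u,y,a)=-p'(y)-uH(u,y,a)$ for a holomorphic function $H$ defined on a neighborhood of $(0,y_0,a_0)$. In particular $\tilde w(0,y,a)=-p'(y)$, which gives the first assertion of the lemma, and it follows immediately that $(0,c,a)\in\critLoc$ precisely when $p'(c)=0$.

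For the final assertion, assume $c$ is a critical point of order one, so $p''(c)\neq 0$. A direct differentiation of $\tilde w=-p'(y)-uH$ at $(0,c,a)$ gives
\[\der{\tilde w}{u}\bigg|_{(0,c,a)}=-H(0,c,a),\quad\der{\tilde w}{y}\bigg|_{(0,c,a)}=-p''(c),\quad\der{\tilde w}{a}\bigg|_{(0,c,a)}=0.\]
Since $p''(c)\neq 0$, the differential $\ed\tilde w$ is nonzero at $(0,c,a)$, so $\critLoc$ is smooth there, and the tangent plane is the kernel of $\ed\tilde w$, which is cut out by $p''(c)\ed y+C\ed u=0$ with $C=H(0,c,a)$. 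The main bookkeeping obstacle is simply keeping track of the divisibility estimates in the five summands; no serious analytic input beyond Lemma~\ref{computationToRemoveExcessFactors} is needed.
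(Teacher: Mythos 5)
Your proposal is correct and follows essentially the same route as the paper: both proofs proceed by a term-by-term inspection of the five summands in equation \eqref{formExpansion}, invoking Lemma~\ref{computationToRemoveExcessFactors} to pin down the vanishing orders in $u$ (including the key observation that the $v$-coordinate of $f_a^{-1}$ is $au/\lambda = O(u)$), concluding $u^2\tilde w = -u^2 p'(y) + O(u^3)$, and then reading off the tangent plane from $\ed\tilde w$ at $(0,c,a)$. The paper is slightly more compressed, packaging the error terms into unnamed holomorphic functions $H_1,\dotsc,H_8$ and leaving the final differentiation implicit, but there is no substantive difference between the two arguments.
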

\begin{proof}
From Lemma~\ref{computationToRemoveExcessFactors}
we can write: $\hmn\circ f_a^{-1}(u,y,a)=\hmn(y,\inlinefrac{au}{up(y)-1},a)=1+uH_2$,
$\der{\hmn}{x}\circ f_a^{-1}(u,y,a)=\der{\hmn}{x}(y,\inlinefrac{au}{up(y)-1},a)=u^d H_4$
and $\der{\hpl}{y}=u^dH_6$
for holomorphic functions $H_2, H_4$ and $H_6$.
From equation (\ref{formExpansion}) 
one obtains
$u^2\tilde{w}(u,y,a)=(1+uH_1)\Bigl(\{-u^2p'(y)(1+uH_2)\} +
uH_3\cdot u^dH_4 -  u^3H_5\Bigr)-u\cdot u^dH_6\cdot H_7=-u^2p'(y)-u^3H_8$
where each $H_i$ is a function which is holomorphic in a neighborhood
of the given $y_0, a\in \disk_R$ and for $u$ sufficiently close to zero.
Simplifying gives 
$\tilde{w}(u,y,a)=-p'(y)-uH_8$
for some holomorphic $H_8$. 
\end{proof}

\begin{theorem}
\name{thm-tangentSpaceAtInfinity}
If $c$ is an order one critical point of $p$
then the critical locus $\critLoc_a$ passes through
the point $(0,c,a)$ (written in $(u,y,a)$ coordinates)
and is smooth at this point. Moreover $\ppl{a}^{-1}\rest{\critLoc_a}\to\mb{C}$ 
is a local biholomorphism about this point. 
\end{theorem}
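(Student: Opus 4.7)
The plan is to deduce this theorem essentially as a corollary of Lemma~\ref{tangentSpaceAtInfinity} combined with the implicit function theorem and the explicit formula \eqref{hplDef} for $\ppl{a}^{-1}$ in terms of $\hpl$.

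First I would fix $a \in \disk_R$ and pass from the three-variable defining function $\tilde{w}(u,y,a)$ to the slice $w_a(u,y) = \tilde{w}(u,y,a)$, which is the defining function for $\critLoc_a$. By Lemma~\ref{tangentSpaceAtInfinity} we can write $w_a(u,y) = -p'(y) - u H(u,y,a)$ for a function $H$ holomorphic near $(0,c,a)$. Since $c$ is a critical point of $p$ we have $p'(c)=0$, so $w_a(0,c)=0$, confirming that $(0,c)\in\critLoc_a$. Because $c$ is an order one critical point, $p''(c)\neq 0$, and so $\der{w_a}{y}(0,c) = -p''(c) \neq 0$. The holomorphic implicit function theorem then produces a unique holomorphic function $y = \eta(u)$ defined on a neighborhood of $0$ with $\eta(0)=c$ and $w_a(u,\eta(u))\equiv 0$, which parameterizes $\critLoc_a$ smoothly near $(0,c)$.

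For the local biholomorphism statement, I would use the representation $\ppl{a}^{-1}(u,y) = u\,\hpl(u,y,a)$ from \eqref{hplDef} together with the fact, established in Lemma~\ref{computationToRemoveExcessFactors}, that $\hpl\rest{u=0}=1$. Restricting to $\critLoc_a$ via the parameterization $u\mapsto (u,\eta(u))$ gives
\[
\phi(u) \;\equiv\; \ppl{a}^{-1}(u,\eta(u)) \;=\; u\cdot \hpl\bigl(u,\eta(u),a\bigr),
\]
which is holomorphic in $u$ near $0$ and satisfies $\phi(0) = 0$ and $\phi'(0) = \hpl(0,c,a) = 1$. Hence $\phi$ is a local biholomorphism, which is the assertion.

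I do not expect any serious obstacle: the hard computational work is already packaged into Lemma~\ref{tangentSpaceAtInfinity} and Lemma~\ref{computationToRemoveExcessFactors}. The only thing to be careful about is keeping track that when we pass to the fixed-$a$ slice the defining function $\tilde{w}$ restricts to a genuine defining function $w_a$ for $\critLoc_a$ near $(0,c)$ (i.e.\ that the nonvanishing multiplier relating different choices of defining function does not cause trouble), but since we already have an explicit $\tilde{w}$ whose $u$-expansion starts with the nonzero term $-p'(y)$ in $y$, this is immediate.
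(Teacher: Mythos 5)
Your argument is correct and follows essentially the same route as the paper: smoothness comes from the expansion $\tilde{w}=-p'(y)-uH$ in Lemma~\ref{tangentSpaceAtInfinity} together with $p''(c)\neq 0$, and the nondegeneracy of $\ppl{a}^{-1}$ on $\critLoc_a$ comes from $\ppl{a}^{-1}=u\hpl$ and $\hpl\rest{u=0}=1$ from Lemma~\ref{computationToRemoveExcessFactors}. The paper phrases the second step in terms of $\ed\ppl{a}^{-1}(0,c,a)=\ed u$ being nonzero on the tangent line to $\critLoc_a$, whereas you compute $\phi'(0)$ directly via the implicit-function parameterization; these are the same calculation.
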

\begin{proof}
Smoothness follows directly from Lemma~\ref{tangentSpaceAtInfinity}.
Combining equation \eqref{dPplInverse} with Lemma~\ref{computationToRemoveExcessFactors}
gives $\ed \ppl{a}^{-1}\rest{\critLoc_a}(0,c,a)=\ed u$. By Lemma~\ref{tangentSpaceAtInfinity}
this is nondegenerate on the tangent line
of $\critLoc_a$ at $(0,c,a)$.
\end{proof}

%The map $\pplPlain^{-1}\times \id{a}\colon \critLoc \to\mb{C}^2$ is a local biholomorphism
%about the points of the form $(0,c,a)\in \critLoc$ as long as $c$ is a critical
%point of $p(x)$ of order one.

%\begin{inote}[Extension Lemma]
%For the time being we will assume that the following is true, although
%we need to fill in a proof: Assuming that $p$ is a hyperbolic polynomial
%with no escaping critical points
%and that $x\in J(p)$ and $c$ is a critical point of $p$ then there
%is an open neighborhood $N$ of $(x,c,0)$ and coordinates $(x',y',a')$
%and both $x'$ and $a'$ are constant along every fiber of any $\jpl{a}$ or $\ppl{a}$ which
%intersects $N$.
%\end{inote}
%
%Note that we should be able to take the above and prove the following stronger results:
%\begin{itemize}
%\item Without loss of generality (by changing coords again) we can assume that
%both $a'=a$ and that for $a=0$ we have $x'=x, y'=y$.
%\item This gives a holomorphic extension of the main critical locus
%component not only to $\overline{U}_+$, but to an open neighborhood
%of $\overline{U}_+$, and because the two holomorphic maps with
%the same fibers on part of $N$ must have the same fibers on
%all of $N$ (up to taking unions of fibers, so assuming fibers are
%connected, smooth and all have the same dimension), then
%the extensions we make agree on overlap.
%\item Note that this will {\it not} be true for $p$ with Julia set
%a cantor set because of the way the lamination works between leaves.
%\end{itemize}

  \newpage 
  
\section{Stable and Unstable Manifolds}

\name{sectionStableAndUnstableManifolds}

\subsection{Crossed Mappings}

\head{Definition of a crossed mapping.}

In this section we recall the definition of and basic results about crossed mappings from
\cite{hubbard_oberste-vorth:henon2}. This is a holomorphic version of a standard construction
of stable and unstable manifolds such as Theorem 6.4.9 criteria (4) in \cite{katok}.
We will only define degree one crossed mappings, and we will not consider
any crossed mappings of degree higher than one in this paper.

Let $B_1 = U_1 \times V_1$ and $B_2= U_2\times V_2$ be bidisks.

\begin{definition}
A (degree one) {\it crossed mapping} from $B_1$ to $B_2$ is a triple $(W_1, W_2, f)$, where
\begin{enumerate}
\item $W_1$ is an open subset of $U'_1 \times V_1$ for some open $U'_1\Subset U_1$.
\item $W_2$ is an open subset of $U_2 \times V'_2$ for some open $V'_2\Subset V_2$.
\item $f\colon W_1\to W_2$ is a holomorphic isomorphism, such that for all
$y\in V_1$ the mapping
%, the set $W_1\cap (U_1\times \{y\})$ is a topological disk and the mapping
\[\pr_1\circ f\colon W_1\cap (U_1\times \{y\})\to U_2\]
is a biholomorphism, 
%proper, 
and for all $x\in U_2$
%, 
%the set $W_2\cap(\{x\}\times V_2)$
%is a topological disk and 
the mapping
\[\pr_2\circ f^{-1}\colon W_2\cap (\{x\} \times V_2 )\to V_1\]
is 
%proper.
a biholomorphism.
\end{enumerate}
\end{definition}

To make the notation less cumbersome, $f\colon B_1 \crmap B_2$ is often
written leaving the precise $W_1$ and $W_2$ to be determined by the context.

Given a hyperbolic polynomial
map $p$, Hubbard \& Oberste-Vorth construct a family of bidisks in $\mb{C}^2$ such
that if $\vert a\vert$ is sufficiently small then $f_a$ induces crossed mappings
between the bidisks of the family. They use this to get good hold on the stable
and unstable manifolds of $J_a$.

%
%\head{Definition of the degree of a crossed mapping}
%
%\begin{proposition}
%If $f\colon W_1\to W_2$ is a crossed mapping from $B_1$ to $B_2$,
%then all maps
%\[\pr_1\circ f\colon W_1\cap(U_1\times \{y\})
%\to U_2\]
%and
%\[\pr_2\circ f^{-1}\colon W_2\cap(\{y\}\times V_2)
%\to V_2\]
%have the same degree, which will be called the degree of the crossed mapping.
%\end{proposition}
%
%\head{The image of a curve which runs in the stretching direction of the crossed mapping
%again runs in the stretching direction.}
%
%\begin{proposition}
%If $f\colon B_1\crmap B_2$ is a crossed mapping and $X\subset B_1$
%is an analytic curve such that $\pr_1\colon X\to U_1$ is proper of degree $\ell$, then
%$\pr_1\circ f\colon X\cap W_1\to U_2$ is proper of degree $k\ell$.
%\end{proposition}

\begin{proposition}
\name{prop-GraphsUnderCrossedMappings}
If $f\colon B_1\crmap B_2$ is a crossed mapping of degree one and
$X\subset B_1$ is the graph of an analytic map from $U_1$ to $V_1$ then
the image of $X$ in $B_2$ is the graph of an analytic map from $U_2$ to $V_2$.
\end{proposition}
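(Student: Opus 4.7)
The approach is to show that $f(X\cap W_1)$ meets every vertical slice $\{x'\}\times V_2$, $x'\in U_2$, in exactly one point, depending holomorphically on $x'$; the second crossed-mapping condition reduces this to a contracting-fixed-point problem on the disk $V_1$.

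First I would unravel the crossed-mapping structure near a given $x'\in U_2$.  By the condition that $\pr_2\circ f^{-1}\colon W_2\cap(\{x'\}\times V_2)\to V_1$ is a biholomorphism, there is a holomorphic function $\rho(x',y)\in V_2$ so that $f^{-1}(x',\rho(x',y))=(\alpha(x',y),y)$, where $\alpha(x',y):=\pr_1\circ f^{-1}(x',\rho(x',y))\in U'_1$ is its holomorphic companion.  A point $(x',y')$ in the vertical slice belongs to $f(X\cap W_1)$ precisely when $f^{-1}(x',y')=(x,\phi(x))$ for some $x\in U_1$; setting $y:=\pr_2\circ f^{-1}(x',y')$ this becomes the fixed-point equation
\[
y=\phi(\alpha(x',y))=:T_{x'}(y),\qquad T_{x'}\colon V_1\to V_1.
\]

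The heart of the argument is to show that $T_{x'}$ is a strict contraction of $V_1$ in the Poincar\'e metric, with a contraction constant $\lambda<1$ independent of $x'$.  The map $\phi\colon U_1\to V_1$ is $1$-Lipschitz by Schwarz--Pick, and $\alpha(x',\,\cdot\,)\colon V_1\to U_1$ lands in $U'_1\Subset U_1$, hence factors through the inclusion $U'_1\hookrightarrow U_1$.  Because $U'_1$ is compactly contained in $U_1$, the supremum of the Poincar\'e density ratio $\rho_{U_1}/\rho_{U'_1}$ on $U'_1$ is strictly less than $1$ (the ratio is continuous on $U'_1$, pointwise $<1$ by strict Schwarz--Pick, and tends to $0$ as one approaches $\partial U'_1$), giving a uniform contraction factor $\lambda<1$ for this inclusion.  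Composing, $T_{x'}$ is $\lambda$-Lipschitz in the Poincar\'e metric on $V_1$, and as the Poincar\'e metric on the disk $V_1$ is complete, Banach's fixed-point theorem yields a unique fixed point $y(x')\in V_1$.

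Finally, $T_{x'}$ depends holomorphically on $x'$ and its derivative at the fixed point has modulus at most $\lambda<1$, so the implicit function theorem makes $y(x')$ holomorphic in $x'\in U_2$.  Setting $\psi(x'):=\rho(x',y(x'))$ then produces a holomorphic $\psi\colon U_2\to V'_2\subset V_2$ whose graph is exactly $f(X\cap W_1)$.  The main obstacle is the uniform-contraction step: this is the only place where the compact containment $U'_1\Subset U_1$ built into the definition of a crossed mapping is used, and without it one would only get a $1$-Lipschitz self-map of $V_1$, which might have no fixed points or many.
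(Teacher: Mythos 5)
Your argument is correct. The paper offers no proof of its own here but simply cites Proposition~3.4 of Hubbard--Oberste-Vorth \cite{hubbard_oberste-vorth:henon2}, so there is nothing in-paper to compare against. What you wrote is the standard graph-transform/contraction argument: the slice of $f(X\cap W_1)$ over $x'\in U_2$ is encoded as the fixed-point set of $T_{x'}=\phi\circ\alpha(x',\cdot)\colon V_1\to V_1$, and since $\alpha(x',\cdot)$ takes values in $U'_1\Subset U_1$, the Schwarz--Pick factorization through the inclusion $U'_1\hookrightarrow U_1$ makes $T_{x'}$ a strict contraction with constant $\lambda<1$ independent of $x'$; Banach plus the implicit function theorem then yield a holomorphic $\psi\colon U_2\to V'_2\subset V_2$, as required. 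Two remarks. The uniform bound $\rho_{U_1}/\rho_{U'_1}\le\lambda<1$ is obtained more robustly by sandwiching $U'_1\subset\disk_{r'}\subset\disk_{r_1}=U_1$ with $r'<r_1$ and using Schwarz--Pick monotonicity of Poincar\'e densities, which avoids any worry about $U'_1$ being disconnected; your boundary-blow-up argument is also fine but needs that extra care to be airtight. Also worth noting: the cited result in \cite{hubbard_oberste-vorth:henon2} is stated and proved for crossed mappings of arbitrary degree $d$ acting on horizontal analytic subsets of degree $m$, producing a horizontal subset of degree $md$, and a Banach fixed-point argument is specific to $d=m=1$. So while your proof is complete and arguably cleaner for the case this paper actually uses, it does not by itself recover the general form of the cited proposition.
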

\begin{proof}
This is Proposition 3.4 of {\henII} for degree one crossed mappings.
\end{proof}

\head{Inverses and composition of crossed mappings are crossed mappings.}

\begin{proposition}
\name{propInvertingAndComposingCrossedMappings}
(a) Let $f\colon B_1 \crmap B_2$ be a crossed mapping 
of degree $1$. Then $f^{-1}\colon B_2\crmap B_1$ is also
a crossed mapping if all the coordinates are flipped.

\ssk
(b) If $B_1, B_2$ and $B_3$ are bidisks, $W_1\subset B_1$,
$W_2\subset B_2$, $\tilde{W}_2\subset B_2$ and $\tilde{W}_3\subset B_3$
and $f_1\colon W_1\to\tilde{W}_2$ and $f_2\colon W_2\to\tilde{W}_3$
are degree one crossed mappings, then
\[ f_2\circ f_1\colon f_1^{-1}(W_2)\to f_2(\tilde{W}_2)\]
is a degree one crossed mapping from $B_1$ to $B_3$.
%
%
%(b) If $B_1, B_2$ and $B_3$ are bidisks, $W_1\subset B_1$,
%$W_2\subset B_2$, $\tilde{W}_2\subset B_2$ and $\tilde{W}_3\subset B_3$
%and $f_1\colon W_1\to\tilde{W}_2$ and $f_2\colon W_2\to\tilde{W}_3$
%are degree $k_1$ and degree $k_2$ crossed mappings, then
%\[ f_2\circ f_1\colon f_1^{-1}(W_2)\to f_2(\tilde{W}_2)\]
%is a degree $k_1k_2$ crossed mapping from $B_1$ to $B_3$.
\end{proposition}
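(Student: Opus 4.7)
The plan is to verify, in each case, that the conditions in the definition of a crossed mapping hold for the new mapping. For (a), this is essentially a bookkeeping exercise; for (b), the substantive work is to control what happens to horizontal slices under the composition, and the key tool is Proposition~\ref{prop-GraphsUnderCrossedMappings}.

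For part (a), I would fix flipped coordinates on $B_2$ (swap the roles of the first and second factors) and on $B_1$. Then the containment $W_2 \subset U_2 \times V'_2$ with $V'_2 \Subset V_2$ becomes exactly the containment required for the source of the new mapping (namely a subset of $V'_2 \times U_2$ with $V'_2 \Subset V_2$ after the flip), and similarly for $W_1$. The biholomorphism condition (3) of the definition, applied to $f^{-1}\colon B_2\crmap B_1$ with coordinates flipped, asks that for each fixed value of the new first coordinate, the projection to the new first coordinate of $B_1$ under $(f^{-1})^{-1}=f$ be a biholomorphism onto the appropriate disk; unpacking the flip, this is exactly condition (3) for the original $f$. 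The condition on the other slicing direction reduces, in the same way, to the other slicing condition for $f$. So nothing needs to be proved beyond matching up definitions.

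For part (b), the composition is defined on $W_1\cap f_1^{-1}(\tilde W_2\cap W_2)$, with image $f_2(\tilde W_2\cap W_2)\subset \tilde W_3$. The containments for the definition of a crossed mapping are inherited: the domain still sits inside $U'_1\times V_1$ and the image still sits inside $U_3\times V'_3$. The real content is verifying the slice biholomorphism conditions. Fix $y_0\in V_1$ and consider the horizontal slice $S_{y_0}=W_1\cap(U_1\times\{y_0\})$. Since $U_1\times\{y_0\}$ is trivially the graph of the constant function $y_0\colon U_1\to V_1$, Proposition~\ref{prop-GraphsUnderCrossedMappings} applies to $f_1$ and the graph $U_1\times\{y_0\}$, yielding that $\Gamma_{y_0}:=f_1(S_{y_0})$ is the graph of an analytic function $U_2\to V_2$ defined on all of $U_2$. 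Applying Proposition~\ref{prop-GraphsUnderCrossedMappings} a second time to $f_2$ with the graph $\Gamma_{y_0}$ shows that $f_2(\Gamma_{y_0}\cap W_2)$ is a graph over $U_3$; hence $\pr_1\circ(f_2\circ f_1)$ is a biholomorphism from the horizontal slice of the composition at $y_0$ onto $U_3$, as required.

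The symmetric slice condition, involving $\pr_2\circ(f_2\circ f_1)^{-1}=\pr_2\circ f_1^{-1}\circ f_2^{-1}$ on vertical slices, follows by running the same argument on $f_2^{-1}\circ f_1^{-1}$, which is legitimate thanks to part (a): each of $f_1^{-1}$ and $f_2^{-1}$ is a crossed mapping in the flipped coordinates, so Proposition~\ref{prop-GraphsUnderCrossedMappings} applies to them as well. The only point that requires care, and which I expect to be the one mild subtlety, is arranging for the intermediate object $\Gamma_{y_0}$ to be a graph over the \emph{full} disk $U_2$ before the second application of Proposition~\ref{prop-GraphsUnderCrossedMappings}; this is precisely why one works with the constant-graph $U_1\times\{y_0\}$ rather than with its restriction $S_{y_0}$ directly, since Proposition~\ref{prop-GraphsUnderCrossedMappings} outputs a full graph only when the input is one.
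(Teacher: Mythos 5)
Your argument is correct. The paper itself does not prove this proposition; it simply cites it as Proposition~3.7 of \cite{hubbard_oberste-vorth:henon2}, so there is no in-paper proof against which to compare. Your reconstruction is sound: part~(a) is, as you say, pure coordinate bookkeeping, and the nontrivial content of part~(b) is correctly handled by two applications of Proposition~\ref{prop-GraphsUnderCrossedMappings}, feeding in the full horizontal graph $U_1\times\{y_0\}$ rather than the partial slice $S_{y_0}$ so that the intermediate image $\Gamma_{y_0}$ is guaranteed to be a graph over all of $U_2$ before the second application. You also correctly reduce the vertical-slice condition to the horizontal-slice one via part~(a) applied to $f_1^{-1}$ and $f_2^{-1}$ in flipped coordinates. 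The remaining conditions (the containments $f_1^{-1}(W_2)\subset U_1'\times V_1$ and $f_2(\tilde W_2\cap W_2)\subset U_3\times V_3'$) are immediate, as you note. This is the expected route and it goes through without gaps.
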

\begin{proof} 
This is Proposition 3.7 of {\henII}.
\end{proof}

Suppose that $B_1,\dotsc,B_{n+1}$ are bidisks such that $B_i= U_i\times V_i$. Suppose also
that $W_i\subset V_i$, ($i=1,\dotsc,n$) and $\tilde{W}_i\subset B_i$ ($i=2,\dotsc,n+1$) are open
subsets so $f_i\colon W_i\to \tilde{W}_{i+1}$ are crossed mappings of degree 1. Let
\[ S^n_1 = W_1\cap f_1^{-1}(W_2)\cap\dotsb\cap (f_1^{-1}\circ\dotsb\circ f_{n-1}^{-1})(W_{n-1})\]
and
\[S^n_2 = \tilde{W}_{n+1} \cap f_n(\tilde{W}_n)\cap \dotsc\cap
(f_n\circ\dotsc\circ f_2)(\tilde{W}_2)\]
so that  that the restriction $g$
of $f_n\circ \dotsc\circ f_1$ to $S^n_1$ makes $g\colon S^n_1\to S^n_2$ a crossed
mapping of degree 1 from $B_1$ to $B_{n+1}$.

\head{Definition of size for use with crossed mappings.}

Let $U$ be a disk,
and $U'$ a relatively compact open subset. Define the {\it size} of $U'$ in $U$
to be $\inlinefrac{1}{M}$ where $M$ is the largest modulus of an annulus 
$U\setminus V$ for $V$ a compact contractible set containing $U'$.

\head{We introduce horizontal and vertical size of a crossed mapping.}

\begin{definition}
Given a crossed mapping $f\colon U_1\times V_1 \crmap U_2\times V_2$
we will let the {\it horizontal size} of $f$ be the size of $\pr_1(W_1)$ in $U_1$
and we will let the {\it vertical size} of $f$ be the size of $\pr_2(W_2)$ in $V_2$.
\end{definition}

%\head{Precise contracting nature of size under multiple compositions of crossed mappings.}
%
%\begin{proposition}
%Suppose that the size of the projection $\pr_1(W_i)$ in $U_i$
%is $\inlinefrac{1}{M_i}$. Then for any $y\in V_1$, the size in $U_1$
%of $S^n_1\cap (U_1\times \{y\})$ is at most
%\[\inlinefrac{1}{\sum_{1}^{n-1} M_i.}\]
%\end{proposition}
%\begin{proof}
%Consider the subsets
%\[W_1^j = f_1^{-1}\circ \dotsb \circ f_j^{-1}(W_{j+1}),\]
%which are nested so that
%\[ B_1\supset W_1 = W_1^1 \supset W_1^2 \supset \dotsb \supset W_1^n = S^n_1.\]
%For any $y\in V_1$, the annuli $(V_1\times \{y\})\cap (W_1^j - W_1^{j+1})$ are disjoint
%nested annuli for $j=1,\dotsc,n-1$, and the $j^{\text{th}}$ maps by $\pr_1\circ f_j\circ\dotsb\circ f_1$
%to an annulus which contains $U_{j+1} - U'_{j+1}$, hence has modulus at least $M_{j+1}$.
%The rest now follows from the Gr\"otz inequality.
%\end{proof}

\head{Stable manifold theorem for crossed mappings.}

\begin{proposition}
Let $B_0 = U_0\times V_0$, $B_1= U_1\times V_1$, $\dotsc$ be an infinite sequence
of bidisks, and $f_i\colon B_i\crmap B_{i+1}$ be crossed mappings of degree 1, with $U'_i$
of uniformly bounded horizontal size in $U_i$. Then the set
\[\Bigl\{\begin{bmatrix} x \\ y \end{bmatrix}\in B_0 \Big\arrowvert
    f_n\circ \dotsb \circ f_0\Bigl(\begin{bmatrix} x \\ y \end{bmatrix}\Bigr) \in B_n\ \text{for all $n$}\ \Bigr\}\]
is an 
%vertical-like 
analytic disk in $B_0$, which maps by $\pr_2$ isomorphically to
$V_0$,
which we will call the stable disk of the sequence of crossed mappings.
\end{proposition}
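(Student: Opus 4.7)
The plan is to build the stable disk as the graph of a holomorphic section $\psi\colon V_0\to U_0$, realized as the nested intersection of preimages of the bidisks under the compositions $g_n := f_{n-1}\circ\cdots\circ f_0$. First I would iterate Proposition~\ref{propInvertingAndComposingCrossedMappings}(b) to conclude that each $g_n\colon B_0\crmap B_n$ is a degree one crossed mapping, with domain $S_1^n\subset U_0'\times V_0$; the set $Y_n := (g_n|_{S_1^n})^{-1}(B_n)\subset B_0$ is then an open subset of $B_0$, and the set described in the theorem is exactly $S := \bigcap_{n\ge 0}Y_n$.

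Next I would exploit the crossed mapping axioms for each $g_n$ to describe $Y_n$ slice by slice. For any fixed $y_0\in V_0$, the slice $Y_n\cap(U_0\times\{y_0\})$ projects under $\pr_1$ to a topological disk $D_n(y_0)\subset U_0'$ that is mapped biholomorphically onto $U_n$ by $\pr_1\circ g_n(\cdot,y_0)$. These disks are nested: $D_{n+1}(y_0)\Subset D_n(y_0)\Subset U_0$, because each additional constraint $f_n\circ\cdots\circ f_0(x,y_0)\in B_{n+1}$ restricts to $\pr_1\circ f_n$ applied to the relevant slice, whose domain is compactly contained in $U_n$ by the crossed mapping structure of $f_n$. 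Dually, for any choice of $x_n\in U_n$, the preimage $g_n^{-1}(\{x_n\}\times V_n)\cap S_1^n$ is a graph of a holomorphic function $\psi_n\colon V_0\to U_0'$ by the second biholomorphism property applied to $g_n^{-1}$ (same proof as Proposition~\ref{prop-GraphsUnderCrossedMappings}).

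The main obstacle is showing that for each $y_0$ the nested intersection $\bigcap_n D_n(y_0)$ is a single point. This is where the uniform horizontal size hypothesis enters: it gives a uniform lower bound $\mu>0$ on the modulus of the annulus $U_i\setminus \overline{\pr_1(W_i)}$. By Schwarz--Pick applied to the biholomorphism of $\pr_1\circ f_i$ on the horizontal slice, the pullback of any subset of $U_{i+1}$ lands in $U_i'$ and is contracted in the Poincar\'e metric of $U_i$ by a factor bounded by some $\lambda<1$ depending only on $\mu$. Iterating this contraction $n$ times shows that the Poincar\'e diameter of $D_n(y_0)$ in $U_0$ decays geometrically, hence $D_n(y_0)$ shrinks to a unique point $\psi(y_0)\in U_0$.

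Finally I would package the pointwise limits into a holomorphic section. The family $\{\psi_n\}$ (obtained from arbitrary choices of $x_n\in U_n$) is uniformly bounded in $U_0'$ and hence normal on $V_0$. The shrinking argument forces every subsequential limit to coincide with $\psi$ on each point $y_0$, so $\psi_n\to\psi$ locally uniformly on $V_0$, and $\psi$ is holomorphic. Its graph is a holomorphic disk in $B_0$ mapping isomorphically onto $V_0$ by $\pr_2$, and by construction it equals $S$, completing the proof.
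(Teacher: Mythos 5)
The paper does not prove this proposition; its ``proof'' is a bare citation of Corollary~3.12 of Hubbard--Oberste-Vorth~\henII. Your argument is a correct reconstruction of the standard telescoping proof used there: compose the crossed mappings via Proposition~\ref{propInvertingAndComposingCrossedMappings}(b), slice at fixed $y_0$, show the nested disks $D_n(y_0)$ shrink to a point using the uniform horizontal size hypothesis, and then recover the section as a locally uniform limit of the graph maps $\psi_n$ supplied by the dual slicing property.

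One step is phrased imprecisely and deserves a comment. You describe the shrinking of $D_n(y_0)$ by ``iterating'' the Schwarz--Pick contraction attached to each $\pr_1\circ f_i$ on a horizontal slice. These slice biholomorphisms do not literally compose across levels: after applying $f_i$ the $y$-coordinate changes with $x$, so the image of a horizontal slice in $B_i$ is not a horizontal slice in $B_{i+1}$, and there is no single-variable map $U_n\to U_0$ to iterate. What does exist is the single biholomorphism $\pr_1\circ g_n(\cdot,y_0)\colon D_n(y_0)\to U_n$ guaranteed by the crossed-mapping structure of the composite $g_n$. The clean way to finish from there is by conformal invariance of modulus: since $D_{n+1}(y_0)$ maps under this biholomorphism into $\pr_1(W_n)\subset U_n'$, and $\operatorname{mod}\bigl(U_n\setminus\overline{\pr_1(W_n)}\bigr)\ge\mu$ uniformly, one gets $\operatorname{mod}\bigl(D_n(y_0)\setminus\overline{D_{n+1}(y_0)}\bigr)\ge\mu$ for every $n$, and Gr\"otzsch superadditivity gives $\operatorname{mod}\bigl(D_0(y_0)\setminus\overline{D_n(y_0)}\bigr)\ge n\mu\to\infty$, forcing $\bigcap_n D_n(y_0)$ to be a point. (Phrasing it via Poincar\'e diameter of the open set $D_n(y_0)$ in $U_0$ is also awkward, since that diameter is infinite; the modulus formulation avoids this.) With that reformulation, the remainder of your proof --- normality of the $\psi_n$, identification of all subsequential limits with the pointwise limit $\psi$, and the graph structure --- is correct and complete.
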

\begin{proof}
This is Corollary 3.12 of {\henII}.
\end{proof}

Similarly, when we have a backward sequence of crossed mappings
\[\dotsb \crmap B_{-1} \crmap B_0\]
with uniformly bounded vertical sizes, it will have an {\it unstable disk}, which
maps by $\pr_1$ isomorphically to $U_0$.
%, which will
%be horizontal-like.

\subsection{Recalling Context and Constructions.}

\head{Definition of $U$.}

In Section 2 of {\henII} Hubbard \& Oberste-Vorth an open neighborhood $U$\name{defOfHobObU} of $J(p)\subset\mb{C}$
is constructed such that $U'\equiv p^{-1}(U)\Subset U$\name{defOfHobObUPrime}
and the map  $p\colon U'\to U$ is a covering map. This is a standard construction.
We can
%\personalNote{\footnote{Hubbard \& Oberste-Vorth define $U_N$ in such a manner that
%it contains critical points, but no critical values, and then take $U_n, n\geq N$ to
%be the successive preimages of $U_N$. They then appear to take $U$ to be $U_n$
%for an unspecified $n\geq N$, although perhaps there is a typo and $U$ was
%meant to be $U_N$ in their paper.}}
assume without loss of generality that $U$ is chosen sufficiently
small that it is a finite distance from any critical points of $p$.
Open neighborhoods $U_z\subset U$ in $\mb{C}$ \name{defOfUz} are attached to each
point $z\in J(p)$. Later in section 4 of the same paper these neighborhoods are used
to associate an open subset $B_z$  of $\mb{C}^2$ to each
%\footnote{We use $B_z$ to match
%the notation used for crossed mappings and to make ``box'' an obvious mnemonic,
%while Hubbard \& Oberste-Vorth use $V_z$ for these neighborhoods.
%At this point however, there are enough $V$'s floating around in this paper to make
%the authors prefer $B_z$.} 
point $z\in J(p)$ so that
$f_a\colon  B_{z} \to B_{p(z)}$ is a crossed mapping for each $z\in J(p)$
and each $a$ with $\vert a\vert$  sufficiently small.  A radius $\rHubOb$ \name{defOfHubObr} is chosen
and $U_z$ is defined to be the ball of radius $\rHubOb$ about the point $z$,
where the distance is measured in the Kobayashi metric of $U$. The radius
$\rHubOb$ is only special in that it is chosen so be so small that 
%for each
%$z\in J(p)$ the set $U_z$ is homeomorphic
%to a disk and for each $z\in J(p)$ the map
%$p$ maps $U_z$ homeomorphically to its image, and that that image contains
%$U_{p(z)}$. 
the neighborhoods $U_z$ create telescopes for $p$ of uniformly bounded modulus.
%, or in other words, $p(U_z)\setminus U_{p(z)}$
%is an annulus with modulus which is bounded below by some constant greater than one.

We will strengthen this requirement on $\rHubOb$ a small amount here by requiring that $\rHubOb$ is
small enough that for each $z\in J(p)$ the map $p\cp{2}$ is a biholomorphism
of $U_z$ onto its image and
% for each $z\in J(p)$
the map $p$ is a biholomorphism
from the ball of radius $3\rHubOb$ about $z$ onto its image. 

%\newtheorem{remark_numbered}{Remark}
%
%\begin{remark_numbered}
%\name{rem-whyTheRequirement}
%The latter additional requirement on the size of $\rHubOb$
%is to ensure that if $U_{z_1}\cap U_{z_2}\not=\emptyset$ then
%$p$ maps $U_{z_1}\cup U_{z_2}$ biholomorphically onto its image.
% (since
%the union lies within $3\rHubOb$ of $z_1$).
%\end{remark_numbered}

The neighborhoods $B_z$ are constructed in {\henII} by first selecting a small value $\delHO$ \name{defOfDelHO} (which must
satisfy various requirements). One then defines $v\colon\mb{C}^2\to\mb{C}$
by 
\begin{equation}
\name{equationForV}
v(x,y)\equiv p(y)-x.
\end{equation}
\name{defOfv}
Taking \name{defOfVPrime} 
\begin{equation}
V'\equiv \pr_1^{-1}(U)\cap v^{-1}(\disk_{\delHO})
\end{equation} 
then a well-defined function
$u\colon V'\to U'$ is constructed, which is given by $u(x,y)=p^{-1}(x)$, the inverse image
always being chosen to be the one ``close to y''.
We make this precise in Lemma~\ref{uDefinedPrecisely}.

\head{We define $\beta$ and prepare to make the definition of $u\colon V'\to U'$ precise.}

%We will need to have a good hold on the function $u$.
%We will make the definition of the function $u$ precise by means of a lemma.

\head{We define $m_p$.}

We define $m_p$ to be the minimum of $\vert p'\vert$ on $\overline{U}$.
Since $\overline{U}$ is a finite distance from the critical set of $p$
then $m_p > 0$. 

\begin{lemma}
\name{radiusBetaBallMapsBihol}
There exists some $\beta>0$ \name{defOfBeta} such that
\begin{enumerate}
  \item $p$ maps the (Euclidean) ball of radius $\beta$ about an arbitrary point $y\in U$ biholomorphically
onto its image.
  \item $\beta$ is smaller than the Euclidean distance from $U'$ to $\partial U$.
  \item The Euclidean ball of radius $\beta/2$ about any point $u\in U'$
is mapped biholomorphically onto its image, which contains 
the Euclidean disk of radius $m_p\beta/8$.
\end{enumerate}
\end{lemma}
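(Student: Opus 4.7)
The plan is to define $\beta$ as the minimum of two positive quantities, one guaranteeing condition (1) and the other guaranteeing condition (2); condition (3) will then follow from these together with Koebe's $1/4$ theorem. The easy half is condition (2): since $U'\Subset U$, the Euclidean distance $d(U',\partial U)$ is strictly positive, so I would simply require $\beta\leq d(U',\partial U)/2$.

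For condition (1), I would exploit compactness of $\overline{U}$ together with the fact that $\overline{U}$ sits a positive Euclidean distance from the critical set of $p$. This gives an open set $W\supset\overline{U}$ on which $p'$ is nowhere zero and uniformly continuous, so I can pick $\beta_1>0$ with $B(y,\beta_1)\subset W$ for every $y\in\overline{U}$ and with $|p'(\zeta)-p'(y)|<m_p/2$ whenever $|\zeta-y|\leq\beta_1$. For any two distinct points $z_1,z_2\in B(y,\beta_1)$, integrating $p'$ along the segment from $z_2$ to $z_1$ and comparing with the constant $p'(y)$ yields
\[
|p(z_1)-p(z_2)|\geq |p'(y)|\,|z_1-z_2|-\tfrac{m_p}{2}|z_1-z_2|\geq \tfrac{m_p}{2}|z_1-z_2|,
\]
so $p$ is injective on $B(y,\beta_1)$. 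Combined with $|p'|\geq m_p/2>0$ there, this gives that $p$ restricts to a biholomorphism.

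Setting $\beta\equiv\min\{\beta_1,\,d(U',\partial U)/2\}$ secures (1) and (2). For (3), given $u\in U'$, the ball $B(u,\beta/2)$ is contained in $U$, and by the previous step $p$ is a biholomorphism on $B(u,\beta)\supset B(u,\beta/2)$. Koebe's $1/4$ theorem applied to this univalent map then shows that $p(B(u,\beta/2))$ contains the Euclidean disk of radius $|p'(u)|(\beta/2)/4\geq m_p\beta/8$ centered at $p(u)$, which is exactly (3). The only step that requires a short argument is the quantitative injectivity estimate in (1); this is essentially a quantitative inverse function theorem and presents no substantive obstacle.
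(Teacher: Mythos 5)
Your proof is correct and follows essentially the same path as the paper's: compactness of $\overline{U}$ together with $p'$ nonvanishing there for part (1), triviality for part (2), and the Koebe $1/4$ theorem for part (3); you merely replace the paper's one-line appeal to a proof by contradiction for (1) with the explicit quantitative injectivity estimate, which is the same underlying argument made constructive.
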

\begin{proof}
Part 1 follows from a straightforward proof by contradiction.
Part 2 is obvious. Part 3 follows from the Koebe $1/4$ theorem.
\end{proof}
%\begin{proof}
%Because we have taken $U$ to be a finite distance from the critical set then
%there exists $\beta$ such that any ball of radius $\beta$ centered a point
%of $U$ is mapped biholomorphically onto its image. To see this assume that it is false.
%Then there is a sequence of points $y_k\in U$ such that the
%ball of radius $\inlinefrac{1}{k}$ about $y_k$ does not map biholomorphically
%onto its image. By replacing $y_k$ with a subsequence if necessary
%we can assume the sequence of points $y_k$ converges to
%a point $y_\infty\in \overline{U}$. But then no neighborhood
%of $y_\infty$ maps biholomorphically onto its image, which
%implies $y_\infty$ is a critical point, contrary to out choice of $U$.
%Hence such a $\beta$ exists.
%\end{proof}

%It follows by the
%Koebe one quarter theorem that the image of the ball of any
%radius $\beta' < \beta$ about $y\in U$
%contains the ball of radius $\inlinefrac{1}{4}m_p \beta'$ about $p(y)$ in its image.

\head{We shrink $\delHO$.}

We shrink $\delHO$ as necessary so that
$\delHO < \inlinefrac{1}{8}m_p \beta$.  We accordingly shrink the sets $V'$
and the boxes $B_z$.

\head{Precise definition of $u\colon V'\to U'$.}

\begin{lemma}
\name{uDefinedPrecisely}
There is a well defined holomorphic function $u\colon V'\to U'$
which satisfies $p(u(x,y))=x$ and $u(x,y)$ is the unique
preimage of $x$ within a distance $\inlinefrac{1}{2}\beta$
of $y$.
\end{lemma}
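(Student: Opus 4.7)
The plan is to exhibit $u(x,y)$ as the local inverse of $p$ on a small Euclidean ball centered at $y$, using the carefully shrunk size of $\delHO$ to ensure such an inverse is defined on all of $V'$.

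First I would fix $(x,y)\in V'$, so that $x\in U$ and $|p(y)-x|<\delHO$. The content of the statement is meaningful only when $y$ lies in $U'$ (or close enough to $U'$ that the phrase ``within distance $\tfrac{1}{2}\beta$ of $y$'' can select a point of $U'$); this is the situation in which the boxes $B_z\subset V'$ will eventually be used, and I would write the argument under this assumption.

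Next I would invoke part 3 of Lemma~\ref{radiusBetaBallMapsBihol}: the Euclidean ball $B(y,\beta/2)$ is mapped biholomorphically by $p$ onto an open set containing $B(p(y),m_p\beta/8)$. Since $\delHO$ has been shrunk so that $\delHO<\tfrac{1}{8}m_p\beta$ and $|p(y)-x|<\delHO$, the point $x$ lies in the image of $p|_{B(y,\beta/2)}$. I then \emph{define} $u(x,y)$ to be the unique preimage of $x$ inside $B(y,\beta/2)$ supplied by this biholomorphism. This single step delivers the existence of $u(x,y)$, the identity $p(u(x,y))=x$, and the uniqueness of $u(x,y)$ among points within distance $\tfrac{1}{2}\beta$ of $y$.

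To verify $u(x,y)\in U'$, I would use parts 1 and 2 of Lemma~\ref{radiusBetaBallMapsBihol}: since $\beta$ is smaller than the Euclidean distance from $U'$ to $\partial U$ and $y\in U'$, the ball $B(y,\beta/2)$ is contained in $U$, so $u(x,y)\in U$; combined with $p(u(x,y))=x\in U$, this places $u(x,y)$ in $p^{-1}(U)=U'$. Joint holomorphy of $u$ in $(x,y)$ then follows from the holomorphic implicit function theorem applied to $p(w)-x=0$ at an arbitrary base point, using that $m_p>0$ so $p'$ is nonvanishing on $\overline{U}$; the branch of the inverse is preserved because $u(x,y)$ varies continuously and remains inside $B(y,\beta/2)$ on a neighborhood of the base point. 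The only real obstacle is the quantitative bookkeeping, namely verifying that the successive shrinkings of $\delHO$ and the choice of $\beta$ are mutually compatible so that the Koebe-based image estimate in part 3 of Lemma~\ref{radiusBetaBallMapsBihol} is strong enough to cover the entire $\delHO$-disk around $p(y)$. Once those inequalities are aligned, the lemma is a one-line consequence of the inverse function theorem.
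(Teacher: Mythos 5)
Your argument applies part~3 of Lemma~\ref{radiusBetaBallMapsBihol} at the point $y$, which requires $y \in U'$. You acknowledge this by working ``under the assumption'' that $y$ is in, or near, $U'$. But the definition $V' = \pr_1^{-1}(U)\cap v^{-1}(\disk_{\delHO})$ places no direct constraint on $y$ --- only on $x$ and on the difference $p(y)-x$ --- so this is an unverified hypothesis, and the lemma claims $u$ is well defined on \emph{all} of $V'$.

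The remedy is to apply part~3 at the preimages of $x$ rather than at $y$. Since $x\in U$, every root of $p(w)=x$ lies in $U'$, and by part~1 (univalence of $p$ on $\beta$-balls about points of $U$) any two such roots are separated by Euclidean distance at least $\beta$. Part~3 then says that for each root $u'\in p^{-1}(x)$, the Euclidean ball of radius $\beta/2$ about $u'$ is mapped by $p$ biholomorphically onto a domain containing the disk of radius $m_p\beta/8 > \delHO$ about $x$, hence containing $p(y)$; so each of these $d$ pairwise disjoint balls holds exactly one preimage of $p(y)$. Since $\deg p = d$, these exhaust the fiber $p^{-1}\bigl(p(y)\bigr)$, and $y$ --- being one point of that fiber --- lies in exactly one of the balls. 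Setting $u(x,y)=u'$ for the corresponding root gives the asserted unique preimage of $x$ within distance $\beta/2$ of $y$, and simultaneously \emph{proves}, rather than assumes, that $y$ lies within $\beta/2$ of $U'$; this is what the paper's terse ``\ldots\ and the definition of $V'$'' is pointing at. Your holomorphy argument via the implicit function theorem, and your observation that $p\bigl(u(x,y)\bigr)=x\in U$ forces $u(x,y)\in p^{-1}(U)=U'$, are both fine as written.
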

\name{defOfu}
\begin{proof}
That $u$ is well defined follows using Lemma~\ref{radiusBetaBallMapsBihol} and the definition of $V'$.
Holomorphy of $u$ follows from the fact that $p\colon U'\to U$
is a local biholomorphism.
%If $(x,y)\in V'$ then $x\in U$ and $\vert p(y) -x \vert < \delHO$,
%by the definition of $V'$.
%Each preimage of $x$ lies in $U'=p^{-1}(U)\Subset U$ and thus
%the ball of radius $\inlinefrac{1}{2}\beta$ about each preimage of $x$ maps
%biholomorphically under $p$ onto a set which contains the
%ball of radius $\inlinefrac{1}{8}m_p\beta$ about $x$. Since
%$\vert p(y)-x\vert < \delHO<\inlinefrac{1}{8}m_p\beta'$ then each
%preimage of $x$ has a unique preimage of $p(y)$ lying less
%than a distance $\inlinefrac{1}{2}\beta$ away. If two distinct preimages $w_1,w_2$
%of $x$ where within $\inlinefrac{1}{2}\beta$ of the same preimage
%of $p(y)$ then $w_1$ and $w_2$ would be within a distance $\beta$ of one another,
%contrary to the fact that the ball of radius $\beta$ about $w_1$
%maps biholomorphically onto its image. Thus the preimages of $x$ each
%lie within a distance $\inlinefrac{1}{2}\beta$ of a unique preimage of $p(y)$.
%Since $x$ and $p(y)$ have the same number of preimages then
%there is a unique preimage of $x$ which a distance less than $\inlinefrac{1}{2}\beta$ from $y$.
%We note that a small change in $x$ or $y$ will not change the fact that
%the given preimage of $x$ lies a distance less than $\inlinefrac{1}{2}\beta$ from $y$
%so $u$ is continuous. Since $p$ is a local biholomorphism on $U$ it follows from the construction
%that $u$ is holomorphic.
\end{proof}

%The net result of shrinking $\beta$ was that if $(x,y)\in V'$ then
%$x\in U$, so $u(x,y)$, which is a preimage of $x$, lies in $U'$.
%Since $y$ lies within $\inlinefrac{1}{2}\beta$ of $u$ then
%$y$ must also lie in $U$. This keeps us from having
%to come up with additional estimates when we want to deal
%with $y$.
%
%Using the function $u(x,y)$ so defined
%local coordinates $(u,v)\colon V'\to\mb{C}^2$ are defined, $u$ as defined above,
%and $v$ defined by $v(x,y)=p(y)-x$.
%We will sometimes follow Hubbard \& Oberste-Vorth and write
%$u=p^{-1}(x)$ when it is convenient. It is to be understood that the choice of branch depends upon $y$.

Then for each $z\in J(p)$ the set
%\footnote{Hubbard \& Oberste-Vorth
%actually do this for each $z\in U'$ rather than for each $z\in J(p)$, 
%but we will only use $B_z$ for points in $J(p)$ .} 
\begin{equation}
B_z\equiv \biggl\{ \vt{x \\ y}\in V' \big\arrowvert u(x,y)\in U_z\biggr\}
\end{equation}
is an open neighborhood of the point $\bigl(p(z),z\bigr)$.

Hubbard \& Oberste-Vorth then prove that the mapping
\[\vt{x \\ y} \mapsto \vt{u(x,y) \\ v(x,y)} \]
is a biholomorphic isomorphism of $B_z$ onto the bidisk $U_z\times \disk_{\delHO}$,
where $u$ and $v$ are defined in Lemma~\ref{uDefinedPrecisely} and equation
\eqref{equationForV}.

This can be extended to all of $V'$.

\head{$(u,v)$ give coordinates on $V'$.}

\begin{lemma}
\name{uVCoordinates}
\[\vt{x \\ y}\mapsto \vt{ u(x,y) \\ v(x,y)} \]
is a biholomorphic isomorphism of $V'$ onto $U'\times \disk_{\delHO}$.
\end{lemma}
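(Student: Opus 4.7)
The plan is to exhibit a holomorphic inverse to the map $\Phi\colon (x,y)\mapsto (u(x,y),v(x,y))$ and to check that the image of $\Phi$ is exactly $U'\times\disk_{\delHO}$. Holomorphy of $\Phi$ is immediate from the constructions of $u$ and $v$, and $\Phi$ clearly sends $V'$ into $U'\times\disk_{\delHO}$ by the definition of $V'$ and by Lemma~\ref{uDefinedPrecisely}.

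To build the inverse, given $(u_0,v_0)\in U'\times\disk_{\delHO}$ I would set $x\equiv p(u_0)$, which lies in $U=p(U')$. For the second coordinate I would solve the equation $p(y)-x=v_0$, i.e.\ $p(y)=p(u_0)+v_0$, by requiring $y$ to lie in the Euclidean ball of radius $\beta/2$ about $u_0$. By part 3 of Lemma~\ref{radiusBetaBallMapsBihol} the restriction of $p$ to that ball is a biholomorphism onto an image containing the disk of radius $m_p\beta/8$ about $p(u_0)$; since $\delHO<\inlinefrac{1}{8}m_p\beta$ and $|v_0|<\delHO$, the target point $p(u_0)+v_0$ lies in this image, so a unique such $y$ exists. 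Writing $y=\Psi(u_0,v_0)$, we obtain a holomorphic function by the inverse function theorem, and I would define the candidate inverse by $\Psi\colon(u_0,v_0)\mapsto\bigl(p(u_0),\Psi(u_0,v_0)\bigr)$.

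Next I would verify that $\Psi$ lands in $V'$: the first coordinate $p(u_0)$ lies in $\pr_1^{-1}(U)$ because $u_0\in U'\subset U$ and $p(U')\subset U$; the value of $v$ on $\Psi(u_0,v_0)$ is $v_0\in\disk_{\delHO}$ by construction. Finally I would check the two composition identities. For $\Phi\circ\Psi=\operatorname{id}$, observe that $u\bigl(p(u_0),\Psi(u_0,v_0)\bigr)$ is by definition the preimage of $p(u_0)$ under $p$ closest to $\Psi(u_0,v_0)$, and since $|u_0-\Psi(u_0,v_0)|<\beta/2$ the uniqueness clause of Lemma~\ref{uDefinedPrecisely} forces this value to be $u_0$; the second coordinate matches by construction. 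For $\Psi\circ\Phi=\operatorname{id}$ on $V'$, start with $(x,y)\in V'$, set $u_0=u(x,y)$, $v_0=v(x,y)$, and note that $p(u_0)=x$ and that $y$ solves $p(y)=x+v_0=p(u_0)+v_0$ with $|y-u_0|<\beta/2$ again by Lemma~\ref{uDefinedPrecisely}; uniqueness of such a preimage within the $\beta/2$-ball gives $\Psi(u_0,v_0)=y$.

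The only nontrivial step is the solvability assertion in the middle paragraph, which is exactly why $\delHO$ was shrunk so that $\delHO<\inlinefrac{1}{8}m_p\beta$. Once that input is in place everything else is a routine bookkeeping check, so I expect no substantial obstacle. The conclusion is that $\Phi$ and $\Psi$ are mutually inverse holomorphic maps between $V'$ and $U'\times\disk_{\delHO}$, which is the claimed biholomorphism.
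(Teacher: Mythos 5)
Your proposal is correct and fills in exactly the details the paper's terse one-line proof (``Using Lemma~\ref{radiusBetaBallMapsBihol} and Lemma~\ref{uDefinedPrecisely} one can construct a holomorphic inverse'') leaves implicit: you build the inverse by setting $x=p(u_0)$ and solving $p(y)=p(u_0)+v_0$ in the $\beta/2$-ball about $u_0$, invoking part 3 of Lemma~\ref{radiusBetaBallMapsBihol} together with the bound $\delHO<\tfrac18 m_p\beta$ for solvability. The only cosmetic blemish is the double use of the symbol $\Psi$ for both the $y$-coordinate function and the full inverse map, but the argument itself is sound and is the same construction the paper has in mind.
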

\begin{proof}
Using Lemma~\ref{radiusBetaBallMapsBihol} and Lemma~\ref{uDefinedPrecisely}
one can construct a holomorphic inverse.
%%% Proof %%%
%We need only show that the map is injective (since then it is necessarily
%a local biholomorphism, and thence also a biholomorphism). If $(x_1,y_1)\in V'$
%and $(x_2,y_2)\in V'$ map to the same values $(u_1,v_1)$
%then $x_1=p(u_1)=x_2$. Also $p(y_1)-x_1=p(y_2)-x_2$. Since $x_1=x_2$
%it follows that $p(y_1)=p(y_2)$. However by Lemmas~\ref{radiusBetaBallMapsBihol}
%and  \ref{uDefinedPrecisely}
%we know that $p$ is a biholomorphism on the ball of radius $\beta$ about
%$u_1$ and that $y_1$ and $y_2$ would necessarily each have to
%lie within a distance of $\inlinefrac{1}{2}\beta$ of $u$. It follows that
%$y_1=y_2$. Hence $(x_1,y_1)=(x_2,y_2)$ and the map is injective,
%and is therefore also a biholomorphic isomorphism.
\end{proof}

\subsection{The Stable and Unstable Manifolds as $a$ Varies}

Given that $p$ is a hyperbolic
polynomial, there is some $A>0$ \name{defOfA} such that $f_a\colon B_z\to B_{p(z)}$
is a crossed mapping for each $z\in J(p)$ whenever $0<\vert a\vert < A$.
It can be verified that $A$ can be chosen so that
these crossed mappings have uniformly bounded horizontal and vertical sizes. 
Given any $\nax{z}=(\dotsb,z_{-2},z_{-1},z_{0})\in \hat{J}(p)$ the sequence of neighborhoods
$\dotsb\to B_{z_{-2}}\to 
B_{z_{-1}}\to B_{z_0}\to B_{p(z_0)}\to B_{p\cp{2}(z_0)}\dotsb$ (all mapped by $f_a$)
form a sequence of crossed neighborhoods. Then for
each $\vert a\vert < A$ there is a natural map $\pi_a$ \name{defOfPiA} from
the natural extension $\hat{J}(p)$ \name{defOfNaxJ} of
the Julia set to $J_a$ which is a homeomorphism for each $a\not=0$
and is the standard projection of $\hat{J}(p)$ to $J(p)$ for $a=0$.

%\boxnote{We need to check that everything holds in {\henII} for $A$ that we claim does (everything
%holds for sufficiently small $\epsilon$, they do construct an $A$, and I can't recall whether that
%one $A$ was enough to make all their conculsions. If not, just
%abandon ausing thier letter here, since we don't ever use it (i.e. we have to shrink things many times
%so the bound given by $A$ is not important). We refer to $A$ only in the above
%paragraph, Lemma~\ref{stableManMovesHol}, Lemma~\ref{unstableManMovesHol}, Convention~\ref{gConvention},
%Lemma~\ref{stableMansVaryCont} and Lemma~\ref{jMovesCont}.
%}

By definition, the point $\pi_a(\nax{z})$ is the unique point in $B_{z_0}$ which
lies in both the stable and the unstable manifold of the sequence of crossed mappings.
We note that given a sequence of crossed mappings $U_0\times V_0 \to U_1\times V_1 \to \dotsb$
then the construction
of  the stable manifold of a crossed mapping in {\henII} is found by first taking a point $u_n \in U_n$
and taking the preimage of $\{u_n\}\times V_n$ by $f_1\circ \dotsb \circ f_n$, which is, by the hypothesis
on crossed mappings, necessarily the graph of a function $g\colon V_1\to U_1$.
One then takes a limit of these graphs as $n\to \infty$.

\head{Holomorphic variation of stable manifolds about the degenerate map.}

We will use the notation $\tilde{f}(x,y,a)\equiv \bigl(f_a(x,y),a\bigr)$\name{defOfFTilde} when it is convenient 
to think of $f_a(x,y)$ as a self map in $x$, $y$ and $a$.

\begin{lemma}
\name{stableManMovesHol}
Given $\nax{z}\in \Nax{J}(p)$ there is a unique holomorphic map
$g_+\colon V_0\times \disk_A\to U_0$ such that the local stable
manifold through the point $\pi_{a'}(\nax{z})\in J_{a'}$, $a'\not=0$
is the graph of $g_+(\cdot,a')\colon V_0\to U_0$. In the case where $a=0$
this graph gives a vertical line through $\bigl(p(z),z\bigr)$ (the natural analogue of the stable
manifold in the degenerate case).
\end{lemma}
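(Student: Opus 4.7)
The plan is to produce $g_+$ as the uniform limit of holomorphic finite approximations $g_{+,n}$ built by the standard preimage-of-vertical-slice construction for crossed mappings, and to handle the degenerate parameter $a=0$ via Riemann extension together with the boundedness of each $g_{+,n}$.

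For each $n\ge 1$, I define $g_{+,n}(v,a)$ to be the function whose graph in $B_{z_0}$ (in $(u,v)$-coordinates) equals the $\tilde f^{-n}$-preimage of the vertical slice $\{u=z_n\}\cap B_{z_n}$, taken through the sequence of crossed mappings
\[
B_{z_0}\crmap B_{z_1}\crmap \dotsb \crmap B_{z_n}
\]
induced by $f_a$ for $a\in \disk_A^*$. By Proposition~\ref{prop-GraphsUnderCrossedMappings} and Proposition~\ref{propInvertingAndComposingCrossedMappings} this preimage is a genuine graph, so $g_{+,n}$ is holomorphic on $V_0\times \disk_A^*$ with values in the bounded open set $U_0\subset\mb{C}$. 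Applying the Riemann extension theorem across the analytic hypersurface $\{a=0\}$ then extends $g_{+,n}$ holomorphically to $V_0\times \disk_A$.

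The uniform horizontal and vertical size bounds on the crossed mappings $f_a\colon B_{z_i}\crmap B_{z_{i+1}}$ for $a\in\disk_A^*$ give exponential contraction of the iterated preimages, so $g_{+,n}\to g_+$ uniformly on $V_0\times K$ for every compact $K\subset\disk_A^*$; this is the telescoping argument underlying Corollary~3.12 of \cite{hubbard_oberste-vorth:henon2}, now performed uniformly in the parameter $a$. Since the $g_{+,n}$ form a uniformly bounded family of holomorphic functions on $V_0\times\disk_A$ which already converges on the dense open subset $V_0\times\disk_A^*$, Montel's theorem forces the convergence to be uniform on compact subsets of $V_0\times\disk_A$, and the limit $g_+$ is holomorphic there. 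To identify $g_+(\cdot,0)$, one computes directly: since $f_0(x,y)=(p(x),x)$, the preimage under $f_0$ inside $B_{z_{k-1}}$ of the slice $\{x=p(z_k)\}\cap B_{z_k}$ (that is, $u=z_k$) is the vertical line $\{x=p(z_{k-1})\}$ (that is, $u=z_{k-1}$); iterating, $g_{+,n}(v,0)\equiv z_0$ for every $n$, hence $g_+(v,0)\equiv z_0$. In the original $(x,y)$-coordinates this is the vertical line $\{x=p(z_0)\}$ through $(p(z_0),z_0)$, as claimed. Uniqueness of $g_+(\cdot,a')$ for $a'\ne0$ comes from uniqueness of the local stable manifold through $\pi_{a'}(\nax z)$; uniqueness at $a=0$ then follows from continuity in $a$.

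The main obstacle is that the crossed-mapping framework itself degenerates at $a=0$: the map $f_0$ is not a biholomorphism onto its image and therefore is not a crossed mapping in the sense used above, so one cannot directly invoke the stable-disk construction of \cite{hubbard_oberste-vorth:henon2} at that parameter value. This is sidestepped by extracting holomorphy at $a=0$ indirectly—through Riemann extension applied to the bounded approximations $g_{+,n}$ and Montel's theorem for the limit—rather than through a crossed-mapping structure at $a=0$.
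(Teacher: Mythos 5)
Your proof is correct, and it reaches the conclusion by a genuinely different mechanism for identifying $g_+(\cdot,0)$. The paper, like you, begins by observing that $g_{z+}$ is holomorphic and bounded on $V_0\times\disk_A^*$ and applies Riemann extension across $\{a=0\}$—but it does this once, to the limit function, without breaking things into finite-stage approximants. The real divergence is in the identification of the degenerate slice. The paper argues indirectly: it pushes the stable manifold of $\hat{f}^{-1}(\nax{z})$ forward under $f_a$ into the stable manifold of $\nax{z}$, passes this relation to $a=0$ by continuity, and then uses the fact that $f_0$ collapses $\mb{C}^2$ onto $C(p)$ to conclude that if the $a=0$ graph were not a vertical line, $u - g_{z+}(v,0)$ would have to vanish identically on $C(p)=\{v=0\}$, which fails since $u$ is non-constant there. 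Your route is direct and computational: by applying Riemann extension at the level of the finite approximants $g_{+,n}$ rather than only to their limit, you can evaluate each $g_{+,n}(\cdot,0)$ explicitly (the $f_0$-preimage of a vertical slice within $B_{z_{k-1}}$ is the vertical line $u=z_{k-1}$), so $g_+(\cdot,0)\equiv z_0$ drops out of the Montel/identity-theorem step without a contradiction argument. The cost of your approach is that you must justify that the termwise convergence persists across $\{a=0\}$ (the normal-family argument, which you do carry out); the benefit is that the degenerate slice is exhibited rather than deduced by contradiction, and no detour through the preimage history $\hat{f}^{-1}(\nax{z})$ is needed.
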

\name{defOfg}
\begin{proof}
As the maps $f_a$ depend holomorphically on $a$ then for $a\not=0$ these graphs
fit together holomorphically, since they are just the preimage of $\{u_n\}\times V_n\times \disk_A$
by $\tilde{f}\cp{n}$, which is holomorphic in both $(x,y)\in \mb{C}^2$ and in $a\in \disk_A$.  Thus,
the stable manifold of $f_a$ through $z\in J$ is the graph of a holomorphic
map $g_{z+}\colon V_0\times \disk_A^{*}\to U_0$. At $a=0$ the map is not a crossed mapping,
however $g_{z+}$ is holomorphic and clearly bounded on $V_0\times \disk_A^*$ and
so $g_{z+}$ has a unique holomorphic extension to a map (which we will still call $g_{z+}$)
from $V_0\times \disk_A$ to $U_0$. 

Now the function $u-g_{z+}(v,a)$ vanishes on the stable manifold and by continuity there 
is a neighborhood $N$ of $\pi_0\bigl(\hat{f}^{-1}(z)\bigr)$ 
such that $f_a(N)\subset U_0\times V_0$
for sufficiently small $a$. Therefore the pullback $\tilde{f}^*\bigl(u-g_{z+}(v,a)\bigr)$
by $\tilde{f}$ is defined in $N$ for all sufficiently small $a$. Since
the stable manifold of $\pi_a\bigl(\hat{f}^{-1}(\nax{z})\bigr)$ is mapped into the stable
manifold of $\pi_a(\nax{z})$ by $f_a$ then $\tilde{f}^*\bigl(u-g_{z+}(v,a)\bigr)$
vanishes on the graph 
$\check{g}_{\hat{f}^{-1}(\nax{z})+}\colon V_0\times \disk_A\to U_0\times V_0\times \disk_A$
of $g_{\hat{f}^{-1}(\nax{z})+}$. This is true even if $a=0$ because of continuity. 

Now the image of $f_0\circ \check{g}_{\hat{f}^{-1}(\nax{z})+}$ lies 
on $C(p)$ since $f_0$ maps all of $\mb{C}^2$ to $C(p)$.
If the image of 
$\check{g}_{\hat{f}^{-1}(\nax{z})+}$ does not lie in a fiber of $f_0$ then
it follows that $f_0\circ \check{g}_{\hat{f}^{-1}(\nax{z})+}$ contains an open
subset of $C(p)$ in its image. Therefore $u-g_+(v,0)$ would
have to vanish on $C(p)$. But this is impossible since $C(p)$
is given by $v=0$ in the $(u,v)$ coordinates and restricting
$u-g_+(v,0)$ to $v=0$ one obtains the false statement $u-g_+(0,0)\equiv 0$. 
This contradiction shows that $\check{g}_{\hat{f}^{-1}(\nax{z})+}$
lies in a fiber of $f_0$, that is, in a vertical line.
\end{proof}

%\boxnote{What about $g_{z+}$ and $g_{z-}$. It seems we attach the plus or minus subscript in places to the $g_z$
%and $\check{g}_z$ we are using. We also need to combine the two conventions here into a single statement.}

\begin{observation}
Recalling that for each $\nax{z}=(\dotsb,z_{-2},z_{-1},z_0)\in\Nax{J}(p)$ the neighborhood
$B_{z_0}$ was equal to $U_{z_0}\times \disk_{\delHO}$ using $(u,v)$ coordinates,
where $U_{z_0}$ was an open neighborhood of $z_0\in J(p)\subset\mb{C}$
then we see that the local stable manifold of $\pi_a(\nax{z})$ given in
Lemma~\ref{stableManMovesHol} by a holomorphic function 
$g_{\nax{z}+}\colon V_0\to U_0$ is just a holomorphic map from 
$\disk_{\delHO}$ to $U_{z_0}$. What is more, this stable manifold was
precisely the stable manifold of the sequence $B_{z_0}\to B_{p(z_0)}\to B_{p\cp{2}(z_0)}\to\dotsb$
and was therefore dependent only on $z_0$ and not on any other point in the history $\nax{z}$.
Thus the map $g_{\nax{z}+}$ depends only on $z_0$. 
\end{observation}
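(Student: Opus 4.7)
The plan is to separate the observation into two assertions and verify each by tracing through the definitions already in place. First I would handle the coordinate-change aspect: by Lemma~\ref{uVCoordinates} the pair $(u,v)$ supplies biholomorphic coordinates on $V'$ under which $B_{z_0}$ becomes $U_{z_0} \times \disk_{\delHO}$. Writing the initial bidisk of the crossed-mapping sequence as $B_0 = U_0 \times V_0$ in these coordinates identifies $U_0 = U_{z_0}$ and $V_0 = \disk_{\delHO}$, so the function $g_{\nax{z}+}\colon V_0 \to U_0$ produced by Lemma~\ref{stableManMovesHol} is precisely a holomorphic map $\disk_{\delHO} \to U_{z_0}$.

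For the substantive content, I would revisit the recipe for the stable disk recalled just before Lemma~\ref{stableManMovesHol}: one fixes $u_n \in U_n$ for each $n \geq 1$, pulls $\{u_n\} \times V_n$ back through the forward composition $f_n \circ \dotsb \circ f_0$ of crossed mappings, and takes a limit of the resulting analytic graphs. The only data entering this construction are the forward bidisks $B_{z_0}, B_{z_1}, B_{z_2}, \dotsc$ together with the crossed mappings between them; the backward coordinates $\dotsc, z_{-2}, z_{-1}$ of $\nax{z}$ play no role.

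Since $z_{i+1} = p(z_i)$ for all $i \geq 0$, the forward orbit $(z_0, z_1, z_2, \dotsc)$ and thus the entire sequence $B_{z_0} \to B_{p(z_0)} \to B_{p\cp{2}(z_0)} \to \dotsb$ are determined by $z_0$ alone. Hence any two histories $\nax{z}, \nax{z}' \in \Nax{J}(p)$ sharing the same zeroth coordinate produce the very same stable disk in $B_{z_0}$, and the maps $g_{\nax{z}+}, g_{\nax{z}'+}$ coincide. No genuine obstacle arises: the observation is a bookkeeping consequence of the fact that stable manifolds are built from forward dynamics only, and the final writeup should reduce to pointing at the appropriate step in the crossed-mapping stable-disk construction.
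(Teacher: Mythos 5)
Your proposal is correct and follows essentially the reasoning the paper implicitly relies on: the coordinate identification via Lemma~\ref{uVCoordinates}, and the forward-only nature of the stable-disk construction recalled just before Lemma~\ref{stableManMovesHol}, together with the fact that the forward orbit of $z_0$ under $p$ is determined by $z_0$ alone. The paper states this as an unproved Observation, and your writeup is the natural spelling-out of the same chain of definitions.
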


\begin{convention}
In accordance with the above observation we will reduce our notation of
$g_{\nax{z}+}$ to $g_{z+}$ as $g_{\nax{z}+}$ only depends on the final
term $z$ of $\nax{z}$.
\end{convention}

\begin{convention}
\name{gConvention}
We will continue to use $\check{g}_{z\pm}$ to 
denote the graph in $V'\subset\mb{C}^2$
of $g_{z\pm}$. When we are thinking
of $g_{z\pm}$ as a function of $a$ as well,
we will similarly use $\check{g}_{z\pm}$ to
denote the graph in $V'\times \disk_A\subset \mb{C}^3$.
\end{convention}
\name{defOfgCheck}

\head{Holomorphic variation of unstable manifolds about the degenerate map.}

\begin{lemma}
\name{unstableManMovesHol}
Given $z\in \hat{J}(p)$ then there is a unique holomorphic map
$g_-\colon U_0\times \disk_A\to V_0$ such that the local unstable
manifold through the point $\pi_a(z)\in J_a$
is the graph of $g_-(\cdot,a')\colon U_0\to V_0$. For
$a=0$ it is simply the portion of the graph $x=p(y)$ about
$\pi_0(z)$.
\end{lemma}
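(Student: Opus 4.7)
The plan is to imitate the proof of Lemma~\ref{stableManMovesHol}, but with forward iterates $f_a\cp{n}$ replacing the preimages by $\tilde{f}\cp{n}$, and with the image $C(p)$ of $f_0$ playing the role that the fibers (vertical lines) of $f_0$ played in the stable case.

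First, for $a\in\disk_A^*$ the backward sequence of crossed mappings $\cdots\crmap B_{z_{-1}}\crmap B_{z_0}$ has uniformly bounded vertical sizes, so it produces the local unstable manifold through $\pi_a(z)$ as the graph in $(u,v)$ coordinates of a holomorphic function $g_{z-}(\cdot,a)\colon U_0\to V_0$. Concretely, this graph is realized as the limit of the horizontal disks $f_a\cp{n}(U_{-n}\times\{v_{-n}\})\cap B_{z_0}$; each of these is holomorphic in $a$ on the full disk $\disk_A$ since $f_a$ is, so $g_{z-}$ is holomorphic on $U_0\times\disk_A^*$. Since it takes values in the bounded set $V_0=\disk_{\delHO}$, the Riemann extension theorem yields a unique holomorphic extension $g_{z-}\colon U_0\times \disk_A\to V_0$.

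To identify $g_{z-}(\cdot,0)$ I would use the dynamical equivariance of unstable manifolds: for each $a\in\disk_A^*$,
$$\check{g}_{z-}(\cdot,a)\;\subset\; f_a\bigl(\check{g}_{\hat{f}^{-1}(z)-}(\cdot,a)\bigr),$$
and the right-hand side also extends continuously to $a=0$ (applying the same Riemann extension one step earlier to $\hat{f}^{-1}(z)$, together with the fact that $f_a$ depends holomorphically on $a$ including at $a=0$). Passing to the limit gives $\check{g}_{z-}(\cdot,0)\subset f_0(V')$. But $f_0(x,y)=(p(x),x)$, so $f_0(V')\subset C(p)=\{v=0\}$ in $(u,v)$ coordinates, which forces $g_{z-}(u,0)\equiv 0$. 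Hence the graph at $a=0$ is $\{v=0\}\cap B_{z_0}$, which in $(x,y)$ coordinates is exactly the piece of $x=p(y)$ about $\pi_0(z)=(p(z_0),z_0)$, as claimed.

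The main obstacle is the identification at $a=0$: Riemann extension produces a holomorphic limit but says nothing about its geometry. The equivariance inclusion above, combined with the degeneracy $f_0(\mb{C}^2)\subset C(p)$, traps the limit graph inside $C(p)$ and thereby pins it down uniquely as the branch of $C(p)\cap B_{z_0}$ through $\pi_0(z)$.
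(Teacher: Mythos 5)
Your proof is correct, but it takes a detour that the paper's own proof avoids. Both arguments begin identically: the local unstable disk for $a\neq 0$ is realized as the limit of the forward images $\tilde f\cp{n}\bigl(U_{-n}\times\{v_{-n}\}\times\disk_A\bigr)$, each of which is holomorphic in $a$ on the \emph{entire} disk $\disk_A$ since $f_a$ (unlike $f_a^{-1}$) suffers no singularity at $a=0$. The paper exploits this directly: at $a=0$, already the first image lies in $f_0(\mb{C}^2)\subset C(p)=\{v=0\}$, so all subsequent approximants are the same piece of $C(p)\cap B_{z_0}$ and the limit graph at $a=0$ is identified immediately, with no extension argument required. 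You instead mirror the stable-manifold proof (Lemma~\ref{stableManMovesHol}): you establish holomorphy only on $\disk_A^*$, invoke the Riemann extension theorem to reach $a=0$, and then need the equivariance inclusion $\check g_{z-}(\cdot,a)\subset f_a\bigl(\check g_{\hat f^{-1}(z)-}(\cdot,a)\bigr)$ together with $f_0(\mb{C}^2)\subset C(p)$ to pin down the extension. That equivariance step is sound (and is exactly the mechanism the paper used in the stable case, where it was genuinely necessary because $f_0^{-1}$ is undefined), but here it is overhead: the forward approximants already live on $\disk_A$ and already sit on $C(p)$ at $a=0$, so the identification is automatic. Your route has the aesthetic virtue of being parallel to the stable case; the paper's route is shorter and makes visible the asymmetry between the two cases, namely that forward iteration does not degenerate and therefore Riemann extension is unnecessary for the unstable manifolds.
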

\begin{proof}
Since for all $a\not=0$ these are just the images of
of some (arbitrary) $U_{-n}\times \{v_0\}$ under
$f_{-n}\circ \dotsb\circ f_1$ in $U_0\times V_0$
then by the hypothesis on a sequence of crossed mappings
the result is a graph $g_-\colon U_0\to V_0$. These graphs again all
fit together to form a single holomorphic ``sheet'' as when put together they
are simply the image of $U_{-n}\times \{v_0\}\times \disk_A$
iterated $n$ times by $f_a$ (which depends holomorphically on $a$).
For each $a$ the limit of these graphs will be a graph. Thus the limit is a graph, which is, 
by {\henII},
the local unstable manifold of $\pi_a(z)\in J_a$ for each $\vert a\vert < A$.
In the case $a=0$ it is simply a portion of the graph $x=p(y)$ about the
point $\pi_0(z)$ (and is therefore the appropriate version of the unstable manifold
for this case).
\end{proof}

% The maps we will be considering will be hyperbolic.
We recall Theorem 5.9 of \cite{bedfordsmillie1}.
\begin{theorem}
Given that $a\not=0$, if $f_a$ is hyperbolic and $\vert a\vert\leq 1$, then
$W^s(J_a)=\jpl{a}$. If $s_1,\dotsc,s_k$ are the sinks
of $f_a$ then $W^u(J_a)=\jmn{a}\setminus\{s_1,\dotsc,s_k\}$.
\end{theorem}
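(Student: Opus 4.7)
The plan is to reduce both equalities to the classical structure theorem for hyperbolic H\'enon maps: the non-wandering set decomposes as $\Omega(f_a) = J_a \cup \{s_1,\ldots,s_k\}$, the basins of the sinks are precisely the interior of $\kpl{a}$, and every bounded forward (resp.\ backward) orbit accumulates on $\Omega(f_a)$. This is exactly the package of facts Bedford--Smillie establish for hyperbolic H\'enon maps, so in practice I would cite it; what follows shows how both equalities drop out once it is in hand.

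For the first equality $W^s(J_a)=\jpl{a}$, one inclusion is immediate: if $q \in W^s(J_a)$ then $\mathrm{dist}(f_a^n q,\,J_a) \to 0$, so the forward orbit is bounded and $q \in \kpl{a}$, and $q$ cannot lie in $\mathrm{int}\,\kpl{a}$ since that interior is a union of sink basins on which orbits converge to a single periodic cycle rather than accumulating on the infinite set $J_a$. Thus $q \in \partial \kpl{a} = \jpl{a}$. Conversely, if $q \in \jpl{a}$, its $\omega$-limit is a nonempty compact invariant subset of $\Omega(f_a)$; if it met some sink $s_i$, then $q$ would lie in the open basin of $s_i$ and hence in $\mathrm{int}\,\kpl{a}$, contradicting $q \in \partial \kpl{a}$. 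Therefore $\omega(q) \subseteq J_a$, and the local stable lamination in a neighborhood of the hyperbolic set $J_a$ (a standard consequence of hyperbolicity) then places $q$ on the stable manifold of some point of $J_a$.

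The second equality $W^u(J_a)= \jmn{a}\setminus\{s_1,\ldots,s_k\}$ follows by applying the same argument to $f_a^{-1}$, with one asymmetry that must be handled carefully. Each sink $s_i$ is a repelling fixed point of $f_a^{-1}$, so arbitrarily nearby points escape under backward iteration; hence $s_i \in \partial \kmn{a} = \jmn{a}$. But the backward orbit of $s_i$ is just $\{s_i\}$, which does not accumulate on $J_a$, so $s_i \notin W^u(J_a)$; this is exactly where the set $\{s_1,\ldots,s_k\}$ has to be removed. For any other point $q \in \jmn{a}$, the $\alpha$-limit lies in $\Omega(f_a)$ and cannot contain any $s_i$ (that would force $q$ into the interior of $\kmn{a}$, contradicting $q \in \partial \kmn{a}$), so $\alpha(q) \subseteq J_a$ and the local unstable lamination gives $q \in W^u(J_a)$.

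The main obstacle is really the structure theorem itself: once one knows $\Omega(f_a)=J_a \cup \{s_1,\ldots,s_k\}$, that $\mathrm{int}\,\kpl{a}$ equals the union of the sinks' basins, and that $J_a$ is a hyperbolic set with local stable and unstable laminations, both equalities reduce to the one-step $\omega$- and $\alpha$-limit arguments above. The assumption $|a|\le 1$ enters only to ensure that $f_a$ is dissipative enough for this decomposition (in particular that there are no sources, only sinks and saddles, in $\Omega(f_a)$), which is what creates the asymmetry between the stable and unstable statements.
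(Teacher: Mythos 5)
The paper gives no proof of this statement at all: it is an explicit citation of Theorem~5.9 of Bedford--Smillie \cite{bedfordsmillie1}, used as a black box. So there is no ``paper's argument'' to compare against; what you have written is an independent reconstruction of how the result follows from the hyperbolic structure theory (spectral decomposition $\Omega(f_a)=J_a\cup\{s_1,\dotsc,s_k\}$, $\mathrm{int}\,\kpl{a}$ equal to the union of sink basins, local stable/unstable laminations near the hyperbolic set $J_a$), which is indeed the package that \cite{bedfordsmillie1} builds up. Your $W^s$ argument is sound given those ingredients.

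There is, however, a genuine gap in your argument for the unstable case. You assert that if $\alpha(q)\ni s_i$ for some $q\in\jmn{a}$, $q\neq s_i$, this ``would force $q$ into the interior of $\kmn{a}$.'' That implication does not hold, and it is also the wrong mechanism. The correct reason is simpler: a sink $s_i$ for $f_a$ is a repelling fixed point for $f_a^{-1}$, so no point $q\neq s_i$ can satisfy $f_a^{-n}(q)\to s_i$; equivalently $W^u(s_i)=\{s_i\}$. Combined with the standard fact (from the no-cycles condition in the spectral decomposition) that $\alpha(q)$ lies in a single basic set, $\alpha(q)\cap\{s_1,\dotsc,s_k\}\neq\emptyset$ would force $\alpha(q)=\{s_i\}$ and hence $q=s_i$. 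This is precisely why the sinks must be deleted from $\jmn{a}$; nothing about $\mathrm{int}\,\kmn{a}$ (which is in fact empty in the dissipative case $\vert a\vert<1$) enters. With that step corrected, the rest of your $W^u$ argument goes through.
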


\begin{definition}
\name{def-localStableMan}
Given $z\in J(p)$ let $\Delta_{z,a}$ be the image $\hat{g}_{z+}(\disk_{\delHO},a)\subset \mb{C}^2$
which is precisely the local stable manifold in $B_z$ corresponding to the sequence of crossed 
mappings $B_z\to B_{p(z)}\to B_{p\cp{2}(z)}\to \dotsb$.
Given $r < 1$ we let $\Delta_{z,a}(r)$ be the image of $\disk_{r\delHO}$ under
$\hat{g}_{z+}$, and we let $\Sigma_{z,a}(r)$ be the image of the circle $S_{r\delHO}$
under $\hat{g}_{z+}$. Hence both $\Delta_{z,a}(r)$ and $\Sigma_{z,a}(r)$ lie
in $\Delta_{z,a}$.
\end{definition}
\name{defOfLocalStableMan}

\head{The Stable Manifolds}

We will show that given $z_1,z_2\in J(p)$, 
if $\Delta_{z_1,a}$ and $\Delta_{z_2,a}$ operlap 
then $z_1=z_2$. First we recall the standard telescope lemma.
%We start with an auxiliary lemma.\personalNote{\footnote{This lemma is
%meant as a replacement
%for Proposition 3.14 of {\henII}, which is not quite correct as it 
%stands. If Proposition 3.15 were true for crossed mappings it would certainly
%be true for telescopes, but by considering $z\mapsto z^2$ in $\mb{C}$ and assigning $z\in S_1$
%the neighborhood $U_z$ which is constructed by fixing an annulus $A$ containing $S_1$
%and taking $U_z$ to be the set of points $w\in A$ with $-\pi < \operatorname{arg}(w/z)
%< \pi$ then it is clear that the neighborhoods $U_z$ form a telescope for $z^2$
%but $U_{z_1}$ and $U_{z_2}$ always intersect, unless $z_1=-z_2$.}}

%Our auxiliary lemma holds under the assumption that $p$ is a biholomorphism from
%$U_{z_1}\cup U_{z_2}$ onto its image whenever $U_{z_1}\cap U_{z_2}\not=\emptyset$.
%This property holds for the neighborhoods $U_z\subset \mb{C}$ used to create the open sets
%$B_z$ as noted in Remark~\ref{rem-whyTheRequirement}.

\begin{lemma}
\name{usingTelescopeDisksToDistinguishPoints}
If $z_1$ and $z_2$ are points in $J(p)$ and if
$U_{p\cp{n}(z_1)}\cap U_{p\cp{n}(z_2)}\not =\emptyset$ for every $n\geq 0$
then $z_1=z_2$.
\end{lemma}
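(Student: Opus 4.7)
The plan is to exploit the expansivity built into the setup: $p\colon U'\to U$ is a covering map and $U'\Subset U$. Applying Schwarz--Pick to the proper inclusion $U'\hookrightarrow U$ yields a constant $\lambda<1$ with $d_U(x,y)\leq \lambda\,d_{U'}(x,y)$ for $x,y\in U'$, where $d_U$ and $d_{U'}$ denote the respective Kobayashi (equivalently, Poincar\'e) distances. Since $p\colon U'\to U$ is a covering, it is a local isometry of Kobayashi metrics; combining this with the previous estimate shows that every local inverse branch of $p$ contracts $d_U$ by a factor $\leq \lambda$.

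First I would extract from the hypothesis the inequality $d_U(p\cp{n}(z_1),p\cp{n}(z_2))\leq 2\rHubOb$ for each $n\geq 0$ by applying the triangle inequality through a point in $U_{p\cp{n}(z_1)}\cap U_{p\cp{n}(z_2)}$. Next, using the assumption that $p$ is a biholomorphism on the $d_U$-ball of radius $3\rHubOb$ about every $z\in J(p)$, I would produce, for each $n\geq 1$, an inverse branch $\psi_n$ of $p$ which sends $p\cp{n}(z_1)$ to $p\cp{n-1}(z_1)$ and whose domain contains $p\cp{n}(z_2)$. The crucial identification $\psi_n(p\cp{n}(z_2))=p\cp{n-1}(z_2)$ follows from the contraction estimate: $\psi_n(p\cp{n}(z_2))$ is a preimage of $p\cp{n}(z_2)$ lying within $d_U$-distance $2\lambda\,\rHubOb$ of $p\cp{n-1}(z_1)$, while the hypothesis at step $n-1$ places $p\cp{n-1}(z_2)$ within $d_U$-distance $2\rHubOb$ of $p\cp{n-1}(z_1)$; both preimages therefore lie in the $3\rHubOb$-ball about $p\cp{n-1}(z_1)$ on which $p$ is injective, and so must coincide.

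Iterating the contraction $n$ times gives
\[
d_U(z_1,z_2)\leq \lambda^n\,d_U(p\cp{n}(z_1),p\cp{n}(z_2))\leq 2\lambda^n\,\rHubOb,
\]
and letting $n\to\infty$ forces $z_1=z_2$. The main subtlety will be justifying that the inverse branches really send $p\cp{n}(z_2)$ to $p\cp{n-1}(z_2)$ at each step rather than to some other preimage; this is the content of the injectivity argument just sketched and depends in an essential way on the choice of $\rHubOb$ recorded at the start of the section.
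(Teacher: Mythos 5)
Your argument is precisely the standard telescope/contraction proof that the paper invokes without elaboration (``This the standard telescope result''), and it is correct. One small imprecision worth flagging: Schwarz--Pick alone only gives $d_U\le d_{U'}$ on $U'$; the uniform constant $\lambda<1$ needs the compact containment $U'\Subset U$ (the ratio of infinitesimal metrics is continuous, strictly less than $1$ on $U'$, and tends to $0$ near $\partial U'$), but since all points appearing in your induction lie in a fixed compact neighborhood of $J(p)$ inside $U'$, the needed uniform contraction is indeed available.
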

\begin{proof}
This the standard telescope result.
\end{proof}
%\begin{proof}
%We let $N_n(z)$ denote the sequence of neighborhoods of $z$ arising from 
%the telescope $U_z\to U_{p(z)}\to U_{p\cp{2}(z)}\to \dotsb$. That is
%$N_0(z)= U_z$, and $N_{n+1}(z)$ is inductively the preimage of $N_n\bigl(p(z)\bigr)$
%under the biholomorphism $p\rest U_z\colon U_z\to p(U_z)$. 
%
%We start by showing that
%if $N_j(x)\cap N_j(y)\not=\emptyset$ 
%$N_j\bigl(p(x)\bigr)\cap N_j\bigl(p(y)\bigr)\not=\emptyset$ 
%for two points 
%$x,y\in J(p)$ then 
%$N_{j+1}(x)\cap N_{j+1}(y)\not=\emptyset$. This is because 
%$N_j(x)\cup N_j(y)$ is mapped biholomorphically onto its image by $p$ 
%(because $N_j(x)\cap N_j(y)\not=\emptyset$ and the biholomorphism
%maps $N_{j+1}(x)$ and $N_{j+1}(y)$ to $N_j\bigl(p(x)\bigr)$ and
%$N_j(\bigl(p(x)\bigr)$ respectively from which the result follows.
%
%Given this fact one has that $U_{p\cp{n}(z_1)}\cap U_{p\cp{n}(z_2)}\not=\emptyset$
%for all $n$ then $N_0\bigl(p\cp{n}(z_1)\bigr)\cap N_0\bigl(p\cp{n}(z_2)\bigr)\not=\emptyset$
%for all $n$ and so it follows inductively from the above result that 
%$N_k\bigl(p\cp{n}(z_1)\bigr)\cap N_k\bigl(p\cp{n}(z_2)\bigr)\not=\emptyset$
%for all $k\geq 0$. But then $N_k(z_1)\cap N_k(z_2)\not=\emptyset$ for all $k\geq 0$
%and since these are the nested sets of a telescope it follows that $z_1=z_2$.
%\end{proof}

We now show our desired result about disjointness of the sets $\Delta_{z,a}$.

\head{The local stable manifolds $\Delta_{z,a}$ are disjoint.}

\begin{lemma}
\name{stableManifoldsDisjoint}
Assume that for $z_1, z_2\in J(p)$ one has
$\Delta_{z_1,a}\cap \Delta_{z_2,a}\not=\emptyset$.
Then $z_1=z_2$.
\end{lemma}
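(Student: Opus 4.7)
The plan is to reduce to the telescope criterion in Lemma~\ref{usingTelescopeDisksToDistinguishPoints}: from a hypothetical intersection point $w \in \Delta_{z_1,a} \cap \Delta_{z_2,a}$, I will produce, for every $n \geq 0$, a point of $U_{p\cp{n}(z_1)} \cap U_{p\cp{n}(z_2)}$, after which $z_1 = z_2$ follows immediately. The mechanism is forward invariance of the stable disks of the telescopes.

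First I treat the case $0 < \vert a \vert < A$, where $f_a$ is an honest crossed mapping on every $B_z$. By Definition~\ref{def-localStableMan}, $\Delta_{z,a}$ is the stable disk of the telescope $B_z \to B_{p(z)} \to B_{p\cp{2}(z)} \to \cdots$, which by the stable manifold result for crossed mappings recalled above consists precisely of those points of $B_z$ whose forward $f_a$-orbit never leaves the telescope. Hence $f_a(\Delta_{z, a}) \subset \Delta_{p(z), a}$, and by induction $f_a^n(w) \in \Delta_{p\cp{n}(z_1), a} \cap \Delta_{p\cp{n}(z_2), a} \subset B_{p\cp{n}(z_1)} \cap B_{p\cp{n}(z_2)}$ for all $n \geq 0$. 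Each $B_z$ equals $U_z \times \disk_{\delHO}$ in the $(u,v)$-coordinates of Lemma~\ref{uVCoordinates}, so projecting to the $u$-coordinate gives $U_{p\cp{n}(z_1)} \cap U_{p\cp{n}(z_2)} \neq \emptyset$ for every $n$, and Lemma~\ref{usingTelescopeDisksToDistinguishPoints} concludes.

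The degenerate case $a = 0$ has to be argued separately, because $f_0$ is not a crossed mapping and collapses each $\Delta_{z,0}$ to a single point on $C(p)$, so the iteration argument yields no information. However, Lemma~\ref{stableManMovesHol} describes $\Delta_{z, 0}$ explicitly as the portion of the vertical line $x = p(z)$ inside $B_z$; on such a line the coordinate $u(x,y) = p^{-1}(x)$ chosen close to $y$ is identically $z$. Hence a common point $w \in \Delta_{z_1, 0} \cap \Delta_{z_2, 0}$ satisfies $z_1 = u(w) = z_2$. The only technicality I anticipate is the verification that $\Delta_{z, a}$ is indeed the full stable disk of the telescope sitting inside $B_z$, so that the forward-orbit characterization of the stable disk genuinely applies — this is built into the constructions recalled from \cite{hubbard_oberste-vorth:henon2} in the previous subsection, and so should not present a real obstacle.
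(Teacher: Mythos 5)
Your argument follows the same route as the paper's proof: for $0<\vert a\vert<A$, forward invariance of the stable disk gives $f_a\cp{n}(w)\in B_{p\cp{n}(z_1)}\cap B_{p\cp{n}(z_2)}$ for all $n\geq 0$, projection by $u$ produces a point of $U_{p\cp{n}(z_1)}\cap U_{p\cp{n}(z_2)}$, and Lemma~\ref{usingTelescopeDisksToDistinguishPoints} finishes. Your separate case $a=0$ is a mild refinement: the forward-orbit characterization of the stable disk comes from crossed-mapping theory, which does not literally apply at $a=0$, and your observation that $u\equiv z$ on $\Delta_{z,0}$ fixes this cleanly. Note, though, that it is a slight overstatement that the iteration argument ``yields no information'' at $a=0$: one checks directly that $f_0\cp{n}(w)=\bigl(p\cpp{n+1}(z_1),p\cp{n}(z_1)\bigr)\in B_{p\cp{n}(z_1)}\cap B_{p\cp{n}(z_2)}$ for all $n$ (using $p(z_1)=p(z_2)$, which follows from the two vertical lines sharing the point $w$), so the paper's unified argument does still run there once this verification is supplied.
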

\begin{proof}
Assume that $\Delta_{z_1,a}\cap \Delta_{z_2,a}\not=\emptyset$.
Let $w$ be a point in the intersection. Then 
$f_a\cp{n}(w)\subset B_{p\cp{n}(z_1)}$ and
$f_a\cp{n}(w)\subset B_{p\cp{n}(z_2)}$ 
for all $n\geq 0$. Recalling that $B_z=U_z\times \disk_{\delHO}$
(in the $(u,v)$ coordinates defined on $V'$) then one has
$u\bigl(f_a\cp{n}(w)\bigr)\in U_{p\cp{n}(z_1)} \cap U_{p\cp{n}(z_2)}$
for all $n\geq 0$ and so $z_1=z_2$ by Lemma~\ref{usingTelescopeDisksToDistinguishPoints}.
\end{proof}

\begin{lemma}
\name{stableMansVaryCont}
The maps $g_{z_0+}\colon \disk_{\delHO}\times \disk_A \to U_{z_0}$
vary continuously (in the sense of locally uniform convergence of maps)
with $z_0\in J(p)$.
\end{lemma}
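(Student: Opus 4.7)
The plan is to combine Montel's theorem with the uniform contraction built into the crossed-mapping construction, then use the identity principle to handle the degenerate slice $a=0$. Since every $g_{z_0+}$ takes values in the bounded open set $U\subset \mb{C}$, Montel's theorem makes the family $\{g_{z_0+}\}_{z_0\in J(p)}$ normal in the compact-open topology on $\disk_{\delHO}\times \disk_A$. Hence it suffices to show that whenever $z_k\to z_0$ in $J(p)$ and $g_{z_{k_j}+}\to \tilde g$ locally uniformly along some subsequence, the limit $\tilde g$ coincides with $g_{z_0+}$.

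For each fixed $n\geq 1$ and $a\in \disk_A^*$ I would consider the truncated stable-disk approximation
\[S^n_{z,a}\equiv B_z\cap f_a^{-1}(B_{p(z)})\cap \dotsb \cap f_a^{-n}(B_{p\cp{n}(z)}),\]
which by Proposition~\ref{propInvertingAndComposingCrossedMappings} is cut out by the composition of $n$ degree-one crossed mappings of uniformly bounded horizontal size. The standard contraction estimate for such compositions delivers some $\lambda<1$, independent of $z$ and of $a$ on compacta of $\disk_A^*$, such that at each height $v\in \disk_{\delHO}$ the $u$-slice of $S^n_{z,a}$ has diameter at most $C\lambda^n$. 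Since the graph of $g_{z+}(\cdot,a)$ sits inside every $S^n_{z,a}$, the $u$-coordinate of any point of $S^n_{z,a}$ at height $v$ differs from $g_{z+}(v,a)$ by at most $C\lambda^n$.

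Given a compact $K\subset \disk_{\delHO}\times \disk_A^*$ and $\varepsilon>0$, I would pick $n$ with $2C\lambda^n<\varepsilon/2$. Because each $p\cp{j}$ is continuous and $z\mapsto B_z$ varies continuously in the Hausdorff topology on compact subsets of $V'$, one has $S^n_{z_k,a}\to S^n_{z_0,a}$ uniformly in $(v,a)\in K$; choosing $k$ large and applying the triangle inequality gives $|g_{z_k+}(v,a)-g_{z_0+}(v,a)|<\varepsilon$ on $K$, so $\tilde g=g_{z_0+}$ on $\disk_{\delHO}\times \disk_A^*$. The identity principle applied to the holomorphic function $\tilde g-g_{z_0+}$ then propagates the equality across the slice $a=0$.

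The hard part is coordinating the $k\to\infty$ and $n\to\infty$ limits: the contraction rate $\lambda$ and the continuous dependence of $S^n_{z,a}$ on $z$ must both be uniform, which ultimately rests on the uniform horizontal-size bound already arranged for the family $f_a\colon B_z\to B_{p(z)}$ of crossed mappings. Once that uniformity is in hand, the degenerate slice $a=0$ causes no additional difficulty, since it is reached only indirectly via normal families and the identity principle rather than through the crossed-mapping contraction itself.
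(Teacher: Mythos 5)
Your argument is correct, but it is a genuinely different route from the one the paper takes. The paper's entire proof is a one-line appeal to Lemma~\ref{stableManifoldsDisjoint}: once the graphs of the $g_{z_0+}$ are known to be pairwise disjoint, normality (Montel) plus a Hurwitz-type argument forces any locally uniform subsequential limit of $g_{z_k+}$ (as $z_k\to z_0$) to coincide with $g_{z_0+}$, since the limit cannot cross, and hence cannot differ from, the graph it is squeezed against. You, in contrast, never use disjointness; instead you re-open the crossed-mapping construction, use the uniform bound on horizontal sizes to get the geometric contraction $\operatorname{diam}\leq C\lambda^n$ of the truncated strips $S^n_{z,a}$, combine that with the Hausdorff continuity of $S^n_{z,a}$ in $z$ for fixed $n$, and then coordinate the two limits. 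The trade-off: your version is more constructive (it actually produces a modulus of continuity in $z$) and is logically independent of Lemma~\ref{stableManifoldsDisjoint}, at the cost of being heavier and of requiring some care at the step ``$S^n_{z_k,a}\to S^n_{z_0,a}$ uniformly in $(v,a)\in K$''. That step is fine here because the $\Subset$ condition built into a crossed mapping keeps the relevant slices uniformly away from $\partial U_{p\cp{j}(z)}$, so preimages of the Hausdorff-continuously-varying disks $U_{p\cp{j}(z)}$ under the fixed holomorphic maps $u\circ f_a^{\circ j}$ really do vary continuously slice by slice; it is worth saying this explicitly, since Hausdorff convergence of sets does not in general pass to slices. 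Your handling of $a=0$ via normality and the identity principle matches what the paper has already set up in Lemma~\ref{stableManMovesHol} and is sound.
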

\begin{proof}
From Lemma~\ref{stableManifoldsDisjoint} 
the functions $g_{z_0+}\colon\disk_{\delHO}\times \disk_A\to U'$ 
have disjoint graphs from which the result follows.
\end{proof}

%\cy{
%\begin{lemma}
%\name{unstableMansVaryCont}
%The maps $g_{\nax{z}-}\colon U_0\times \disk_r\to \disk_{\delHO}$ vary continuously in the sense
%of locally uniform convergence of maps with 
%$\nax{z}\in \Nax{J}(p).$
%\end{lemma}
%}

%\boxnote{\lookup
%\red{
%%\ref{unstableMansVaryCont}
%Hyperbolicity of $J$ (since in domain of Hubbard \& Obertse-Vorth's construction)
%means each point moves continuously and Holomorphically (by Jonsson).
%This motion extends to a punctured disk (in $a$) about $a=0$.
%Thus the coordinate function of the map on each disk 
%has either a removeable singularity, an essential singularity or a pole at $a=0$.
%Because the graphs remain bounded (because they remain in $U\times V$)
%we can conclude that all the singularities are removable. Because for each point
%the punctured disks map in disjointly, we should be able to conclude that they 
%vary continuously as maps of disks. The result should follow.}
%{Alternatively (if the above doesn't work), Prove Lemma~\ref{jMovesCont}
% directly from the crossed mapping construction. These almost certainly already be proven in {\henII}
%or in the sequence of papers by Bedford and Smillie. Unfortunately, the Buzzard \& Verma paper
%just refers to Mattias Jonsson's result, which simply uses Ruelle's approach
%using histories. However the approach using histories is not valid 
%in this situation where the degenerate map doesn't have histories 
%that pass through any point of $\mb{C}^2$ not on the curve (since the
%degenerate map isn't surjective). 
%}
%}

\begin{lemma}
\name{jMovesCont}
The maps $\pi_a\colon \Nax{J}(p)\to\mb{C}^2$ vary continuously
with $a$ in the sense of uniform
convergence of maps for $a\in \disk_A$.
\end{lemma}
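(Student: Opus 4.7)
The plan is to realize $\pi_a(\nax{z})$ as the intersection, inside $B_{z_0}$, of the two graphs furnished by Lemmas~\ref{stableManMovesHol} and \ref{unstableManMovesHol}, and then to deduce continuity of this intersection from the joint continuous dependence of the two graphs on their data. More precisely, for $\nax{z}=(\dotsc,z_{-1},z_0)\in \Nax{J}(p)$, and in the $(u,v)$ coordinates on $B_{z_0}=U_{z_0}\times \disk_{\delHO}$, the local stable manifold through $\pi_a(\nax{z})$ is the graph $u=g_{z_0+}(v,a)$, and the local unstable manifold is the graph $v=g_{\nax{z}-}(u,a)$. Hence $\pi_a(\nax{z})$ is the unique solution of the fixed-point system
\begin{equation*}
u_*=g_{z_0+}(v_*,a),\qquad v_*=g_{\nax{z}-}(u_*,a).
\end{equation*}

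First I would record the two inputs to the fixed-point problem. From Lemma~\ref{stableManMovesHol} the map $a\mapsto g_{z_0+}(\cdot,a)$ is holomorphic on $\disk_A$, and from Lemma~\ref{stableMansVaryCont} it depends continuously on $z_0\in J(p)$ in the sense of locally uniform convergence on $\disk_{\delHO}$. Together these give joint continuity of $(z_0,a)\mapsto g_{z_0+}(\cdot,a)$ in the sup norm on $\disk_{\delHO}$. The unstable counterpart is exactly the same: Lemma~\ref{unstableManMovesHol} gives holomorphy in $a$, and the analogue of Lemma~\ref{stableMansVaryCont} for unstable manifolds (proved by the same disjointness argument applied to the unstable disks $\nax{\Delta}_{\nax{z},a}$, which are disjoint whenever distinct histories diverge in the past) gives continuity in $\nax{z}\in \Nax{J}(p)$ in the topology of uniform convergence on $U_{z_0}$.

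Second I would turn the fixed-point system into a contraction argument. Because the crossed mappings $f_a\colon B_{z}\crmap B_{p(z)}$ have horizontal and vertical sizes uniformly bounded away from zero for $a\in \disk_A$, a standard Schwarz-type estimate shows that each composition $g_{z_0+}(\cdot,a)\circ g_{\nax{z}-}(\cdot,a)\colon U_{z_0}\to U_{z_0}$ contracts the Kobayashi (equivalently Poincar\'e) metric of $U_{z_0}$ by a uniform factor $\lambda<1$ independent of $(\nax{z},a)\in \Nax{J}(p)\times \disk_A$. The intersection point $u_*$ is the unique fixed point of this contraction, and $v_*=g_{\nax{z}-}(u_*,a)$. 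The usual quantitative fixed-point estimate then gives
\begin{equation*}
\operatorname{dist}\bigl(\pi_a(\nax{z}),\pi_{a'}(\nax{z}')\bigr)\leq \frac{C}{1-\lambda}\bigl(\|g_{z_0+}(\cdot,a)-g_{z'_0+}(\cdot,a')\|_\infty+\|g_{\nax{z}-}(\cdot,a)-g_{\nax{z}'-}(\cdot,a')\|_\infty\bigr),
\end{equation*}
with $C$ a uniform constant. Combined with the joint continuity of the two families of graphs established in the previous paragraph, this yields continuity of $(a,\nax{z})\mapsto \pi_a(\nax{z})$ on $\disk_A\times \Nax{J}(p)$, and in particular uniform convergence $\pi_a\to\pi_{a_0}$ as $a\to a_0$, since $\Nax{J}(p)$ is compact.

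The main obstacle I anticipate is the unstable side of the story: one has to verify carefully that $g_{\nax{z}-}(\cdot,a)$ really does depend continuously on the entire backward history $\nax{z}$ and jointly holomorphically in $a$ up to and including $a=0$. The key point is that the limit defining the unstable manifold is taken over iterates $f_{a}^n$ of shrinking preimages of horizontal lines, and these iterates are jointly holomorphic in $(x,y,a)$; the uniform vertical size of the crossed mappings makes the limit uniform in $(\nax{z},a)$, which is exactly what one needs both for continuity in $\nax{z}$ and for the holomorphic extension across $a=0$ (by Riemann extension as in the proof of Lemma~\ref{stableManMovesHol}). Once this is in hand the rest of the argument is mechanical.
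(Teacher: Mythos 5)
Your approach is genuinely different from, and considerably heavier than, the paper's. The paper's proof is essentially two sentences: for $a\neq 0$ the conclusion is standard from the crossed-mapping theory, and for $a=0$ it suffices to observe that for each fixed $\nax{z}$ the function $a\mapsto\pi_a(\nax{z})$ is holomorphic on $\disk_A^{*}$ and takes values in the bounded set $B_{z_0}$, hence extends holomorphically across $a=0$; because the bound on the values is uniform over $\nax{z}$ (the boxes $B_{z_0}$ lie in a fixed compact), Cauchy estimates give equicontinuity in $a$, and equicontinuity plus pointwise convergence is uniform convergence. You instead realize $\pi_a(\nax{z})$ as the fixed point of $g_{z_0+}(\cdot,a)\circ g_{\nax{z}-}(\cdot,a)$ and run a Poincar\'e-metric contraction argument to get a quantitative Lipschitz estimate. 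This works and has the virtue of being explicit and of giving joint continuity in $(\nax{z},a)$, but it costs more: you need the unstable analogue of Lemma~\ref{stableMansVaryCont}, which the paper never proves (you flag this correctly as the main obstacle), and you need the contraction factor $\lambda<1$ to hold uniformly down to $a=0$, where $f_0$ is no longer a crossed mapping — this is true but requires a separate check (at $a=0$ the stable graph is constant, so the composition degenerates to a constant map, which is fine, but the uniform-in-$a$ bound on $\lambda$ is not automatic from the crossed-mapping hypothesis alone). The paper's normality argument avoids both of these issues, since it never needs to vary $\nax{z}$ and only needs the holomorphy-in-$a$ statements already proved in Lemmas~\ref{stableManMovesHol} and~\ref{unstableManMovesHol}. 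In short: correct idea, valid but more laborious route, and the auxiliary continuity-in-history lemma you invoke would need to be added before the argument is complete.
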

\begin{proof}
This is clear for nonzero $a$. Because
$\pi_a(z)$ varies holomorphically with $a$ it follows easy
that one also has continuity at $a=0$.
\end{proof}

%
%The following follows for sufficiently small $a$ from the models of $\jpl{a}$ and $\jmn{a}$
%given in {\henII}.
%
%
%\begin{lemma}
%\name{jplAndJmnEqualStableAndUnstableMans}
%$J^\pm_a$ are the supports of the stable and unstable foliations.
%%If $\vert a\vert$ is sufficiently small then every point of $\jpl{a}$ and every point 
%%of $\jmn{a}$ lies on a stable manifold of a point
%%of $J_a$ and every point of $\jmn{a}$ lies on an unstable manifold of a point of $J_a$.
%\end{lemma}

\head{The closure of $\critLoc$ in the zero slice lies in $\critLoc_0$,
$\jpl{0}$ or $\jmn{0}$.}

\begin{lemma}
\name{closureOfCriticalLocusInZeroSlice}
If $(x,y,0)\in\overline{\critLoc}$ then either $(x,y)\in\overline{\critLoc_0}$
or $x\in J(p)$ or $(x,y)\in C(p)$.
\end{lemma}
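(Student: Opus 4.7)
The plan is to perform a trichotomy on the position of $(x,y,0)$ relative to the open set $\bigUPl\cap\bigUMn$, which is the common domain on which $\critLoc$ is defined (as the zero set of the holomorphic function $\tilde w$). Since $\critLoc\subset \bigUPl\cap\bigUMn$ and both sets are open (Lemma~\ref{bigUPlAndbigUMnOpen}), any point in $\overline{\critLoc}$ is either inside $\bigUPl\cap\bigUMn$ or on the boundary of at least one of $\bigUPl$, $\bigUMn$. Treating each of these three possibilities separately will produce one of the three alternatives listed in the conclusion.

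First, if $(x,y,0)\in \bigUPl\cap\bigUMn$, then because the zero set of the holomorphic function $\tilde w$ is closed in this open set, passing to the limit in a sequence of points of $\critLoc$ converging to $(x,y,0)$ gives $\tilde w(x,y,0)=0$, so $(x,y,0)\in\critLoc$. Intersecting with the slice $a=0$ yields $(x,y)\in\critLoc_0\subset\overline{\critLoc_0}$, which is the first alternative of the conclusion.

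Next, if $(x,y,0)\notin \bigUPl$, then by the definition of $\bigUPl$ the slice at $a=0$ is exactly $\upl{0}=\mb{C}^2\setminus\kpl{0}$, so $(x,y)\in\kpl{0}=K(p)\times\mb{C}$. The key input here is Lemma~\ref{inflatingKplZero}, which says that no point interior to $\kpl{0}$ lies in the closure of $\bigUPl$; since our point does lie in $\overline{\bigUPl}\supset\overline{\critLoc}$, this forces $(x,y)\in \partial\kpl{0}=J(p)\times\mb{C}$, i.e.\ $x\in J(p)$. Finally, if $(x,y,0)\notin\bigUMn$, then the slice at $a=0$ is $\umn{0}=\mb{C}^2\setminus C(p)$, so immediately $(x,y)\in C(p)$.

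There is no real obstacle — this is essentially a structural cleanup lemma whose content is bookkeeping about the open sets $\bigUPl$, $\bigUMn$ at the degenerate slice $a=0$. The one nontrivial step is in Case~2, where the hyperbolicity of $p$ (via Lemma~\ref{inflatingKplZero}) is what upgrades the conclusion from $x\in K(p)$ to $x\in J(p)$; without it one would only obtain the weaker statement that $x\in K(p)$.
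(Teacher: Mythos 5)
Your proof is correct and follows essentially the same route as the paper's: a case analysis on whether $(x,y,0)$ lies in the domain $\bigUPl\cap\bigUMn$ of $\critLoc$, using that $\critLoc$ is closed analytic in that open set to handle the interior case, that the $a=0$ slices of $\bigUPl$ and $\bigUMn$ are $\mb{C}^2\setminus(K(p)\times\mb{C})$ and $\mb{C}^2\setminus C(p)$, and that interior points of $\kpl{0}$ cannot be in $\overline{\bigUPl}$ (the paper re-derives this inline from the attracting-cycle argument, whereas you cite Lemma~\ref{inflatingKplZero}, which packages the same fact).
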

\begin{proof}
If $z\in\overline{\critLoc}$ but $z\not\in\overline{\critLoc}_0$ then
since $\critLoc$ is a closed analytic variety on its domain of definition
then $z\in (K(p)\times\mb{C})\cup C(p)$. Now if $z$ lies in the interior
of $K(p)\times \mb{C}$ then $z$ is attracted to the cycle of an attracting periodic point
$\alpha$ of $p$. 
It follows that $z$ lies in the interior of $\kpl{a}$ for all
sufficiently small $a$, which is a contradiction.
%%% Proof %%%
%Given a neighborhood $N$ of $z$ which
%is relatively compact in the basin of attraction of $\alpha$ then
%for sufficiently small $a$ the corresponding periodic point $\alpha(a)$
%is still attracting and has $N$ in its basin of attraction. Thus there is a
%disk $\disk_\epsilon$ such that $N\subset \kpl{a}$ for each $a\in \disk_\epsilon$.
%It follows that $N\times \disk_\epsilon$ is disjoint from $\critLoc$ (as
%it lies outside the domain of definition of $\critLoc$) and therefore
%$z\not\in\overline{\critLoc}$, contrary to our assumption. 
Therefore $z\in J(p)\times \mb{C}$ or $z\in C(p)$.
\end{proof}

\head{Description of the critical locus in the degenerate case.}

It is easy to see what the critical locus $\critLoc_0$ looks like if $J(p)$
is connected. 
%the critical
%points of $p$ are bounded
%\footnote{If they are not bounded the picture
%is still fairly easy to work out. The difference is that there is a countable
%collection of leaves of $\Fpl{0}$, corresponding to successive preimages of the unbounded critical point,
%which have multiplicity greater than one. Thus all such leaves of $\Fpl{0}$ should be considered
%to lie in $\critLoc_0$, even though they do not ``appear'' tangent to $\Fmn{0}$.}. 
Since $\ppl{0}(x,y)=b_p(x)$,
the B\"ottcher coordinate of $x$, then the leaves of $\Fpl{0}$
are simply vertical. Since $\pmn{0}(x,y)=\bigl(p(y)-x\bigr)^{\frac{1}{d}}$,
then the leaves of $\Fmn{0}$ are translates in the $x$ direction of the graph
$C(p)$. Thus $\Fpl{0}$ and $\Fmn{0}$ will be tangent exactly along the horizontal lines $y=c$ where $c$
is a critical point of $p$. Thus $\critLoc_0$ is a union of horizontal lines
(restricted to $U_{0+}\cap U_{0-}=\mb{C}^2\setminus \bigl( C(p) \cup (K(p)\times\mb{C})\bigr)$),
one at the level of each critical point of $p$.  
This fact will be quite important in what follows.

\begin{lemma}
\name{degenerateCriticalLocus}
If the Julia set of $p$ is connected then
$\critLoc_0$ is the union of the sets $\{(x,c)\vert x\not\in K(p)\}$
over the critical points $c$ of $p$.
\end{lemma}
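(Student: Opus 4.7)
The plan is to compute the defining two-form for $\critLoc_0$ directly from the explicit descriptions of $\ppl{0}$ and $\pmn{0}$ given earlier, and to read off the zero set.

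First I would identify defining functions for the two foliations on $U_0^+\cap U_0^-=\bigl(\mb{C}^2\setminus(K(p)\times\mb{C})\bigr)\cap\bigl(\mb{C}^2\setminus C(p)\bigr)$. By Lemma~\ref{pplForZeroA}, $\ppl{0}(x,y)=b_p(x)$ is a local defining function for $\Fpl{0}$; in particular the leaves of $\Fpl{0}$ are the vertical lines $x=\mathrm{const}$. By Lemma~\ref{pmnForZeroA}, $\pmn{0}^d(x,y)=p(y)-x$ is a local defining function for $\Fmn{0}$, so the leaves of $\Fmn{0}$ are the level curves $\{p(y)-x=\mathrm{const}\}$, i.e.\ horizontal translates of $C(p)$.

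Next I would compute the wedge
\[
\ed\ppl{0}\wedge \ed\bigl(\pmn{0}^d\bigr)
=b_p'(x)\,\ed x\wedge\bigl(-\ed x+p'(y)\,\ed y\bigr)
=b_p'(x)\,p'(y)\,\ed x\wedge \ed y.
\]
Up to multiplication by a nonvanishing holomorphic function this is the same as the defining function $w_0$ of $\critLoc_0$, so $\critLoc_0$ is cut out on $U_0^+\cap U_0^-$ by $b_p'(x)p'(y)=0$.

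Now I would invoke the hypothesis that $J(p)$ is connected: this guarantees that $b_p$ extends to a biholomorphism of $\mb{C}\setminus K(p)$ onto $\mb{C}\setminus\overline{\disk}$, hence $b_p'(x)\ne0$ for every $x\notin K(p)$. On $U_0^+\cap U_0^-$ the condition therefore reduces to $p'(y)=0$, which says that $y=c$ for a critical point $c$ of $p$. Thus
\[
\critLoc_0=\bigcup_{c\,:\,p'(c)=0}\bigl\{(x,c)\,:\,x\notin K(p)\text{ and }(x,c)\notin C(p)\bigr\}.
\]

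The final step is to dispose of the auxiliary exclusion $(x,c)\notin C(p)$. Because $J(p)$ is connected, every critical point $c$ of $p$ lies in $K(p)$, hence $p(c)\in K(p)$; so the condition $x\notin K(p)$ already implies $x\ne p(c)$, i.e.\ $(x,c)\notin C(p)$. This matches the claimed description. The only substantive input beyond a short calculation is the connectedness of $J(p)$ used to secure $b_p'\ne 0$ and to place the critical points inside $K(p)$; everything else is a routine wedge computation using the already-established formulas for $\ppl{0}$ and $\pmn{0}$.
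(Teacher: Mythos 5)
Your proof is correct and follows essentially the same line as the paper's brief justification: identify $\ppl{0}$ with $b_p(x)$ and $\pmn{0}^d$ with $p(y)-x$, observe that the wedge reduces to $b_p'(x)p'(y)\,\ed x\wedge\ed y$, and use connectedness of $J(p)$ to ensure $b_p'$ never vanishes. Your additional observation that $(x,c)\notin C(p)$ follows automatically from $x\notin K(p)$ (because $c\in K(p)$ forces $p(c)\in K(p)$) is a nice point that the paper glosses over when it restricts to $U_{0+}\cap U_{0-}$ without comment.
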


\head{The need to control the critical locus at the boundary when we perturb from the degenerate case.}

When we perturb $a$ away from zero, we will need to be able to control the motion
of the critcal locus. The main difficulty is that we must control what happens
at the boundary of $U_{0+}$ and $U_{0-}$. We will do this
by choosing a tube about each of horizontal lines in $\critLoc_0$ which 
contains the perturbed component of $\critLoc_a$ as $a$
moves away from zero. Of course, that the different components of $\critLoc_0$
remain distinct components when $a$ is perturbed will have to be shown e.g. consider the
equations $y(y-1)x + a=0$. When $a=0$ this is a curve which has three components, but
as $a$ is perturbed to a nonzero value this becomes
a smooth curve with only one component. Since $\critLoc_0$ is not
defined outside of $U_{0+}\cup U_{0-}$ it is conceivable that such a thing could happen
to $\critLoc_a$ as $a$ is varied from zero, i.e. we could have a large portion of $\critLoc_a$
which is ``hidden'' in the boundary of definition when $a=0$. 
We have to demonstrate that such oddities do not occur.

We will make use of
the following version of the ``Inclination Lemma'' about
the degenerate map $f_0$. Compare to \cite{katok} and \cite{ruelle}.\footnote{This
lemma is also sometimes called the $\lambda$-lemma, but we avoid this term
because the term $\lambda$-lemma is typically used in complex
dynamics to refer to a result about holomorphic motions. See \cite{lambda}, \cite{Lyu83}.}

\begin{lemma}[An inclination lemma near the degenerate case.]
\name{continuityOfPlLaminations} Given a sequence $a_k\to 0$
and a sequence of points  $w_k\in \critLoc_{a_k}$ converging to some
point $w_\infty\in \jpl{0}$ then the the leaves $\lfpl{a_k}(w_k)$
converge locally and without ramification to a vertical line
through $w_\infty$ (i.e. to the leaf of $\jpl{0}$ through
$w_\infty$). 
%More precisely, as $k\to \infty$  then for an
%arbitrary bounded open set $S\subset \mb{C}$ the plaques
%$\lfpl{a_k}(w_k,\mb{C}\times S)$ become vertical in the sense
%that for sufficiently large $k$, each such plaque can be
%parameterized as $\bigl(x_k(t),t\bigr)$, $t\in S$ where $x_k(t)$
%converges uniformly to a constant as $k\to\infty$ on $S$.
\end{lemma}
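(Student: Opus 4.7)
The argument is local and explicit. I work in a small bidisk $B$ around $w_\infty = (x_\infty, y_\infty)$ and write $\mc{L}_k \equiv \lfpl{a_k}(w_k, B)$. The plan is to exhibit each $\mc{L}_k$, for $k$ large, as the graph of a single-valued holomorphic function $x = \phi_k(y)$ on a fixed $y$-disk about $y_\infty$, and to show $\phi_k \to x_\infty$ uniformly on compacta. This simultaneously gives the local convergence of $\mc{L}_k$ to the vertical disk $\{x=x_\infty\}$ and the absence of ramification.

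The starting observation is that $x_\infty \in J(p)$, so $f_0(w_\infty) = (p(x_\infty), x_\infty)$ lies on $C(p)$ at the center $(u,v) = (x_\infty, 0)$ of the Hubbard--Oberste-Vorth bidisk $B_{x_\infty}$. For $k$ large, $f_{a_k}(w_k)$ lies in $B_{x_\infty}$ and tends there to $(x_\infty, 0)$ in $(u,v)$-coordinates. The local leaf $\lfpl{a_k}(f_{a_k}(w_k), B_{x_\infty})$ is built by pulling back leaves of $\Fpl{a_k}$ from $V_+$ through the infinite chain of crossed mappings $B_{x_\infty} \crmap B_{p(x_\infty)} \crmap \cdots$, so by Proposition~\ref{prop-GraphsUnderCrossedMappings} it is a graph $u = G_k(v)$ on some $v$-disk. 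Joint continuity of the pullback in base point and parameter, combined with the degenerate description that leaves of $\Fpl{0}$ in $B_{x_\infty}$ are $\{u=\mathrm{const}\}$ (cf.\ Lemma~\ref{stableManMovesHol}, which gives $g_{x_\infty+}(v,0)\equiv x_\infty$), yields $G_k \to x_\infty$ uniformly on a $v$-disk of radius slightly smaller than $\delHO$.

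Next I pull back by $f_{a_k}^{-1}(x',y') = \bigl(y',\,(p(y')-x')/a_k\bigr)$. Using the chart relations $p(u)=x$ and $v = p(y)-x$, the graph $u = G_k(v)$ in $B_{x_\infty}$ translates on $\mc{L}_k$ into the implicit equation
\[
p(x) \;=\; p\bigl(G_k(a_k y)\bigr) + a_k y.
\]
Letting $p^{-1}_{x_\infty}$ denote the local inverse branch of $p$ near $x_\infty$ provided by Lemma~\ref{radiusBetaBallMapsBihol}, set
\[
\phi_k(y) \;:=\; p^{-1}_{x_\infty}\!\bigl(p(G_k(a_k y)) + a_k y\bigr).
\]
For $k$ large $a_k y$ lies in the domain of $G_k$ on a $y$-disk of radius independent of $k$ about $y_\infty$, so $\phi_k$ is holomorphic there, realizes $\mc{L}_k$ as the graph $x = \phi_k(y)$, and satisfies $\phi_k(y) \to p^{-1}_{x_\infty}(p(x_\infty)) = x_\infty$ uniformly on compacta.

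The main obstacle is the uniform convergence $G_k \to x_\infty$ on a fixed $v$-disk. The leaves $G_k$ are not, in general, stable manifolds of points in $J_{a_k}$ (the base point $f_{a_k}(w_k)$ need not lie in $J_{a_k}$), so Lemma~\ref{stableMansVaryCont} does not apply directly; one must instead show that the crossed-mapping pullback procedure defining $\Fpl{a_k}$ on $B_{x_\infty}$ is jointly continuous in the base point in $B_{x_\infty}$ and in $a \in \disk_A$ up to $a = 0$. Once this joint continuity is established, the explicit formula for $\phi_k$ delivers the conclusion at once.
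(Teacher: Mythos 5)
Your outline follows the same strategy as the paper's sketch: realize the leaf through $f_{a_k}(w_k)$ as a graph $u = G_k(v)$ in a Hubbard--Oberste-Vorth box via the crossed-mapping pullback, then transport this back to $w_k$ and read off the verticality. The explicit formula $\phi_k(y)=p^{-1}_{x_\infty}\bigl(p(G_k(a_ky))+a_ky\bigr)$ is correct (I checked the $(u,v)$-chart algebra) and is a nice concrete way to exhibit the local leaf as a single-valued graph, which handles the ``without ramification'' clause cleanly. However, the proof is incomplete exactly where you say it is: the assertion that $G_k\to x_\infty$ uniformly on a fixed $v$-disk is the whole content of the lemma, and your proposal leaves it as a to-do.

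Moreover, calling the missing step ``joint continuity'' of the pullback procedure in $(w,a)$ understates what has to be shown. For $a\neq 0$ the leaf $u=G_k(v)$ is built by finitely many pullbacks through crossed-mapping boxes --- the orbit of $f_{a_k}(w_k)$ stays in the boxes for some $n_k$ steps before escaping to $V_+$ --- whereas at $a=0$ the ``leaf'' is an exact vertical line. What makes the limit work is not continuity per se but the contraction inherent in a chain of degree-one crossed mappings with uniformly bounded horizontal size: each pullback shrinks the horizontal spread of the graph by a definite factor, and $w_k\to w_\infty\in\jpl{0}$ forces the escape time $n_k\to\infty$, so the graphs collapse to a point in the $u$-direction. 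This contraction estimate, combined with the uniform control of $\Fpl{a}$ far from $K^+$ (where the fibers of $\ppl{a}$ vary continuously through $a=0$), is the mechanism the paper invokes in its sketch: iterate forward to leave the bounded region, observe the leaf is uniformly nearly vertical there, and pull back $n_k\to\infty$ times through the boxes. Until that argument (or an equivalent) is supplied, the claim $G_k\to x_\infty$ is unsupported and the proposal does not yet prove the lemma.
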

\begin{proof}[Sketch of Proof]
We construct a neighborhood $\mc{N}$ of $J_0$ in $\mb{C}^3$
such that each point $w$ of $\mc{N}$ lies in a box
$B_{\mf{z}(w)}$ and such that if $w$ and $f_a(w)$ both
lie in $\mc{N}$ then $f_a\colon B_{\mf{z}(w)}\to B_{\mf{z}\bigl(f_a(w)\bigr)}$
is a crossed mapping.

We then apply iterates of $f_a$ to each member of our sequence to 
move all the points of the sequence a definate distance
away from $\kpl{}$. Then the leaves of $\Fpl{}$ through the new
sequence must converge to the leaves of $\Fpl{0}$, and
must therefore become vertical lines in the limit.

We then pull back the leaves by iteration so that they
pass through the members of the original sequence. By using 
the boxed mapping construction, we can guarantee that these leaves
are still graphs in their respective boxes $B_{z(w_k)}$.
It is then easy to show that the leaves become vertical in the limit.

Once this is established, 
one can apply this argument to $f_{a_k}(w_k)$
and obtain a sequence of plaques which become vertical in the limit.
Taking the preimages of these plaque under $f$ to obtain 
a sequence of plaques through the points $w_k$, 
it is easy to show that
the resulting plaques become vertical over arbitrarily large sets.
\end{proof}

  \newpage

\section{Components of the Critical Locus.}

\name{sectionComponentsOfTheCriticalLocus}

\subsection{Trapping and Mapping Components.}
\name{subsection-trappingAndMappingComponents}

\begin{standingAssumption}
Throughout the rest of this paper we will make the additional assumption
that the orbits of each of the critical points remain bounded, that is, that
$J(p)$ is connected. We also assume that all of the critical points of $p$ are
simple.
\end{standingAssumption}

\head{We define $H_{ca}$.}

\begin{definition}
\name{def-primaryHorizontalComponent}
Given a critical point $c$ of $p(x)$ we 
let $H_{ca}$ be the component of $\critLoc_a$ which is asymptotic to $y=c$ as $\vert x\vert \to\infty$.
We call $H_{ca}$ the {\it primary horizontal component} of the 
critical locus corresponding to the critical point $c$ of $p(z)$.
\end{definition}
\name{defOfH}

We will show that if $a$ is sufficiently small then given
distinct critical points $c_1,c_2$ then $H_{c_1,a}$ and $H_{c_2,a}$ are
distinct and disjoint components of $\critLoc_a$.
Note that we know from Lemma~\ref{degenerateCriticalLocus} that 
$H_{c0}=\{(x,c)\,\vert\ x\not\in K(p)\}$.

\head{We construct a tube in which to contain $H_{ca}$.}

For each critical point $c$ of $p(z)$ we select an open 
disk $\tubeYFilled_c\subset \mb{C}$ about $c$. We assume
these disks are chosen small enough to have disjoint closures.
We let $\tubeXHole_c$ be an open disk about $p(c)$, sufficiently small that
$\overline{\tubeXHole}_c$ lies in the basin of an attracting periodic point.
We let $\tubeX_c\equiv \mb{C}\setminus{\overline{\tubeXHole}_c}$.

If necessary, we shrink $\tubeYFilled_c$ so that $\tubeX_c\times \tubeYFilled_c$ 
is a positive distance from $C(p)$. 
For each critical point $c$ we let $\tubeYCore_c$ be a second disk about $c$
of half the radius of $\tubeYFilled_c$. We let 
$\tube_c=\tubeX_c\times (\tubeY{c})$.

We think of $\tube_c$ as a thin hollow tube about $\{y=c\}$
as pictured in Figure~1. 
%%\ref{tubePicture}. 
We will show that for small values of
$a$ the component $H_{ca}$ of $\critLoc_a$  
remains inside this tube. We let $\tubeCore_c=\tubeX_c\times \tubeYCore_c$ be the 
core of the tube $\tube_c$ and we let $\tubeFilled_c=\tubeX_c\times \tubeYFilled_c$
be the filled tube.

%\bignote{The tube figure is hidden here} 

\begin{figure}
\name{tubePicture}
%%%\centerline{\includegraphics{annuli_about_critical.pdf}}
%%%\centerline{\includegraphics{figure1Small.pdf}}  
%%% You have a 600dpi version as well as the original illustrator file at home
%%% You also have a full page version of the illustrator one (the original
%%% was a little Larger than full page. In order to avoid serious font frustration
%%% exporting to a tif and trimming whitespace with a photo program works best.
\centerline{\includegraphics{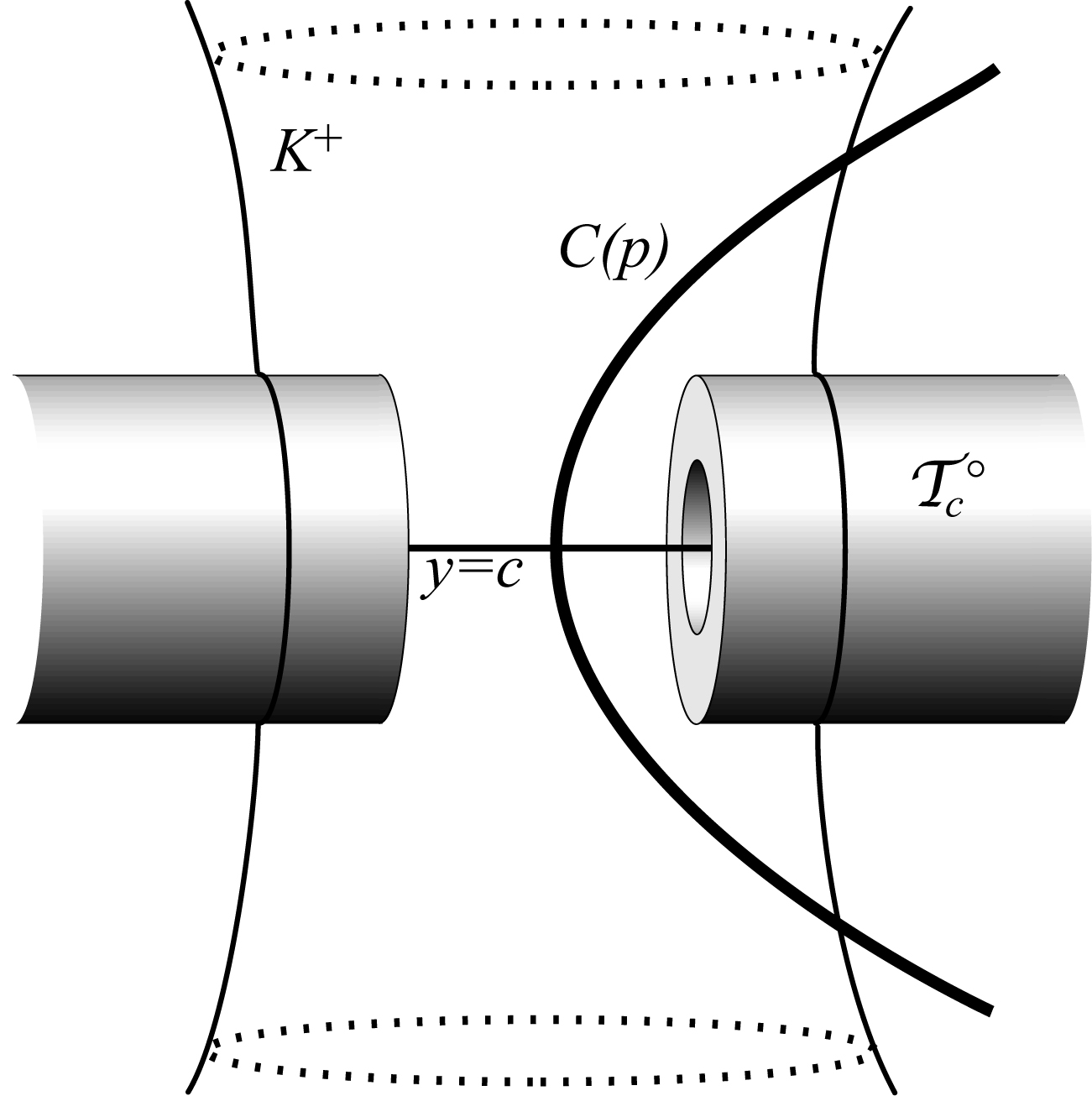}}
\caption{The tube $\tube_c$ and its environment.}
\end{figure}

\begin{lemma}
\name{tubeToolbox}
There exists some $\delta_T > 0$ and some $\epsilon_T >0$ such that if
$\vert a\vert < \epsilon_T$ then:
\begin{enumerate}
\item \name{part-filledTubeAvoidsKmn}
The filled tube $\tubeFilled_c$ is distance at least $\delta$ from
the set $\kmn{a}$ for each critical point $c$ of $p(z)$.
\item \name{part-trapHoleInKpl}
One has $\overline{\tubeXHole_c \times \tubeYFilled_c}\subset \kpl{a}$
for each critical point $c$.
\item \name{part-trappedInTube}
For each critical point $c$, $H_{ca}\subset\tubeCore_c$
and this is the only component of $\critLoc_a$ which intersects
$\overline{\tubeFilled_c}$.
\item \name{part-controllingTangenciesInTheTube}
The foliations $\Fpl{a}$ and $\Fmn{a}$ have contact of order two at any point of $H_{ca}$
for any critical point $c$. In particular, $H_{ca}$ is smooth, has
multiplicity one, and is everywhere
transverse to both $\Fpl{a}$ and $\Fmn{a}$.
\end{enumerate}
\end{lemma}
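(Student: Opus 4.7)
The plan is to handle the four parts in sequence. Parts 1 and 2 are continuity/perturbation statements, Part 3 is the substantive trapping argument, and Part 4 follows from a transversality check plus Corollary~\ref{cor-leafContactToolbox}.

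\emph{Part 1.} Apply Theorem~\ref{thm-greensContinuous}. By construction of $\tubeYFilled_c$, the set $\overline{\tubeFilled_c}$ is a positive distance from $\kmn{0}=C(p)$, so $\gmn{0}$ is bounded below by some $M_0$ on $\overline{\tubeFilled_c}$ (and in fact tends to $+\infty$ as $|x|\to\infty$, uniformly in $y\in\overline{\tubeYFilled_c}$, because $\pmn{0}\sim y$ only controls the fiber at infinity while $\gmn{0}=\tfrac1d\log|p(y)-x|$ blows up at large $|x|$). By joint continuity of $\gmn{a}$ we have $\gmn{a}\geq M_0-1$ on a neighborhood of $\overline{\tubeFilled_c}$ in $(x,y,a)$ for $|a|$ small. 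Since $\gmn{a}=\tfrac{1}{d-1}\log|a|\to-\infty$ on $\kmn{a}$, the level-set $\kmn{a}$ stays Hausdorff--close to $\kmn{0}$, giving the uniform distance bound.

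\emph{Part 2.} Since $p$ is hyperbolic and $\overline{\tubeXHole_c}$ lies in the basin of an attracting periodic point, pick a small forward-invariant neighborhood $N$ of the attracting cycle of $p$. For $|a|$ small, a perturbed attracting cycle of $f_a$ persists, and by continuity $N\times\tubeYFilled_c$ (or a small thickening) is forward-invariant under $f_a$, absorbing $\overline{\tubeXHole_c\times\tubeYFilled_c}$ under iteration. Hence this compact set has bounded forward orbit, so lies in $\kpl{a}$.

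\emph{Part 3 (the main obstacle).} I split this into three steps.

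\emph{Step A (shell exclusion):} I claim $\critLoc_a\cap\overline{\tube_c}=\emptyset$ for all sufficiently small $|a|$. Suppose not: pick $a_k\to 0$ and $w_k\in\critLoc_{a_k}\cap\overline{\tube_c}$ with $w_k\to w_\infty\in\overline{\tube_c}$. If $w_\infty\notin\kpl{0}$, both foliations are defined near $(w_\infty,0)$, $\tilde{w}$ is holomorphic there, and so $w_\infty\in\critLoc_0\cap\overline{\tube_c}$; but by Lemma~\ref{degenerateCriticalLocus} and the fact that $c\in\tubeYCore_c$, this set is empty, a contradiction. If instead $w_\infty\in\kpl{0}$, note that by Part 1 above $w_\infty\notin\kmn{0}=C(p)$, so $\Fmn{a}$ extends holomorphically in $a$ near $(w_\infty,0)$ and the plaques $\lfmn{a_k}(w_k)$ converge to the leaf $\lfmn{0}(w_\infty)$, a translate of $C(p)$ with vertical tangent at points above critical values of $p$. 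By the Inclination Lemma~\ref{continuityOfPlLaminations}, the plaques $\lfpl{a_k}(w_k)$ become vertical in the limit. Tangency of these two limiting leaves at $w_\infty$ forces the $y$-coordinate of $w_\infty$ to be a critical point of $p$. But $c$ is the only critical point of $p$ in $\overline{\tubeYFilled_c}$ and lies in $\tubeYCore_c$, disjoint from the shell's $y$-range, contradiction.

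\emph{Step B ($H_{ca}\subset\tubeCore_c$):} The topological boundary $\partial\tubeCore_c$ is contained in $\overline{\tube_c}\cup\overline{\tubeXHole_c\times\tubeYFilled_c}$, and both subsets are disjoint from $\critLoc_a$ (by Step A and Part 2 respectively, noting $\critLoc_a\subset U^+_a$). Now $H_{ca}$ is connected and, since it is asymptotic to $y=c$ with $c\in\tubeYCore_c$, contains points of $\tubeCore_c$ for all sufficiently large $|x|$. Hence $H_{ca}$ cannot escape $\tubeCore_c$ without crossing $\partial\tubeCore_c$, so $H_{ca}\subset\tubeCore_c$.

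\emph{Step C (uniqueness):} Any other component of $\critLoc_a$ meeting $\overline{\tubeFilled_c}$ would, by the same $\partial\tubeCore_c$-argument, have to be trapped in $\tubeCore_c$. But a Hurwitz argument on the vertical fibers $\{x_0\}\times\tubeYFilled_c$ (for $x_0\in\tubeX_c$ such that the fiber avoids $\kpl{a}$) shows that the holomorphic function $w_a(x_0,\cdot)$, which at $a=0$ has a unique simple zero at $y=c\in\tubeYCore_c$ and is bounded away from $0$ on $\partial\tubeYFilled_c$, continues to have exactly one simple zero in $\tubeYFilled_c$ for small $|a|$. That zero must lie on $H_{ca}$, so no other component can cross such a fiber, completing the uniqueness.

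\emph{Part 4.} At $a=0$ and any point $(x_0,c)\in H_{c0}$, the leaf of $\Fpl{0}$ is the vertical line $x=x_0$ and the leaf of $\Fmn{0}$ is the translate $x=p(y)+(x_0-p(c))$, which has tangent $\partial_y$ at $y=c$ because $p'(c)=0$; the tangent of $H_{c0}$ is $\partial_x$, hence transverse to both leaves. By part~(1) of Lemma~\ref{intersectionComparison} and Corollary~\ref{cor-leafContactToolbox}, the contact order is exactly two on $H_{c0}$. At the point $(0,c,a)$ at infinity, Theorem~\ref{thm-tangentSpaceAtInfinity} together with equation~\eqref{dPplInverse} shows $\critLoc_a$ is smooth and transverse to $\Fpl{a}$ (and by symmetry, transverse to $\Fmn{a}$), so contact equals two there. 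For the remaining points of $H_{ca}$ with small $|a|$, use that $\mc{K}_3$ is analytic in $\critLoc$ (part~(2) of Lemma~\ref{intersectionComparison}). Since $H_{ca}$ is one-dimensional and not contained in $\mc{K}_3$ (the infinity point is excluded), $\mc{K}_3\cap H_{ca}$ is a discrete subset; combined with transversality of $H_{c0}$ to both foliations at $a=0$ and joint continuity of the transversality defect in $a$ along $H_{ca}\subset\tubeCore_c$, the discrete set must be empty. Hence contact equals two everywhere on $H_{ca}$, and Corollary~\ref{cor-leafContactToolbox} supplies smoothness, multiplicity one, and transversality.

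\emph{Main obstacle.} The hardest step is Step A of Part 3: ruling out accumulation of critical-locus points against $\kpl{0}$, where $\tilde{w}$ is undefined and a naive continuity argument fails. The Inclination Lemma~\ref{continuityOfPlLaminations} is the essential tool, converting would-be tangencies near $\kpl{0}$ into a tangency of a vertical line with a translate of $C(p)$, which is ruled out by the geometric separation of distinct critical points of $p$.
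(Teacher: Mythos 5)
Your Parts~1, 2, and Steps~A and~B of Part~3 are essentially correct and close to the paper's route (Part~1 is argued in the paper via openness of $\bigUMn$ rather than via Theorem~\ref{thm-greensContinuous}, but either works). One small omission in Step~A: you write ``pick $w_k\to w_\infty\in\overline{\tube_c}$,'' but $\overline{\tube_c}$ is unbounded in $x$, so you must first rule out $|w_k|\to\infty$; this follows from Lemma~\ref{tangentSpaceAtInfinity}, since $\critLoc$ at $u=0$ lies only on $y=c$ while the shell's $y$-range is bounded away from $c$. The two genuine gaps are in Step~C of Part~3 and in Part~4. In Step~C, the Hurwitz count on vertical fibers $\{x_0\}\times\tubeYFilled_c$ only constrains fibers where $w_a(x_0,\cdot)$ is holomorphic up to the boundary, i.e.\ $x_0$ at a safe distance from $K(p)$, and the estimate degrades as $x_0\to J(p)$; a spurious component $X\subset\tubeCore_c$ could a priori be confined to $x$-values over (or near) $K(p)$ and never cross a fiber you test. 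The paper's argument avoids this entirely: it takes $z_k\in X$ with $\gpl{a}(z_k)\to\sup_X\gpl{a}$; since $\gpl{a}|_X$ is harmonic, nonnegative, and tends to $0$ near $\jpl{a}$, the supremum cannot be attained or approached inside the bounded part of $\tubeCore_c$, so $z_k\to\infty$, and by Lemma~\ref{tangentSpaceAtInfinity} and Theorem~\ref{thm-tangentSpaceAtInfinity} the only branch of $\critLoc_a$ at $(0,c,a)$ is $H_{ca}$; hence $X=H_{ca}$. Your version could be repaired by adding the case in which $X$ avoids all tested fibers (hence is bounded, touches $\jpl{a}$, and $\gpl{a}|_X$ is a bounded nonnegative harmonic function vanishing at the frontier, forcing $\gpl{a}\equiv 0$ on $X$, a contradiction), but as written it is incomplete.

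In Part~4 the claim that the discrete analytic set $\mc{K}_3\cap H_{ca}$ ``must be empty by continuity of the transversality defect'' is not a proof: discreteness for each fixed $a$ in no way prevents the set from being nonempty for \emph{every} small $a$, and ``continuity'' near $\jpl{0}$ -- where $\ppl{0}$ and its leaves are not defined in the naive sense -- is precisely what has to be established. The paper runs a compactness contradiction that you should include. Assume sequences $a_k\to 0$, $z_k\in H_{ca_k}$ exist with leaf contact $\geq 3$. If $|z_k|\to\infty$, then by Lemma~\ref{intersectionComparison} the leaf of $\Fpl{a_k}$ through $z_k$ meets $\critLoc_{a_k}$ with multiplicity $\geq 2$, so by Lemma~\ref{intersectionLemma} the first directional derivative of $w(u,y,a)$ along that leaf vanishes at $z_k$; passing to the limit gives $\partial w/\partial y(0,c,0)=0$, contradicting $w=-p'(y)-uH$ from Lemma~\ref{tangentSpaceAtInfinity}. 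If $z_k\to z_\infty$, then -- and this is the step you omit -- one must invoke the Inclination Lemma~\ref{continuityOfPlLaminations} when $z_\infty\in\jpl{0}$ (or Lemma~\ref{pplForZeroA} when $z_\infty\in\upl{0}$) to obtain plaques of $\lfpl{a_k}(z_k)$ converging to a vertical line; then the vanishing of the second directional derivative of $\pmn{a_k}^{d^j}$ along those plaques (forced by contact $\geq 3$ via Lemma~\ref{intersectionLemma}) passes to the limit and contradicts the formula $\pmn{0}^{d^j}(x,y)=\bigl(p(y)-x\bigr)^{d^{j-1}}$ from Lemma~\ref{pmnForZeroA}, whose second $y$-derivative at $y=c$ equals $d^{j-1}\bigl(p(c)-x\bigr)^{d^{j-1}-1}p''(c)\neq 0$. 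Without this limiting argument -- and in particular the Inclination Lemma -- Part~4 as you wrote it does not close.
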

\begin{proof}
%[Proof of part~\ref{part-filledTubeAvoidsKmn}]
\ref{part-filledTubeAvoidsKmn}.
This is a trivial consequence of the construction of 
$\tubeFilled_c$ and Lemma~\ref{bigUPlAndbigUMnOpen}.

\smallskip
%\noindent{\it Proof of part~\ref{part-trapHoleInKpl}.}
\ref{part-trapHoleInKpl}.
This follows from the fact that 
$\overline{\tubeXHole_c \times \tubeYFilled_c}$ is relatively compact
in a basin of attraction of $f_0$.

\smallskip
%\noindent{\it Proof of part~\ref{part-trappedInTube}.}
\ref{part-trappedInTube}.
We prove first that if $\vert a\vert$
is sufficiently small then for each critical point $c$, 
any component $X$ of $\critLoc_a$ which intersects
$\overline{\tubeFilled_c}$ lies in $\tubeCore_c$.
%In particular, $H_{ca}\subset\tubeCore_c$ for each critical point $c$. 
Hence, for the sake of contradiction, assume 
that there exists a sequence $a_i\to 0$ and a
sequence $z_i$ such that $z_i\in \critLoc_{a_i}\cap \overline{\tube_c}$.
It follows from Lemma~\ref{tangentSpaceAtInfinity}
that the sequence $z_i$ is bounded. Letting $z_\infty$ be any accumulation
point of $z_i$ then either $z_\infty\in \upl{0}$ or
$z_\infty\in \jpl{0}$. One concludes from Lemma~\ref{pmnForZeroA}
combined with either Lemma~\ref{pplForZeroA} in the former case or with
Lemma~\ref{continuityOfPlLaminations} in the latter case that 
$z_\infty\in\bigcup \overline{H_{c0}}$. However this is a contradiction
since $\overline{\tube_c}$ is disjoint from each $\overline{H_{c0}}$.
Thus $\critLoc_a\cap\overline{\tube_c}=\emptyset$
for $\vert a\vert$ sufficiently small.

Now assume $X$ is a component of $\critLoc_a$ intersecting $\overline{\tubeFilled_c}$.
Since $X$ is disjoint from $\overline{\tubeXHole_c\times \tubeYFilled_c}$ by
part~\ref{part-trapHoleInKpl} and 
from $\overline{\tube_c}$ then it follows easily from the definitions of these sets that 
%%% Proof %%%
%$X$ is disjoint from $\mb{C}\times (\tubeY{c})$. Hence 
%$X\subset \mb{C}\times \tubeYCore_c$. Since $X$
%is disjoint from $\overline{\tubeXHole_c\times \tubeYFilled_c}$
%then it follows immediately that 
$X\subset (\mb{C}\setminus \overline{\tubeXHole_c})\times \tubeYCore_c\subset \tubeCore_c$.
We will show that $X$ must be $H_{ca}$. 

Choose some sequence of points $z_k\in X$ 
such that
$$
  \lim_{k\to\infty} \gpl{a}(z_k)=\sup_{z\in X} \gpl{a}(z).
$$
A straightforward argument by contradiction shows that $z_k\to\infty$.
%%% Proof %%%
%By choosing a subsequence
%if necessary we can assume that either $\|z_k\|\to \infty$ or
%$z_k$ converges. Assume that $z_k$ converges to some point $z_\infty\in\mb{C}^2$.
%Then by the maximum principle $z_\infty\not\in X$. Since we know that $X$ is a closed
%subvariety of $\upl{a}\cap \umn{a}=\mb{C}^2\setminus (\kpl{a}\cup\kmn{a})$ 
%by the definition of $\critLoc_a$ then $z_\infty\in\kpl{a}\cup\kmn{a}$.
%By part~\ref{part-filledTubeAvoidsKmn} we conclude $z_\infty\in\kpl{a}$. But then
%$\gpl{a}(z_\infty)=0$ contrary to the fact that $\gpl{a}(z_\infty)=\sup_{z\in X}\gpl{a}(z)$.
%Hence we obtain a contradiction and conclude that the sequence $z_k$ diverges 
%to infinity. 
By Lemma~\ref{tangentSpaceAtInfinity}, $z_k$ converges to the point 
of the extension of $\critLoc_a$ to $\mb{P}^1\times\mb{P}^1\times\disk_R$
with $(u,y,a)$ coordinates $(0,c,a)$. 
%But $H_{ca}$ is the only component
%of $\critLoc_a$ through $(0,c,a)$ as follows
%from Theorem~\ref{thm-tangentSpaceAtInfinity}.
From Theorem~\ref{thm-tangentSpaceAtInfinity}, $X=H_{ca}$.

\smallskip
%\noindent{\it Proof of part~\ref{part-controllingTangenciesInTheTube}.}
\ref{part-controllingTangenciesInTheTube}.
If there is no $\epsilon_T$ such as the lemma claims to
exist then one can choose a critical point $c$ of $p(x)$ and sequences
$a_k\to 0$ and $z_k\in H_{ca_k}$ such that $\lfpl{a_k}(z_k)$ and 
$\lfmn{a_k}(z_k)$ have order of contact at least three 
at $z_k$ for each $i$. We can assume without loss of generality
that $\vert z_k\vert\to\infty$ or that $z_k$ converges
to some point $z_\infty\in\overline{\tubeCore_c}$.

If $\vert z_k\vert\to\infty$ then
we will need to use the extension of $\critLoc$ to $x=\infty$,
hence we change coordinates letting $u=1/x$.
Since $z_k\in \critLoc_{a_k}$ by assumption and the leaves
of $\Fpl{a_k}$ and $\Fmn{a_k}$ have contact of 
order at least three at $z_k$ then  
Lemma~\ref{intersectionComparison} shows that 
$\Fpl{a_k}$ and $\critLoc_{a_k}$ have intersection
multiplicity at least two at $z_k$. Since $\critLoc_{a_k}$
is defined by $w(u,y,a_k)$ for $a_k$ and $u$ sufficiently
small and $y$ bounded then the directional derivative
of $w(u,y,a)$ along the leaf of $\Fpl{a_k}$ through $z_k$
is zero by Lemma~\ref{intersectionLemma}. But 
$\frac{\partial w}{\partial y}(0,c,0)=0$ since this is the
directional derivative along the leaf of $\Fpl{0}$ by 
Corollary~\ref{cor-foliationsExtendToInfinity}.
But this contradicts the expression for $w$ about
$(0,c,0)$ given in Lemma~\ref{tangentSpaceAtInfinity}.

Having obtained a contradiction if $\vert z_k\vert\to\infty$,
assume $z_k\to z_\infty=(x_\infty,y_\infty)$. By part~\ref{part-filledTubeAvoidsKmn} we know
$z_\infty\not\in\kmn{0}$, so $(x_{\infty},y_{\infty},0)\in\mf{V}_{0,-}$. 
By Lemma~\ref{inflatingKplZero} 
we know $z_\infty\in \overline{\mb{C}^2\setminus \kpl{0}}$.
Hence $\pmnPlain^{d^j}$ is well defined at $(x_\infty,y_\infty,0)$
for some integer $j$. Then by  
either Lemma~\ref{continuityOfPlLaminations} or
Lemma~\ref{pplForZeroA}, depending on
whether $z_\infty\in \jpl{0}$ or not, one concludes that
one can parameterize plaques of the leaves $\lfpl{a_k}(z_k)$
such that they converge to a parameterization of a plaque of the leaf
of $\jpl{0}$ or $\Fpl{0}$ through $z_\infty$ (which is a vertical
line in either case). Since $\Fpl{a_k}$
and $\Fmn{a_k}$ have contact of order at least three at $z_k$ for each $k$ then 
by Lemma~\ref{intersectionLemma} the directional derivative of 
$\pmn{a_k}^{d^j}$ along the leaf $\lfpl{a_k+}(z_k)$ is zero. But by continuity
of $\pmnPlain^{d^j}$ it follows that the directional derivative of $\pmn{0}^{d^j}$
in the $y$ direction is zero. This contradicts the expression
for $\pmn{0}^{d^j}$ given in Lemma~\ref{pmnForZeroA}. Hence
we conclude that there is no such sequence $a_k$ and $z_k$
and so $\Fmn{a}$ and $\Fpl{a}$ have contact of order two 
at every point of $H_{ca}$ whenever $\vert a\vert$ is sufficiently small.

The rest of the Lemma is an immediate consequence of
Corollary~\ref{cor-leafContactToolbox}.
\end{proof}

\begin{lemma}
\name{gplProperOnH}
If $\vert a\vert < \epsilon_T$ then
the map $\gpl{a}\rest{H_{ca}}\colon H_{ca}\to (\frac{1}{d-1}\log\vert a\vert,\infty)$ is proper.
\end{lemma}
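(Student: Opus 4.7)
The plan is to prove properness via the sequential criterion: any sequence $\{z_n\}\subset H_{ca}$ whose images $\gpl{a}(z_n)$ lie in a compact subinterval $[s,t]\subset(\tfrac{1}{d-1}\log|a|,\infty)$ must admit a subsequence converging in $H_{ca}$. Since $\gpl{a}$ is strictly positive on $\upl{a}\supset H_{ca}$, the essential content is the case $s>0$, i.e.\ that preimages of compact subsets of $(0,\infty)$ are compact in $H_{ca}$.

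First I would bound $\{z_n\}$ in $\mb C^2$. Since $H_{ca}\subset\tubeCore_c=\tubeX_c\times\tubeYCore_c$ by Lemma~\ref{tubeToolbox}(\ref{part-trappedInTube}), the $y$-coordinates automatically lie in the bounded disk $\tubeYCore_c$. For the $x$-coordinates I would invoke Corollary~\ref{corPplBound}: on $V_+$ one has $|\ppl{a}(x,y)|>B^{-1}|x|$. Because $(x,y)\in\tubeCore_c$ lies in $V_+$ once $|x|$ is large (as $|y|$ is bounded by the radius of $\tubeYCore_c$), the upper bound $\gpl{a}(z_n)\le t$ forces $|x_n|$ to remain bounded. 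Hence some subsequence $z_{n_k}\to z_\infty\in\overline{\tubeCore_c}\subset\overline{\tubeFilled_c}$.

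Next I would identify $z_\infty$ as a point of $H_{ca}$. By Lemma~\ref{tubeToolbox}(\ref{part-filledTubeAvoidsKmn}), $\overline{\tubeFilled_c}$ is uniformly bounded away from $\kmn{a}$, so $z_\infty\in\umn{a}$. Continuity of $\gpl{a}$ (Theorem~\ref{thm-greensContinuous}) together with $\gpl{a}(z_n)\ge s$ gives $\gpl{a}(z_\infty)\ge s$; provided $s>0$, this places $z_\infty\in\upl{a}$. Thus $z_\infty$ sits in the natural domain $\upl{a}\cap\umn{a}$ of $\critLoc_a$, and since $\critLoc_a$ is a closed analytic subvariety of this domain containing each $z_n$, the limit $z_\infty$ lies in $\critLoc_a$. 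Lemma~\ref{tubeToolbox}(\ref{part-trappedInTube}) then identifies $H_{ca}$ as the unique component of $\critLoc_a$ meeting $\overline{\tubeFilled_c}$, forcing $z_\infty\in H_{ca}$.

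The main obstacle is the lower endpoint $\tfrac{1}{d-1}\log|a|$ of the codomain interval: since this quantity is negative for $|a|<1$, the statement as written allows $s\le 0$, and then continuity alone does not rule out $z_\infty\in\kpl{a}$. In other words, the genuinely nontrivial content of the lemma is that $H_{ca}$ does not accumulate on $\kpl{a}$ inside $\overline{\tubeFilled_c}$. To dispose of this I would adapt the inclination argument behind Lemma~\ref{continuityOfPlLaminations}: a putative accumulation $z_n\in H_{ca}\to z_\infty\in\jpl{a}$ would force the plaques $\lfpl{a}(z_n)$ to become nearly vertical at $z_\infty$; but by Lemma~\ref{tubeToolbox}(\ref{part-controllingTangenciesInTheTube}) the foliations are only tangent to order two along $H_{ca}$, so tracking $H_{ca}$ along these near-vertical leaves together with the fact that $H_{ca}$ is a graph over a horizontal disk (from Theorem~\ref{thm-tangentSpaceAtInfinity} plus the trapping by $\tubeCore_c$) should produce a contradiction. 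Once this is in place, the sequential argument above closes.
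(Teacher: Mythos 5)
Your first three paragraphs reproduce faithfully the argument the paper intends when it invokes parts~\ref{part-filledTubeAvoidsKmn} and~\ref{part-trappedInTube} of Lemma~\ref{tubeToolbox}: the trapping $H_{ca}\subset\tubeCore_c$ bounds the $y$-coordinate; the bound $\gpl{a}\le t$ together with Corollary~\ref{corPplBound} bounds the $x$-coordinate; part~\ref{part-filledTubeAvoidsKmn} keeps the limit $z_\infty$ out of $\kmn{a}$; and, provided $s>0$, continuity of $\gpl{a}$ (Theorem~\ref{thm-greensContinuous}) forces $\gpl{a}(z_\infty)>0$, so $z_\infty\in\upl{a}\cap\umn{a}$, hence $z_\infty\in\critLoc_a$ by closedness of the variety there, hence $z_\infty\in H_{ca}$ by uniqueness of the component of $\critLoc_a$ meeting $\overline{\tubeFilled_c}$. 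That is the right chain, and it is what the paper's one-line proof means.

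The final paragraph, however, attempts to establish something that is false. You propose to show that $H_{ca}$ does not accumulate on $\kpl{a}$ inside $\overline{\tubeFilled_c}$. In fact $\overline{H_{ca}}$ \emph{must} meet $\jpl{a}$: Lemma~\ref{abutmentForComponents} says every component of $\critLoc_a$ abuts $\jpl{a}\cup\jmn{a}$, and part~\ref{part-filledTubeAvoidsKmn} rules out accumulation on $\jmn{a}$, leaving $\jpl{a}$. (Proposition~\ref{prop-captureAllAbutmentPoints} makes this explicit, eventually identifying $\overline{H_{ca}}$ with $\mb{C}\setminus K^\circ(p)$, which has a copy of $J(p)$ worth of abutment points on $\jpl{a}$.) So the inclination-type argument you sketch cannot possibly close, and no argument of that kind can.

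The discrepancy you noticed is real, but it sits in the stated codomain rather than in a need for extra geometry. Since $\gpl{a}>0$ on $\upl{a}$ and $\overline{H_{ca}}$ does meet $\jpl{a}$ (where $\gpl{a}=0$), the restriction $\gpl{a}\rest{H_{ca}}$ has image in $(0,\infty)$ with infimum $0$. Consequently it is \emph{not} proper into $(\tfrac{1}{d-1}\log\vert a\vert,\infty)$: for any $s$ with $\tfrac{1}{d-1}\log\vert a\vert<s\le 0$, the preimage of the compact interval $[s,1]$ is $\{z\in H_{ca}:\gpl{a}(z)\le 1\}$, which is not compact. What is true, and what the Morse-theoretic argument in Theorem~\ref{thm-extensionToH} actually requires, is that $\gpl{a}\rest{H_{ca}}$ is proper onto $(0,\infty)$. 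With the codomain read that way, your paragraphs two and three already give a complete proof, and the fourth paragraph should simply be dropped.
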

\begin{proof}
This is easy to prove using parts~\ref{part-filledTubeAvoidsKmn}
and \ref{part-trappedInTube} of Lemma \ref{tubeToolbox}.
%%% Proof %%%
%If not then there exists a compact subset $\kappa\subset (\frac{1}{d-1}\log\vert a\vert,\infty)$
%and sequence of points $z_k\in H_{ca}$ which leaves any compact subset $H_{ca}$,
%but for which $\gpl{a}(z_k)\in \kappa$ for each $k$.
%Since $H_{ca}$ is a finite distance from $\kmn{a}$ by 
%part~\ref{part-filledTubeAvoidsKmn} of Lemma~\ref{tubeToolbox} then
%$z_k$ must either have a limit point in $\upl{a}$ or must be unbounded. In either case
%$z_k$ is leaves every compact subset  of $\upl{a}$.
%The sequence $z_k$ remains in $\tubeFilled_c$ for all $k$
%by part~\ref{part-trappedInTube} of Lemma~\ref{tubeToolbox}. Thus, there 
%is a subsequence of $z_k$ which diverges to infinity in $\tubeFilled_c$
%or there is a subsequence which converges to $\kpl{a}$, which yields a contradiction 
%in either case.
\end{proof}

\head{Relating the index of the gradient to the order of a critical point.}

The following is a standard fact:

\begin{lemma}
\name{gradientsAndCriticalPoints}
If $M$ is a Riemann surface and if $g$ is harmonic on $M$ then
\begin{itemize}
\item The zeros of $\nabla g$ are discrete
\item The zeros of $\nabla g$ correspond to critical points of a holomorphic $h$ 
such that $g=\re h$ 
and the index of the zero is equal the negative of the order of  the critical point.
\end{itemize}
\end{lemma}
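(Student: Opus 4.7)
The plan is to reduce everything to the familiar fact that a nonconstant holomorphic function has discrete zeros, and then compute the index of the gradient vector field via a standard normal form. First I would fix a point $p_0 \in M$ and a simply connected coordinate chart around $p_0$. On such a chart $g$ admits a harmonic conjugate $g^{*}$, so that $h := g + i g^{*}$ is holomorphic with $\re h = g$. Any two such $h$ differ by an additive imaginary constant, so the notion of a critical point (a zero of $h'$) and its order are intrinsic and transform correctly under change of chart.

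Second, in local coordinates $z = x + i y$, the Cauchy--Riemann equations applied to $h = g + i g^{*}$ give
\[
h'(z) = g_x - i g_y = \overline{g_x + i g_y}.
\]
Identifying a real vector field $V_1 \partial_x + V_2 \partial_y$ with the complex-valued function $V_1 + i V_2$, one sees that $\nabla g$ corresponds to $\overline{h'(z)}$. Consequently $\nabla g$ vanishes at a point exactly when $h'$ does, and since $h'$ is holomorphic its zeros are isolated whenever $h$ is nonconstant on the component in question (the alternative $\nabla g \equiv 0$ on a component is excluded by the implicit hypothesis that $g$ be nonconstant).

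Third, for the index I would put $h'$ into the local normal form $w^k$. If $h'(z) = z^k u(z)$ with $u(0) \neq 0$ and $k$ the order of the critical point, choose a local branch of $u^{1/k}$ near $0$ and set $w = z\, u(z)^{1/k}$; this is a local biholomorphism and in the new coordinate $h'$ equals $w^k$. Holomorphic changes of coordinates are orientation-preserving and so preserve the index of an isolated zero of a vector field, so it suffices to compute the index of the vector field $w \mapsto \overline{w^k} = \overline{w}^{\,k}$ at the origin. Parameterizing the unit circle by $w = e^{i\theta}$ with $\theta \in [0, 2\pi]$, the image $e^{-i k \theta}$ winds $-k$ times around $0$, so the index equals $-k$, as claimed.

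There is no real obstacle here; the content is a standard consequence of local harmonic conjugates and the winding-number definition of the index. The only point requiring care is the sign in identifying $\nabla g$ with $\overline{h'}$ rather than with $h'$, and this sign is precisely what converts the naive winding number $+k$ of $w \mapsto w^k$ into the correct Poincar\'e--Hopf index $-k$.
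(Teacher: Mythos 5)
The paper does not actually prove this lemma; it is introduced with the line ``The following is a standard fact:'' and stated without argument. So there is no proof to compare against, only the question of whether your proof is correct — and it is. Your identification $\nabla g \leftrightarrow g_x + ig_y = \overline{h'(z)}$ is exactly right (verified directly from the Cauchy--Riemann equations for the pair $(g,g^*)$), and reducing the index computation to the normal form $\overline{w^k}$ via the substitution $w = z\,u(z)^{1/k}$ is a clean way to get the winding number $-k$. Two small points you gloss over but that are standard: (i) you justify well-definedness across choices of harmonic conjugate (additive constant), but you should also note that under a chart change $z \mapsto \zeta$, $h'$ gets multiplied by the nonvanishing factor $d\zeta/dz$, which preserves both the zero set and the order; and (ii) the index of an isolated zero of $\nabla g$ is independent of the conformal metric used to raise the index of $dg$, which is why it makes sense to compute it with the flat chart identification. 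Both are routine and do not affect the correctness of the argument. An even shorter route to the sign, avoiding the coordinate change, is to write $\overline{h'(z)} = \bar z^{\,k}\,\overline{u(z)}$ directly and observe that $\overline{u}$ has winding number $0$ on a small circle while $\bar z^{\,k}$ has winding number $-k$.
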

%\begin{proof}
%The first claim follows immediately from the fact that $g$ is locally
%the real part of a holomorphic function $h$. The second claim then 
%follows by noting that if $g=\re h$ about a point $z_0$. It is an easy
%consequence of the Cauchy-Riemann equations that $\nabla g$
%vanishes precisely at a critical point of $h$. To work out the
%index note that $\nabla g=\nabla \re h$ which will be the direction
%in which $\re h$ is increasing the fastest. Of course $\re h(z)$ 
%increases fastest in the direction $\inlinefrac{1}{h'(z)}$.
%The index of $\nabla g$ at $z_0$ is the degree of the map
%$z\mapsto \inlinefrac{\nabla g(z)}{\|\nabla g(z)\|}$ on a sufficiently 
%small circle about $z_0$, which is the same as the map
%$z\mapsto \inlinefrac{\vert h'(z)\vert}{ h'(z)}$ on the same circle.
%This is the negative of the degree of the map $z\mapsto \inlinefrac{ h'(z)}{\vert h'(z)\vert}$
%on the same circle. But this is just the degree of the lowest order term in the power
%series expansion of $h$ about $z_0$, which is the order of $z_0$ as a critical point of $h$.
%\end{proof}

\begin{theorem}
\name{thm-extensionToH}
If $\vert a\vert < \epsilon_T$ then $H_{ca}$ is a punctured disk and
the map $\ppl{a}$ extends holomorphically from from $H_{ca}\cap V_+$
to a biholomorphism $\ppl{a}\colon H_{ca}\to\mb{C}\setminus \overline{\disk}$.
\end{theorem}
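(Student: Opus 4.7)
The plan is to use the harmonic exhaustion $\gpl{a}|_{H_{ca}}$ to pin down the conformal type of $H_{ca}$, and then extend $\ppl{a}$ globally by a monodromy argument. First, $H_{ca}$ is a smooth connected Riemann surface contained in $\upl{a}\cap\umn{a}$ by Lemma~\ref{tubeToolbox}, so $\gpl{a}=\log\vert\ppl{a}\vert$ is harmonic on it. The key observation is that $\gpl{a}|_{H_{ca}}$ has no critical points: a critical point would be a tangency of $H_{ca}$ to a leaf of $\Fpl{a}$ (since the tangent space to $\Fpl{a}$ coincides with $\ker\ed\gpl{a}$), and Lemma~\ref{tubeToolbox}(\ref{part-controllingTangenciesInTheTube}) rules this out. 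Together with the properness from Lemma~\ref{gplProperOnH}, this makes $\gpl{a}|_{H_{ca}}$ a proper harmonic submersion onto an interval, and an Ehresmann-type argument (the base is contractible) identifies $H_{ca}$ diffeomorphically with $S^1\times\mb{R}$, with each level set of $\gpl{a}$ a single smooth Jordan curve.

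Next I would identify the conformal type via the classical fact that a Riemann surface admitting such a proper harmonic exhaustion without critical points is conformally an annulus $\{r_1<\vert z\vert<r_2\}$ with $\gpl{a}$ an affine function of $\log\vert z\vert$. By Theorem~\ref{thm-tangentSpaceAtInfinity}, the $\gpl{a}\to\infty$ end of $H_{ca}$ compactifies by adding the single point $(0,c,a)$, at which $\ppl{a}^{-1}$ is a local coordinate; single-point compactification of an annular end is only possible when $r_2=\infty$. After rescaling, $H_{ca}$ is conformally $\mb{C}\setminus\overline{\disk}$ with $\gpl{a}$ corresponding to $\log\vert z\vert$.

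Finally I would extend $\ppl{a}$. It is already single-valued and holomorphic on $V_+\cap H_{ca}$, which contains a loop generating $\pi_1(H_{ca})\cong\mb{Z}$ (namely a small loop around the puncture at $(0,c,a)$). The monodromy of $\ppl{a}$ around this generator is trivial, so analytic continuation gives a single-valued holomorphic extension $\ppl{a}\colon H_{ca}\to\mb{C}^*$ satisfying $\vert\ppl{a}\vert=e^{\gpl{a}}=\vert z\vert$. The ratio $\ppl{a}(z)/z$ is then a unimodular holomorphic function on $\mb{C}\setminus\overline{\disk}$, hence a constant, which delivers the required biholomorphism onto $\mb{C}\setminus\overline{\disk}$.

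The main technical obstacle will be confirming $r_1=1$ — equivalently, that $\gpl{a}|_{H_{ca}}$ really has infimum $0$ rather than some positive value. This should follow from Lemma~\ref{gplProperOnH} combined with an analysis of how $H_{ca}$ approaches the boundary $\kpl{a}$ of $\upl{a}$ inside the trapping tube, but it will need careful verification.
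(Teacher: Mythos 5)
Your route largely parallels the paper's: critical points of $\gpl{a}\rest{H_{ca}}$ would be tangencies with $\Fpl{a}$ (Lemma~\ref{gradientsAndCriticalPoints}), ruled out by Lemma~\ref{tubeToolbox}\,(\ref{part-controllingTangenciesInTheTube}); properness and connectedness of the level circle near the puncture give a topological annulus; and $\ppl{a}$ extends because $\pi_1(H_{ca})$ is carried by a loop near $(\infty,c)$ on which $\ppl{a}$ is already single-valued (the paper phrases this by taking $d^k$-th roots of the globally defined $\ppl{a}\circ f_a\cp{k}$, but it is the same winding-number computation). The real divergence is in your concluding step, and there is a gap there. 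You uniformize $H_{ca}$ as $\{r_1<|z|<\infty\}$ with $\gpl{a}=\alpha\log|z|+\beta$ and assert that ``after rescaling'' this becomes $\gpl{a}=\log|z|$. But rescaling $z\mapsto\lambda z$ only adjusts $\beta$; the coefficient $\alpha$ is the period of the harmonic conjugate divided by $2\pi$ and is a conformal invariant, so it cannot be normalized away. Without $\alpha=1$ the function $\ppl{a}(z)/z$ has modulus $e^{\beta}|z|^{\alpha-1}$, which is not unimodular, and your last line fails. The fix is at hand in Theorem~\ref{thm-tangentSpaceAtInfinity}, which you cite but do not exploit for this purpose: since $\ppl{a}^{-1}$ is a local coordinate at the puncture, both $\ppl{a}^{-1}$ and $1/z$ vanish simply there, so $\ppl{a}/z$ is bounded and bounded away from $0$ near the puncture; hence $\gpl{a}-\log|z|$ is bounded as $|z|\to\infty$, which forces $\alpha=1$.

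The ``main technical obstacle'' you flag, namely $\inf\gpl{a}\rest{H_{ca}}=0$, is real if one insists on pinning the uniformization before building the biholomorphism. The paper sidesteps it: once $\ppl{a}$ is extended, it is a proper holomorphic map into $\mb{C}\setminus\overline{\disk}$ (the image lies there since $\gpl{a}>0$, and properness is inherited from that of $\gpl{a}$), hence surjective; its local degree at $(\infty,c)$ is $1$ by Theorem~\ref{thm-tangentSpaceAtInfinity}, so it is a biholomorphism. Surjectivity then delivers $\inf\gpl{a}\rest{H_{ca}}=0$ and simultaneously rules out $H_{ca}\cong\mb{C}^*$ as byproducts, rather than as hypotheses that must be checked up front. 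I would recommend adopting this order in the final write-up: extend first, then let properness and local degree do the rest, which eliminates both the $\alpha$-normalization issue and the separate verification of the infimum.
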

\begin{proof}
Using Lemma~\ref{gradientsAndCriticalPoints} and 
part~\ref{part-controllingTangenciesInTheTube} of Lemma~\ref{tubeToolbox}
we can conclude that $\gpl{a}$ has no critical points 
on $H_{ca}$. Moreover, by Lemma~\ref{gplProperOnH},
$\gpl{a}$ is proper.
By Theorem~\ref{thm-tangentSpaceAtInfinity}, Corollary~\ref{corBoundednessPl}
and the definition of $\gpl{a}$ it follows that that the fibers of $\gpl{a}\rest{H_{ca}}$ 
about the point $(\infty,c)\in H_{ca}$ are topological circles and, by Morse theory, $H_{ca}$
is a topological annulus. Since $H_{ca}$ contains a punctured disk about $(\infty,c)$ then
$H_{ca}$ is either $\disk^*$ or $\mb{C}^*$.

The function $\ppl{a}\circ f_a\cp{k}\colon H_{ca}\to \mb{C}\setminus\overline{\disk}$ 
induces the map 
$j\mapsto d^k\cdot j$ on $\pi_1(H_{ca})\to \pi_1(\mb{C}\setminus\overline{\disk})$
by Corollary~\ref{corBoundednessPl}, Theorem~\ref{thm-tangentSpaceAtInfinity} and
the recursion relation for $\ppl{a}$. It follows that 
$\ppl{a}\circ f_a\cp{k}$ has a holomorphic $d^{k\text{th}}$ root which
is equal to $\ppl{a}$ on $H_{ca}\cap V_+$. 
It then follows that $\ppl{a}$ has a holomorphic extension to all of $H_{ca}$.
What is more, $\ppl{a}$ is proper since $\gpl{a}$ is. By considering $\ppl{a}$ about
$(\infty,c)$ it follows easily that $\ppl{a}\rest{H_{ca}}\colon H_{ca}\to \mb{C}\setminus\overline{\disk}$
has degree one and is therefore a biholomorphism.
\end{proof}

Given $a,b$ with $\vert a\vert < \epsilon_T $
and $\vert b\vert < \epsilon_T$ we define a biholomorphism 
$\tau_{ab}\colon H_{ca}\to H_{cb}$ by 
$\tau_{ab}=\biggl(\ppl{b}\rest{H_{cb}}\biggr)^{-1}\circ \ppl{a}$.
Then $\ppl{a}\circ \tau_{ab}=\ppl{b}$.

\begin{proposition}
\name{holomorphicVariationOnCritical}
The maps $\tau_{ab}\colon H_{ca}\to H_{cb}$ vary holomorphically
in $a$ and $b$.
\end{proposition}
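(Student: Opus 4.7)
The plan is to package the maps $\tau_{ab}$ as coming from the biholomorphic uniformization of a single complex $2$-manifold. Consider the total primary component
\[ \mc{H}_c \equiv \critLoc \cap \bigl(\tubeFilled_c \times \disk_{\epsilon_T}\bigr) = \{(z, a) : z \in H_{ca},\ |a| < \epsilon_T\}, \]
so that by part~\ref{part-trappedInTube} of Lemma~\ref{tubeToolbox} the fiber of $\mc{H}_c$ over each $a$ is exactly $H_{ca}$. Consider also the map
\[ \Phi\colon \mc{H}_c \to (\mb{C} \setminus \overline{\disk}) \times \disk_{\epsilon_T}, \qquad \Phi(z, a) = \bigl(\ppl{a}(z), a\bigr). \]
I would prove that $\Phi$ is a biholomorphism. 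Writing $\Phi^{-1}(w, a) = (\psi_a(w), a)$ gives joint holomorphy of $\psi_a(w)$ in $(w, a)$, so
\[ \tau_{ab}(z) = \psi_b\bigl(\ppl{a}(z)\bigr) \]
is jointly holomorphic in $(z, a, b)$, which is the proposition.

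The key steps are as follows. First, I would verify that $\mc{H}_c$ is a smooth complex $2$-dimensional submanifold. Locally $\critLoc$ is the zero set of the single holomorphic function $\tilde{w}(x, y, a)$, and part~\ref{part-controllingTangenciesInTheTube} of Lemma~\ref{tubeToolbox} already says that along each fiber $H_{ca}$ the $(x, y)$-gradient of $\tilde{w}$ is nonvanishing (since $H_{ca}$ is smooth of multiplicity one). Hence the full gradient of $\tilde{w}$ in $(x, y, a)$ is nonvanishing on $\mc{H}_c$, so $\mc{H}_c$ is smooth. Second, I would check joint holomorphy of $\Phi$ on $\mc{H}_c$. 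On $\mc{H}_c \cap (V_+ \times \disk_{\epsilon_T})$ this is immediate from Corollary~\ref{corPplBound}, since the telescoping series defining $\ppl{a}$ converges uniformly in $(x, y, a)$. Off $V_+$, the extension in the proof of Theorem~\ref{thm-extensionToH} realizes $\ppl{a}(z)$ as the unique $d^k$-th root of $\ppl{a} \circ f_a^k(z)$ which agrees with $\ppl{a}$ on $H_{ca} \cap V_+$; the radicand is jointly holomorphic in $(z, a)$ and nonvanishing on $\mc{H}_c$, and the branch is pinned down by the already-jointly-holomorphic extension from $V_+$, so joint holomorphy propagates (using that $\mc{H}_c$ is connected, being a $\disk^*$-bundle over a disk).

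By Theorem~\ref{thm-extensionToH}, for each fixed $a$ the restriction $\Phi|_{H_{ca}}$ is a biholomorphism onto $\mb{C} \setminus \overline{\disk}$; hence $\Phi$ is a holomorphic bijection between two complex $2$-manifolds of the same dimension, and therefore a biholomorphism. Its inverse $(w, a) \mapsto (\psi_a(w), a)$ is then holomorphic, and composing yields the jointly holomorphic formula $\tau_{ab}(z) = \psi_b(\ppl{a}(z))$. The main obstacle is Step~$1$: smoothness of the total space $\mc{H}_c$ uniformly as $a$ passes through $0$. This is exactly what the tube construction of Section~\ref{subsection-trappingAndMappingComponents}, together with the degenerate-case analysis of Sections~\ref{sectionFolitaionsNearDegeneracy} and \ref{sectionTheCriticalLocusNearInfinity}, was set up to provide, so the heavy lifting has already been done; the present proposition is essentially a compilation of those results via the total-space viewpoint.
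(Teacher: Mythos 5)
Your proposal is essentially the paper's own proof: the paper also packages the primary components into the total space $\ms{H}_c$, extends $\ppl{a}$ to a jointly holomorphic function on $\ms{H}_c$ via the $d^{k}$-th root construction from the proof of Theorem~\ref{thm-extensionToH}, declares $\widetilde{\ppl{a}} = \bigl(\ppl{a},a\bigr)\colon\ms{H}_c\to(\mb{C}\setminus\overline{\disk})\times\disk_{\epsilon_T}$ a biholomorphism, and reads off holomorphy of $\tau_{ab}$ from $\widetilde{\ppl{b}}\circ\tilde{\tau}_{ab}=\widetilde{\ppl{a}}$. The one place your write-up adds genuine content is the explicit verification that $\mc{H}_c$ is a \emph{smooth} complex $2$-manifold via the nonvanishing of the $(x,y)$-gradient of $\tilde{w}$ supplied by part~\ref{part-controllingTangenciesInTheTube} of Lemma~\ref{tubeToolbox}; the paper leaves this as ``it is easy to see that $\widetilde{\ppl{a}}$ is a biholomorphism,'' and smoothness is exactly what justifies upgrading the holomorphic bijection to a biholomorphism, so your elaboration is a useful fill-in rather than a detour.
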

\begin{proof}
The precise meaning of this is that if one defines $\ms{H}_c\subset\critLoc$
to be $\{(x,y,a)\vert (x,y)\in H_{ca}, a\in\disk_{\epsilon_T}\}$ then from 
Lemma~\ref{tangentSpaceAtInfinity} and 
part~\ref{part-trappedInTube} of Lemma~\ref{tubeToolbox}
it is clear that $\ms{H}_c$ is a component
of $\critLoc\cap (\mb{C}^2\times \disk_{\epsilon_T})$ and 
this proposition states that the map 
$\tilde{\tau}\colon \ms{H}_c\times \disk_{\epsilon_T}\to \ms{H}_c$
given by $\tilde{\tau}(x,y,a,b)\equiv \tau_{a,b}(x,y)$ 
is holomorphic.

The proof is elementary since from our original construction the function
$\ppl{a}(x,y)\colon V_+\times \disk_R\to \mb{C}$ is holomorphic
in $x$, $y$ and $a$. It was shown in the proof of Theorem~\ref{thm-extensionToH}
that $\ppl{a}\circ f\cp{k}\rest{H_{ca}}$ 
has a $d^{k\text{th}}$ root which agrees with $\ppl{a}$ when $\vert a\vert < \epsilon_T$
and this root gives the extension of $\ppl{a}$ to $f^{-k}(V_+)\cap H_{ca}$.
It follows that $\ppl{a}\circ f_a^k(x,y)\colon 
\ms{H}_c\cap \{(x,y,a)\vert f_a\cp{k}(x,y)\in V_+,a\in\disk_{\epsilon_T}\}\to \mb{C}$
has a $d^{k\text{th}}$ root which agrees with $\ppl{a}(x,y)$ on $\ms{H}_c\cap (V_+\times \disk_{\epsilon_T})$.
Consequently the extension $\ppl{a}(x,y)\colon \ms{H}_c\to \mb{C}\setminus\overline{\disk}$
is holomorphic in $x$, $y$ and $a$. It is easy to see that 
the map $\widetilde{\ppl{a}}\equiv \bigl(\ppl{a}(x,y),a\bigr)\colon
\ms{H}_c\to  (\mb{C}\setminus\overline{\disk})\times \disk_{\epsilon_T}$ is a biholomorphism.
It follows that $\tilde{\tau}_{ab}$ is holomorphic in $a$ and $b$ from the easily
verified relationship
$\widetilde{\ppl{b}}\circ \tilde{\tau}_{ab}=\widetilde{\ppl{a}}$.
\end{proof}

\subsection{Classification of the Critical Components.}

Since our strategy has been to consider the degenerate map $f_0$ and
then to consider $\critLoc_a$ as a deformation of $\critLoc_0$, we need to ensure
that we have accounted for every component of $\critLoc_a$. It is plausible
that $\critLoc_a$ has a component that ``escapes to infinity'' as $a\to 0$, and thus
this component would be invisible to us in $\critLoc_0$. 
We will start by showing that any component of $\critLoc_a$
meets either $\jmn{a}$ or $\jpl{a}$.
We will be able to use this to show that
any component of $\critLoc_a$ is an iterate of a component of the form $H_{ca}$, and
thus we have accounted for every component of $\critLoc_a$ by accounting for the components
$H_{ca}$ and their iterates.

\head{Ensuring that components of $\critLoc_a$ abut against $\jpl{a}$ or $\jmn{a}$.}

\begin{lemma}
\name{abutmentForComponents}
If $W_a$ is some component of $\critLoc_a$ then $\partial W_a$
contains a point in either $\jpl{a}$ or $\jmn{a}$.
\end{lemma}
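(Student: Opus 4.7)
The plan is to argue by contradiction, assuming $\overline{W}_a\cap(\jpl{a}\cup \jmn{a}) = \emptyset$, where the closure is taken in $\mb{C}^2$. First I would observe that $\kpl{a}^{\circ}$ and $\kmn{a}^{\circ}$ are open and disjoint from $\upl{a}\cap \umn{a}$, so no sequence in $W_a\subset \upl{a}\cap \umn{a}$ can converge to a point of either interior (else the sequence eventually lies inside the open set, contradicting $W_a\subset \upl{a}\cap \umn{a}$). Combined with the decomposition $\kpl{a} = \kpl{a}^{\circ}\sqcup \jpl{a}$ and the standing contradiction hypothesis, this forces $\overline{W}_a\subset \upl{a}\cap \umn{a}$. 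Since $W_a$ is closed in $\upl{a}\cap \umn{a}$ as an analytic subvariety, it follows that $W_a$ is in fact closed in $\mb{C}^2$.

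I would next rule out the bounded case. A closed, bounded analytic subvariety of $\mb{C}^2$ is compact, and by applying the maximum principle to the coordinate functions $x,y$ restricted to $W_a$, such a variety must be zero-dimensional, contradicting that $W_a$ is a component of the one-dimensional set $\critLoc_a$. Thus $W_a$ is unbounded, and its closure $\overline{W}_a$ in $\mb{P}^1\times \mb{P}^1$ is an irreducible analytic curve that must meet the locus at infinity in a nonempty finite set of points.

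The key remaining step is to identify the points of $\overline{W}_a$ at infinity. Using the extension of $\critLoc$ to a neighborhood of $\{u=0\}$ developed in Section~\ref{sectionTheCriticalLocusNearInfinity}, any limit point of $W_a$ on $\{x=\infty\}$ must satisfy $\tilde{w}(0,y,a) = -p'(y) = 0$ by Lemma~\ref{tangentSpaceAtInfinity}, hence is of the form $(0,c,a)$ with $c$ a critical point of $p$. The analogous points on $\{y=\infty\}$, and the corner $(\infty,\infty)$, are handled by running the mirror construction in $(x,v,a)$ coordinates using $\pmn{a}^{-d}$ as local defining function, or by iterating: $f_a^n(W_a)$ is again a component of $\critLoc_a$, and forward iteration sweeps points of $\upl{a}$ into $V_+$, ensuring that after finitely many iterates the closure of the resulting component meets some $(0,c,a)$. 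Once $\overline{W}_a$ meets $(0,c,a)$, Theorem~\ref{thm-tangentSpaceAtInfinity} together with the trapping tube of Lemma~\ref{tubeToolbox} identifies $W_a$ as the primary horizontal component $H_{ca}$.

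Finally, contradiction comes from Theorem~\ref{thm-extensionToH}: the map $\ppl{a}\colon H_{ca}\to \mb{C}\setminus \overline{\disk}$ is a biholomorphism, so the ``inner'' end of $H_{ca}$, where $|\ppl{a}|\to 1^{+}$, is an end on which $\gpl{a}\to 0$. Because $H_{ca}$ is closed in $\mb{C}^2$ by the first step, this end must accumulate on genuine points of $\mb{C}^2$, and these points lie in $\partial\kpl{a} = \jpl{a}$, contradicting the starting hypothesis. The main obstacle is the third paragraph: controlling \emph{all} possible asymptotic directions of $\overline{W}_a$ at infinity and reducing them to the horizontal primary case, since the detailed extension of $\critLoc$ across the line at infinity was only carried out near $\{u=0\}$.
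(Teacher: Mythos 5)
Your approach is genuinely different from the paper's, and unfortunately it has gaps that the paper's much shorter argument avoids. The paper simply considers the pluriharmonic function $\mf{g}\equiv\gpl{a}+\gmn{a}-\tfrac{1}{d-1}\log\vert a\vert$, which is positive on $\upl{a}\cap\umn{a}$. Its restriction to (the normalization of) $W_a$ is harmonic, hence by the minimum principle its infimum cannot be attained at an interior point unless $\mf{g}\rest{W_a}$ is constant, which it is not since $\mf{g}\to\infty$ as $\vert (x,y)\vert\to\infty$. A minimizing sequence is therefore bounded (else $\mf{g}\to\infty$) and its accumulation points lie in $\overline{W_a}\setminus W_a$; since $W_a$ is closed in $\upl{a}\cap\umn{a}$ and the interiors of $\kpl{a}$, $\kmn{a}$ are open and avoided by $W_a$, these accumulation points lie in $\jpl{a}\cup\jmn{a}$. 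No extension to $\mb{P}^1\times\mb{P}^1$ and no trapping tubes are needed.

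The first serious gap in your argument is the assertion that ``its closure $\overline{W}_a$ in $\mb{P}^1\times\mb{P}^1$ is an irreducible analytic curve that must meet the locus at infinity in a nonempty finite set of points.'' This has no justification: a closed one-dimensional analytic subvariety of $\mb{C}^2$ need not have analytic closure in a compactification. The graph of $z\mapsto e^z$ is a closed analytic curve in $\mb{C}^2$ whose closure in $\mb{P}^1\times\mb{P}^1$ accumulates on the entire line $\{x=\infty\}$. To extract finiteness you must invoke the explicit extension of $\critLoc$ from Section~\ref{sectionTheCriticalLocusNearInfinity}, but that extension covers only a neighborhood of $\{u=0\}\setminus\{(\infty,\infty)\}$, and you never control what happens at the corner $(\infty,\infty)$; the appeal to ``iterating'' does not obviously help, since $f_a$ and $f_a^{-1}$ both send sequences approaching $(\infty,\infty)$ back to sequences approaching $(\infty,\infty)$.

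The second gap is in your final step, and it is a logical inversion rather than an omission. You write that because $H_{ca}$ is closed in $\mb{C}^2$, the end where $\gpl{a}\to 0$ ``must accumulate on genuine points of $\mb{C}^2$.'' But closedness gives the opposite: if the inner end accumulated at some $w\in\mb{C}^2$, then $w\in\overline{H_{ca}}=H_{ca}\subset\upl{a}$ and so $\gpl{a}(w)>0$, contradicting continuity of $\gpl{a}$ together with $\gpl{a}\to 0$ along the end. So closedness forces the inner end to escape to infinity. The actual contradiction then comes from the tube: on $H_{ca}\subset\tubeCore_c$ the $y$-coordinate is bounded, hence $\vert x\vert\to\infty$ along an escaping sequence, which forces $\gpl{a}\to\infty$ rather than $0$. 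That corrected argument does work, but it relies on the tube bound (Lemma~\ref{tubeToolbox}), not on closedness alone, and your stated reasoning is not what closes the loop.
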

\begin{proof}
Consider the positive plurisubharmonic function
$\mf{g}(x,y,a)\equiv \gpl{a}(x,y) + \gmn{a}(x,y) - \dfrac{1}{d-1}\log\vert a\vert$ on
$\bigUPl\cap \bigUMn$. It is easy to show that if $z_n$ is a sequence
of points of $W_a$ such that 
$\lim_{n\to\infty} \mf{g}(z_n)=\inf_{z\in W_a}\mf{g}(z)$
then $z_n$ has an accumulation point in $\jpl{a}\cup\jmn{a}$.
\end{proof}

We will need specific local stable manifolds about the points of $J_a$. 
We know that if $\vert a\vert$ is sufficiently small then given  
$\nax{z}=(\dotsb,z_{-2},z_{-1},z_0)\in \Nax{J}(p)$
there is an associated neighborhood $B_{z_0}=V_0\times U_{z_0}$ of $\pi_a(\nax{z})$
and the local stable manifold in $B_{z_0}$ is the graph of a holomorphic function
from $V_0\to U_{z_0}$.

We recall that in Lemma~\ref{uVCoordinates} it was shown that
$(u,v)$ coordinates are defined on an open set $V'$ which contains
each of the sets $B_z$ and that $(u,v)$ coordinates provide a biholomorphic
isomorphism of $V'$ onto $U'\times \disk_{\delHO}$. Also, since
$v(x,y)=p(y)-x$ then $v$ is defined on all of $\mb{C}^2$, not just on $V'$.

\head{Small tube about $C(p)$ a finite distance from filled trapping tubes.}

%\begin{lemma}
%There exists $\epsilon_r > 0$ such that 
%each of the filled tubes $\tubeFilled_c$ is a finite distance from the
%set $\{(x,y)\big\arrowvert\ \vert v(x,y)\vert \leq \epsilon\}$.

\begin{definition}
\name{def-r}
We now fix some positive $r < 1$ such that
each of the filled tubes $\tubeFilled_c$ is a finite distance from the
set $\{(x,y)\in\mb{C}^2\big\arrowvert\ \vert v(x,y)\vert \leq r\delHO\}$.
\end{definition}
\begin{proof}[Proof that such an $r$ exists.]
Each of the filled tubes lies a finite distance from $C(p)$ by construction.
We let $s$ be half the minimal distance between $C(p)$
and the nearest tube.
Since $\bigl( x-v(x,y),y\bigr)\in C(p)$ for all $(x,y)\in\mb{C}^2$
then the set $\{(x,y)\big\arrowvert\ \vert v(x,y)\vert \leq s\}$
is comprised of points no further than $s$ from $C(p)$. Thus $r=s/\delHO$ will do.
\end{proof}

\head{Bounding $\gmn{a}$ on local stable manifolds.}

\begin{lemma}
\name{controlGmnOnStableDisks}
Given $\epsilon > 0$ there exists $\delta_r(\epsilon) > 0$ such that if $\vert a\vert < \delta_r(\epsilon)$
and if $(x,y)\in V'\subset \mb{C}^2$ then
\begin{itemize}
\item $\vert v(x,y)\vert \leq r\delHO$ implies that 
$\gmn{a}(x,y) < \dfrac{1}{d}\log\vert r\delHO\vert + \epsilon$
\item $\vert v(x,y)\vert\geq r\delHO$ implies that 
$\dfrac{1}{d}\log\vert r\delHO\vert - \epsilon < \gmn{a}(x,y)$
\end{itemize}
\end{lemma}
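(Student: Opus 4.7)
The plan is to reduce both inequalities to the continuity of $\gmn{a}(x,y)$ in $(x,y,a)$ (Theorem~\ref{thm-greensContinuous}) combined with the explicit formula available at $a=0$. By Lemma~\ref{pmnForZeroA} applied with $k=1$, $\pmn{0}^d(x,y)=p(y)-x=v(x,y)$ on $\mb{C}^2\setminus C(p)$, so
\[
\gmn{0}(x,y)=\tfrac{1}{d}\log|v(x,y)|,
\]
with the convention $\gmn{0}\equiv-\infty$ on $C(p)$. A preliminary observation needed for uniformity is that $\overline{V'}$ is compact in $\mb{C}^2$: the set $U$ is bounded (since $p^{-1}(U)\Subset U$ for the polynomial $p$ forces $U$ to be bounded), and the defining constraint $|v(x,y)|<\delHO$ with $x\in\overline{U}$ then bounds $p(y)$ and hence confines $y$ to a compact subset of $\mb{C}$.

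For the first bullet I would argue by contradiction. If no $\delta_r(\epsilon)$ works, then there exist sequences $a_n\to 0$ and $(x_n,y_n)\in V'$ with $|v(x_n,y_n)|\leq r\delHO$ and
\[
\gmn{a_n}(x_n,y_n)\geq \tfrac{1}{d}\log|r\delHO|+\epsilon.
\]
Passing to a subsequence so that $(x_n,y_n)\to(x_*,y_*)\in\overline{V'}$, continuity gives $\gmn{0}(x_*,y_*)\geq \tfrac{1}{d}\log|r\delHO|+\epsilon$, while the identity above combined with $|v(x_*,y_*)|\leq r\delHO$ gives $\gmn{0}(x_*,y_*)\leq \tfrac{1}{d}\log|r\delHO|$ (interpreted as $-\infty$ when $v(x_*,y_*)=0$), a contradiction. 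The second bullet follows by the same argument with the inequalities reversed; there the hypothesis $|v(x_*,y_*)|\geq r\delHO>0$ automatically places $(x_*,y_*)$ off of $C(p)$, so the $-\infty$ value never enters the discussion.

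I do not foresee any substantive obstacle here; this is a routine compactness-plus-continuity deduction. The only mild point to watch is the behavior of $\gmn{0}$ on $C(p)$ in the first bullet, where the degenerate value $-\infty$ is merely consistent with, and in fact strictly stronger than, the claimed upper bound, so it causes no trouble.
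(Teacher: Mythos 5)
Your proof is correct and follows the same route the paper intends: the paper's proof is simply the one-line remark that the lemma is ``an easy consequence of Theorem~\ref{thm-greensContinuous},'' and you have filled in exactly the details that remark implicitly invokes — the explicit degenerate formula $\gmn{0}=\tfrac{1}{d}\log\vert v\vert$ (via Lemma~\ref{pmnForZeroA}), compactness of $\overline{V'}$, and a contradiction/subsequence argument using continuity of $\gmn{}$ in $(x,y,a)$.
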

\begin{proof}
This is an easy consequence of Theorem~\ref{thm-greensContinuous}.
%%% Proof %%%
%The function $\gmn{a}$ is continuous (to the set $\mb{R}\cup\{-\infty\}$)
%and the set of $(x,y)\in V'$ with
%$\vert v(x,y)\vert \leq r\delHO$ has compact closure in $\mb{C}^2$
%(since $V'$ does). What is more $\gmn{0}(x,y)\leq\inlinefrac{1}{d}\log\vert r\delHO\vert$ at 
%such a point. Therefore the first result follows by continuity. The proof of the second
%result is identical.
\end{proof}

\head{Gradient of $\gmn{a}$ on local unstable manifold not tangent to boundary of crossed mapping block.}

\begin{lemma}
\name{nontangencyCondition}
There exists $\epsilon_S > 0$ such that if $\vert a\vert < \epsilon_S$ and $z\in J(p)$ 
then the gradient of the
restriction of $\gmn{a}$ to the local stable manifold $\check{g}_{z+}(\disk_{\delHO},a)$ is defined
and nonzero on the curve $\check{g}_{z+}(S_{r\delHO},a)$. 
\end{lemma}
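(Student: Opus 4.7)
The plan is to compute the gradient explicitly at $a=0$, where the stable manifold is a vertical line and the restriction of $\gmn{0}$ becomes a closed-form logarithm, and then to extend to small $|a|$ by $C^1$ continuity of holomorphic families together with compactness of $J(p)$.

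First I would work in the $(u,v)$ coordinates on $V'$ from Lemma~\ref{uVCoordinates}. By Lemma~\ref{stableManMovesHol}, at $a=0$ the local stable manifold $\check{g}_{z+}(\disk_{\delHO},0)$ is the vertical segment $\{u=z,\ v\in\disk_{\delHO}\}$, and by Lemma~\ref{pmnNearZeroA} one has $\pmn{0}^d(x,y)=p(y)-x=v$ throughout $V'$. Hence along this stable disk,
$$
\gmn{0}\bigl(\check{g}_{z+}(v,0)\bigr)=\tfrac{1}{d}\log|v|,
$$
whose complex gradient with respect to $v$ has modulus $1/(d|v|)$. On the circle $|v|=r\delHO$ this is the positive constant $1/(dr\delHO)$, independent of $z\in J(p)$.

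Next, for general small $|a|$, I would set
$$
P_{z,a}(v)\equiv \pmn{a}^d\bigl(\check{g}_{z+}(v,a)\bigr),
$$
and show that this is holomorphic in $v$ on a fixed annular neighborhood of the circle $|v|=r\delHO$, uniformly in $z\in J(p)$ and in $a$ with $|a|$ small. The only obstruction would be the stable disk meeting $\kmn{a}$, but at $a=0$ the circle $\check{g}_{z+}(S_{r\delHO},0)$ lies a positive distance from $C(p)=\kmn{0}$, by the very choice of $r$ in Definition~\ref{def-r} together with the continuity of the stable manifolds in $z$ from Lemma~\ref{stableMansVaryCont} and compactness of $J(p)$. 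Continuity in $a$ of both the stable manifolds (Lemma~\ref{stableManMovesHol}) and of the Green functions (Theorem~\ref{thm-greensContinuous}) then keeps the circle uniformly away from $\kmn{a}$ for all sufficiently small $|a|$.

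With $P_{z,a}$ defined on a fixed annulus, Lemmas~\ref{pmnForZeroA} and~\ref{stableMansVaryCont} together give that $P_{z,a}\to P_{z,0}=v$ in the locally uniform topology on this annulus, uniformly in $z\in J(p)$ as $a\to 0$. Cauchy's estimates on a slightly smaller annulus containing $|v|=r\delHO$ upgrade this to uniform $C^1$ convergence, so $P'_{z,a}(v)/P_{z,a}(v)\to 1/v$ uniformly in $(z,v)$. Since $\gmn{a}\bigl(\check{g}_{z+}(v,a)\bigr)=\frac{1}{d}\log|P_{z,a}(v)|$ and the modulus of the complex gradient of $\log|P|$ equals $|P'/P|$, I can then choose $\epsilon_S$ so small that this gradient modulus exceeds $1/(2dr\delHO)$ for every $|a|<\epsilon_S$, every $z\in J(p)$, and every $v$ with $|v|=r\delHO$. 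The main obstacle is establishing the uniform holomorphic domain for $P_{z,a}$ about the circle; once that is in place, the rest is a routine Cauchy-estimate and compactness argument.
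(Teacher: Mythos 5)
Your proof is correct, and it takes a genuinely different (though closely related) route from the paper's. Both arguments hinge on the same three ingredients: the exact computation at $a=0$, where the restriction of $\gmn{0}$ to the vertical stable disk is $\tfrac{1}{d}\log|v|$ with gradient modulus $1/(dr\delHO)$ on the circle $|v|=r\delHO$; the uniform control of the stable manifolds over $z\in J(p)$ and $a$ small (Lemma~\ref{stableMansVaryCont} plus compactness of $J(p)$); and the observation that the stable circle stays a definite distance from $\kmn{a}$, so $\gmn{a}$ is pluriharmonic and smooth there. The paper packages these into a soft compactness contradiction: assume sequences $a_k\to 0$, $z_k\in J(p)$, $w_k$ on the circle with vanishing tangential gradient, extract a convergent subsequence, and derive a contradiction with the $a=0$ computation using convergence of the derivatives of $\check{g}_{z_k+}$. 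You instead argue directly, introducing the holomorphic pull-back $P_{z,a}(v)=\pmn{a}^d(\check{g}_{z+}(v,a))$ on a fixed annulus, showing $P_{z,a}\to P_{z,0}=v$ locally uniformly, and upgrading to $C^1$ convergence via Cauchy estimates to get $|P'_{z,a}/P_{z,a}|$ close to $1/|v|$ uniformly. Your version has the advantage of producing an explicit uniform lower bound on the gradient modulus, rather than merely nonvanishing; the paper's version is shorter and avoids the need to establish a fixed annular domain of holomorphy for $P_{z,a}$. You do correctly identify and address the only real obstacle — justifying that such a uniform annular domain exists — by invoking Definition~\ref{def-r}, Lemma~\ref{stableMansVaryCont}, and the swelling of $f_a(V_-)$ as $a\to 0$; this is right, though it is worth noting that the relevant fact about $f_a(V_-)$ is Lemma~\ref{imageOfVmnIsFilling} (you appealed to Theorem~\ref{thm-greensContinuous}, which also works but is heavier machinery than necessary for that step).
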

\begin{proof}
First we recall that $\gmn{a}-\inlinefrac{1}{d-1}\log\vert a\vert$ is pluriharmonic,
and is therefore smooth, away from its zero set. From Lemma~\ref{controlGmnOnStableDisks}
we conclude that as long as $\vert a\vert < \delta_r(\epsilon)$
and $\vert a\vert <\bigl((r\delHO)^{1/d}e^{-\epsilon}\bigr)^{d-1}$
then if $(x,y) \in \check{g}_{z+}(S_{r\delHO},a)$ then $(x,y)\in B_z\in V'$
and $\vert v(x,y)\vert =r\delHO$ so 
\[\gmn{a}(x,y)-\inlinefrac{1}{d-1}\log\vert a\vert
> \inlinefrac{1}{d}\log \vert r\delHO\vert - \epsilon -\inlinefrac{1}{d-1}\log\vert a\vert > 0.\]
Thus $\gmn{a}$ is smooth at such points.

Assume that no such $\epsilon_S$ existed. Then there exists a sequence $a_i\to 0$ and
a sequence of points $z_i\in J(p)$ and a sequence of points 
$w_i \in U_{z_i} \times S_{r\delHO}\subset B_{z_i}$ such that
for each $i$ the restriction of $\gmn{a_i}$ to the stable manifold 
$\check{g}_{z_i+}(\disk_{\delHO},a)\subset B_{z_i}$
has gradient zero at the point $\check{g}_{z_i+}(w_i,a_i)$.
Then by compactness we can replace our sequence with a subsequence if necessary
such that both $z_i$ converges to some point $z_\infty\in J(p)$
and $w_i$ converges to some point $w_\infty\in V'$.
%%% Proof %%%
% (we know $w_\infty\in V'$ since each $w_i$ has $v$ coordinates of 
%norm $\vert r\delHO\vert$ and has $u$ coordinate in $J(p)$).

Then by Lemma~\ref{stableManMovesHol} and Lemma~\ref{stableMansVaryCont}
we see that 
$\check{g}_{z_i+}(w_i,a_i)\to \check{g}_{z_\infty+}(w_\infty,0)$ and
$w_\infty\in U_{z_\infty}\times S_{r\delHO}\subset V'$
and the gradient of the restriction of $\gmn{0}$ to the stable manifold
in $B_{z_\infty}$ is zero at the point
$\check{g}_{z_\infty+}(w_\infty,0)$ on the curve 
$\check{g}_{z_\infty+}(S_{r\delHO},0)$. 
Since the sequence $\gmn{a_k}\bigl(\check{g}_{z_k+}(\cdot,a_k)\bigr)$
converges locally uniformly to
$\gmn{0}\bigl(\check{g}_{z_\infty+}(\cdot,0)\bigr)$
by Lemma~\ref{stableMansVaryCont}, so derivatives
of $\check{g}_{z_k+}(\cdot,a_k)$ converge locally uniformly
to the derivatives of $\check{g}_{z_\infty+}(\cdot,0)$, 
then because $\gmn{a}$ is smooth on a neighborhood of the image
of $\check{g}_{z_\infty+}(S_{r\delHO},0)$ then
the gradient of $\gmn{a_k}$ projected to the tangent space
of $\Delta_{z_k,a_k}$ at $\check{g}_{z_k+}(w_k,a_k)$ converges
to the gradient of $\gmn{0}$ projected to the tangent space
of $\Delta_{z_k,a_k}$ at $\check{g}_{z_k+}(w_\infty,0)$.
This is a contradiction
%Now $\gmn{0}(u,v)=\inlinefrac{1}{d}\log\vert v\vert=
%\inlinefrac{1}{d}\log\vert p(y)-x\vert$ and 
%since $g(\cdot,0)$ is constant, corresponding to the fact that 
%stable manifolds at $a=0$ are just vertical lines. Thus we obtain a contradiction
since the gradient of the restriction of $\gmn{0}=\inlinefrac{1}{d}\log\vert v\vert$ 
to a vertical line 
%(i.e. to a curve of the form $u=\text{constant}$)
does not vanish on the curve $\vert v\vert=r\delHO$.
\end{proof}

\head{The index of the gradient of $\gmn{a}$ about the local stable manifold $\Delta_{z,a}(r)$ is one.}

\begin{lemma}
\name{indexOneAroundLoop}
The index of the gradient of the restriction of $\gmn{a}$ 
to $\check{g}_{z+}(\disk_{\delHO})$ around the curve
$\check{g}_{z+}(S_{r\delHO})$ is one for
all $\vert a\vert < \epsilon_S$.
\end{lemma}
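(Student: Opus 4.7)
The plan is to set up a homotopy/continuity argument that reduces the assertion to a direct computation at $a=0$. By Lemma~\ref{nontangencyCondition}, the restriction of $\gmn{a}$ to $\Delta_{z,a}$ has a nonvanishing gradient along the loop $\check{g}_{z+}(S_{r\delHO},a)$ for every $\vert a\vert < \epsilon_S$, so its index around this loop is a well-defined integer. Moreover, the estimate produced in that proof shows that near the image of $S_{r\delHO}$ one has $\gmn{a} > \frac{1}{d-1}\log\vert a\vert$, so $\gmn{a}$ is pluriharmonic (hence $C^\infty$) on an open neighborhood of the curve, uniformly in $a$. Thus the gradient is a legitimate smooth vector field there.

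Next I would argue that the index depends continuously on $a\in\disk_{\epsilon_S}$. The parameterization $\check{g}_{z+}(\cdot,a)$ depends holomorphically on $a$ by Lemma~\ref{stableManMovesHol}, and by the uniform smoothness of $\gmn{a}$ on a fixed neighborhood of the image of $S_{r\delHO}$ (together with Theorem~\ref{thm-greensContinuous}), the pulled-back vector field $\ed(\gmn{a}\circ \check{g}_{z+}(\cdot,a))$ on $S_{r\delHO}\subset\disk_{\delHO}$ varies continuously in $a$ and never vanishes. Since its winding number is an integer-valued continuous function of $a$ on the connected disk $\disk_{\epsilon_S}$, it is constant; so it suffices to compute it at $a=0$.

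At $a=0$, Lemma~\ref{stableManMovesHol} identifies $\Delta_{z,0}$ with the portion of the vertical line $\{x=p(z)\}$ near $(p(z),z)$, and in $(u,v)$ coordinates $\check{g}_{z+}(\cdot,0)$ sends $v\in\disk_{\delHO}$ to the point with coordinates $(u,v)=(z,v)$. Along this line $v(x,y)=p(y)-p(z)$, so by Lemma~\ref{pmnForZeroA} the pullback $\gmn{0}\circ\check{g}_{z+}(\cdot,0)$ is precisely $\tfrac{1}{d}\log\vert v\vert$ on $\disk_{\delHO}$. Its gradient is a purely radial outward vector field with an isolated logarithmic singularity at $v=0$, whose index around $S_{r\delHO}$ is $+1$. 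Combining this with the continuity argument gives index $1$ for all $\vert a\vert < \epsilon_S$. The main point where care is needed is the uniformity in $a$ of the smoothness of $\gmn{a}$ near the loop, which is what allows the winding number to depend continuously on $a$ all the way down to $a=0$; that step reuses the lower bound extracted in the proof of Lemma~\ref{nontangencyCondition}, so no new estimate is required.
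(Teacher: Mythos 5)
Your proof is correct and follows the same strategy as the paper's own (very terse) proof: compute the index at $a=0$ by hand to get $+1$, then use Lemma~\ref{nontangencyCondition} to conclude the nowhere-vanishing of the gradient on the loop and hence, by continuity of the winding number, constancy of the index over $\disk_{\epsilon_S}$. You simply make explicit what the paper compresses into ``the index can not change,'' citing the relevant continuity inputs (Lemma~\ref{stableManMovesHol}, Theorem~\ref{thm-greensContinuous}) and the explicit form $\gmn{0}=\frac{1}{d}\log\vert v\vert$ along the vertical stable line.
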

\begin{proof}
The lemma is easily seen to be true for $a=0$ since then
$\check{g}_{z+}(S_{r\delHO})$ is a loop around a single
singularity of $\frac{1}{d}\log\vert p(y)-x\vert$.
By Lemma~\ref{nontangencyCondition},
the index can not change for $\vert a\vert <\epsilon_S$.
\end{proof}

\begin{lemma}
\name{disksLandInDisks}
If $a$ is sufficiently small then for any $z\in J(p)$ and for
any $w\in p^{-1}(z)$ one has
$f_a\bigl(\Delta_{w,a}(r)\bigr) \Subset \Delta_{z,a}(r)$.
Also $f_a\bigl(\Delta_{w_1,a}(r)\bigr)\cap f_a\bigl(\Delta_{w_1,a}(r)\bigr)=\emptyset$
for $w_1$ and $w_2$ distinct points of $p^{-1}(z)$.
\end{lemma}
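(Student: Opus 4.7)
The plan is to split the two assertions and reduce each one to the identity
\[
v\circ f_a(x,y) \;=\; p(x)-\bigl(p(x)-ay\bigr) \;=\; ay,
\]
which is immediate from $f_a(x,y)=(p(x)-ay,x)$ and $v(x,y)=p(y)-x$.  The content of this identity is that a single application of $f_a$ scales the $v$-coordinate by the factor $a$, so once $|a|$ is small, the $v$-coordinate of any image point $f_a(x,y)$ is forced to be tiny, and everything reduces to straightforward size control.

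For the inclusion $f_a(\Delta_{w,a}(r))\Subset \Delta_{z,a}(r)$, I would first establish the set-theoretic inclusion $f_a(\Delta_{w,a})\subset \Delta_{z,a}$.  By construction, $\Delta_{w,a}$ is the local stable manifold attached to the forward sequence of crossed mappings $B_w\to B_{p(w)}\to B_{p\cpp{2}(w)}\to\dotsb$, and chopping off the first entry yields precisely the sequence whose stable manifold is $\Delta_{z,a}$; so if $(x,y)\in \Delta_{w,a}$, then $f_a\cpp{n+1}(x,y)\in B_{p\cpp{n+1}(w)}=B_{p\cp{n}(z)}$ for every $n\ge 0$, which gives $f_a(x,y)\in \Delta_{z,a}$.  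To refine this to the shrunken version $\Delta_{z,a}(r)$, I apply $v\circ f_a=ay$ to a point $(x,y)\in\Delta_{w,a}(r)\subset B_w\subset V'$.  Since $J(p)$ is compact and the boxes $B_w$ have uniformly bounded size, there is a constant $M$, independent of $w\in J(p)$ and of $a\in\disk_A$, with $|y|\le M$ throughout $\Delta_{w,a}(r)$.  Hence $|v(f_a(x,y))|\le |a|\,M$, and choosing $|a|<r\delHO/(2M)$ forces $|v(f_a(x,y))|\le r\delHO/2$, giving $f_a(\Delta_{w,a}(r))\subset \hat g_{z+}(\overline{\disk_{r\delHO/2}},a)\Subset \Delta_{z,a}(r)$.

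For the disjointness, Lemma~\ref{stableManifoldsDisjoint} already supplies $\Delta_{w_1,a}\cap \Delta_{w_2,a}=\emptyset$ for distinct $w_1,w_2\in p^{-1}(z)\subset J(p)$; as long as $a\ne 0$, $f_a$ is a biholomorphism of $\mb{C}^2$, so injectivity directly transfers the disjointness to the images $f_a(\Delta_{w_1,a}(r))$ and $f_a(\Delta_{w_2,a}(r))$.  The disjointness clause must thus be read over $a\in \disk_\epsilon^*$, since at $a=0$ the degenerate map $f_0$ collapses every $\Delta_{w_i,0}(r)$ to the common single point $(p(z),z)$.

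I do not foresee a genuine obstacle: everything ultimately rests on the identity $v\circ f_a=ay$, which converts the inclusion into a straightforward $v$-size estimate.  The sole piece of housekeeping is extracting the single constant $M$ that bounds $|y|$ uniformly over $w\in J(p)$ and small $a$, which is immediate from the uniform horizontal and vertical sizes of the Hubbard--Oberste-Vorth boxes together with the compactness of $J(p)$.
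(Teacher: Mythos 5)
Your proof is correct, and it takes a genuinely more elementary route than the paper's. The paper controls the image under $f_a$ via the Green's function $\gmn{a}$: it invokes Lemma~\ref{controlGmnOnStableDisks} (which pins $\gmn{a}$ close to $\frac{1}{d}\log|v|$ on $V'$ when $|a|$ is small) together with the functional equation $\gmn{a}\circ f_a^{-1}=d\,\gmn{a}-\log|a|$, and derives a lower bound on $|a|$ under the assumption that some point of $\overline{\Delta_{w,a}(r)}$ is mapped into $\overline{\Delta_{z,a}\setminus\Delta_{z,a}(r)}$. Your argument short-circuits all of that by noticing the exact algebraic identity $v\circ f_a(x,y)=ay$, which turns the depth estimate into a one-line bound once one observes that $|y|$ is uniformly bounded on $V'$ (since $U$ is bounded, $|v|<\delHO$ forces $p(y)$ into a bounded set, hence $y$ is bounded). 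Both proofs tacitly need the set-theoretic inclusion $f_a(\Delta_{w,a})\subset\Delta_{z,a}$; you spell it out by shifting the crossed-mapping sequence, whereas the paper leaves it implicit. Both also reduce disjointness to Lemma~\ref{stableManifoldsDisjoint} plus injectivity of $f_a$, and you correctly flag the degenerate $a=0$ case, which the paper glosses over but which matches the ``sufficiently small nonzero $a$'' reading under which the lemma is subsequently applied (Corollary~\ref{cor-noPointsInJ}, Proposition~\ref{prop-captureAllAbutmentPoints}). What the paper's Green's-function route buys is cohesion with the estimates it has already built and reuses elsewhere; what your route buys is transparency and a shorter critical path.
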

\begin{proof}
Using Lemma~\ref{controlGmnOnStableDisks} it follows that
if $(x,y)\in\overline{\Delta_{w,a}(r)}$ and
$f_a(x,y)\in \overline{\Delta_{w,a}\setminus \Delta_{w,a}(r)}$
then
%% $\vert a\vert  \geq \biggl(\inlinefrac{r\delHO}{\epsilon}\biggr)^{d-1}$.
$\vert a\vert  \geq \inlinefrac{(r\delHO)^{1-1/d}}{\epsilon^{d+1}}$.
The second statement is an immediate consequence of Lemma~\ref{stableManifoldsDisjoint}.
%% Proof %%%
%\boxnote{Verbose Proof was: By the definition of $\Delta_{w,a}$ as the stable manifold of the sequence
%of crossed mappings $B_w\to B_z\to B_{p(z)}\to \dotsb$
%we know that $f_a(\Delta_{w,a})\subset \Delta_{z,a}$.
%Now if $(x,y)\in\overline{\Delta_{w,a}(r)}$ and $\vert a\vert < \delta_r(\epsilon)$ then
%$\gmn{a}(x,y)\leq \inlinefrac{1}{d}\log\vert r\delHO\vert + \epsilon$ so
%$\gmn{a}\bigl(f_a(x,y)\bigr)=\inlinefrac{1}{d}\bigl( \gmn{a}(x,y)+\log\vert a\vert\bigr)
%\leq \inlinefrac{1}{d^2}\log (r\delHO) + \inlinefrac{\epsilon}{d} + \inlinefrac{1}{d}\log\vert a\vert$
%and if $f_a(x,y)\in \overline{\Delta_{w,a}\setminus \Delta_{w,a}(r)}$ we 
%see that $\gmn{a}\bigl(f_a(x,y)\bigr) \geq \inlinefrac{1}{d}\log (r\delHO) -\epsilon$.
%Therefore $\inlinefrac{1}{d}^2\log(r \delHO) + \inlinefrac{\epsilon}{d} + \inlinefrac{1}{d}\log\vert a\vert
%\geq \inlinefrac{1}{d}\log (r\delHO) -\epsilon$.
%Therefore $\log\vert a\vert \geq (d-1)\log (r\delHO) - \epsilon(d-1)$
%and hence if $\vert a\vert  < \inlinefrac{(r\delHO)^{d-1}}{\epsilon^{d-1}}$ then
%this is impossible. Therefore 
%$f_a(\overline{\Delta_{w,a}(r)})\subset \Delta_{z,a}(r)$.
%Since $\overline{\Delta_{w,a}(r)}$ is compact in $\Delta_{w,a}$ then
%the result follows.}
\end{proof}

%Recall the tube $\tube_c$ we constructed for an arbitrary critical 
%point $c$ of $p(x)$ in Section~\ref{subsection-trappingAndMappingComponents}.
%We use $\tubeYFilled_c$, $\tubeYCore_c$, $\tubeX_c$ and $\tubeXHole_c$ as
%in that section. We recall that from 
%Lemma~\ref{tubeToolbox} if $\vert a\vert <\epsilon_T$ then
%the component
%$H_{ca}$ of the critical locus lies in $\tubeCore_c$,
%$\overline{\tubeXHole_c}\times \tubeYFilled\subset K_{a+}$, and
%$\vert\pmn{a}\vert -\vert a\vert^{\inlinefrac{1}{d}} > \inlinefrac{1}{2}\delta_T$
%on $\tubeFilled_c$.  

\head{The forward image of each filled tube lies near $C(p)$.}

\begin{lemma}
\name{tubesLandNearCurve}
There exists $M>0$ 
such that given $\epsilon > 0$ then for $\vert a\vert < \epsilon/M$ one has
$\vert v\bigl(f_a(x,y)\bigr)\vert <\epsilon$ for each $(x,y)\in \tubeFilled_c$
and for each critical point $c$ of $p(x)$.
\end{lemma}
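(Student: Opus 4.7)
The plan is to exploit the explicit form of $v$ and $f_a$ to reduce the statement to a trivial bound on the $y$-coordinate inside each filled tube.

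First, I would compute $v\bigl(f_a(x,y)\bigr)$ directly. Writing $f_a(x,y) = (p(x) - ay, x)$ and using the definition $v(x,y) = p(y) - x$ from equation \eqref{equationForV}, we get
\[
v\bigl(f_a(x,y)\bigr) = p(x) - \bigl(p(x) - ay\bigr) = ay.
\]
Thus $\bigl\vert v(f_a(x,y))\bigr\vert = \vert a\vert \cdot \vert y\vert$, and the problem reduces to bounding $\vert y\vert$ uniformly on the filled tubes.

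Next, recall that $\tubeFilled_c = \tubeX_c \times \tubeYFilled_c$, where $\tubeYFilled_c$ is a bounded disk around the critical point $c$ (and there are only finitely many critical points of $p$). I would therefore set
\[
M \equiv \max_{c} \sup_{y \in \tubeYFilled_c} \vert y \vert,
\]
which is finite. Then for any $(x,y) \in \tubeFilled_c$ we have $\vert y\vert \leq M$, and so $\vert v(f_a(x,y))\vert = \vert a\vert\cdot \vert y\vert \leq \vert a\vert M$. If $\vert a\vert < \epsilon/M$, this is bounded by $\epsilon$, as required.

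There is no real obstacle here; this is essentially a one-line calculation exploiting the very special structure of the H\'enon map, which sends the $y$-coordinate to the $x$-coordinate and introduces the factor of $a$ only through $ay$. The statement is nontrivial only because $v$ is defined in terms of $p(y) - x$, but the cancellation $p(x) - (p(x) - ay) = ay$ makes the leading terms vanish and leaves the clean bound.
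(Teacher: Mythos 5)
Your proof is correct and takes the same one-line approach as the paper: compute $v(f_a(x,y)) = ay$ and bound $|y|$ on the (bounded) disk $\tubeYFilled_c$. In fact, you get the coordinate right where the paper's own proof has a slip: the paper chooses $M$ so that $\tubeFilled_c\subset \disk_M\times\mb{C}$, i.e.\ bounds the $x$-coordinate, but $\tubeX_c = \mb{C}\setminus\overline{\tubeXHole}_c$ is unbounded, so no such $M$ exists — what is needed (and what you supply) is $\tubeFilled_c\subset \mb{C}\times\disk_M$, bounding $|y|$ by the radius of $\tubeYFilled_c$.
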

\begin{proof}
One has $\vert v\bigl(f_a(x,y)\bigr)\vert=\vert v\bigl(p(x)-ay,x\bigr)\vert =\vert ay\vert$.
Choosing $M > 0$ such for each critical point $c$ of $p(x)$ one has $\tubeFilled_c\subset (\disk_M\times \mb{C})$ 
one concludes that taking $\vert a\vert < \inlinefrac{\epsilon}{M}$ is sufficient.
\end{proof}

\head{The preimages of $\Delta_{z,a}(r)$ are slices of a complex manifold.}

We will let $V'_r\equiv\{(x,y)\in V' \big\arrowvert \ \vert v(x,y)\vert < r\}$. \name{defOfVPrimeR}

\begin{corollary}
\name{cor-noPointsInJ}
If $a$ is sufficiently small then for any $z\in J(p)$ the set
$\overline{\Delta_{z,a}(r)}\setminus \bigcup_{w\in p^{-1}(z)}f_a\bigl(\Delta_{w,a}(r)\bigr)$
contains no points of $J_a$.
\end{corollary}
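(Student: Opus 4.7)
The plan is to argue by contrapositive: every point $\zeta\in J_a$ that lies in $\overline{\Delta_{z,a}(r)}$ must in fact lie in $f_a(\Delta_{w,a}(r))$ for some $w\in p^{-1}(z)$. The key is that $\gmn{a}$ takes the fixed value $\frac{1}{d-1}\log|a|$ on all of $\jmn{a}\supset J_a$, and the functional equation $\gmn{a}\circ f_a^{-1}=d\cdot\gmn{a}-\log|a|$ preserves this value under backward iteration. For small $|a|$ this pinned value is very negative, which via Lemma~\ref{controlGmnOnStableDisks} forces $f_a^{-1}(\zeta)$ to sit deep inside its own local stable disk.

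First I would fix $\zeta\in J_a\cap \overline{\Delta_{z,a}(r)}\subset\Delta_{z,a}$ and, using the disjointness of local stable manifolds from Lemma~\ref{stableManifoldsDisjoint}, note that any history $\nax{\zeta}\in\Nax{J}(p)$ with $\pi_a(\nax{\zeta})=\zeta$ must have last entry $z$. Setting $w=\zeta_{-1}\in p^{-1}(z)$, the point $f_a^{-1}(\zeta)$ lies in $\Delta_{w,a}\subset B_w\subset V'$, so the $v$-coordinate is defined at $f_a^{-1}(\zeta)$ and Lemma~\ref{controlGmnOnStableDisks} applies there. Combining $\zeta\in \jmn{a}$ with the functional equation yields
\[
\gmn{a}\bigl(f_a^{-1}(\zeta)\bigr)=d\,\gmn{a}(\zeta)-\log|a|=\frac{1}{d-1}\log|a|.
\]

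Next I would fix any $\epsilon>0$ (say $\epsilon=1$) and impose the two smallness conditions $|a|<\delta_r(\epsilon)$ and $\frac{1}{d-1}\log|a|<\frac{1}{d}\log(r\delHO)-\epsilon$. The second implication of Lemma~\ref{controlGmnOnStableDisks} then forbids $|v(f_a^{-1}(\zeta))|\geq r\delHO$, so $f_a^{-1}(\zeta)\in\Delta_{w,a}(r)$ and hence $\zeta\in f_a(\Delta_{w,a}(r))$, contradicting membership in the set of the statement. The main subtlety to watch is that all estimates are uniform in $z\in J(p)$: the threshold $\frac{1}{d}\log(r\delHO)-\epsilon$ is independent of $z$, the inclusion $\Delta_{w,a}\subset B_w\subset V'$ is built into the construction of the boxes, and $\delta_r(\epsilon)$ from Lemma~\ref{controlGmnOnStableDisks} is global, so a single smallness bound on $a$ works for every $z$ simultaneously.
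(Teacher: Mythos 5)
Your proposal is correct, and its skeleton matches the paper's: use Lemma~\ref{stableManifoldsDisjoint} together with the symbolic coding of $J_a$ to see that any $\zeta\in J_a\cap\overline{\Delta_{z,a}(r)}$ has a history in $\Nax{J}(p)$ ending in $z$, observe that $f_a^{-1}(\zeta)\in\Delta_{w,a}$ for $w=\zeta_{-1}\in p^{-1}(z)$, and then upgrade this to $f_a^{-1}(\zeta)\in\Delta_{w,a}(r)$ so that $\zeta\in f_a\bigl(\Delta_{w,a}(r)\bigr)$. Where you diverge is in the upgrade step. The paper simply asserts $J_a\subset V'_r$ for small $a$, which follows from the continuity of $\pi_a$ (Lemma~\ref{jMovesCont}), the compactness of $\Nax{J}(p)$, and the fact that $\pi_0(\nax{z})=(p(z_0),z_0)\in C(p)=\{v=0\}$. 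You instead derive the needed bound $|v(f_a^{-1}(\zeta))|<r\delHO$ directly from the constant value $\gmn{a}\equiv\frac{1}{d-1}\log|a|$ on $\kmn{a}\supset J_a$ combined with the second implication of Lemma~\ref{controlGmnOnStableDisks}. (The functional-equation computation is a harmless detour: $f_a^{-1}(\zeta)$ already lies in $\kmn{a}$, so its $\gmn{a}$-value is pinned for free.) Your route is sound and a shade more quantitative: it produces an explicit, $z$-uniform smallness threshold on $|a|$ (namely $|a|<\delta_r(\epsilon)$ together with $\frac{1}{d-1}\log|a|<\frac{1}{d}\log(r\delHO)-\epsilon$) in place of a soft continuity/compactness argument, while the paper's version is shorter because it cites $J_a\subset V'_r$ as already known. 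Both are valid; just remember to also require $|a|<A$ so that $\pi_a$ and the box structure are available, as the paper's first sentence does.
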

\begin{proof}
If $a$ is sufficiently small then $J_a\subset V'_r$ and $\pi_a$ is defined.  
Since $J_a\subset V'_r$ then given any $\nax{z}=(\dotsb,z_{-2},z_{-1},z_0)\in \Nax{J}(p)$
then $\pi_a(\nax{z})\in \Delta_{z,a}(r)$. From Lemma~\ref{stableManifoldsDisjoint}
we know that a point in $J_a$ can lie in $\Delta_{z,a}$ iff the corresponding
history in $\Nax{J}(p)$ ends with the point $z$. Then the result 
is easy since the set removed from $\overline{\Delta_{z,a}(r)}$ contains
all points in $J_a$ corresponding to histories in $\Nax{J}(p)$ which
could end with $z_0$.
\end{proof}

\begin{corollary}
\name{cor-preciseIndex}
There exists $\epsilon > 0$ such that if $\vert a \vert < \epsilon$ then
the index of the vector field $\nabla(\gmn{a}\rest{\Delta_{z,a}})$
around the boundary of  
$\overline{\Delta_{z,a}(r)}\setminus \bigcup_{w\in p^{-1}(z)}f_a\bigl(\Delta_{w,a}(r)\bigr)$
is $1-d$.
\end{corollary}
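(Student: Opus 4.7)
The plan is to decompose $\partial\Omega$, where $\Omega \equiv \overline{\Delta_{z,a}(r)} \setminus \bigcup_{w\in p^{-1}(z)} f_a(\Delta_{w,a}(r))$, into its $d+1$ components and compute the winding number of the gradient vector field $\nabla(\gmn{a}\rest{\Delta_{z,a}})$ on each, choosing $\epsilon$ small enough for all of the smallness conditions invoked below. By Lemma~\ref{disksLandInDisks}, for $\vert a\vert$ sufficiently small $\Omega$ is topologically a closed disk with $d$ disjoint open disks removed, so $\partial\Omega$ consists of the outer curve $\check{g}_{z+}(S_{r\delHO})$ together with the $d$ inner curves $f_a\bigl(\check{g}_{w+}(S_{r\delHO})\bigr)$, one per $w\in p^{-1}(z)$. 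I read the ``index around the boundary'' as the total winding of $\nabla(\gmn{a}\rest{\Delta_{z,a}})$ along $\partial\Omega$ with Stokes orientation (outer counterclockwise, inner clockwise), and the goal is to show this sum is $1-d$.

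First I verify non-vanishing of the vector field on every component of $\partial\Omega$. On the outer curve this is immediate from Lemma~\ref{nontangencyCondition} applied at $z$ for $\vert a\vert < \epsilon_S$. For each inner curve, the functional equation $\gmn{a}\circ f_a^{-1} = d\gmn{a} - \log\vert a\vert$ rewrites as $\gmn{a}\rest{\Delta_{z,a}}\circ f_a\rest{\Delta_{w,a}} = \tfrac{1}{d}\bigl(\gmn{a}\rest{\Delta_{w,a}} + \log\vert a\vert\bigr)$ on $\Delta_{w,a}$. Since $f_a\rest{\Delta_{w,a}}$ is a local biholomorphism into $\Delta_{z,a}$, a second application of Lemma~\ref{nontangencyCondition} at $w$ together with invariance of non-vanishing under conformal pullback shows that $\nabla(\gmn{a}\rest{\Delta_{z,a}})$ is nonzero on the inner boundary. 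Lemma~\ref{indexOneAroundLoop} at $z$ then gives winding $+1$ for the outer boundary traversed counterclockwise. For each inner boundary, the same pullback identifies the counterclockwise winding of $\nabla(\gmn{a}\rest{\Delta_{z,a}})$ around $f_a\bigl(\check{g}_{w+}(S_{r\delHO})\bigr)$ with the counterclockwise winding of the gradient of $\tfrac{1}{d}\bigl(\gmn{a}\rest{\Delta_{w,a}} + \log\vert a\vert\bigr)$ around $\check{g}_{w+}(S_{r\delHO})$, which is $+1$ by Lemma~\ref{indexOneAroundLoop} at $w$. Reversing to the Stokes orientation contributes $-1$ per inner boundary, and since $p\colon U'\to U$ is a degree-$d$ unramified covering we have $\vert p^{-1}(z)\vert = d$, giving total index $1 + d(-1) = 1-d$.

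The main obstacle I anticipate is justifying invariance of winding numbers of gradient fields under the conformal pullback used twice above. In complex notation, for holomorphic $\phi$ and real $h$ one has $\nabla(h\circ\phi)(z) = \overline{\phi'(z)}\cdot(\nabla h)(\phi(z))$, so the direction fields of $(\nabla h)\circ\phi$ and $\nabla(h\circ\phi)$ differ pointwise by the rotation $\arg\phi'$. The total change of this rotation along $\check{g}_{w+}(S_{r\delHO})$ equals the winding of $\phi' = (f_a\rest{\Delta_{w,a}})'$ around $0$, which is $0$ by the argument principle since this derivative is non-vanishing throughout the enclosed disk $\Delta_{w,a}(r)$. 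Hence the two windings match, and the computation closes.
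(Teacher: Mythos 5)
Your argument is correct and fills in the approach the paper's terse proof only gestures at: decompose $\partial\Omega$ into the outer loop and the $d$ inner loops using Lemma~\ref{disksLandInDisks}, show each loop contributes winding $+1$ via Lemma~\ref{indexOneAroundLoop}, and sum with Stokes orientation to get $1-d$. The one non-trivial step the paper leaves to the reader is the transfer across $f_a$ — both the non-vanishing from Lemma~\ref{nontangencyCondition} and the winding from Lemma~\ref{indexOneAroundLoop} are stated on $\check{g}_{w+}(S_{r\delHO})$ inside $\Delta_{w,a}$, whereas the inner boundary components of $\Omega$ are $f_a\bigl(\check{g}_{w+}(S_{r\delHO})\bigr)$ inside $\Delta_{z,a}$ — and your final paragraph supplies it correctly, via the gradient chain rule $\nabla(h\circ\phi) = \overline{\phi'}\,(\nabla h)\circ\phi$ and the observation that $\arg\phi'$ has zero total variation around the loop by the argument principle.
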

\begin{proof}
We note from Corollary~\ref{cor-noPointsInJ} that $\gmn{a} > 0$
on $\overline{\Delta_{z,a}(r)}\setminus \bigcup_{w\in p^{-1}(z)}f_a\bigl(\Delta_{w,a}(r)\bigr)$
and so it will be pluriharmonic on a neighborhood of 
$\overline{\Delta_{z,a}(r)}\setminus \bigcup_{w\in p^{-1}(z)}f_a\bigl(\Delta_{w,a}(r)\bigr)$.
Thus $\nabla (\gmn{a}\rest{\Delta_{z,a}})$ will have only
finitely many zeros in $\overline{\Delta_{z,a}(r)}\setminus \bigcup_{w\in p^{-1}(z)}f_a\bigl(\Delta_{w,a}(r)\bigr)$.
By Lemma~\ref{nontangencyCondition} we know none of these zeros lie on the boundary.

The result then follows as
long as $\epsilon$ is sufficiently small as a consequence of
Lemma~\ref{indexOneAroundLoop} and Lemma~\ref{disksLandInDisks}.
\end{proof}

We recall \cite{bedfordsmillie7} Proposition 2.7, noting that the 
hypothesis is satisfied for all $a$ under consideration since $f_a$
is hyperbolic when the crossed mapping construction of {\henII} applies, 
and there is a continuous surjection from $\Nax{J}(p)$
to $J_a$, and hence $J_a$ is connected. Since $J_a$ is connected and $\vert a\vert<1$ 
then by Theorem 0.2 of \cite{bedfordsmillie6} it follows
that $f_a$ is unstably connected. 

\begin{proposition}
\name{prop-plusLaminationExtends}
If $f_a$ is hyperbolic and unstably connected, then
the union of $\Fpl{a}$ and the stable lamination
of $f_a\rest{{J_a}}$ form a lamination of the space $\upl{a}\cup\jpl{a}$.
\end{proposition}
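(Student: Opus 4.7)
The plan is to establish the lamination property locally at each point of $\upl{a}\cup\jpl{a}$. Away from $\jpl{a}$ nothing is required, since $\Fpl{a}$ is already a holomorphic foliation of $\upl{a}$. The substantive work lies at points of $\jpl{a}$, where one must exhibit a local chart in which the leaves of $\Fpl{a}$ (on one side) and the local stable manifolds (on $J_a$) appear as the horizontal slices of a product.

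First I would exploit the crossed-mapping machinery recalled in Section~\ref{sectionStableAndUnstableManifolds}. For each $z\in J(p)$ and $a$ with $|a|<A$, $f_a\colon B_z\crmap B_{p(z)}$ is a degree-one crossed mapping, and in $(u,v)$ coordinates $B_z=U_z\times\disk_{\delHO}$. The local stable manifold $\Delta_{z,a}$ through the unique point of $J_a$ in $B_z$ is the graph of $g_{z+}(\cdot,a)\colon\disk_{\delHO}\to U_z$ by Lemma~\ref{stableManMovesHol}. I would then show that every connected component of $\lfpl{a}(w)\cap B_z$, for any leaf of $\Fpl{a}$ meeting $B_z$, is also the graph of a holomorphic map $\disk_{\delHO}\to U_z$. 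This propagates by backward iteration: in $V_+$ the leaves of $\Fpl{a}$ are level sets of $\ppl{a}$, so they are vertical graphs there; and Proposition~\ref{prop-GraphsUnderCrossedMappings} applied along the chain $B_z\crmap B_{p(z)}\crmap\dotsb\crmap V_+$, together with Proposition~\ref{propInvertingAndComposingCrossedMappings}, forces the graph property on each intermediate $B_{p^k(z)}$.

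The central step is continuity of these graphs as one crosses $\jpl{a}$: if $w_n\in\upl{a}\cap B_z$ converges to $w_\infty\in\jpl{a}\cap B_z$, the plaques of $\lfpl{a}(w_n)$ in $B_z$ must converge, locally uniformly and without ramification, to the graph $\Delta_{z,a}$ through $w_\infty$. I would prove this by iterating forward: apply $f_a^{\,N}$ with $N$ large enough that $f_a^{\,N}(w_n)$ lies a definite distance from $\jpl{a}$ (using that $J_a$ is hyperbolic, so points of $\upl{a}$ escape $\jpl{a}$ exponentially under forward iteration), pass to a subsequential limit of the iterated leaves (which are level sets of $\ppl{a}^{d^N}$ and therefore convergent by normality together with the bounds of Corollary~\ref{corPplBound}), then pull back using the crossed-mapping chain. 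Uniform bounds on horizontal sizes from {\henII} guarantee that the pullback preserves the graph structure and that the sequence of graphs converges to the stable graph $g_{z_0+}(\cdot,a)$ through $w_\infty$, argued exactly in the spirit of Lemma~\ref{continuityOfPlLaminations}.

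The main obstacle is ensuring that the leaves of $\Fpl{a}$ and the stable manifolds of $J_a$ glue into a \emph{single} lamination rather than two merely abutting structures. The hypothesis of unstable connectedness enters here: combined with hyperbolicity, it implies by \cite{bedfordsmillie6} that each leaf of $\Fpl{a}$ is biholomorphic to $\mb{C}$, and prevents leaves of $\Fpl{a}$ from bifurcating as they approach $\jpl{a}$, so each limit of nearby leaves is a single stable manifold—precisely the compatibility a lamination chart demands. Once the convergence is in place, a transverse slice through any point of $B_z\cap(\upl{a}\cup\jpl{a})$ parameterizes the nearby leaves and stable manifolds continuously, giving the required lamination chart. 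Since $\upl{a}\cup\jpl{a}$ is covered by the $B_z$ for $z\in J(p)$ together with iterates of $V_+$ on which $\Fpl{a}$ is already a foliation, assembling the local charts yields the desired lamination on the whole of $\upl{a}\cup\jpl{a}$.
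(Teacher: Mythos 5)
The paper does not actually prove this proposition; it is recalled as Proposition 2.7 of Bedford--Smillie \cite{bedfordsmillie7}, after the preceding paragraph verifies the hypotheses (hyperbolicity, and unstable connectedness via connectivity of $J_a$ and Theorem 0.2 of \cite{bedfordsmillie6}). Your proposal is therefore a genuinely different route: a self-contained argument built on the crossed-mapping machinery of {\henII} rather than a citation. As an outline it is credible in this hyperbolic setting---leaves of $\Fpl{a}$ do appear as vertical graphs in the boxes $B_z$ by propagation through crossed mappings, and your forward-iterate-then-pull-back scheme for controlling the limiting plaque is essentially the same strategy as the paper's own Lemma~\ref{continuityOfPlLaminations}, so the pieces hang together.

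The weak point is the step where unstable connectedness is supposed to do its work. You invoke \cite{bedfordsmillie6} to say that leaves of $\Fpl{a}$ are biholomorphic to $\mb{C}$ and that this ``prevents bifurcation,'' but the inference from the first statement to the second is exactly the substance that must be supplied. The concrete fact a lamination chart in $B_z$ requires is that for any leaf $\leaf$ of $\Fpl{a}$, each connected component of $\leaf\cap B_z$ is a single vertical graph (not an annular piece, not several plaques stacked over the same vertical disk), and that these plaques converge to one and only one stable graph as $\leaf$ approaches a stable manifold. Bedford--Smillie obtain this through the potential theory of $G^+$ and the structure of the unstable slices; a crossed-mapping proof would have to establish it directly, say by combining the uniform horizontal/vertical size bounds with simple connectivity of $\leaf$ to rule out $\leaf$ re-entering a fixed $B_z$. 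That argument is plausible but is precisely what your outline has elided, so as written there is a genuine gap at the crucial step.
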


\begin{observation}
For the maps we are studying, the union of $\Fmn{a}$
and the unstable lamination of $f_a\rest{{J_a}}$ 
do not form a lamination of the space $\umn{a}\cup \jmn{a}$.
This is because critical points on the local stable manifolds 
are tangencies between the stable foliation and $\Fmn{a}$. 
Taking forward images of
these tangencies gives accumulations of such tangencies near $J_a$.
But $\jmn{a}$ is transverse to $\jpl{a}$ everywhere since the map is hyperbolic, so
the unstable foliation and $\Fmn{a}$ can't be part of the same foliation. 
\end{observation}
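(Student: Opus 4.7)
The plan is to argue by contradiction. Suppose $\mathcal{L}^- := \Fmn{a}\cup(\text{unstable lamination of } f_a|_{J_a})$ forms a lamination of $\umn{a}\cup\jmn{a}$. By Proposition~\ref{prop-plusLaminationExtends}, the analogous union $\mathcal{L}^+ := \Fpl{a}\cup(\text{stable lamination of } f_a|_{J_a})$ is a lamination of $\upl{a}\cup\jpl{a}$. The strategy is to exhibit a sequence of tangency points $w_n\in\critLoc_a$ converging to some $w\in J_a$; passing to the limit in the two laminations will force the stable manifold $W^s(w)$ and the unstable manifold $W^u(w)$ to share a tangent line at $w$, contradicting hyperbolicity of $f_a$ at $w\in J_a$.

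To construct the sequence, set $h := \gmn{a}-\tfrac{1}{d-1}\log|a|$, which satisfies $h\circ f_a = h/d$ by a direct computation from the recursion $\gmn{a}\circ f_a^{-1}=d\cdot\gmn{a}-\log|a|$. By Theorem~\ref{thm-extensionToH}, $\ppl{a}\colon H_{ca}\to\mathbb{C}\setminus\overline{\disk}$ is a biholomorphism, so $\gpl{a}|_{H_{ca}}$ attains every positive value; moreover $h|_{H_{ca}}$ is bounded on the portion of $H_{ca}$ approaching $\jpl{a}$, since that piece lies in a bounded region of $\mathbb{C}^2$ (the $y$-coordinate stays in the annulus $\tubeY{c}$ and $\gpl{a}$ being small forces $|x|$ bounded through the asymptotic $\ppl{a}\sim x$). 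For each large $n$, pick $z_n\in H_{ca}$ with $\gpl{a}(z_n)<d^{-2n}$ and set $w_n:=f_a^n(z_n)$. Then $w_n\in\critLoc_a$ by $f_a$-invariance of the critical locus, $\gpl{a}(w_n)<d^{-n}\to 0$, and $h(w_n)=h(z_n)/d^n\to 0$. Joint properness of $(\gpl{a},h)$ on $\upl{a}\cap\umn{a}$—inherited from $\ppl{a}\sim x$ for $|x|>|y|$ and $\pmn{a}\sim y$ for $|y|>|x|$—makes $\{w_n\}$ relatively compact, and after passing to a subsequence, $w_n\to w\in\jpl{a}\cap\jmn{a}$. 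For hyperbolic H\'enon maps with connected Julia set this intersection equals $J_a$, so $w\in J_a$.

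Finally I would appeal to continuity of leaves in the two laminations. In $\mathcal{L}^+$ the leaves $\lfpl{a}(w_n)$ converge locally in $C^1$ to $W^s(w)$, and under the contradiction hypothesis the leaves $\lfmn{a}(w_n)$ converge similarly in $\mathcal{L}^-$ to $W^u(w)$. At each $w_n\in\critLoc_a$ the two leaves share a tangent line (in fact with order-two contact, preserved by $f_a^n$ via part~\ref{part-controllingTangenciesInTheTube} of Lemma~\ref{tubeToolbox}), so the coincidence of tangent directions passes to the limit and gives $T_wW^s(w)=T_wW^u(w)$. This contradicts transversality of stable and unstable manifolds at points of $J_a$ guaranteed by hyperbolicity. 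The main obstacle will be upgrading the $C^0$ continuity of leaves built into the definition of a lamination to the $C^1$ convergence of tangent directions along $w_n\to w$; this is where one must exploit smoothness of the stable lamination of a holomorphic hyperbolic map (standard for $\mathcal{L}^+$) together with the holomorphy of $\Fmn{a}$ (which makes the hypothetical $\mathcal{L}^-$ smooth enough across $\jmn{a}$ to transport tangent data in the limit).
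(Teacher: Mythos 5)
Your argument targets the same contradiction the paper has in mind — tangencies between the two (hypothetically) laminated objects accumulating at $J_a$, contradicting hyperbolicity — but you manufacture the accumulating sequence by a different mechanism. The paper takes the abutment points of $H_{ca}$ on the local stable manifolds (the critical points of $\gmn{a}\rest{\Delta_{z,a}}$ produced in Proposition~\ref{prop-captureAllAbutmentPoints}); these already sit on $\jpl{a}$ and are already tangencies between the stable lamination and $\Fmn{a}$, so iterating them forward drives them into $J_a$ with no further work. You instead stay in the interior, choosing $z_n\in H_{ca}$ with $\gpl{a}(z_n)$ decaying rapidly and pushing forward by $f_a^n$, then converting the interior $\Fpl{a}$--$\Fmn{a}$ tangencies at $w_n$ into a limiting $W^s$--$W^u$ tangency by appealing both to Proposition~\ref{prop-plusLaminationExtends} and to the contradiction hypothesis. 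Your route bypasses the abutment-point bookkeeping, at the cost of needing \emph{both} laminations (not just the hypothetical one) to pass tangent directions to the limit. Two small tightenings: (i) you should say explicitly that since $w_n\in\upl{a}\cap\umn{a}$, which is open and disjoint from $\kpl{a}$ and $\kmn{a}$, the limit $w$ cannot lie in the interior of either, forcing $w\in\jpl{a}\cap\jmn{a}=J_a$; and (ii) the $C^0$-to-$C^1$ upgrade you flag as the main obstacle is in fact automatic for laminations by holomorphic curves, since locally uniform convergence of graphs of holomorphic functions implies convergence of derivatives by Cauchy estimates — so this is not actually a gap to worry about.
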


We let $K^\circ(p)$ denote the interior of the filled Julia set of $p(x)$.

\begin{proposition}
\name{prop-captureAllAbutmentPoints}
For all sufficiently small nonzero $a$,
given any $z\in J(p)$ then
the only points of $\overline{\critLoc_a}$ 
which lie in $\bigDiskWithHoles{z}{a}{w}$
are the points $\tau_{0a}(z,c)\in \tau_{0a}(\partial H_{c0})=\partial H_{ca}$
where $c$ is a critical point of $p(z)$.

Additionally, when $a$ and $b$ are sufficiently small
the biholomorphism $\tau_{ab}\colon H_{ca}\to H_{cb}$ defined in Section
\ref{subsection-trappingAndMappingComponents} extends naturally to a homeomorphism
between $\overline{H_{ca}}$ and $\overline{H_{cb}}$. Since $\overline{H_{c0}}$ 
can be naturally identified with $\mb{C}\setminus K^\circ(p)$ then
the same is true for $\overline{H_{ca}}$.
%%%% Original statement
%Additionally for all sufficiently small nonzero $a$, given any $z\in J(p)$ then
%for each critical point $c$ of $p(x)$ the only boundary point of the component $H_c$
%of the critical locus which lies in $\bigDiskWithHoles{z}{a}{w}$
%is $\tau_{0a}(z,c)\in \tau_{0a}(\partial H_{c0})$ 
%and no other point of $\overline{\critLoc_a}$ lies in 
%$\bigDiskWithHoles{z}{a}{w}$.
\end{proposition}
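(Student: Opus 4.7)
The plan is to transport the problem through $f_a$ onto the local stable disk $\Delta_{z,a}$ and apply the Morse-theoretic count of Corollary~\ref{cor-preciseIndex}. By Lemma~\ref{disksLandInDisks}, for small nonzero $a$ the biholomorphism $f_a$ carries $f_a^{-1}(\Delta_{z,a}(r))$ onto $\Delta_{z,a}(r)$, sending each $\Delta_{w,a}(r)$ for $w \in p^{-1}(z)$ onto the hole $f_a(\Delta_{w,a}(r))$; consequently it sends $\bigDiskWithHoles{z}{a}{w}$ biholomorphically onto $\diskWithHoles{z}{a}{w}$. Because $\Fpl{a}$ and $\Fmn{a}$ are both $f_a$-invariant, so is $\overline{\critLoc_a}$, and the analysis reduces to identifying the points of $\overline{\critLoc_a} \cap \diskWithHoles{z}{a}{w}$.

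By Proposition~\ref{prop-plusLaminationExtends} the stable disk $\Delta_{z,a}$ is (part of) a leaf of the extended $\Fpl{a}$ lamination on $\upl{a}\cup\jpl{a}$, so a point of $\overline{\critLoc_a}$ lying on $\Delta_{z,a}$ is a tangency between this leaf and a leaf of $\Fmn{a}$, equivalently a critical point of the pluriharmonic restriction of $\gmn{a}$ to $\Delta_{z,a}$ (pluriharmonicity on the relevant region follows from Corollary~\ref{cor-noPointsInJ}). By Lemma~\ref{gradientsAndCriticalPoints} each such critical point has index equal to the negative of its order, and the boundary index computed in Corollary~\ref{cor-preciseIndex} equals $1-d$, so the total order of the critical points inside $\diskWithHoles{z}{a}{w}$ is exactly $d-1$.

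Next, for each simple critical point $c$ of $p$ I produce one simple saddle of $\gmn{a}$ restricted to $\Delta_{z,a}$, lying at a boundary point of $H_{ca}$ that I then designate $\tau_{0a}(z,c)$. In the degenerate case, $\gmn{0}$ restricted to $\Delta_{z,0}$ agrees with $\frac{1}{d}\log|p(y)-p(z)|$ on the vertical line $x=p(z)$ and has non-degenerate saddles exactly at $y=c$ for each critical point $c$ of $p$; these saddles are the abutment points of the horizontal lines $H_{c0}$ against the stable line. By the continuous variation of stable disks from Lemma~\ref{stableMansVaryCont} together with Lemma~\ref{tubeToolbox}(4), which preserves order-two tangency under small perturbations, each saddle persists as a non-degenerate saddle contributing index $-1$. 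Summed over the $d-1$ critical points of $p$, these saturate the total index $1-d$ and therefore exhaust $\overline{\critLoc_a}\cap\diskWithHoles{z}{a}{w}$; applying $f_a^{-1}$ yields the first assertion.

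For the second assertion, $\ppl{a}\colon H_{ca}\to\mb{C}\setminus\overline{\disk}$ is a biholomorphism by Theorem~\ref{thm-extensionToH}, and the previous analysis shows that each boundary point of $H_{ca}$ is attached to a unique $z\in J(p)$ via the rule $\ppl{a}(\tau_{0a}(z,c))=b_p(z)$; this parameterization is continuous in $a$ by Lemma~\ref{stableMansVaryCont} combined with the implicit function theorem at the non-degenerate saddles, so $\tau_{ab}$ extends to a homeomorphism $\overline{H_{ca}}\to\overline{H_{cb}}$ preserving the common parameterization by $J(p)$. In the degenerate case $H_{c0}=\{(x,c):x\notin K(p)\}$ is identified with $\mb{C}\setminus K(p)$ via $(x,c)\mapsto x$, and the closure adds $J(p)=\partial K(p)$, giving $\overline{H_{c0}}\simeq\mb{C}\setminus K^\circ(p)$; this identification transports to each $\overline{H_{ca}}$ via $\tau_{0a}$. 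The main technical obstacle is the matching step in the third paragraph: one must verify carefully that the $d-1$ saddles produced by the Morse count are exactly the abutment points of the primary components $H_{ca}$ and not of some non-primary component of $\critLoc_a$ that escapes control as $a\to 0$, which ultimately rests on combining the explicit degenerate picture with continuity of both the saddles and the components $H_{ca}$ in $a$.
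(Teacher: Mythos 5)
Your overall strategy shares the two central ingredients of the paper's argument---the index count from Corollary~\ref{cor-preciseIndex} giving total index $1-d$, and the degenerate ($a=0$) picture where the saddles of $\gmn{0}\rest{\Delta_{z,0}}$ sit at $(p(z),c)$ and abut $H_{c0}$---but the crucial bridge between the two is not supplied, and you flag this yourself at the end. The gap is genuine. Persistence of the nondegenerate saddle of $\gmn{a}\rest{\Delta_{z,a}}$ via the implicit function theorem produces a critical point with index $-1$, but it does \emph{not} by itself show that this critical point is a limit point of $H_{ca}$ (and not, say, a tangency between $\jpl{a}$ and $\Fmn{a}$ to which no component of $\critLoc_a$ actually abuts, or the abutment point of some as-yet-unclassified component). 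Invoking Lemma~\ref{tubeToolbox}(\ref{part-controllingTangenciesInTheTube}) here is not quite right either, since that lemma controls tangency orders along $H_{ca}$ itself, not at boundary limit points. The paper closes the gap in the opposite direction: it takes the holomorphic motions $h_{cz}(a)=(\tau_{0a}(z,c),a)$ of \emph{interior} points of $H_{ca}$, shows (via boundedness of $\gpl{}\circ h_{cz}$) that $\{h_{cz}\colon z\in Y\}$ is a normal family, uses the two sub-lemmas to pin any subsequential limit $g_c$ into the annular region, observes that $g_c(a)$ is forced to be a tangency of $\jpl{a}$ with $\Fmn{a}$ (so has nonzero index) because it is a limit of points of $H_{ca}$ and Proposition~\ref{prop-plusLaminationExtends} applies, and only \emph{then} feeds in the index count to conclude each $g_c(a)$ has index exactly $-1$ and is the unique nonzero-index point in its tube. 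This last step is what upgrades the subsequential limits to an honest limit and simultaneously establishes that the saddles are precisely the boundary points of the primary components.

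The second paragraph of your proof has a related circularity. You invoke the rule $\ppl{a}\bigl(\tau_{0a}(z,c)\bigr)=b_p(z)$ for $z\in J(p)$, but $\ppl{a}$ is only defined on the interior $H_{ca}$, not on $\partial H_{ca}$; what you are implicitly asserting is that $\ppl{a}\rest{H_{ca}}$ extends continuously to $\overline{H_{ca}}$, which is essentially the content of the second part of the proposition and cannot be assumed. The implicit function theorem at the nondegenerate saddles gives continuity of the saddle \emph{locations} in $a$, but not continuity of the map $\tau_{ab}$ extended to the boundary: for that you need to know that when interior points $w_k\in H_{ca}$ converge to a boundary point, the corresponding points $\tau_{ab}(w_k)\in H_{cb}$ converge, and to the right place. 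This is precisely what the paper extracts from the (now known to be convergent, not just subconvergent) family $h_{cz_k}$: well-definedness and continuity of the extended $\tau_{ab}$ are proved by an interlacing-sequences argument that relies on the limits of $h_{cz_k}$ being unique. If you want to complete your approach, you would need to supply an argument that the perturbed saddle is a limit of points of $H_{ca}$ (for instance by directly constructing nearby tangency points on $\Fpl{a}$-leaves close to $\Delta_{z,a}$, or by establishing the normality and uniqueness the paper uses), and then redo the boundary continuity with genuine control on the limiting behavior of $\ppl{a}$ along $H_{ca}$.
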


\begin{proof}
We choose $\epsilon$ small enough that:
\begin{enumerate}
\item $\epsilon \leq \epsilon_T$, so Lemma~\ref{tubeToolbox} holds for $\vert a\vert <\epsilon$,
\item $J_a\subset V'$ when $\vert a\vert < \epsilon$, which we can do by Lemma~\ref{jMovesCont},
\item $\epsilon < r\delHO/M$ for the value $M$ in Lemma~\ref{tubesLandNearCurve},
\item Lemma~\ref{disksLandInDisks}, Corollary~\ref{cor-noPointsInJ} and Corollary~\ref{cor-preciseIndex}
all hold for $\vert a\vert < \epsilon$.
\end{enumerate}
We will show the result holds for $\vert a\vert < \epsilon$.

\smallhead{To each point $z\in H_{c0}$ we define a holomorphic motion $h_z$ such that $h_z(a)$ stays
in the critical component and the same leaf of $\Fpl{a}$.}

Given a critical point $c$ of $p(x)$, we
let $\tilde{H}_c=\{(x,y,a)\vert (x,y)\in H_{ca}, a\in \disk_\epsilon\}$.
Now given an arbitrary point $z\in \mb{C}\setminus K(p)$ 
we define the map $h_{cz}\colon \disk_\epsilon\to\tilde{H}_c\subset \mb{C}^2\times \disk_\epsilon$ by
$h_{cz}(a)\equiv\bigl(\tau_{0a}(z,c),a\bigr)\in \tilde{H}_c$. 
It follows from Proposition~\ref{holomorphicVariationOnCritical} that $h_{cz}$ is holomorphic.
One easily confirms that $\ppl{a}\bigl(h_{cz}(a)\bigr)
%% =\ppl{0}\circ\tau_{a0}\bigl(h_{cz}(a)\bigr)=
%% \ppl{0}\circ\tau_{a0}\circ\tau_{0a}(z,c)
=\ppl{0}(z,c)$ which is independent of $a$. It follows that
\begin{equation}
\name{gplConstant}
\gpl{}\bigl(h_{cz}(a)\bigr)\equiv \gpl{0}(z,c) \text{\ for\ }a\in \disk_\epsilon.
\end{equation}

\smallhead{We define a parameterized version $\mc{B}$ of $\ppl{a}$, identifying the varying
family of components $H_{ca}$ with the product of $\mb{C}\setminus K(p)$ and
the parameter space.}

%We define a map
%$\mc{B}_c\colon \tilde{H}_c\to \bigr(\mb{C}\setminus K(p)\bigr)\times \disk_\epsilon$
%by $\mc{B}_c(x,y,a)=\bigl(b_p^{-1}\circ \ppl{a}(x,y), a\bigr)$,
%where $b_p\colon \mb{C}\setminus K(p)\to\mb{C}\setminus\overline{\disk}$
%is the B\"ottcher coordinate for $p(x)$. It follows using 
%Theorem~\ref{thm-extensionToH} that this is a biholomorphism.
%
%\smallhead{We define a version of the polynomial $p$ for $\tilde{H}_c$.}
%
%We now define a holomorphic self map $\tilde{p}_c\colon \tilde{H}_c\to \tilde{H}_c$
%given by $\tilde{p}_c(x,y,a)=\mc{B}_c^{-1}\circ (p\times \id{\disk_{\epsilon}}) \circ \mc{B}_c$.
%Straightforward calculation yields
%\begin{equation}
%\tilde{p}_c\circ h_{cz}=h_{cp(z)}.
%\end{equation}

%%% Proof %%%
% To see this use $\ppl{a}^{-1}$ to mean the inverse 
%of the biholomorphism $\ppl{a}\colon H_{ca}\to \mb{C}\setminus\overline{\disk}$
%and, recalling that $\ppl{a}(z,c)=b_p(z)$, we have $\tilde{p}\circ h_z(a)=
%\mc{B}^{-1}\circ (p\times \id{\disk_{\epsilon}})\circ \mc{B}\circ \bigl(\tau_{0a}(z,c),a\bigr)
%=\mc{B}^{-1}\circ (p\times \id{\disk_\epsilon})\circ
%\biggl(b_p^{-1}\Bigl(\ppl{a}\bigl(\tau_{0a}(z,c)\bigr)\Bigr),a\biggr)
%=\biggl(\ppl{a}^{-1}\Bigl(b_p\bigl(p(b_p^{-1}(\ppl{0}(z,c)))\bigr)\Bigr),a\biggr)
%=\biggl(\ppl{a}^{-1}\Bigl(b_p\bigl(p(z)\bigr)\Bigr),a\biggr)
%=\biggl(\tau_{0a}\circ \ppl{0}^{-1}\Bigl(b_p\bigl(p(z)\bigr)\Bigr),a\biggr)
%=\biggl(\tau_{0a}\Bigl(b_p^{-1}\Bigl(b_p\bigl(p(z)\bigr)\Bigr),c\Bigr),a\biggr)
%=\Bigl(\tau_{0a}\bigl(p(z),c),a\Bigr)
%=h_{p(z)}(a)$.

\smallhead{We define the open set $Y$ surrounding $J(p)$. 
Holomorphic motions of points in $W_c$ are a normal family.}

\smallhead{Construction of a bounded subset $W_c$ of the tube which contains any possible abutments.}

We also note that by Corollary~\ref{corPplBound} there is some
radius $R'$ such that for each critical point $c$ of $p(x)$, if 
$\vert x\vert \geq R'$, $(x,y)\in \overline{\tubeFilled_c}$ 
and $\vert a\vert <\epsilon$ then
$\gpl{a}(x,y) \geq 1$. Hence if $(x,y)\in H_{ca}$ 
and $\gpl{a}(x,y) < 1$ then 
$(x,y)\in W_c\equiv\Bigl(\overline{\tubeFilled_c} 
\cap (\disk_{R'}\times\mb{C})\Bigr)\times \disk_\epsilon$.
The set $W_c$ is clearly a bounded set in $\mb{C}^3$ since the set $\disk_{R'}\times\mb{C}$
has bounded $x$ coordinates and the set $\overline{\tubeFilled_c}$
has bounded $y$ coordinates.

We let $Y=\{z\in \mb{C}\setminus K(p)\ \vert\ \gpl{0}(z,c) < 1\}$. 
If $z\in Y$ then $h_{cz}(a)$ lands 
in the set $W_c$ since $\gpl{}\bigl(h_{cz}(a)\bigr)=\gpl{0}(z,c)<1$ for all $a\in \disk_\epsilon$.
Since $W_c$ is bounded it follows that 
$\{h_{cz}\vert z\in Y\}$
is a normal family of maps from $\disk_\epsilon$ 
into $\mb{C}^3$. The condition that $\gpl{0}(z,c) <1$
is the same as $\log\vert b_p(z)\vert < 1$ so $J(p)\times\{c\}\times\{0\}\subset \overline{Y}$.

To complete the proof of Proposition~\ref{prop-captureAllAbutmentPoints}
we need two lemmas.

\smallhead{We begin to show that abutments stay in our disks.}

\begin{lemma}
\name{sublemma-insideBigDisk}
Assume $z_k$ is a sequence of points of $\mb{C}\setminus K(p)$ converging to 
a point $z_\infty\in J(p)$.
Then, for each critical point $c$, the limit $g$ 
of any convergent subsequence of $h_{cz_k}$
satisfies $g(a)\in f_a^{-1}\bigl(\Delta_{z_\infty, a}(r)\bigr)$ 
for all $a\in \disk_{\epsilon}$.
\end{lemma}
\begin{proof}[Proof of Lemma~\ref{sublemma-insideBigDisk}]
%We will prove this by contradiction. 
%To do this, it is sufficient to assume $h_{z_k}$ converges to $g$ and
%$g(a)\not\in f_a^{-1}\bigl(\Delta_{z_\infty,a}(r)\bigr)$ for some $a\in \disk_\epsilon$.
We will show that $f_a\bigl(g(a)\bigr)\in \Delta_{z_\infty,a}(r)$ for all 
$a\in \disk_\epsilon$.
By Lemma~\ref{tubesLandNearCurve}, since $\vert a\vert< \epsilon < r\delHO/M$, then 
$\vert v\bigl(f_a(\tubeFilled_c)\bigr)\vert < r\delHO$.
Recall that $V'_r=\{(x,y)\in V'\big\arrowvert\ \vert v\vert< r\delHO\}= 
U'\times \disk_{r\delHO}$.
Now $\tilde{\Delta}_{z_\infty,a}(r)\equiv \{(x,y,a)\vert (x,y)\in\Delta_{z_\infty,a}(r),a\in\disk_\epsilon\}$
is defined in $V'_r\times\disk_\epsilon$ as the graph of 
$g_{z_\infty+}\colon \disk_{r\delHO}\times\disk_\epsilon\to U'$.
Thus $v-g_{z_\infty+}(u,a)$ is a holomorphic defining function
for $\tilde{\Delta}_{z_\infty,a}(r)\subset V'_r\times \disk_\epsilon$.
Since $\gpl{a}\bigl(h_{cz_k}(a)\bigr) >0$ for each $k$, then
$\bigl(v-g_{z_\infty}(u,a)\bigr)\circ \tilde{f}\circ h_{cz_k}$ is nonvanishing 
on $\disk_\epsilon$ for each $k$ whenever it is defined, i.e. whenever
$h_{cz_k}\in V'_r\times \disk_\epsilon$.
If $g$ is the limit of any convergent subsequence, then 
$\bigl(v-g_{z_\infty+}(u,a)\bigr)\circ f_0\circ g(0)=0$. 
%$\bigl(v-g_{z_\infty+}(u,a)\bigr)\circ \tilde{f}\circ g$ must vanish
%identically for on an open set of $0\in\disk_\epsilon$. 
Since
$g$ maps $\disk_\epsilon$ into
$\{(x,y)\vert\ \vert v(x,y)\vert < r\delHO\}$ it can be shown
that the set on which
$\bigl(v-g_{z_\infty+}(u,a)\bigr)\circ\tilde{f} \circ g$ vanishes
is both open and closed in $\disk_\epsilon$, so it is all
of $\disk_\epsilon$.
Thus $g(a)\in f_a^{-1}\bigl(\Delta_{z_\infty,a}(r)\bigr)$ for all $a\in\disk_\epsilon$.
\end{proof}

\begin{lemma}
\name{sublemma-outsideSmallDisks}
Assume $z_k$ is a sequence of points of $\mb{C}\setminus K(p)$ converging to 
a point $z_\infty\in J(p)$.
Then the limit $g$ of any convergent subsequence of $h_{cz_k}$ is disjoint from $\Delta_{w, a}(r)$
for each $w\in p^{-1}(z_\infty)$ 
for all $a\in \disk_{\epsilon}$ and for each critical point $c$.
\end{lemma}
\begin{proof}[Proof of Lemma~\ref{sublemma-outsideSmallDisks}.]
Each $g(a)\in\tubeFilled_c$ lies a positive
distance from $\vert v\vert < r\delHO$ by Definition~\ref{def-r}.
Since $\Delta_{w,a}(r)\subset V'_r$ this completes the proof.
\end{proof}

Now consider an arbitrary critical point $c_0$ of $p(z)$.
Consider also an arbitrary $a\in \disk_\epsilon$ and a sequence of points 
$\{w_i\}\in H_{c_0a}$ which converge to a point 
$w_\infty\in \overline{H_{c_0a}}\cap (J_{a+}\cup J_{a-})$ .
Then let $(z_i,c_0)=\tau_{a0}(w_i)\in H_{c_00}$ and consider the sequence of maps
$h_{cz_k}\colon \disk_\epsilon\to\mb{C}^3$ for each critical point $c$.

This is a normal family.
Choose some subsequence $h_{cz_{k_i}}$ so that $h_{cz_{k_i}}$
converges for each critical point $c$. For each critical point $c$ let $g_c$ be the limit
of this subsequence. 
%Then $h_{cz_{k_i}}(a)=w_{ck_i}\to w_{ck_\infty}$ so
%$g_c(a)=w_\infty$.
By 
%assumption $g_c(a)\in \overline{H_{ca}}\cap (J_{a+}\cup J_{a-})$ and by 
Lemma~\ref{sublemma-insideBigDisk},
$g_c(a)\in f_a^{-1}\bigl(\Delta_{z_\infty,a}(r)\bigr)$. 
By Lemma~\ref{tubeToolbox}, $g_c(a)\not\in\jmn{a}$, 
so $\gmn{a}$ is smooth at $g_c(a)$.
% and $w_\infty\in \overline{H_{ca}}\cap\jpl{a}$.
Since $g_c(a)$
is a limit of points of $H_{ca_0}$ then 
by Proposition~\ref{prop-plusLaminationExtends},  $\Fmn{a}$
and $\jpl{a}$ are tangent at $g_c(a)$ and so $g_c(a)$
is a point where $\nabla (\gmn{a}\rest{f_{a}^{-1}\bigl(\Delta_{z_\infty,a}(r)\bigr)})$
has nonzero index.

Now if $a\not= 0$ then since there are $d-1$ critical points of $p(x)$ 
then the set $\{g_c(a)\vert c\text{\ a critical point}\}$ is a set of
$d-1$ points of nonzero index in 
$f_{a}^{-1}\bigl(\Delta_{z_\infty,a}(r)\bigr)\setminus\bigcup_{w\in p^{-1}(z_\infty)}\Delta_{w,a}$.

\smallhead{Here we have an abutment for each $H_{ca}$ in 
$f_a^{-1}\bigl(\Delta_{z_\infty,a}(r)\bigr)$ but \it{not} in 
$\Delta_{w,a}(r), w\in p^{-1}(z)$. We conclude each has index $-1$.}

Since the index of the gradient of the restriction of $\gmn{a}$ is a vector
field in
$\Delta_{z_\infty,a}(r)\setminus \bigcup_{w\in p^{-1}(z_\infty)} f_{a}\bigl(\Delta_{w,a}(r)\bigr)$
by Corollary~\ref{cor-noPointsInJ}, and the index around the boundary
of this set 
is $1-d$ by Corollary~\ref{cor-preciseIndex}, 
then the same is clearly true for
$f_{a}^{-1}\bigl(\Delta_{z_\infty,a}(r)\bigr)\setminus \bigcup_{w\in p^{-1}(z_\infty)} \Delta_{w,a}(r)$. 
Thus each of the $d-1$ points $g_c(a)$ has index $-1$
and $g_c(a)$ is the unique point of nonzero index in the intersection of
$f_{a}\bigl(\Delta_{z_\infty,a}(r)\bigr)\setminus \bigcup_{w\in  p^{-1}(z_\infty)} \Delta_{w,a}(r)$
and the tube $\tubeFilled_c$. That the same holds for $a=0$ is easy to verify directly.

It follows that given any critical point $c$ of $p(z)$ then any 
convergent subsequence of $h_{cz_k}$
must converge to $g_c$. It follows that $h_{cz_k}\to g_c$. We denote $g_c$ by 
$h_{cz_\infty}\colon \disk_\epsilon\to\mb{C}^3$.
We have thus shown that, given a critical point $c$, 
if $z_k\in\mb{C}\setminus K(p)$ converges
to $z_\infty\in J(p)$ then 
$h_{cz_k}(a)\equiv\bigl(\tau_{0a}(z,c),a\bigr)\in \tilde{H}_c$
converges to a holomorphic function $g_c$ such that 
$g_c(a)$ is the unique point of nonzero index
in the intersection of $\tubeFilled_c$ and
$\bigDiskWithHoles{z_\infty}{a}{w}$.

%\boxnote{It might be nice to use closure of $\tubeCore_c$ instead of $\tubeFilled_c$.}

We now construct the extension $\tau_{ab}\colon\overline{H_{ca}}\to\overline{H_{cb}}$
by defining $\tau_{ab}(w_\infty)=h_{cz_\infty}(b)$ whenever
$w_\infty\in\overline{H_{ca}}$ and $z_\infty$ is the limit 
of $\{\tau_{a0}(w_k)\}$ where $w_k\to w_\infty$.
This is well defined because if $w_k\to w_\infty$ 
and $w_k'\to w_\infty$ then $w_1,w'_1,w_2,w'_2,\dotsc$
converges to $w_\infty$ and by the above, the sequence of maps
$h_{c\tau_{a0}(w_1)},h_{c\tau_{a0}(w_1')},\dotsc$ converges to a single map $h_{cz_\infty}$
for $\vert a\vert < \epsilon$.
This is also continuous since if $w_k\in \overline{H_{ca}}$ and $w_k$ converges
to $w_\infty\in\overline{H_{ca}}$ but $\tau_{ab}(w_k)\not\to \tau_{ab}(w_\infty)$ then
there exists $\epsilon_1 > 0$ such that there are arbitrarily large 
values of $k$ with $\vert \tau_{ab}(w_k)-\tau_{ab}(w_\infty)\vert > \epsilon_1$.
Then replace each point $w_k$ with a point $w_k'\in H_{ca}$ such that 
$\vert w'_k - w_k\vert < {1}/{2^k}$ and 
$\vert \tau_{ab}(w'_k)-\tau_{ab}(w_k)\vert <\epsilon / 2$ (which we can
do by the definition of $\tau_{ab}(w_k)$ if $w_k\in \overline{H_{ac}}\setminus H_{ac}$
and we just take $w'_k=w_k$ otherwise). Then $w'_k$ is a sequence in $H_{ac}$
and $\tau_{ab}(w'_k)$ can not converge to $\tau_{ab}(w_\infty)$ because
there are arbitrarily large $k$ for which $\vert \tau_{ab}(w'_k)-\tau_{ab}(w_\infty)\vert
> \vert \tau_{ab}(w_k)-\tau_{ab}(w_\infty)\vert -
\vert \tau_{ab}(w'_k)-\tau_{ab}(w_k)\vert > \epsilon/2$ 
but $w'_k\to w_\infty$ since $\vert w'_k-w_k\vert < 1/2^k$.
But this is a contradiction since 
$h_{c\tau_{a0}(w'_k)}\to h_{c\tau_{a0}(w_\infty)}$ by our previous
work and $h_{c\tau_{a0}(w'_k)}(a)=w_{k'}$ and 
$h_{c\tau_{a0}(w'_k)}(b)=\tau_{ab}(w'_k)$ by definition.
Therefore $\tau_{ab}\colon\overline{H_{ca}}\to\overline{H_{cb}}$ is continuous.
Since $\tau_{ba}$ is clearly the inverse of $\tau_{ab}$ then $\tau_{ab}$ is a homeomorphism.
This completes the proof of Proposition~\ref{prop-plusLaminationExtends}.
\end{proof}

%\boxnote{The following is partial alternative text (using
%$G_{a-}$ instead of $\gmn{a}$ to make the relations more obvious) for
%the proof of Theorem~\ref{thm-classificationOfCriticalComponents}: \red{\underline{Finally}, 
%assume $\critLoc$ has a \underline{component}
%$C$ which abuts againts $J_{a-}$. If $z_\infty$
%is an abutment point in $J_{a-}$ then choose $z_i\in\critLoc$
%such that $z_i\to z_\infty$.
%Then using Hubbard \& Oberste-Vorth's $G_{a-}\equiv \gmn{a}+\dfrac{1}{d-1}\log\vert a\vert$
%one has $G_{a-}(z_i)\to G_{a-}(z_\infty)=0$ and $G_{a+}(z_i)\to G_{a+}(z_\infty)=\mf{g}$.
%Then for every $i$ chose $n_i$ such that $G_{a-}(f_a^{-n_i}(z_i))\leq d$. 
%Then $G_{a+}(f_a^{-n_i}(z_i))\to 0$ so $\{ f_a^{-n_i}(z_i)\}$ is a bounded sequence of
%abutment points converging to a point $J_+$ so $f_a^{-n_i}(z_i)\in f_a(H_{c_?})$ for
%all sufficiently large $i$ so $z_i$ lies in an iterate of $H_{c_j}$ for all large $j$. } }

\begin{theorem}
\name{thm-classificationOfCriticalComponents}
For all sufficiently small $a$ 
every component of the critical locus is an iterate
of one of the components $H_c$.
\end{theorem}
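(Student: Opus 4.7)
The plan is to take any component $W_a$ of $\critLoc_a$ and produce $n\in\mb{Z}$ and a critical point $c$ of $p$ such that $f_a^{-n}(W_a)=H_{ca}$. The route is to push an abutment point of $W_a$ into the fundamental annular region treated in Proposition~\ref{prop-captureAllAbutmentPoints}, where the only points of $\overline{\critLoc_a}$ are the distinguished abutments $\tau_{0a}(z,c)$ of the primary components.

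First I would invoke Lemma~\ref{abutmentForComponents} to obtain $p_0\in\overline{W_a}\cap(\jpl{a}\cup\jmn{a})$. I carry out the argument in the case $p_0\in\jpl{a}$; the case $p_0\in\jmn{a}$ requires the dual statement for $\pmn{a}$ and local unstable manifolds. By the quoted Bedford--Smillie result $\jpl{a}=W^s(J_a)$, so $p_0$ sits on some local stable manifold $\Delta_{z_0,a}=\check{g}_{z_0+}(\disk_{\delHO},a)$ for a suitable $z_0\in J(p)$. Replacing $p_0$ with a nearby point of $\overline{W_a}$ if necessary, I may arrange that $p_0\notin J_a$, so that its transverse parameter $t_0\in\disk_{\delHO}$ is nonzero.

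Next, the key step is to locate the smallest $n\geq 1$ for which $f_a^{-n}(p_0)$ falls into the fundamental annular region $\bigDiskWithHoles{z_{-(n-1)}}{a}{w}$, where $(z_{-k})$ is the backward history of $z_0$ tracked by the orbit of $p_0$. Writing $t_k$ for the transverse parameter of $f_a^{-k}(p_0)$ on $\Delta_{z_{-k},a}$, the nesting $f_a(\Delta_{w,a}(r))\Subset\Delta_{z,a}(r)$ from Lemma~\ref{disksLandInDisks} --- a manifestation of the hyperbolic contraction of $f_a$ along stable manifolds --- forces $|t_{k+1}|>|t_k|$ with uniform expansion, so such an $n$ exists. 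Then $f_a^{-(n-1)}(p_0)\in\Delta_{z_{-(n-1)},a}(r)$ puts $f_a^{-n}(p_0)$ into $f_a^{-1}(\Delta_{z_{-(n-1)},a}(r))$, while $|t_n|>r\delHO$ together with Lemma~\ref{stableManifoldsDisjoint} puts it outside every $\Delta_{w,a}(r)$ for $w\in p^{-1}(z_{-(n-1)})$, giving $f_a^{-n}(p_0)\in\bigDiskWithHoles{z_{-(n-1)}}{a}{w}$. Proposition~\ref{prop-captureAllAbutmentPoints} now forces $f_a^{-n}(p_0)=\tau_{0a}(z_{-(n-1)},c)$ for some critical point $c$ of $p$. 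Since $z_{-(n-1)}\in J(p)$ is disjoint from $\overline{\tubeXHole_c}$, the abutment $\tau_{0a}(z_{-(n-1)},c)$ lies in the open set $\tubeFilled_c$, so $f_a^{-n}(W_a)$ meets $\tubeFilled_c$, and part~\ref{part-trappedInTube} of Lemma~\ref{tubeToolbox} identifies $f_a^{-n}(W_a)=H_{ca}$. This yields $W_a=f_a^n(H_{ca})$.

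The main obstacle is the transverse-expansion step, i.e.\ verifying rigorously that $|t_k|$ exceeds $r\delHO$ in finitely many iterations with control uniform in the history $(z_{-k})$ and in $a$. This rests on the crossed-mapping construction together with hyperbolicity. One also needs care in the degenerate sub-case $t_0=0$ ($p_0\in J_a$), which requires a limiting argument using abutments from points of $W_a$ approaching $J_a$ transversely to the stable foliation; and in the symmetric case $\overline{W_a}\cap\jpl{a}=\emptyset$, which demands the dual fundamental-region picture built from the local unstable manifolds.
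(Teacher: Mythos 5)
Your high-level strategy is the same as the paper's: take an abutment point of $W_a$ (via Lemma~\ref{abutmentForComponents}), normalize by iteration until it lands in the fundamental annular region, and then invoke Proposition~\ref{prop-captureAllAbutmentPoints} together with part~\ref{part-trappedInTube} of Lemma~\ref{tubeToolbox}. But there are two places where your route diverges from the paper's in a way that leaves real work unfinished.

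First, the well-foundedness of the escape time. You propose to track the transverse parameter $t_k$ on the local stable disks and appeal to Lemma~\ref{disksLandInDisks} for ``uniform expansion.'' That lemma only yields the qualitative nesting $f_a\bigl(\Delta_{w,a}(r)\bigr)\Subset\Delta_{z,a}(r)$, which by itself gives $|t_{k+1}|>|t_k|$ but not a uniform lower bound on the gain, so it does not by itself guarantee that $|t_k|$ exceeds $r\delHO$ in finitely many steps. The paper sidesteps the parameter $t_k$ entirely and uses the functional equation $\bigl(\gmn{a}-\tfrac{1}{d-1}\log|a|\bigr)\circ f_a^{-1}=d\bigl(\gmn{a}-\tfrac{1}{d-1}\log|a|\bigr)$: since $w\in\jpl{a}\setminus\jmn{a}$ makes $\gmn{a}(w)-\tfrac{1}{d-1}\log|a|>0$, the backward orbit has $\gmn{a}$-value growing like $d^k$, hence escapes every compact set, hence the set of $n'$ with $f_a^{\circ n'}(w)\in\Delta_{z',a}(r)$ is bounded below. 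This gives exactly the uniform escape your argument needs, and with no reference to crossed mappings.

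Second, and more seriously, your treatment of the case where the only abutments lie in $\jmn{a}$ (including $J_a$, i.e.\ your $t_0=0$ case). You propose to build ``the dual fundamental-region picture built from the local unstable manifolds.'' That would require a full analogue of Proposition~\ref{prop-captureAllAbutmentPoints} for the unstable side, which is a substantial new piece of machinery that the paper neither builds nor needs. (Note the paper's own Observation after Proposition~\ref{prop-plusLaminationExtends} that $\Fmn{a}$ and the unstable lamination do \emph{not} form a lamination — so the unstable side is genuinely less friendly here.) Instead, the paper handles $w\in\jmn{a}$ with a limiting argument that stays entirely on the stable side: pick $w_i\in W$ with $w_i\to w$, choose $n_i$ so that $1<\gmn{a}\bigl(f_a^{-n_i}(w_i)\bigr)-\tfrac{1}{d-1}\log|a|\le d$, observe $n_i\to\infty$ and $\gpl{a}\bigl(f_a^{-n_i}(w_i)\bigr)\to 0$, so the sequence is bounded and accumulates on $\jpl{a}$; then Proposition~\ref{prop-captureAllAbutmentPoints} and Lemma~\ref{tubeToolbox} locate $f_a^{-n_i}(w_i)$ inside an iterate of some $H_c$ for large $i$. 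This is the one genuine idea missing from your outline, and it is the reason the paper's proof closes without duplicating the whole apparatus of Section~\ref{sectionComponentsOfTheCriticalLocus} on the unstable side.

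Relatedly, the step ``Replacing $p_0$ with a nearby point of $\overline{W_a}$ if necessary, I may arrange that $p_0\notin J_a$'' is not automatic: $\overline{W_a}\cap(\jpl{a}\cup\jmn{a})$ could in principle lie entirely inside $J_a$. The paper avoids relying on this by splitting into $w\in\jpl{a}\setminus\jmn{a}$ and $w\in\jmn{a}$ and treating the latter with the sequential argument described above, which applies to arbitrary sequences in $W$ rather than to abutment points.
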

\begin{proof}
The components of $H_{ca}$ are the only components of the critical locus
if $a=0$, so assume $a\not=0$.
If $W$ is a component of $\critLoc_a$ then
by Lemma~\ref{abutmentForComponents} we know that
$\partial W$ contains at least one point $w$ in either $\jpl{a}$ 
or $\jmn{a}$. If $w$ lies in $\jpl{a}\setminus \jmn{a}$ then $w$ lies in the stable
manifold of some point of $J_a$ so there is some $n'$
such that $f_a\cp{n'}(w)\in \Delta_{z',a}(r)$ for some
$z'\in J(p)$. Take $n$ to be the smallest such $n'$ (where $n'$ is allowed to be negative)
and take $z$ to be the corresponding point of $J(p)$. We know a smallest such $n$ exists
since $w\in \jpl{a}\setminus \jmn{a}$ so $\gmn{a}(w) - \inlinefrac{1}{d-1}\log\vert a\vert > 0$
so $\gmn{a}\bigl( f_a^{-k}(w)\bigr)-\inlinefrac{1}{d-1}\log\vert a\vert=
d^n\cdot \bigl( \gmn{a}(w)-\inlinefrac{1}{d-1}\log\vert a\vert\bigr)\to \infty$ as $k\to \infty$
but $\gmn{a}$ is bounded on compact sets (since
$V_0$ can certainly be assumed to be large enough to contain $V'$). 

It follows from our choice of $n$ that $f_a\cp{n}(w)\in 
\Delta_{z,a}(r)\setminus \bigcup_{y\in p^{-1}(z)} f_a\bigl(\Delta_{y,a}(r)\bigr)$. 
But then from Proposition~\ref{prop-captureAllAbutmentPoints}
and Lemma~\ref{tubeToolbox}
we conclude that $f_a\cp{n}(W)$ is an iterate of some $H_c$.

On the other hand if $w\in \jmn{a}$ then choose a sequence
of points $w_i\in W$ such that $w_i\to w$. Then
$\gpl{a}(w_i)\to \gpl{a}(w)=\mf{g} \geq 0$, and
$\gmn{a}(w_i)-\inlinefrac{1}{d-1}\log\vert a\vert\to \gmn{a}(w)-\inlinefrac{1}{d-1}\log\vert a\vert=0$. 
Then for every
$i\geq 1$ choose $n_i$ such that 
$1 < \gmn{a}\bigl(f_a^{-n_i}(w_i)\bigr)-\inlinefrac{1}{d-1}\log\vert a \vert\leq d$.
By the recursion relation for $\gmn{a}$ it follows that $n_i\to \infty$
as $i\to \infty$.
Then $\gpl{a}\bigl(f_a^{-n_i}(w_i)\bigr) \to 0$ as $ i\to \infty$ so
the sequence $f_a^{-n_i}(w_i)$ is a bounded sequence of points
%%% Proof %%%
%(Because $\gpl{a}+\gmn{a}-\dfrac{1}{d-1}\log\vert a\vert$ is bounded on this sequence)
in iterates of $W\subset\critLoc_a$ converging to $\jpl{a}$. 
By Proposition~\ref{prop-captureAllAbutmentPoints}
and Lemma~\ref{tubeToolbox}
the members of any convergent
subsequence of $f_a^{-n_i}(w_i)$ must lie in an iterate of $H_c$ for all
large $i$, so $W$ must be an iterate of $H_c$. This completes the proof.
\end{proof}

\newpage 
\part{Rigidity}

\sectionDone{Holonomy in the Critical Locus.}

\name{sectionHolonomyInTheCriticalLocus}

%\boxnote{We need to alter the statements in this chapter
%and in the next chapter
%so that it is clear how they apply if $a$ is not small. Everything
%in the final chapter carries over for any nonzero $a$ as long as
%an iterate of some $H_{fc}$ is mapped to an iterate of some $H_{gc'}$.}

We will now consider a single map $f=f_a$, $a\not=0$ to which 
Theorem~\ref{thm-classificationOfCriticalComponents} applies
and for which $\vert a\vert < \epsilon_T$ (so Lemma~\ref{tubeToolbox}
and Theorem~\ref{thm-extensionToH} apply).
Since $a$ is fixed we let $\psi_+\equiv \ppl{a}$ \name{defOfPsi} 
and $\psi_-\equiv \eta\cdot\pmn{a}\circ f^{-1}$ 
where $\eta^{d-1}=\shortInlineFrac{1}{a}$ and $\eta$ is fixed. This gives the simpler relations
$\psi_-\circ f^{-1}=\psi_-^d$ and $\psi_+\circ f=\psi_+^d$. The function $\psi_-$
is well defined and holomorphic on $f(V_-)$. We will 
show that $\psi_-$ is a biholomorphism from a neighborhood of infinity in 
$H_c$ to a neighborhood of infinity in $\mb{C}$.
Because $a$ is fixed we will omit it from the notation throughout the rest of
this section.

\begin{lemma}
\name{psiMnBiholAtInfinity}
There is a neighborhood $H_c^\circ$ of infinity in $H_c$ which lies 
in $f(V_-)$ and such that 
$\psi_-$ is a biholomorphism from $H_c^\circ$ to $\mb{C}\setminus \overline{\disk}_{\pmnHole}$
for some $\pmnHole>1$.
\end{lemma}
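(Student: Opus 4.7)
The plan is to use the fact, furnished by Theorem~\ref{thm-extensionToH}, that $\psi_+|_{H_c}$ already uniformizes $H_c$ as $\mb{C}\setminus\overline{\disk}$, and to show that $\psi_-$, regarded as a holomorphic function of the coordinate $w=\psi_+$ near infinity on $H_c$, extends meromorphically to $w=\infty$ with a simple pole. Once that is done, the lemma becomes a standard one-variable statement: a meromorphic function with a simple pole at $\infty$ is biholomorphic from a punctured neighborhood of $\infty$ onto a neighborhood of $\infty$.

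First I would check that $\psi_-$ is actually defined in a neighborhood of $(\infty,c)$ in $H_c$. By Theorem~\ref{thm-tangentSpaceAtInfinity} the component $H_c$ is asymptotic to $y=c$ as $|x|\to\infty$, so for $(x,y)\in H_c$ with $|x|$ large we have $y$ close to $c$ (hence bounded) and $x$ large. Since $f^{-1}(x,y)=\bigl(y,(p(y)-x)/a\bigr)$, the second coordinate has modulus $\sim|x|/|a|$ while the first stays bounded; thus $f^{-1}(x,y)\in V_-$ once $(x,y)\in H_c$ is close enough to $(\infty,c)$. On such a neighborhood $\psi_-=\eta\cdot\pmn{a}\circ f^{-1}$ is holomorphic.

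Next I would compute the asymptotic behavior of $\psi_-$ near $(\infty,c)$. By the corollary to Lemma~\ref{vanishingInVmn}, $\pmn{a}(x',y')\sim y'$ as $|y'|\to\infty$ along $V_-$. Applied with $(x',y')=f^{-1}(x,y)$, this gives
\[
\psi_-(x,y)=\eta\cdot\pmn{a}\bigl(y,(p(y)-x)/a\bigr)\sim \eta\cdot\frac{p(y)-x}{a}\sim -\frac{\eta}{a}\,x
\]
as $(x,y)\to(\infty,c)$ along $H_c$, since $y\to c$ is bounded while $x\to\infty$. On the other hand $\psi_+(x,y)=\ppl{a}(x,y)\sim x$ in the same limit by the corollary preceding Lemma~\ref{pplForZeroA}. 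Therefore, writing $w=\psi_+$ as the uniformizing coordinate on $H_c$ provided by Theorem~\ref{thm-extensionToH}, we obtain the relation
\[
\psi_-(w)\sim -\frac{\eta}{a}\,w\qquad\text{as }w\to\infty,
\]
with nonzero coefficient $-\eta/a$.

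Finally, since $\psi_-$ is holomorphic on $\{|w|>\rho_0\}$ for some $\rho_0\geq 1$ and has at worst a simple pole at $w=\infty$, it extends to a meromorphic map $\hat\psi_-$ of a neighborhood of $\infty$ in $\hat{\mb{C}}$ to $\hat{\mb{C}}$, with $\hat\psi_-(\infty)=\infty$ and nonzero derivative there (in the coordinate $1/w$). Hence $\hat\psi_-$ is a biholomorphism from some neighborhood of $\infty$ onto some neighborhood of $\infty$, and by shrinking this neighborhood we can ensure the image is exactly $\mb{C}\setminus\overline{\disk}_{\pmnHole}$ with any chosen $\pmnHole>1$. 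Pulling this back via $\psi_+$ defines the desired set $H_c^\circ\subset H_c\cap f(V_-)$ on which $\psi_-$ is a biholomorphism onto $\mb{C}\setminus\overline{\disk}_{\pmnHole}$.

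The only step that requires care is the verification that $f^{-1}$ pushes a neighborhood of the point at infinity of $H_c$ into the domain $V_-$ where $\pmn{a}$ lives; everything else is standard one-variable complex analysis applied through the uniformization $\psi_+|_{H_c}$.
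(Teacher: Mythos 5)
Your proof is correct and follows essentially the same route as the paper: show that a neighborhood of $(\infty,c)$ in $H_c$ lies in $f(V_-)$, compare $\psi_-$ to $\psi_+$ via the asymptotics of $\pmn{}$ and $\ppl{}$, and invoke the Riemann extension theorem to deduce that $\psi_-$ is a degree-one holomorphic map near the puncture. The paper uses the two-sided bounds $B^{-1}<|\pmn{}/y|<B$ and $B^{-1}<|\ppl{}/x|<B$ to conclude that $\psi_-/\psi_+$ extends to a nonvanishing holomorphic function, while you go one step further and identify the limit as $-\eta/a$; that extra precision is not needed but is not wrong. The only small slip is the phrase ``with any chosen $\pmnHole>1$'': you can only guarantee this for all sufficiently large $\pmnHole$, since for $\pmnHole$ near $1$ the preimage $\hat\psi_-^{-1}(\mb{C}\setminus\overline{\disk}_{\pmnHole})$ need not lie in the region where $\psi_-$ is defined and injective. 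Since the lemma asserts only the existence of some $\pmnHole>1$, this does not affect the validity of the argument.
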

\name{defOfHCirc}
%\name{defOfPmnHole}
\begin{proof}
First we note that 
\[f(V_-)=\{(x,y)\vert \ \vert p(y)-x\vert > \vert a y\vert \ \text{and}\ 
    \vert p(y)-x\vert > \vert a \vert \alpha\}.\]
Since the $y$ coordinate
of points $(x,y)\in H_c$ remain bounded as $\vert x\vert \to \infty$,
it follows that $(x,y)\in H_c$ implies
$(x,y)\in f(V_-)$ whenever $\vert x\vert$ is sufficiently large. 
This implies the first assertion.

To show the second result we consider $H_c$ to be lying
in $\mb{C}^2\subset \mb{P}^1\times\mb{P}^1$. We then
know that $H_c$ can be completed to become a disk by adding the point
$(\infty,c)$ and that its tangent space is given at $(\infty,c)$ by
$p''(c)\ed y + C \ed u=0$ where $C$ is some constant
and $u=\inlinefrac{1}{x}$. 
From Corollary~\ref{corPmnBound} we obtain
\[B^{-1} < \Bigl\arrowvert \inlinefrac{\pmnPlain(x,y)}{y}\Bigr\arrowvert < B\]
for $(x,y)\in V_-$.
Now if $(x,y)\in f(V_-)$ then 
$f^{-1}(x,y)=\Bigl(y,\inlinefrac{p(y)-x}{a}\Bigr)\in V_-$ and so we obtain
$ B^{-1}\Bigl\arrowvert \inlinefracp{p(y)-x}{a}\Bigr\arrowvert 
< \arrowvert \pmnPlain\bigl(f^{-1}(x,y)\bigr)\arrowvert 
< B\Bigl\arrowvert \inlinefracp{p(y)-x}{a}\Bigr\arrowvert $.
Since $\pmnPlain\circ f^{-1}=\psi_-/\eta$ this becomes
\[B^{-1}\Bigl\arrowvert \eta\inlinefracp{p(y)-x}{a}\Bigr\arrowvert 
< \arrowvert \psi_-(x,y)\arrowvert 
< B\Bigl\arrowvert \eta\inlinefracp{p(y)-x}{a}\Bigr\arrowvert\] 
for $(x,y)\in f(V_-)$.
From Corollary~\ref{corPplBound} we have
$B^{-1}\vert x\vert < \vert \psi_+(x,y)\vert < B \vert x\vert$ for $(x,y)\in V_+$.
Dividing one equality by the other gives
\[B^{-2} \Bigl\arrowvert \eta \inlinefracp{p(y)-x}{ax}\Bigr\arrowvert < 
\Bigl\arrowvert \inlinefrac{\psi_-(x,y)}{\psi_+(x,y)}\Bigr\arrowvert <
 B^{2} \Bigl\arrowvert \eta \inlinefracp{p(y)-x}{ax}\Bigr\arrowvert.\]
Since $\inlinefracp{p(y)-x}{x}$ is bounded on $H_c$ as $\vert x\vert \to\infty$,
by the Riemann extension theorem 
$\inlinefrac{\psi_-(x,y)}{\psi_+(x,y)}$ is holomorphic and nonzero 
on a neighborhood of $\infty\in H_c$. Hence
by shrinking $H_c^\circ$ if necessary we conclude
that $\psi_-(x,y)$ is a biholomorphism from the neighborhood $H_c^\circ$ 
of $\infty$ onto $\mb{C}\setminus\overline{\disk_\pmnHole}$ for some large $\pmnHole$.
\end{proof}

The following is shown in Proposition 6.2 of {\henII}.

\begin{lemma}
\name{leafStructureInVplAndVmn}
There exists $\mf{R} > 1$ such that for any $z\in \mb{C}$
with  $\vert z\vert > \mf{R}$ 
the fiber of $\psi_+$ in $V_+$ over $z$ 
and the fiber of $\psi_-$ in $f(V_-)$ over $z$ are each analytic disks.
\end{lemma}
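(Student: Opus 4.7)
My plan is to apply the implicit function theorem in the charts at infinity furnished by Corollaries~\ref{corBoundednessPl} and~\ref{corBoundednessMn}; I handle $\psi_+$ first, then transport the $\psi_-$ statement by the biholomorphism $f^{-1}$. In the $(u,y)$ chart with $u=1/x$, Corollary~\ref{corBoundednessPl} shows that $1/\psi_+ = u\hpl(u,y)$ extends holomorphically to $\hat{V}_+$, and Lemma~\ref{computationToRemoveExcessFactors} gives $\hpl(0,y)\equiv 1$, so $1/\psi_+$ vanishes simply along the line at infinity $\{u=0\}$. For $|z|$ large, the fiber $\psi_+^{-1}(z)\cap V_+$ is the level curve $\{u\hpl(u,y)=1/z\}$, and the bounds $B^{-1}\leq |\hpl|\leq B$ confine every fiber point to $|u|\in[1/(B|z|),B/|z|]$, so the whole fiber hugs the line $\{u=0\}$.

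Next I would apply the implicit function theorem. Since $\partial_u(u\hpl)|_{u=0}=\hpl(0,y)=1\neq 0$ for every $y\in\mb{C}$, there is a unique holomorphic $\varphi(y,w)$ on a neighborhood $\Omega$ of $\mb{C}\times\{0\}$ in $(y,w)$-space solving $u\hpl(u,y)=w$ with $\varphi(y,0)=0$; the local branches glue by uniqueness. Choose $\mf{R}$ so that for all $|z|>\mf{R}$ the slice $w=1/z$ lies in $\Omega$ above the open set $D_z:=\{y\in\mb{C} : (\varphi(y,1/z),y)\in V_+\}$. The a priori estimate $|u|\leq B/|z|$ on the full fiber forces it inside the region where the implicit function theorem applies, so $\psi_+^{-1}(z)\cap V_+$ is exactly the graph of $y\mapsto\varphi(y,1/z)$ over $D_z$. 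Since $V_+$ in the $(u,y)$ chart is $\{|uy|<1,\ |u|<1/\alpha\}$ and $|\varphi(y,1/z)|\asymp 1/|z|$ uniformly in $y$, the set $D_z$ is a topological disk bounded essentially by the circle $|y|\approx|z|$, and its graph is an analytic disk via projection to $y$.

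The statement for $\psi_-$ follows by an identical argument in the symmetric $(x,v)$ chart with $v=1/y$: by Corollary~\ref{corBoundednessMn} and Lemma~\ref{computationToRemoveExcessFactors}, $1/\pmn{a}=v\hmn(x,v)$ with $\hmn(x,0)\equiv 1$, so the fiber of $\pmn{a}$ in $V_-$ over any $\zeta$ with $|\zeta|$ large is an analytic disk; pulling back by $f^{-1}\colon f(V_-)\to V_-$ (a biholomorphism since $a\neq 0$), the fiber of $\psi_-=\eta\pmn{a}\circ f^{-1}$ over $z$ in $f(V_-)$ is the $f$-image of the fiber of $\pmn{a}$ over $z/\eta$, hence also an analytic disk. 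The main technical obstacle is showing that $D_z$ is genuinely a topological disk and that the implicit function theorem graph captures the entire fiber; both reduce to the uniform two-sided bound on $\hpl$, which rules out fiber components outside a thin neighborhood of $\{u=0\}$, combined with a Rouché/winding-number argument using $u\hpl(u,y)\approx u$ near that line.
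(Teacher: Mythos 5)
The paper gives no argument at all for this lemma; it simply cites Proposition~6.2 of Hubbard--Oberste-Vorth \cite{hubbard_oberste-vorth:henon2}. So any self-contained argument is by construction a different route, and your implicit-function-theorem approach in the $(u,y)$ and $(x,v)$ charts at infinity is a natural one, essentially in the spirit of what Hubbard--Oberste-Vorth do. The reduction for $\psi_-$ via $f$-conjugation (transporting fibers of $\pmn{a}$ in $V_-$ to fibers of $\psi_-$ in $f(V_-)$) is correct and clean.

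However, there is a genuine gap at the center of the argument. You repeatedly lean on the two-sided bound $B^{-1}\leq|\hpl|\leq B$ of Corollary~\ref{corBoundednessPl} as the estimate that localizes the fiber and drives the Rouch\'e/winding-number step --- you say explicitly that the technical obstacles ``reduce to the uniform two-sided bound on $\hpl$.'' That bound is not enough. To run Rouch\'e on $g_y(u)=u\hpl(u,y)-1/z$ against $u\mapsto u$, or to conclude that $y\mapsto y\,\varphi(y,1/z)$ is injective (which is what you need to see that $D_z$ is the preimage of a round disk under an injective map, hence simply connected), you need $\hpl$ close to $1$, uniformly, once $|u|$ is small. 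The constant $B=(1-r)^{-1/(d-1)}$ need not be close to $1$, and $\hpl(0,y)\equiv 1$ alone is a statement on the line $u=0$, not a uniform estimate on a neighborhood. The correct input is that $\spl{k}\to 0$ uniformly on $V_+$ as $|x|\to\infty$ (which follows from $|x_{k-1}|\geq(R+1)^{k-1}|x|$ and $|\spl{k}|<r$), giving $\sup_{|x|\geq\rho}|\hpl-1|\to 0$ as $\rho\to\infty$; this uniform convergence to $1$, not the fixed two-sided bound, is what makes $u\hpl(u,y)\approx u$ quantitatively and makes the Rouch\'e and injectivity arguments go through. Without isolating that estimate, the steps ``the a priori bound forces the fiber into the region where the IFT applies'' and ``$D_z$ is a topological disk bounded essentially by $|y|\approx|z|$'' are asserted rather than proved, and the winding-number argument that would close the argument is left entirely to the reader. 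Filling in the uniform estimate on $\hpl$ (and the symmetric one on $\hmn$) and then carrying out the injectivity/Rouch\'e step carefully would make the proof complete.
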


\head{We define $\mathscr{V}_+$ and $\mathscr{V}_-$.}

\begin{definition}
Noting that $\vert \psi_+\vert$ and $\vert\psi_-\vert$ are well defined on
all of $\upl{}$ and $\umn{}$ respectively,
we let 
\[\mathscr{V}_+\equiv \{z\in V_+\vert \ \vert\psi_+(z)\vert > \mf{R}\}\]
and we let 
\[\mathscr{V}_-\equiv \{z\in V_-\vert \ \vert\psi_-(z)\vert > \mf{R}\}.\]
\end{definition}
\name{defOfScriptV}

We note that $f(\mathscr{V}_+)\subset \mathscr{V}_+$ and 
$f^{-1}(\mathscr{V}_-)\subset \mathscr{V}_-$
as follows from the recursion relations for $\psi_+$ and $\psi_-$.

\head{We define $\ms{S}$.}

The following is a consequence of the definitions:
\begin{lemma}
\name{sameLeafFirstCritereon}
Two points $z_1$ and $z_2$ are on the same leaf of $\Fpl{}$ iff
there exists $n\geq 0$ such that $f\cp{n}(z_1), f\cp{n}(z_2)\in \ms{V}_+$
and $\psi_+\bigl(f\cp{n}(z_1)\bigr)=\psi_+\bigl(f\cp{n}(z_2)\bigr)$.
Similarly, two points $z_1$ and $z_2$ are on the same leaf of $\Fmn{}$ iff
there exists $n\geq 0$ such that $f^{-n}(z_1), f^{-n}(z_2)\in \ms{V}_-$
and $\psi_-\bigl(f^{-n}(z_1)\bigr)=\psi_-\bigl(f^{-n}(z_2)\bigr)$.
\end{lemma}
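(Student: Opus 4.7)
The plan is to exploit two features of the foliations: (i) the leaves of $\Fpl{}$ are by construction the connected components of fibers of $\psi_+$ on $V_+$, and the extension to $\bigUPl = \bigcup_{n\ge 0} f^{-n}(V_+)$ is precisely the pull-back by iterates of $f$; (ii) on the smaller set $\ms{V}_+$ the fibers of $\psi_+$ are already connected analytic disks by Lemma~\ref{leafStructureInVplAndVmn}, so there on $\ms{V}_+$ fibers coincide with leaves. An entirely symmetric story holds for $\Fmn{}$, so I will only spell out the $\Fpl{}$ case.

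For the ($\Leftarrow$) direction, assume $f\cp{n}(z_1), f\cp{n}(z_2)\in \ms{V}_+$ share the value $\psi_+\bigl(f\cp{n}(z_1)\bigr)=\psi_+\bigl(f\cp{n}(z_2)\bigr)$. By Lemma~\ref{leafStructureInVplAndVmn} this common fiber is a single analytic disk, which is then a leaf of $\Fpl{}$ containing both points. Since $f$ preserves the foliation, applying $f^{-n}$ shows $z_1$ and $z_2$ lie in the same leaf of $\Fpl{}$.

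For the ($\Rightarrow$) direction, assume $z_1, z_2$ lie on a common leaf $\leaf$ of $\Fpl{}$. Because $\bigUPl = \upl{}$ is an increasing union of $f^{-n}(V_+)$, there exists $n_0$ with $f\cp{n_0}(z_i)\in V_+$ for $i=1,2$. On $V_+$ the leaves of $\Fpl{}$ are, by definition, connected components of fibers of $\psi_+$, so $\psi_+\bigl(f\cp{n_0}(z_1)\bigr)=\psi_+\bigl(f\cp{n_0}(z_2)\bigr)$. Using the recursion $\psi_+\circ f = \psi_+^d$ and the fact that $z_i\in \upl{}$ forces $|\psi_+(z_i)|>1$ (since $\gpl{}=\log|\psi_+|>0$ on $\upl{}$), we have $|\psi_+(f\cp{n}(z_i))|=|\psi_+(z_i)|^{d^n}\to\infty$. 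Thus for all sufficiently large $n\ge n_0$ we get $|\psi_+(f\cp{n}(z_i))|>\mf{R}$, i.e.\ $f\cp{n}(z_i)\in \ms{V}_+$, while the equality of their $\psi_+$-values is preserved under $f$.

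There is no genuine obstacle; the only point deserving any care is making sure one invokes Lemma~\ref{leafStructureInVplAndVmn} on $\ms{V}_+$ (rather than on the whole of $V_+$) when identifying fibers with leaves in the $(\Leftarrow)$ direction, since on $V_+\setminus \ms{V}_+$ a fiber of $\psi_+$ may well split into several leaves. The $\Fmn{}$ statement follows by the same argument with $f^{-1}$, $\psi_-$, and $\ms{V}_-$ in place of $f$, $\psi_+$, and $\ms{V}_+$, using that $\psi_-\circ f^{-1}=\psi_-^d$ and $|\psi_-|>1$ on $\umn{}$.
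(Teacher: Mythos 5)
Your $(\Leftarrow)$ direction is fine, and the overall strategy matches what the paper intends (the paper merely states the lemma is ``a consequence of the definitions''). But your $(\Rightarrow)$ direction has a genuine gap in the step ``there exists $n_0$ with $f^{\circ n_0}(z_i)\in V_+$ \dots\ so $\psi_+(f^{\circ n_0}(z_1))=\psi_+(f^{\circ n_0}(z_2))$.'' A leaf $\leaf$ of the \emph{extended} foliation $\Fpl{}$ can intersect $V_+$ in several connected components, and $\psi_+$ need not take the same value on distinct components --- that is precisely the phenomenon encoded by $\ms{S}$ in Lemma~\ref{sameLeafSecondCritereon}. Knowing only that both $f^{\circ n_0}(z_i)$ land in $V_+$ and lie on the same global leaf does \emph{not} place them in the same connected component of $\leaf\cap V_+$, so the claimed equality of $\psi_+$-values does not follow. (One cannot patch this by invoking that the ratio lies in $\ms{S}$, since Lemma~\ref{sameLeafSecondCritereon} is deduced \emph{from} the present lemma.)

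The fix is a compactness argument you omitted: since $\leaf$ is a connected manifold, choose a compact connected set $K\subset\leaf$ containing $z_1$ and $z_2$. The open sets $f^{-n}(\ms{V}_+)$ increase and exhaust $\upl{}$, so $K\subset f^{-n}(\ms{V}_+)$ for some $n$. Then $f^{\circ n}(K)$ is a \emph{connected} subset of $\ms{V}_+$ contained in the leaf $f^{\circ n}(\leaf)$; since each fiber of $\psi_+$ over $\{|w|>\mf{R}\}$ in $\ms{V}_+$ is an analytic disk (Lemma~\ref{leafStructureInVplAndVmn}), $f^{\circ n}(K)$ lies in a single such fiber, giving $\psi_+(f^{\circ n}(z_1))=\psi_+(f^{\circ n}(z_2))$ as required. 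With that insertion your argument is complete; the $\Fmn{}$ case is symmetric as you say.
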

%%% Proof %%%
%\begin{proof}
%If $z_1$ and $z_2$ are on the same leaf then choose a path $P$ in the leaf connecting them.
%Then a compactness argument shows $f\cp{n}(P)\subset \ms{V}_+$ for some $n$.
%It follows that $\psi_+\bigl(f\cp{n}(z_1)\bigr)=\psi_+\bigl(f\cp{n}(z_2)\bigr)$.
%The converse follows from Lemma~\ref{leafStructureInVplAndVmn}.
%\end{proof}

Let $\ms{S}\equiv \{\omega\in\mb{C}\vert \omega^{d^n}=1\ \text{for some}\ n\geq 0\}\cong \mb{Q}_d/\mb{Z}.$
\name{defOfDiadicS}
Given two points $z_1,z_2\in\upl{}$,
it is clear that the   property
$\inlinefrac{\psi_+(z_1)}{\psi_+(z_2)}\in\ms{S}$
is independent of the branches of $\psi_+$ used.
Similarly for the property
$\inlinefrac{\psi_-(z_1)}{\psi_-(z_2)}\in\ms{S}$,
$z_1,z_2\in \umn{}$.

\begin{lemma}
\name{sameLeafSecondCritereon}
Two points $z_1,z_2\in \upl{}$ are on the same leaf of $\Fpl{}$ iff
$\inlinefrac{\psi_+(z_1)}{\psi_+(z_2)}\in\ms{S}$.
Similarly, two points $z_1,z_2\in \umn{}$ are on the same leaf of $\Fmn{}$ iff
$\inlinefrac{\psi_-(z_1)}{\psi_-(z_2)}\in\ms{S}$.
\end{lemma}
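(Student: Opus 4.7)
The plan is to reduce the statement to Lemma~\ref{sameLeafFirstCritereon} by exploiting the functional equations $\psi_+\circ f=\psi_+^d$ and $\psi_-\circ f^{-1}=\psi_-^d$. The key observation is that on $\ms{V}_+\subset V_+$ the function $\psi_+$ is single-valued, so an equality $\psi_+(f^n(z_1))=\psi_+(f^n(z_2))$ of actual complex numbers translates, via the functional equation, into $(\psi_+(z_1)/\psi_+(z_2))^{d^n}=1$ for any compatible choice of roots on $\upl{}$---exactly the condition that the ratio lie in $\ms{S}$. The analogous statement will hold for $\psi_-$ on $\ms{V}_-$.

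Before setting up the two implications I would first verify the auxiliary fact that for every $z\in\upl{}$ the iterates $f^n(z)$ eventually enter $\ms{V}_+$. The inclusion $f^n(z)\in V_+$ for large $n$ is automatic, since $\upl{}=\bigcup_{n\ge 0}f^{-n}(V_+)$ (already used in Lemma~\ref{bigUPlAndbigUMnOpen}) and $f(V_+)\subset V_+$; once $f^n(z)$ lies in $V_+$, the recursion $|\psi_+(f^{n+k}(z))|=|\psi_+(f^n(z))|^{d^k}$ together with $\gpl{}(z)>0$ on $\upl{}$ (so $|\psi_+(f^n(z))|>1$) forces $|\psi_+(f^{n+k}(z))|\to\infty$, eventually exceeding $\mf{R}$. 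The symmetric fact for $\umn{}$ follows from Lemma~\ref{preimageOfVmnInsideVmn} and the fact that $\gmn{}-\tfrac{1}{d-1}\log|a|>0$ on $\umn{}$.

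For the forward direction, suppose $z_1$ and $z_2$ lie on the same leaf of $\Fpl{}$. Lemma~\ref{sameLeafFirstCritereon} produces $n$ with $f^n(z_i)\in\ms{V}_+$ and $\psi_+(f^n(z_1))=\psi_+(f^n(z_2))$; fixing any $d^n$-th roots $\psi_+(z_i)$ of these two values yields $(\psi_+(z_1)/\psi_+(z_2))^{d^n}=1$, hence the ratio lies in $\ms{S}$. Conversely, if $\psi_+(z_1)/\psi_+(z_2)\in\ms{S}$ for some choice of branches, pick $m$ with $(\psi_+(z_1)/\psi_+(z_2))^{d^m}=1$ and then $n\ge m$ large enough (by the previous paragraph) that $f^n(z_i)\in\ms{V}_+$. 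Applying the functional equation to the now single-valued left-hand sides gives $\psi_+(f^n(z_1))=\psi_+(f^n(z_2))$, and a second invocation of Lemma~\ref{sameLeafFirstCritereon} closes the argument. The $\Fmn{}$, $\psi_-$ case runs in parallel, with backward iterates of $z\in\umn{}$ replacing forward iterates.

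The main obstacle I anticipate is not any deep geometry but a careful bookkeeping of branch ambiguity: since $\psi_+$ is only defined on $\upl{}$ up to multiplication by a $d^k$-th root of unity, the statement ``the ratio $\psi_+(z_1)/\psi_+(z_2)$ lies in $\ms{S}$'' has to be (as the lemma implicitly asserts) branch-independent. This ultimately reduces to the observation that $\ms{S}$ is a multiplicative group containing all $d^k$-th roots of unity, so altering branches multiplies the ratio by an element of $\ms{S}$ and keeps it in $\ms{S}$; this point should be flagged explicitly in the final write-up so that both directions of the ``iff'' are stated without ambiguity.
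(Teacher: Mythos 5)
Your proof is correct and takes essentially the same route as the paper's (terse) argument: iterate forward into $\ms{V}_+$ (resp.\ backward into $\ms{V}_-$), invoke Lemma~\ref{sameLeafFirstCritereon} to translate ``same leaf'' into equality of $\psi_\pm$ on those iterates, and use the functional equation $\psi_+\circ f=\psi_+^d$ to pass between the equality on iterates and the condition that the ratio lie in $\ms{S}$. Your extra paragraph on branch-independence and the verification that iterates eventually reach $\ms{V}_+$ merely make explicit what the paper's one-line proof leaves implicit.
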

\begin{proof}
If $z_1$ and $z_2$ lie in $\ms{V}_+$ the first assertion follows from Lemmas
\ref{leafStructureInVplAndVmn} and \ref{sameLeafFirstCritereon} and
the recursion relationship for $\psi_+$.
Otherwise, choosing $k$ such that
$f\cp{k}(z_1),f\cp{k}(z_2)\in \ms{V}_+$ gives the first assertion.
The second is analogous.
\end{proof}

We will want to consider the holonomy maps of $H_c$ determined by
the foliations $\Fpl{}$ and $\Fmn{}$. By Theorem~\ref{thm-extensionToH}, $H_c$ can be identified
with $\mb{C}\setminus\overline{\disk}$ using $\psi_+$.

Assume that $z_1$ and $z_2$ are points of $H_c$ for some critical
point $c$ of $p(z)$ and that $z_1$ and $z_2$ 
lie on the same leaf $\lfpl{}(z_1)$ of $\Fpl{}$. 
Start with $z=z_1$ and then vary $z\in H_c$.
By Lemma~\ref{tubeToolbox},
the leaves of $\Fpl{}$ all intersect $H_c$ transversely
hence the intersection of $\lfpl{}(z)$ with $H_c$ which is 
near $z_2$ will vary holomorphically with $z$.
By this means we get a holomorphic map $h$
from a neighborhood $N_1\subset H_c$ of $z_1$ to 
a neighborhood $N_2\subset H_c$
of $z_2$. Since the inverse map is given by starting with $z=z_2$, varying 
$z\in H_c$ and following the intersection of
$H_c$ and $\lfpl{}(z)$ near $z_1$ then the holomorphic map
from $N_1$ to $N_2$ is a biholomorphism for suitably
chosen $N_1$ and $N_2$. Let $\mf{m}_+\colon N_1\to N_2$ \name{defOfMathFracM}
denote this biholomorphism.

\begin{lemma}
In the coordinates on $H_c$ given by $\psi_+\colon H_c\to\mb{C}\setminus\overline{\disk}$
the map $\mf{m}_+$ is given by $\mf{m}_+(z)=\omega z$ for some $\omega\in\ms{S}$.
If we continue $\mf{m}_+$ by holonomy then $\mf{m}_+$ extends
to the global automorphism of $H_c$ given by $\mf{m}_+(z)=\omega z$.
The same result holds on $H_c^\circ$ with coordinates
defined by $\psi_-$, and using any two points $z_1,z_2\in H_c^\circ$
on the same leaf of $\Fmn{}$, along with $\Fmn{}$, to construct
a holonomy map $\mf{m}_-$.
\end{lemma}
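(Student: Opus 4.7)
The plan is to work in the global coordinate $\psi_+\colon H_c\to\mb{C}\setminus\overline{\disk}$ supplied by Theorem~\ref{thm-extensionToH} and to identify the local map $\mf{m}_+$ with a multiplication, using the characterization of same-leaf equivalence in Lemma~\ref{sameLeafSecondCritereon}. Once this local normal form is in hand, the global extension will be read off from the fact that $z\mapsto\omega z$ is already a globally defined biholomorphism of $\mb{C}\setminus\overline{\disk}$ that automatically satisfies the same-leaf condition everywhere.

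First I would consider, on the connected neighborhood $N_1$, the holomorphic function
\[\omega(z)\equiv\frac{\psi_+\bigl(\mf{m}_+(z)\bigr)}{\psi_+(z)}.\]
By construction of the holonomy, $\mf{m}_+(z)$ lies on the leaf $\lfpl{}(z)$, so Lemma~\ref{sameLeafSecondCritereon} forces $\omega(z)\in\ms{S}$ for every $z\in N_1$. Since $\ms{S}$ is countable and discrete in $\mb{C}$, the holomorphic function $\omega$ must be constant on $N_1$; let $\omega\in\ms{S}$ denote this common value. Then in the $\psi_+$ coordinate $\mf{m}_+(z)=\omega z$ on $N_1$.

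For the global extension, take the biholomorphism $\tilde{\mf{m}}_+\colon H_c\to H_c$ defined in the $\psi_+$ coordinate by $z\mapsto\omega z$. By Lemma~\ref{sameLeafSecondCritereon}, $\tilde{\mf{m}}_+(z)$ and $z$ lie on the same leaf of $\Fpl{}$ for every $z\in H_c$, and by part~\ref{part-controllingTangenciesInTheTube} of Lemma~\ref{tubeToolbox} the curve $H_c$ is everywhere transverse to $\Fpl{}$, so the holonomy continuation of $\mf{m}_+$ along any path in $H_c$ is unambiguously defined. Since $\tilde{\mf{m}}_+$ agrees with $\mf{m}_+$ on $N_1$ and both are determined at each subsequent point by the identical leaf-following prescription, uniqueness of analytic continuation gives that the global extension of $\mf{m}_+$ by holonomy is precisely $\tilde{\mf{m}}_+$.

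The argument for $\mf{m}_-$ on $H_c^\circ$ is entirely parallel: one uses the biholomorphism $\psi_-\colon H_c^\circ\to\mb{C}\setminus\overline{\disk}_\pmnHole$ from Lemma~\ref{psiMnBiholAtInfinity}, the $\Fmn{}$ half of Lemma~\ref{sameLeafSecondCritereon}, and the transversality of $\Fmn{}$ to $H_c$ from Lemma~\ref{tubeToolbox} to repeat the discrete-image argument and then the global extension. The only substantive step is the local-constancy of $\omega$, which is forced immediately by discreteness of $\ms{S}$; the rest is a matter of recognizing that $z\mapsto\omega z$ is the explicit global candidate with which the holonomy continuation must coincide, so there is no real obstacle.
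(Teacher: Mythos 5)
Your proof follows essentially the same route as the paper's: observe that $\omega(z)=\psi_+\bigl(\mf{m}_+(z)\bigr)/\psi_+(z)$ takes values in $\ms{S}$, conclude from the topological smallness of $\ms{S}$ that $\omega$ is constant, and then extend globally via the explicit rotation $z\mapsto\omega z$, which is a bit more detail than the paper gives for the extension step but is the same idea. One inaccuracy worth flagging: $\ms{S}$ is \emph{not} discrete in $\mb{C}$ (it is dense in the unit circle $S^1$), but since it is countable, a non-constant holomorphic function on a connected open set would have open, hence uncountable, image, so the conclusion that $\omega$ is constant is still correct and the slip does not break the argument.
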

\begin{proof}
The function $\psi_+\colon H_c\to\mb{C}$ is well defined 
and so $\inlinefrac{\psi_+\bigl(\mf{m}_+(z)\bigr)}{\psi_+(z)}$ is a continuous
fuction of $z$ which takes values in $\ms{S}$. Hence it is constant. 
Thus the map $\mf{m}_+(z)$, which was only defined in a neighborhood
of a point on $H_c$, takes the form 
$\mf{m}_+(z)=\omega z$ in the coordinates defined
by $\psi_+$, and thus such a holonomy map gives a 
global automorphism of $H_c$.

The result for $H_c^\circ$ is proven the same way.
\end{proof}

One can attempt to picture this holonomy map in terms of monodromy. Assume $d=2$.
Because the Jacobian of $f$ is very small, 
the set $f(H_c)$ looks approximately like the curve $C(p)$. 
Consequently, if $z\in\mb{C}$ is sufficiently large then there will be exactly 
two points in $f(H_c)$ which map to $z$ under $\psi_+$.
The monodromy map $\mf{m}\colon z\to -z$, 
carried out on $f(H_c)$ instead of on $H_c$, interchanges such pairs. 
%That is,
%the monodromy map interchanges branches of the projection of $f(H_c)\cap V_+$ by $\ppl{a}$
%(which is roughly like the projection of $C(p)$ to the $x$-axis). 
One visualizes a monodromy of a given order $2^n$ by looking at $f\cp{n}(H_c)\cap V_+$
and considering the fibers of $\pplPlain$ in $V_+$ as we move in a large loop around
$K_+$ along $H_c$.

  \newpage 
  
\section{Fiber Preserving Conjugacies.}
\name{section-conjugacies}

\subsectionDone{Statement of the Theorem}

Buzzard and Verma \cite{buzzard-verma} proved a stability result by using the $\lambda$-lemma
along the leaves of the foliation $\Fpl{}$. 
We are interested in deforming the underlying manifold to obtain a new holomorphic self map. 
Unfortunately, simply deforming the leaves of $\Fpl{}$ quasiconformally does not yield a well defined
complex manifold structure as it destroys the complex structure transverse to the foliation.
The first natural approach is to deform using both $\Fpl{}$ and $\Fmn{}$.
However, we will show that typically H\'enon maps can
not be deformed in this way.

In what follows we consider H\'enon maps $f(x,y)=\bigl(p(x)-ay,x\bigr)$ satisfying the following:

\begin{condition}
\name{hypothesis}
{\hspace{2in}}\par
\begin{itemize}
\item $p$ is hyperbolic with connected Julia set and simple critical points.
\item The Jacobian $a$ is so small that the hypothesis of 
Theorem~\ref{thm-extensionToH}, 
Theorem~\ref{thm-classificationOfCriticalComponents}, and
Lemma~\ref{psiMnBiholAtInfinity} all hold.
\end{itemize}
\end{condition}

\begin{convention}
We will be dealing with just two H\'enon maps, $f$ and $g$, 
in this section instead of a whole family $f_a$. 
Hence we will omit the subscript $a$, but will
use a subscript of $f$ or $g$ whenever necessary, e.g. $\Fpl{f}$ and $\Fpl{g}$
instead of $\Fpl{a}$, or $\pmnHole_f$ and $\pmnHole_g$ instead of $\pmnHole$.
\end{convention}

Assume that we are given two different H\'enon maps $f$ and $g$ arising from two such 
polynomials $p_f$ and $p_g$ and that $f$ and $g$ satisfy Condition~\ref{hypothesis}. 
We will show that there are severe obstructions to the existence
of a conjugacy between $f$ and $g$ on
$\upl{}\cup \umn{}$ which maps leaves of $\Fpl{f}$ and $\Fmn{f}$ to
the leaves of $\Fpl{g}$ and $\Fmn{g}$ respectively.

The folling result can be found as
Lemma 2.1 of \cite{buzzard-nondensityOfStability}.
We include a proof.

\begin{lemma}
\name{conjugacyPreservesTangency}
A homeomorphism from $\upl{f}\cup \umn{f}$ to $\upl{g}\cup \umn{g}$ which
maps the leaves of $\Fpl{f}$ and $\Fmn{f}$ to the leaves of $\Fpl{g}$
and $\Fmn{g}$ respectively necessarily maps
$\critLoc_f$ to $\critLoc_g$.
\end{lemma}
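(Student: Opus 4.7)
The plan is to give a topological characterization of the tangency locus that any foliation-preserving homeomorphism must respect. By definition, $z \in \critLoc_f$ exactly when the leaves $\lfpl{f}(z)$ and $\lfmn{f}(z)$ are tangent (as opposed to transverse) at $z$, so the content of the lemma is that this tangency/transversality dichotomy for a pair of holomorphic curves meeting at a common point is preserved by any ambient homeomorphism of pairs. Note that $\critLoc_f \subset \upl{f} \cap \umn{f}$, so both leaves through any $z\in\critLoc_f$ exist and $\phi(z)$ is a candidate for lying in $\critLoc_g$.

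The topological criterion I will use is cardinality of nearby-leaf intersections. If $\Fpl{f}$ and $\Fmn{f}$ are transverse at $z$, then for any sufficiently small neighborhood $U$ of $z$ and any leaf $L$ of $\Fmn{f}$ passing close enough to $z$, the intersection $\lfpl{f}(z) \cap L \cap U$ has exactly one point. If the two leaves are tangent at $z$ with order of contact $k \geq 2$, then for a nearby leaf $L$ of $\Fmn{f}$ distinct from $\lfmn{f}(z)$ the intersection $\lfpl{f}(z) \cap L \cap U$ consists of up to $k$ points, and of at least two points for most such $L$. One sees this by choosing local coordinates in which $\lfpl{f}(z) = \{y = 0\}$ and $\Fmn{f}$ is given by level sets of a holomorphic submersion $g_2$ with $g_2(x,0) - g_2(0,0)$ vanishing to order exactly $k$ at $x=0$, reducing the intersection count to solving a degree-$k$ analytic equation in $x$.

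This criterion is manifestly preserved by $\phi$. Since $\phi$ carries $\lfpl{f}(z)$ to $\lfpl{g}(\phi(z))$ and each leaf $L$ of $\Fmn{f}$ to a leaf $\phi(L)$ of $\Fmn{g}$, and since $\phi$ is an ambient homeomorphism, the identity
\[ \phi\bigl(\lfpl{f}(z) \cap L \cap U\bigr) \ = \ \lfpl{g}(\phi(z)) \cap \phi(L) \cap \phi(U) \]
is a set-theoretic bijection and therefore preserves cardinality. Continuity of $\phi$ and $\phi^{-1}$ then ensures that the ``every sufficiently close $L$ meets $\lfpl{f}(z)$ in exactly one point of $U$'' condition holds at $z$ if and only if the analogous condition holds at $\phi(z)$, giving $z \in \critLoc_f$ if and only if $\phi(z) \in \critLoc_g$. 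The most delicate step is matching the ``sufficiently close'' quantifiers on both sides of $\phi$; this reduces to routine continuity together with the observation that small transversal disks to $\Fmn{f}$ at $z$ are mapped by $\phi$ to topological transversals of $\Fmn{g}$ at $\phi(z)$ that remain small, but it is the only spot that needs genuine care.
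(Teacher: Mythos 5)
Your proof is correct, and it takes a genuinely different route from the paper's. The paper isolates a single pair of leaves $\leaf_1 = \lfpl{f}(z)$ and $\leaf_2 = \lfmn{f}(z)$, normalizes coordinates so that $\leaf_1$ is the $x$-axis and $\leaf_2$ is parameterized as $t\mapsto(t,t^d)$, and then observes that the inclusion $\leaf_2\setminus\{z\}\hookrightarrow U\setminus\leaf_1$ induces a surjection on fundamental groups iff $d=1$, i.e.\ iff the leaves are transverse; this linking-number criterion is manifestly a homeomorphism invariant of the pair of leaves alone. You instead use the full foliation structure: you fix the plaque $P = \lfpl{f}(z)\cap U$ and look at the cardinality of intersections of $P$ with nearby plaques of $\Fmn{f}$, which is locally one-to-one iff the contact is first order and generically $k$-to-one when the contact has order $k\geq 2$. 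Both invariants correctly detect the set-theoretic tangency locus. Your version requires the stronger (but granted) hypothesis that $\phi$ respects the entire foliation $\Fmn{f}$, not just the individual leaf through $z$, whereas the paper's argument works purely with the two leaves; on the other hand, your approach avoids the coordinate normalization and fundamental-group computation and is a bit closer in spirit to the intersection-multiplicity machinery already set up in Lemma~\ref{intersectionLemma} and Lemma~\ref{intersectionComparison}. The ``sufficiently close'' quantifier you flag is not a genuine gap: one can phrase your criterion invariantly as ``there exists a neighborhood $U$ of $z$ such that the assignment sending each point of the plaque $\lfpl{f}(z,U)$ to the plaque of $\Fmn{f}$ through it is injective,'' and this statement transports directly through $\phi$ because $\phi$ carries plaques to plaques and connected components to connected components.
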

\begin{proof}
This is because two leaves $\leaf_1$ and $\leaf_2$ which are transverse in $\mb{C}^2$ 
intersect in a different manner topologically than two leaves which
are not.  
To see this, choose convenient $(x,y)$ coordinates so that
the point of intersection is the origin, $\leaf_1$
coincides with the $x$ axis, and $\leaf_2$ is transverse 
to the $y$ axis. Then choose a biholomorphic parameterization
$t\mapsto \bigl(g_1(t),g_2(t)\bigr)$ of a neighborhood of the 
intersection in $\leaf_2$ such that $t=0$ maps to the point of 
intersection.

Since the second leaf is transverse to the $y$ axis then $g_1'(0)\not=0$.
Therefore $g_1$ is a local biholomorphism. It follows that we can
write $g_2=\theta g_1^d$ 
for some nonvanishing holomorphic function $\theta$ defined in some
neighborhood of zero, where $d=1$ iff $\leaf_1$ and $\leaf_2$
intersect transversely. Choosing $\zeta$ a 
holomorphic function such that $\zeta^{d-1}=\theta$,
then $\zeta g_2=(\zeta g_1)^d$ so $\leaf_2$ is parameterized by
$\bigl(\zeta g_1,(\zeta g_2)^d\bigr)$. 
One can choose local coordinates 
so that that $\leaf_2$ is parameterized by $(t,t^d)$ and
$\leaf_1$ is the $x$ axis. Then if
$U$ is any sufficiently small open neighborhood of the origin we see that the
inclusion $\leaf_2\setminus\{0\}\hookrightarrow U\setminus \leaf_1$
can induce a surjective map of fundamental groups iff $d=1$.
%% Proof %%%
%That is, if $d\not=1$ then
%there is no loop in $\leaf_2\setminus \{0\}$ which is homotopic 
%in $U\setminus \leaf_1$ to a loop in a small transversal to $\leaf_1\setminus\{0\}$,
%but if $d=1$ it is easy to find arbitrarily small neighborhoods $U$ for which such loops in
%$\leaf_2\setminus\{0\}$ exist.
% The point is that if such a homotopy existed then it would induce a homotopy 
% between the images of these loops under projection to the $y$ axis with zero removed
% but for $\leaf_2$ this image is necessarily 
% a multiple of the $d^\text{th}$ power of a generator.
\end{proof}

\begin{remark}
\name{rem-reductionToSimplerCaseByIterating}
We will assume that $h\colon \upl{f}\cup \umn{f}\to \upl{g}\cup \umn{g}$
is a homeomorphism which
maps leaves of $\Fpl{f}$ to leaves of $\Fpl{g}$ and maps leaves of
$\Fmn{f}$ to leaves of $\Fmn{g}$. 
Then by Lemma~\ref{conjugacyPreservesTangency}
$h$ maps the critical locus $\critLoc_f$ of $f$ to the critical locus
$\critLoc_g$ of $g$. We assume that there are critical points
$c_f$ and $c_g$ of $f$ and $g$ such that if we let $H_f$ and $H_g$
denote the components of  $\critLoc_f$ and $\critLoc_g$ asymptotic
to $y=c_f$ and $y=c_g$ respectively as $\vert x\vert\to\infty$
then $h$ maps $H_f$ to $H_g$. There is no loss of generality
in assuming that $h$ maps $H_f$ to $H_g$ since, 
by Theorem~\ref{thm-classificationOfCriticalComponents},
we can always choose integers $k$ and $\ell$
such that this assumption holds if we change coordinates by iterates of $f$ and $g$
so that in the new coordinates $h$ becomes the map
$g\cp{k}\circ h\circ f\cp{\ell}$. 
\end{remark}

There is one degenerate situation we wish to rule out. Since 
we are considering conjugacies 
$h\colon\upl{f}\cup \umn{f}\to \upl{g}\cup \umn{g}$ without
requiring that $h$ extends to $\mb{C}^2$ then
it is possible that $h\colon H_f\to H_g$ ``inverts'' $H_f$, meaning 
that it maps neighborhoods of the puncture in $H_f\cong \disk^*$
to open sets adjacent to the other boundary component of $\disk^*$.

\begin{condition}
\name{noInversion}
The map $h$ maps small neighborhoods of the 
puncture of $H_f$ at infinity to neighborhoods
of the puncture of $H_g$, where $H_f$ and $H_g$
are specified horizontal components of $\critLoc_f$
and $\critLoc_g$ respectively.
\end{condition}

\begin{observation}
Let $\conjMap\colon\mb{C}^2\to\mb{C}^2$ \name{defOfconjMap}
be the conjugation map $\conjMap(x,y)=(\conj{x},\conj{y})$,
and let $g=\conjMap\circ f\circ \conjMap$.
Then $\psi_{g+}=\conj{\psi_{f+}\circ \conjMap}$ and 
$\psi_{g-}=\conj{\psi_{f-}\circ \conjMap}$. Thus
as a conjugacy between $f$ and $g$, $\conjMap$
maps the leaves of $\Fpl{f}$ and $\Fmn{f}$ to the leaves of 
$\Fpl{g}$ and $\Fmn{g}$
respectively. 
\end{observation}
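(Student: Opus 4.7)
The plan is to verify the two displayed identities directly from the defining telescoping formulas, and then deduce the leaf-preservation claim as a formal consequence.

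A direct computation yields $g(x,y)=\bigl(\conj{p(\conj x)}-\conj a\,y,\ x\bigr)$, so $g$ is itself a H\'enon map with monic polynomial $\conj p(x):=\conj{p(\conj x)}$ of degree $d$ and Jacobian $\conj a$. In particular, the construction of Section~\ref{subsectionTheFoliations} applies verbatim to $g$ and produces $\ppl{g}$ and $\pmn{g}$, and hence $\psi_{g\pm}$ via the same normalizations used for $f$.

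To verify $\psi_{g+}=\conj{\psi_{f+}\circ\conjMap}$, I would observe by induction that $g^n=\conjMap\circ f^n\circ\conjMap$, so if $(x_n^g,y_n^g)$ denotes the $n$-th forward iterate under $g$ of $(x,y)$, then $x_n^g=\conj{x_n^f(\conj x,\conj y)}$. Substituting into the telescoping formula~(\ref{telescopingPl}) for $\ppl{g}$ and using that complex conjugation commutes with the principal branch of $\log$ on the disk $|\spl{k}|<r<1$ guaranteed by Lemma~\ref{splBound}, one obtains the identity term by term; the normalization $\ppl{g}\sim x$ at infinity (Corollary~\ref{corPplBound}) selects a unique branch of each $d^n$-th root, eliminating any root-of-unity ambiguity. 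The same argument applied to~(\ref{telescopingMn}) yields $\pmn{g}=\conj{\pmn{f}\circ\conjMap}$; a brief check using the consistent choice $\eta_g=\conj{\eta_f}$ --- valid because $\eta_g^{d-1}=1/\conj a=\conj{1/a}=\conj{\eta_f^{d-1}}$ --- then upgrades this to $\psi_{g-}=\conj{\psi_{f-}\circ\conjMap}$.

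For the leaf-preservation claim: on $V_+$ the leaves of $\Fpl{f}$ are level sets of $\ppl{f}$, and the first identity shows $\conjMap$ sends $\{\ppl{f}=c\}$ onto $\{\ppl{g}=\conj c\}$, mapping leaves of $\Fpl{f}$ to leaves of $\Fpl{g}$ there. Since $\Fpl{f}$ is extended from $V_+$ to $\bigUPl$ by pullback under iterates of $f$, and $\conjMap\circ f=g\circ\conjMap$, the leaf-preservation property propagates to all of $\bigUPl$; the mirror argument on $V_-$ and $\bigUMn$ handles $\Fmn{}$. The only bookkeeping concern is branch-choice in the $d^n$-th roots implicit in the telescoping products, but the asymptotic normalizations $\psi_+\sim x$ and $\psi_-\sim y$ force these choices, so no real obstacle arises.
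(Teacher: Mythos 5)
Your verification is correct, and the paper itself states this as an Observation without proof, precisely because the check you give — identifying $g$ as the H\'enon map with polynomial $\conj{p(\conj x)}$ and Jacobian $\conj a$, pushing $\conjMap$ through the telescoping products term by term using that conjugation commutes with the principal branch of $\log$ near $1$, and handling $\psi_-$ with the compatible choice $\eta_g=\conj{\eta_f}$ — is the routine argument the authors leave to the reader. Your treatment of the branch issues is sound, and the leaf-preservation deduction via the level sets of $\ppl{}$, $\pmn{}$ together with $\conjMap\circ f=g\circ\conjMap$ is exactly right.
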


\begin{reduction}
\name{red-reductionToSimplerCaseByConjugation}
We can assume that $h\colon H_f\to H_g$ is orientation preserving
by replacing $g$ with $\conjMap\circ g\circ \conjMap$ and $h$ with $\conjMap\circ h$
if necessary.
\end{reduction}

\begin{lemma}
\name{hRotatesCircles}
In the coordinates on $H_f$ and $H_g$ given
by $\psi_{f+}$ and $\psi_{g+}$
one has $h(\omega z)=\omega h(z)$ for any $\omega\in S^1$.
The same property also holds for all $z\in H_f^\circ$ using the coordinates on
$H^\circ_f$ and $H^\circ_g$ given by 
$\psi_{f-}$ and $\psi_{g-}$, 
\end{lemma}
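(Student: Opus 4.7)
The plan is to push the holonomy automorphisms from Section~\ref{sectionHolonomyInTheCriticalLocus} through $h$, and then fill in the full circle by continuity.

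First, I would fix $\omega\in\ms{S}$ and consider the global holonomy automorphism $\mf{m}_{f+}$ of $H_f$ which in $\psi_{f+}$-coordinates is $z\mapsto\omega z$. Since $h$ maps leaves of $\Fpl{f}$ to leaves of $\Fpl{g}$ and $h(H_f)=H_g$, and since for every $z\in H_f$ the points $z$ and $\mf{m}_{f+}(z)$ share a common $\Fpl{f}$-leaf, it follows that $h(z)$ and $h(\mf{m}_{f+}(z))$ share a common $\Fpl{g}$-leaf and both lie in $H_g$. By Lemma~\ref{sameLeafSecondCritereon}, the continuous function $w\mapsto\psi_{g+}(h\,\mf{m}_{f+}\,h^{-1}(w))/\psi_{g+}(w)$ takes values in $\ms{S}$; because $H_g$ is connected and $\ms{S}$ is totally disconnected, this function is a constant $\omega'\in\ms{S}$. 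Writing $\tilde{h}$ for $h$ in the coordinates $\psi_{f+},\psi_{g+}$, this gives the functional equation
\[
\tilde{h}(\omega z)=\omega'\,\tilde{h}(z),\qquad z\in\mb{C}\setminus\overline{\disk}.
\]

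The crux is to show $\omega'=\omega$. By Reduction~\ref{red-reductionToSimplerCaseByConjugation}, $\tilde{h}$ is orientation-preserving, and by Condition~\ref{noInversion} it sends a neighborhood of $\infty$ to a neighborhood of $\infty$. Therefore for sufficiently large $R$ the curve $\theta\mapsto\tilde{h}(Re^{i\theta})$ is a Jordan curve of winding number $+1$ about the origin. Choose a continuous branch $\alpha(\theta)$ of its argument, so that $\alpha(\theta+2\pi)=\alpha(\theta)+2\pi$. The functional equation yields $\alpha(\theta+\arg\omega)-\alpha(\theta)\equiv\arg\omega'\pmod{2\pi}$, and continuity upgrades this to an equality $\alpha(\theta+\arg\omega)=\alpha(\theta)+\arg\omega'+2\pi m$ for a fixed integer $m$. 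Writing $\omega=e^{2\pi ik/d^n}$ and $\omega'=e^{2\pi ik'/d^n}$ and iterating the shift $d^n$ times collapses this to $2\pi k=2\pi k'+2\pi m\,d^n$, i.e.\ $k\equiv k'\pmod{d^n}$, hence $\omega'=\omega$.

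Once $\tilde{h}(\omega z)=\omega\,\tilde{h}(z)$ is established for every $\omega\in\ms{S}$, density of $\ms{S}$ in $S^1$ together with continuity of $\tilde{h}$ extends the relation to every $\omega\in S^1$. The statement for $\psi_-$ on $H_f^\circ$ is handled by the identical argument applied to $\mf{m}_-$ and $\psi_-$ in place of $\mf{m}_+$ and $\psi_+$; one first shrinks $H_f^\circ$ (using that $h$ preserves neighborhoods of the puncture) so that $h(H_f^\circ)\subset H_g^\circ$, after which the same winding-number computation goes through. The main obstacle throughout is pinning down $\omega'=\omega$ rather than another $d^n$-th root of unity of the same order; this is the single step that essentially uses Reduction~\ref{red-reductionToSimplerCaseByConjugation} and Condition~\ref{noInversion}.
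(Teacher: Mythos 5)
Your proof is correct, and it follows the paper's argument in its first step but replaces the second step with a somewhat different computation. Both you and the paper begin by establishing that, for fixed $\omega\in\ms{S}$, the quantity $\tilde{h}(\omega z)/\tilde{h}(z)$ is a continuous function of $z$ taking values in the totally disconnected set $\ms{S}$, hence equal to a constant $\omega'\in\ms{S}$; your phrasing via the holonomy automorphism $\mf{m}_{f+}$ of Section~\ref{sectionHolonomyInTheCriticalLocus} is just a repackaging of the same appeal to Lemma~\ref{sameLeafSecondCritereon}. The divergence is in how $\omega'$ is identified with $\omega$. The paper first observes that $\ms{S}$-equivariance forces $\tilde h$ to map circles centered at $0$ to circles centered at $0$, and then argues that a homeomorphism of a circle preserves or reverses the cyclic order of the finite orbit $z,\omega z,\dotsc$; this gives $\omega'\in\{\omega,\conj\omega\}$, and orientation preservation eliminates $\conj\omega$. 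You instead lift to a continuous branch of $\arg\tilde h(Re^{i\theta})$, use Condition~\ref{noInversion} and Reduction~\ref{red-reductionToSimplerCaseByConjugation} to see that the lift satisfies $\alpha(\theta+2\pi)=\alpha(\theta)+2\pi$, and iterate the shift $d^n$ times; this yields $\omega'=\omega$ directly, without passing through the circle-to-circle fact or the binary dichotomy $\omega'\in\{\omega,\conj\omega\}$. Your route is a bit more explicit and self-contained (the paper's cyclic-order step is stated tersely and implicitly involves sorting the residues $jk\bmod d^n$); the paper's is shorter. One small point: in the iteration step you write $\omega'=e^{2\pi ik'/d^n}$, tacitly placing $\omega'$ among the $d^n$-th roots. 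This is harmless — the identity $\alpha(\theta+2\pi k)=\alpha(\theta)+d^n\arg\omega'+2\pi md^n$ together with $\alpha(\theta+2\pi k)=\alpha(\theta)+2\pi k$ forces $\arg\omega'=2\pi k/d^n-2\pi m$, so $\omega'=\omega$ regardless of which power of $d$ kills $\omega'$ — but it is worth stating, since a priori $\omega'$ could lie in a larger cyclic subgroup of $\ms{S}$.
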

\begin{proof}
We will conduct the proof for $H_f$ and $H_g$
with coordinates 
$\psi_{f+}\colon H_f\to\mb{C}\setminus\overline{D}$
and $\psi_{g_+}\colon H_g\to\mb{C}\setminus\overline{D}$
given by Theorem~\ref{thm-extensionToH}.
The proof for the other part of the lemma is the same using
the coordinates given in Lemma~\ref{psiMnBiholAtInfinity}.
Given $\omega \in \ms{S}$ the function
$\inlinefrac{h(\omega z)}{h(z)}$ only takes values in $\ms{S}$ so
it is constant.
%%% Proof %%%
%Since the domain of $h$ is rotation
%invariant in the given coordinates then this is defined on all of $H_f$.
%Since $\omega z$ and $z$ always lie on the same
%leaf of $\Fpl{f}$ then $\inlinefrac{h(\omega z)}{h(z)}\in\ms{S}$
%for all $z\in H_f$, and therefore $\inlinefrac{h(\omega z)}{h(z)}$ is constant.
Thus, given $\omega\in \ms{S}$ there exists $\theta_\omega \in \ms{S}$ such
that $h(\omega z)=\theta_\omega h(z)$ for all $z\in H_f$. 
It follows that $h$ is $\ms{S}$-equivariant. Hence it maps
circles (centered at the origin) to circles.
%%% Proof %%%
%If $\omega^{d^n}=1$ then 
%$h(z)=h(\omega^{d^n}z)=\theta_\omega^{d^n} h(z)$ so
%$\theta_\omega^{d^n}=1$. If $\theta_\omega^{d^n}=1$ then 
%$h(\omega^{d^n}z)=h(z)$ so $\omega^{d^n}=1$ by the injectivity
%of $h$. 
Since $h$ is a homeomorphism then the order of the points 
$z,\omega z, \omega^2z,\dotsc, \omega^{d^n}z=z$ on the circle
must be preserved or reversed.
Thus 
%$\theta_\omega=\omega$ or $\theta_\omega=\conj{\omega}$ so
$h(\omega z)=\omega h(z)$ or 
$h(\omega z)=\conj{\omega} h(z)$ for $\omega\in\ms{S}$ and hence 
for all $\omega\in S^1$. The latter possiblity can be 
eliminated since $h$ is orientation preserving.
\end{proof}

%
%satisfy $\psi_{g+}\circ h\circ \psi_{f+}^{-1}(\omega z)=
%\omega^{\pm 1}\psi_{g+}\circ h\circ \psi_{f+}^{-1}(z)$
%and
%$\psi_{g-}\circ h\circ \psi_{f-}^{-1}(\omega z)=
%\omega^{\pm 1}\psi_{g-}\circ h\circ \psi_{f-}^{-1}(z)$
%for all $\omega\in S^1$ and where
%the sign of the $\pm$ is the same in both equations.
%We prefer to conjugate each of the above maps by $z\mapsto 1/z$ to obtain
%$\dfrac{1}{\psi_{g+}}\circ h\circ \Bigl(\dfrac{1}{\psi_{f+}}\Bigr)^{-1}(\omega z)
%=\omega^{\pm 1}\dfrac{1}{\psi_{g+}}\circ h\circ \Bigl(\dfrac{1}{\psi_{f+}}\Bigr)^{-1}(z)\colon
%\disk^*\to \disk^*$ and
%$\dfrac{1}{\psi_{g-}}\circ h\circ \Bigl(\dfrac{1}{\psi_{f-}}\Bigr)^{-1}(\omega z)
%=\omega^{\pm 1}\dfrac{1}{\psi_{g-}}\circ h\circ \Bigl(\dfrac{1}{\psi_{f-}}\Bigr)^{-1}(z)\colon
%\disk_{1/\pmnHole_f}^*\to \disk_{1/\pmnHole_g}^*$. 

We will write $\chi_{f+}$, $\chi_{f-}$, $\chi_{g+}$ 
and $\chi_{g-}$ for the maps $1/\psi_{f+}$, $1/\psi_{f-}$, $1/\psi_{g+}$ and
$1/\psi_{g-}$. The point at infinity is a removable singularity
for each of these maps, each of which sends this point to the origin.
Hence $\chi_{f+}\colon H_f\to \disk$, $\chi_{g+}\colon H_g\to \disk$,
$\chi_{f-}\colon H_f^\circ \to \disk_{1/\pmnHole_f}$ and 
$\chi_{g-}\colon H_g^\circ \to \disk_{1/\pmnHole_g}$ are all biholomorphisms.

\begin{definition}
\name{def-hAndK} 
We define $\mf{h}\colon \disk\to \disk$ by 
$\mf{h}=\chi_{g+}\circ h\circ \chi_{f+}^{-1}$
and we similarly define $\mf{k}\colon \disk_{1/\pmnHole_f}\to \disk_{1/\pmnHole_g}$ by
$\mf{k}=\chi_{g-}\circ h\circ \chi_{f-}^{-1}(z)$. 
These maps satisfy $\mf{h}(\omega z)=\omega^{\pm 1}\mf{h}(z)$
and $\mf{k}(\omega z)=\omega^{\pm 1}\mf{k}(z)$, where the $\pm$ has the same sign
in both relationships.
\end{definition}
\name{defOfMathFrach}

%Then $h=\Bigl(\dfrac{1}{\psi_{g+}}\Bigr)^{-1}\circ\mf{h}\circ \dfrac{1}{\psi_{f+}}=
%\Bigl(\dfrac{1}{\psi_{g-}}\Bigr)^{-1}\circ \mf{k}\circ \dfrac{1}{\psi_{f-}}$
%on $\disk_{1/\pmnHole_f}$.
\begin{definition}
\name{def-sigma}
Let $\sigma_g=\chi_{g-}\circ\chi_{g+}^{-1}\colon
\chi_{g+}(H_g^\circ)\to \disk_{1/\pmnHole_g}$ and let 
$\sigma_f=\chi_{f-}\circ \chi_{f+}^{-1}\colon
\chi_{f+}(H_f^\circ)\to \disk_{1/\pmnHole_f}$.
The maps $\sigma_g$ and $\sigma_f$ are 
the biholomorphic transition maps between
the coordinate systems in which $\mf{h}$ and $\mf{k}$ represent the map $h$.
\end{definition}
\name{defOfSigma} 

It is easy to confirm that
\begin{equation}
\name{conjugacyRelationship}
\sigma_g\circ\mf{h}=\mf{k}\circ \sigma_f. 
\end{equation}

Moreover, both
$\mf{k}\circ \sigma_f$ and $\sigma_g\circ \mf{h}$ are 
defined on $\Omega_f\equiv\chi_{f+}(H_f^\circ)$
and map this homeomorphically onto $\disk_{1/\pmnHole_g}$. 
% We will write the domain of definition 
% of $\mf{k}\circ \sigma_g$ simply as $\Omega$.\name{defOfOmega} 

We thus have the following commutative diagram:
%\begin{equation}
%\begin{CD}
%{}  @>>\mf{h}> {\disk_{1/\pmnHole_f}} \\
%@AA{\chi_{f-}}A   @A{\chi_{g-}}AA \\
%{} @>h>> {} \\
%@VV{\chi_{f+}}V   @V{\chi_{g+}}VV \\
%\Omega @>\mf{k}>> {} 
%\end{CD}
%\end{equation}

\begin{equation}
\name{commutativeDiagram}
\xymatrix{
\D_{1/ \tau_f} \ar[r]^{\mf{k}}   
    &   {\disk_{1/\pmnHole_g}} \\  
 H_f^\circ \ar[u]_{\chi_{f-}} \ar[r]^h \ar[d]^{\chi_{f+}}  
    &    H_g^\circ \ar[u]^{\chi_{g-}} \ar[d]_{\chi_{g+}}  \\
 \Omega_f \ar[r]_{\mf{h}} \ar@(ul,dl)[uu]^{\sigma_f}  
    &    \Omega_g \ar@(ur,dr)[uu]_{\sigma_g}} 
\end{equation}

\begin{proposition}
\name{prop-conjugacyInvariant}
One of the following must hold:
\begin{enumerate}
\item \name{prop-conjugacyInvariant-SigmaOption} $\sigma_f(z)=\beta z$ and $\sigma_g(z)=\gamma z$ for constants $\beta,\gamma\in \mb{C}^*$
\par\hspace{-12pt} or
\item \name{prop-conjugacyInvariant-HAndKOption} there is a neighborhood of the origin about which
$\mf{h}(z)=\beta z$ and $\mf{k}(z)=\gamma z$ for constants $\beta,\gamma\in \mb{C}^*$.  
\end{enumerate}
\end{proposition}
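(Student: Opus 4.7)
The plan is as follows. By Lemma~\ref{hRotatesCircles}, together with Reduction~\ref{red-reductionToSimplerCaseByConjugation}, I will write the $S^1$-equivariant, orientation-preserving homeomorphisms $\mf{h}$ and $\mf{k}$ in polar form as $\mf{h}(z) = \mu_f(|z|)z$ and $\mf{k}(w) = \mu_g(|w|)w$, where $\mu_f$ and $\mu_g$ are continuous $\mb{C}^*$-valued functions. Substituted into \eqref{conjugacyRelationship}, the identity becomes, on each circle $|z|=r$,
\begin{equation*}
\sigma_g(\mu_f(r)z) = \mu_g(|\sigma_f(z)|)\sigma_f(z). \tag{$\ast$}
\end{equation*}
The strategy is a dichotomy on whether $\sigma_f$ is linear.

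\textbf{Case I} ($\sigma_f$ linear). If $\sigma_f(z) = a_1z$, then $|\sigma_f|=|a_1|r$ is constant on $\{|z|=r\}$, so the right-hand side of $(\ast)$ reduces to the linear function $z\mapsto \mu_g(|a_1|r)a_1z$. Both sides of $(\ast)$ are holomorphic on $\disk_r$ and agree on the boundary, hence agree throughout; comparing Taylor expansions of $\sigma_g$ forces $b_k=0$ for all $k\ge 2$, so $\sigma_g$ is linear as well. Conversely, if $\sigma_g$ is linear, taking moduli in $(\ast)$ shows $|\mf{k}(\sigma_f(z))|$ is constant on $|z|=r$; since $|\mf{k}(w)|$ is a strictly increasing function of $|w|$ (because $\mf{k}$ is an $S^1$-equivariant homeomorphism), $|\sigma_f|$ must be constant on every small circle. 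A standard $S^1$-equivariance argument for a holomorphic map sending circles to circles then yields $\sigma_f(z) = \beta z$. Either way, both maps are linear, producing option~\ref{prop-conjugacyInvariant-SigmaOption}.

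\textbf{Case II} (neither $\sigma_f$ nor $\sigma_g$ linear). Let $n\ge 2$ and $m\ge 2$ be the smallest indices with $a_n\ne 0$ and $b_m\ne 0$. I will analyze the holomorphic function
\[
q_r(z) := \sigma_g(\mu_f(r)z)/\sigma_f(z),
\]
which extends across $z=0$ with $q_r(0) = b_1\mu_f(r)/a_1$. Identity $(\ast)$ states $q_r(z) = \mu_g(|\sigma_f(z)|)$ on $|z|=r$, so $q_r$ is constant on each level set of $|\sigma_f|^2$ there. Expanding $|\sigma_f|^2$ in $\theta=\arg z$, its leading nonconstant Fourier mode has frequency $n-1$ from the cross-term $2r^{n+1}\re(\conj{a_1}a_n e^{i(n-1)\theta})$; the analogous expansion of $|\mathrm{LHS}|^2$ has leading frequency $m-1$, so matching forces $m=n$. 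In the archetypal two-term situation $\sigma_f = a_1z + a_nz^n$, $\sigma_g = b_1z + b_nz^n$, the function $q_r$ becomes a M\"obius transformation in $w=z^{n-1}$, and the level sets of $|\sigma_f|^2 = r^2|a_1 + a_nw|^2$ on $|w|=r^{n-1}$ are paired by an involution $w\mapsto r^{2(n-1)}\gamma/w$ for a unit-modulus constant $\gamma = e^{-2i\arg(a_n/a_1)}$. Requiring $q_r(w) = q_r(r^{2(n-1)}\gamma/w)$ on the circle gives a rational equation whose numerator factors as $(b_1\mu_f a_n - b_n\mu_f^n a_1)(r^{2(n-1)}\gamma - w^2)$; the second factor is not identically zero on the circle, forcing the exact relation
\[
\mu_f(r)^{n-1} = \frac{a_nb_1}{a_1b_n}.
\]
A bootstrap through the higher Fourier modes of $(\ast)$ propagates this identity to the general case. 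Since the right-hand side is a fixed nonzero constant, and $\mu_f$ is continuous with values in the discrete set of its $(n-1)$-th roots, $\mu_f$ must be locally constant near $0$; hence $\mf{h}(z) = \beta z$ near the origin. The symmetric argument applied to the inverse relation $\sigma_f\circ\mf{h}^{-1} = \mf{k}^{-1}\circ\sigma_g$ yields $\mf{k}(z) = \gamma z$ near $0$, establishing option~\ref{prop-conjugacyInvariant-HAndKOption}.

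The main obstacle is precisely the passage from the archetypal two-term M\"obius computation to the general identity for the full Taylor series of $\sigma_f$ and $\sigma_g$. This requires a careful Fourier-mode bootstrap on $(\ast)$, simultaneously exploiting the holomorphy of $q_r$ and the constraint that its circle-restriction factors through $|\sigma_f|$, so that any correction to the relation $\mu_f^{n-1} = a_nb_1/(a_1b_n)$ at any order in $r$ is ruled out by consistency with a higher-frequency mode.
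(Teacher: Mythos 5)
Your Case~I argument is sound: if one of $\sigma_f,\sigma_g$ is linear, the equivariance identity $(\ast)$ forces the other to be linear as well, because holomorphic functions agreeing on a circle agree on the disk, and an $S^1$-equivariant homeomorphism fixing $0$ has strictly monotone radial modulus.

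Case~II, however, has a genuine gap, and you flag it yourself. You establish the relation $\mu_f(r)^{n-1} = a_n b_1/(a_1 b_n)$ only in the ``archetypal two-term situation'' $\sigma_f = a_1 z + a_n z^n$, $\sigma_g = b_1 z + b_n z^n$, where $q_r$ happens to be a M\"obius function of $w = z^{n-1}$ and the level sets of $|\sigma_f|^2$ on a circle are an explicit one-parameter family paired by an involution. For general $\sigma_f,\sigma_g$ with infinitely many nonzero Taylor coefficients, the function $q_r$ is no longer a function of $w$ alone, the level sets of $|\sigma_f|$ on the circle are no longer described by a single involution, and the constraint ``$q_r$ is constant on level sets of $|\sigma_f|$'' does not reduce to one rational equation in $w$. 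Asserting that ``a bootstrap through the higher Fourier modes propagates this identity to the general case'' is exactly the claim that needs a proof, and no mechanism is given by which lower-order conclusions control higher-order Fourier modes. The issue is not cosmetic: at each radius $r$ you get one scalar constraint factored through $|\sigma_f|$, but the Fourier coefficients of $\sigma_f,\sigma_g$ enter at all orders in $r$ simultaneously, and there is no a priori reason the system closes.

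The paper's proof sidesteps Taylor bookkeeping entirely. It recasts the problem in terms of two holomorphic circle actions near $0$ (the standard one $\gamma^t$ and the $\sigma$-conjugated one $\rho^s$, together with their tilded counterparts), observes that $\mf{h}$ is equivariant for both pairs, shows by a local-chart argument that such an equivariant homeomorphism is automatically smooth on the punctured disk, then pulls back the standard conformal structure and uses transitivity of the joint $(\gamma,\rho)$-action plus an outermost-orbit tangency argument with an invariant line field to conclude the structure is standard, hence $\mf{h}$ is holomorphic and therefore linear. You would need an analogue of the transitivity and line-field steps to close Case~II along your lines; the paper's geometric route is the cleaner way to get there.

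A secondary point worth double-checking in Case~I: the claim ``$|\mf{k}(w)|$ is a strictly increasing function of $|w|$'' is correct but should be justified from $\mf{k}$ being an $S^1$-equivariant homeomorphism fixing $0$ with no critical radius, since equivariance alone only gives that circles map to circles.
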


%%% Proof %%%
%We note that if either $\sigma_f(z)$ or $\sigma_g(z)$ is of the form $\beta z$ then both are of this
%form. This is because $\mf{h}$ and $\mf{k}$ both commute or both anticommute with rotations
%so if either $\sigma_f(z)$ or $\sigma_g(z)$ is of the form $\beta z$, and thus commute
%with rotations, then both $\sigma_f$ and $\sigma_g$ must commute with rotations
%and are just complex scalings.

In the next section we will derive Proposition~\ref{prop-conjugacyInvariant} from
general properties of holomorphic circle actions. 

\comm{
\begin{assumption}
\name{asForProof}
We assume, for the sake of conducting the proof, 
that conclusion~\ref{prop-conjugacyInvariant-SigmaOption}
of Proposition~\ref{prop-conjugacyInvariant} does not hold for the given maps.
It is easy to see that this is equivalent to the assumption
that neither $\sigma_f$ nor $\sigma_g$ is of the form $z\mapsto \beta z$
for a constant $\beta \in\mb{C}^*$. 
\end{assumption}

The proof of Proposition~\ref{prop-conjugacyInvariant} will take a couple of
sections.
%The main step is to show that $\mf{h}$ and $\mf{k}$ 
%are smooth in a neighborhood of the origin, from which
%the result will easily follow.
%
Every use of the
word ``circle'' during the proof of Proposition~\ref{prop-conjugacyInvariant}
will refer to a circle centered about the origin.
We denote the circle of radius $r$ by $S_r$.
}

\subsection{Holomorphic circle actions}

Let us consider the circle $\R/\Z$ acting faithfully on a neighborhood $U$ of $0$ by biholomorphic maps
fixing $0$, i.e., we have a monomomorphism $t\mapsto \gamma^t$ from $\R/\Z$ to the group of biholomorphic
maps of $U$ such that $\gamma^t(0)=0$. We call it briefly a {\it holomorphic circle action}. 
The action $\gamma^t: z\mapsto e^{2\pi i t} z$ will be called {\it standard}.

\msk\nin
{\bf Remark 1 (straightening).} {\it Any holomorphic circle action is conformally conjugate to the standard one
(where  the conjugacy can be orientation reversing)}. Indeed, take some orbit $\Gamma$ of the action.
It is a topological circle that bounds a topological disk $V\ni 0 $. Uniformize $V$ by the round disk,
$h: V\ra \D$. Then the maps $\tl \gamma^t = h\circ \gamma^t \circ h^{-1}$ are holomorphic automorphisms of $\D$ fixing $0$,
so they are  rotations.
Thus, we obtain a monomomorphism $\R/\Z\ra \T$, where $\T$ is the group of rotations.
There are only two such monomomorpisms, $t\mapsto e^{\pm 2\pi i t}$, and they are conjugate by the
reflection $z\mapsto \bar z$.  
\msk

The orbits of any holomorphic circle action form an analytic foliation in the punctured neighborhood $U^*$ of $0$.

\msk\nin
{\bf Remark 2 (tangencies)}
  {\it Given a pair $(\gamma^t, \rho^s)$  of holomorphic circle actions, the associated foliations either coincide 
(and in this case the actions either coincide or conjugate by a conformal reflection%
\footnote{We will refer to such a pair as {\it trivial}.})
or have finitely many  tangencies on any given orbit.} 
Indeed, if the set of tangencies between the two foliations
is not finite on some orbit, then they have a common leaf (since they are analytic), 
and hence can be simultaneously straightened.

Given two circle actions $\gamma^t$ and $\tl \gamma^t$, 
a local homeomorphism $h$ near $0$ is called $(\gamma, \tl \gamma)$-{\it equivariant} 
if $h(\gamma^t z) = \tl\gamma^t h(z)$ or $h(\gamma^t z) = \tl\gamma^{-t} h(z)$.

\begin{proposition}
   Let $(\gamma^t, \rho^s)$ and $(\tl\gamma^t, \tl\rho^s)$  be two non-trivial pairs of holomorphic circle actions.
If a local homeomorphism $h$ is both $(\gamma, \tl \gamma)$- and $(\rho, \tl\rho)$-equivariant,
then $h$ is holomorphic.
\end{proposition}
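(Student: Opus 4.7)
The plan is to reduce, via the straightening of Remark 1, to the case where $\gamma$ and $\tilde\gamma$ both coincide with the standard rotation action $z\mapsto e^{2\pi it}z$. In these coordinates, continuity and $\gamma$-equivariance of $h$ put $h$ in the polar normal form $h(z)=z\,\mathcal{F}(|z|^2)$ for some continuous, nowhere-vanishing function $\mathcal{F}$; consequently $h$ is holomorphic exactly when $\mathcal{F}$ is constant, since $\partial h/\partial\bar z=z^2\mathcal{F}'(|z|^2)$. After possibly post-composing $h$ with the reflection $w\mapsto\bar w$, I may assume $h$ is orientation-preservingly equivariant under both actions, reducing the task to showing $\mathcal{F}$ is constant; I will extract this from $\rho$-equivariance.

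Let $\Phi$ and $\tilde\Phi$ be the Bochner linearizations of $\rho$ and $\tilde\rho$, normalized so that $\Phi(0)=\tilde\Phi(0)=0$ and $\Phi'(0)=\tilde\Phi'(0)=1$. Since $h$ conjugates $\rho$ to $\tilde\rho$, the composition $\tilde\Phi\circ h\circ\Phi^{-1}$ commutes with the standard $S^1$-action (coming from $\rho$ in the source and $\tilde\rho$ in the target), hence also has polar form $w\mapsto w\,\mathcal{G}(|w|^2)$ for a continuous $\mathcal{G}$. This yields the identity
\[\tilde\Phi\bigl(z\,\mathcal{F}(|z|^2)\bigr)=\Phi(z)\,\mathcal{G}\bigl(|\Phi(z)|^2\bigr).\]
Because both flows are real-analytic in their parameters and $h$ conjugates them, $h$ is real-analytic along each orbit of either foliation. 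By Remark 2 the two foliations are transverse off a discrete subset of each orbit, and a Journ\'e-type regularity theorem then upgrades $h$ to real-analytic on all of $U\setminus\{0\}$, so I may freely differentiate.

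Applying $\partial/\partial\bar z$ to the identity and using that $\Phi$, $\tilde\Phi$ are holomorphic gives
\[z^2\,\mathcal{F}'(|z|^2)\,\tilde\Phi'\bigl(h(z)\bigr)=\Phi(z)^2\,\overline{\Phi'(z)}\,\mathcal{G}'\bigl(|\Phi(z)|^2\bigr).\]
I then expand both sides as power series in $(z,\bar z)$ at $z=0$. Writing $\Phi(z)=z+a_2z^2+a_3z^3+\cdots$, the left-hand side is a product of a holomorphic power series in $z$ (from $\tilde\Phi'(h)$) with $z^2\,\mathcal{F}'(|z|^2)$, and therefore contains only monomials of the form $z^{k+2}\bar z^k$; the right-hand side, by contrast, picks up ``off-diagonal'' monomials such as $a_m z^{m+1}\bar z$ and $\overline{a_m}\, z^2\bar z^{m-1}$ coming from the higher Taylor coefficients of $\Phi$ and $\overline{\Phi'}$. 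Non-triviality of the pair $(\gamma,\rho)$ is exactly the statement that $\Phi$ is not of the form $cz$, so some $a_m$ with $m\geq 2$ is nonzero; matching Fourier coefficients in $\theta$ on each circle $|z|=r$ then forces $\mathcal{G}'\bigl(|\Phi(z)|^2\bigr)\equiv 0$, so $\mathcal{G}$ is constant. Substituting back, $h(z)=\tilde\Phi^{-1}\bigl(\mathcal{G}\cdot\Phi(z)\bigr)$ is a composition of holomorphic maps, hence holomorphic. The main technical obstacle is this final Fourier-matching step, which must be carried out inductively starting from the lowest nonvanishing higher Taylor coefficient of $\Phi$ in order to extract the constraint $\mathcal{G}'\equiv 0$ from the mismatch of angular frequencies between the two sides.
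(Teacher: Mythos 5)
Your route---straighten $\gamma$ and $\tilde\gamma$, Bochner-linearize $\rho$ and $\tilde\rho$ via $\Phi$, $\tilde\Phi$, write everything in polar normal form, and extract $\mathcal{G}'\equiv 0$ by Fourier analysis---is genuinely different from the paper's proof. The paper first establishes smoothness via explicit $(t,s)\mapsto\gamma^t(\rho^s z)$ charts near a point where the two orbit foliations are transverse (using Remark~2 on both the source and the target side), and then argues geometrically: the pulled-back conformal structure $\mu=h^*\lambda$ is invariant under the joint $(\gamma,\rho)$-action, which is transitive on the punctured neighborhood; if $\mu$ were nonstandard, the resulting $(\gamma,\rho)$-invariant line field of major axes would contradict $\gamma$-invariance at an outermost tangency of a $\gamma$-orbit with a $\rho$-orbit. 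That argument avoids power-series bookkeeping entirely, while your route trades geometry for computation.

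There is, however, a real gap in your Fourier step. The claim that ``the left-hand side is a product of a holomorphic power series in $z$ (from $\tilde\Phi'(h)$)'' is essentially circular: $h(z)=z\mathcal{F}(|z|^2)$ depends on $\bar z$, so $\tilde\Phi'(h(z))$ is a holomorphic power series in $z$ only if $\mathcal{F}$ is already constant, which is the very thing you want to conclude. Even granting that, the product of a holomorphic series with $z^2\mathcal{F}'(|z|^2)$ would consist of monomials $z^p\bar z^q$ with $p-q\geq 2$, not $p-q=2$ as you assert. The correct mechanism---which you gesture at with ``matching Fourier coefficients in $\theta$ on each circle''---is that on each fixed circle $|z|=r$ the map $h$ restricts to the \emph{linear} map $z\mapsto\mathcal{F}(r^2)z$, so $\tilde\Phi'(h(z))$ restricted to that circle is the boundary value of a holomorphic function and has only nonnegative Fourier modes; hence the whole left-hand side has Fourier modes $\geq 2$ on every circle. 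Matching the Fourier-$(3-m)$ coefficient on the right (with $m\geq 2$ the lowest index with $a_m\neq 0$) then gives $\bar a_m\bigl(m\,\mathcal{G}'(r^2)+r^2\,\mathcal{G}''(r^2)\bigr)=0$ for all small $r$; the nonzero solutions $\mathcal{G}'(s)=Cs^{-m}$ of this ODE force $\mathcal{G}$ to blow up at $0$, contradicting continuity of $h$ at its fixed point. This computation must be carried out, not deferred. Finally, the appeal to a Journ\'e-type regularity theorem is not automatic: the two orbit foliations are not globally transverse (they have tangencies), and Journ\'e's theorem requires transversality. You should either carefully justify the regularity upgrade in the presence of tangencies, or adopt the paper's chart construction near a point of transversality, which also makes explicit where the non-triviality of the pair $(\tilde\gamma,\tilde\rho)$ enters (you currently only use non-triviality of $(\gamma,\rho)$).
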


\begin{proof}
  Without loss of generality, we can assume that $h$ conjugates $\gamma^t$ to $\tl \gamma^t$
(otherwise, compose $h$ with an appropriate conformal reflection). 

Let us take some orbit $\Gamma$ of the $\gamma$-action. By the above Remark 2,
we can pick a point $z\in \Gamma$  such that $\Gamma$ is transverse to the $\rho$-orbit at $z$,
and the orbits of $\tl\gamma$ and $\tl\rho^s$ are transverse at $\tl z = h(z)$.
Then the map $(t,s)\mapsto (\rho^s (z), \gamma^t(\rho^s z))$ gives a smooth local chart
near $\Gamma$. If $h$ conjugates $\rho^s$ to $\tl\rho^s$, 
let us consider the similar local  chart near $\tl\Gamma = h(\Gamma)$. 
Otherwise, let us consider the chart $(t,s)\mapsto (\tl\rho^{-s} (z), \tl\gamma^t(\rho^{-s} z))$.
In either case, the map $h$ becomes the identity in these coordinates. Hence $h$ is smooth near $\Gamma$,
and thus, it is smooth in a punctured neighborhood of $0$. 

Let us now show that the joint $(\gamma, \rho)$-action is transitive on the punctured neighborhood $U^*$ of $0$,
i.e., any two points in $U^*$ can be conneced by a concatenation of pieces of
the $\gamma$- and $\rho$-orbits. Indeed, the orbits of the joint action are open since the
domain of  any local chart
described above is contained in one orbit. Since $U^*$ is connected, it must be a single orbit. 

Let us now consider the conformal structure $\mu=h^*(\la)$ in $U$, where $\la$ is the standard conformal structure.
The structure $\mu$ is represented by a smooth family of infinitesimal ellipses in $U^*$.
Moreover, since  $\la$ is invariant under $\tl\gamma$ and $\tl\rho$ and $h$ is equivariant,
 $\mu$ is invaraint under the joint ($\gamma,\de)$-action. 

If the structure $\mu$ is standard then $h$ is holomorphic.
Otherwise, there is a non-circular  ellipse $\mu(z)$. 
Since $\mu$ is invariant under the transitive  $(\gamma, \rho)$-action, 
all the ellipses are non-circular on $U^*$.
Hence the big axes $l(z)\subset T_zU^*$ of the ellipses are well defined on $U^*$ and
form a $(\gamma, \rho)$-invariant line field over $U^*$. 

Let us take some $\rho$-orbit $\De_0$ and 
consider the outermost $\gamma$-orbit $\Gamma_0$ crossing $\De$.
Then $\Gamma_0$ and $\De_0$ have  tangency of even order at some point 
$z_0$.
Rotating the line field by an appropriate angle, we can make $l(z_0)$ tangent at $z_0$ to both
$\Gamma_0$ and $\De_0$. By invariance, $l(z)$ is then tangent to $\De_0$ at any point $z\in \De_0$. 

Let us now consider a nearby $\gamma$-orbit $\Gamma$ which is closer to the origin than $\Gamma_0$.
Then $\Gamma$ intersects $\De_0$ transversally at two points $z_+$ and $z_-$ near $z_0$.
Moreover, the angles $\alpha_{\pm}\in (-\pi/2, \pi/2)$ of these intersections 
have opposite signs.  
This contradicts to the invariance of the line field under the $\gamma$-action.

\end{proof}

\begin{corollary}
  Under the circumstances of the above Proposition,
if the $\gamma$- and $\tl\gamma$-actions are standard, then $h$ is linear, $z\mapsto \la z$.  
\end{corollary}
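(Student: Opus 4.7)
The plan is to apply the preceding Proposition to upgrade $h$ from a mere local homeomorphism to a holomorphic map, and then to exploit the $(\gamma,\tilde\gamma)$-equivariance for the \emph{standard} circle action to pin down its Taylor expansion.

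By the Proposition, $h$ is holomorphic in a neighborhood of $0$. Since $h$ is equivariant with respect to the two circle actions, both of which fix $0$ (and $0$ is the unique fixed point of each action), we have $h(0)=0$. Expand
\[
h(z)=\sum_{n\ge 1} a_n z^n
\]
as a convergent power series near $0$.

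The equivariance hypothesis, applied to the standard actions $\gamma^t(z) = e^{2\pi i t}z$ and $\tilde\gamma^t(z) = e^{2\pi i t}z$, reads
\[
h(e^{2\pi i t} z) \;=\; e^{\pm 2\pi i t}\, h(z), \qquad t\in\mathbb{R}/\mathbb{Z},
\]
with a sign that is fixed (independent of $t$). Substituting the power series and comparing coefficients of $z^n$ yields, for every $n\ge 1$,
\[
a_n\bigl(e^{2\pi i n t} - e^{\pm 2\pi i t}\bigr) \;=\; 0 \qquad \text{for all } t.
\]
In the plus-sign case this forces $a_n = 0$ unless $n = 1$, so $h(z) = \lambda z$ with $\lambda = a_1 \ne 0$ (nonvanishing because $h$ is a homeomorphism at $0$). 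In the minus-sign case, the equation $e^{2\pi i n t} = e^{-2\pi i t}$ has no solution with $n\ge 1$, so every $a_n$ vanishes, contradicting the fact that $h$ is a local homeomorphism at the origin.

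Hence only the first case occurs, and $h(z) = \lambda z$ as claimed. The one subtle point worth double-checking is the assertion that $h(0)=0$: this follows because $0$ is the unique common fixed point of the circle action on each side, and an equivariant homeomorphism must send the fixed-point set to the fixed-point set. Beyond that the argument is a short Fourier-coefficient computation, which is why I expect no essential obstacle once the Proposition is in hand.
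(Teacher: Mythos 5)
Your argument is correct, but it reaches the conclusion by a slightly different route than the paper. The paper notes that since both actions are standard, the orbits are round circles centered at $0$, so the equivariant map $h$ carries round circles to round circles; it then invokes the classical fact (a Schwarz-lemma consequence) that a biholomorphic germ fixing $0$ and taking a circle centered at $0$ to another such circle must be $z \mapsto \lambda z$. You instead expand $h$ in a Taylor series around the (correctly identified) fixed point $0$ and match Fourier coefficients in $t$ against the relation $h(e^{2\pi i t}z)=e^{\pm 2\pi i t}h(z)$, which kills $a_n$ for $n\neq 1$ in the $+$ case and all $a_n$ in the $-$ case. Both proofs are short and elementary; yours is perhaps more self-contained since it does not quietly appeal to the Schwarz lemma, while the paper's is more geometric and avoids writing out a series. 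Your explicit dismissal of the minus sign is a worthwhile robustness check that the paper leaves implicit; in the paper's actual application the sign has already been normalized to $+$ via the orientation-preserving reduction (Reduction~\ref{red-reductionToSimplerCaseByConjugation} and Lemma~\ref{hRotatesCircles}), so the case you rule out never arises there, but given that the Proposition's conclusion is stated flatly as ``$h$ is holomorphic'' your argument that the minus sign is incompatible with holomorphy plus local injectivity is exactly the right observation.
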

   
\begin{proof}
In this case, $h$ preserves the foliation by round circles centered at $0$.
But a biholomorphic map that fixes $0$ and maps a circle centered at $0$ to another such a circle
is linear. 
\end{proof}

\nin
{\it Proof of Proposition \ref{prop-conjugacyInvariant}}. 
  We know that the maps $\mf{h}$ and $\mf{k}$ are equivariant with respect to the standard circle action $\gamma^t$.
  Let $\rho^s=  \si_f^{-1}\circ \gamma^s\circ \si_f$ and  $\tl \rho^s=  \si_g^{-1}\circ \gamma^s \circ \si_g$.
  By (\ref{conjugacyRelationship}), $\mf{h}$ is  $(\rho, \tl\rho)$-equivariant.  
  If the maps $\si_f$ and $\si_g$ are not linear, then the pairs of actions, 
  $(\gamma, \rho)$ and $(\gamma, \tl\rho)$, are non-trivial.
  Then $\mf{h}$ is linear by the last Corollary.

  The same argument applies to $\mf{k}$. 
\QED

\comm{********************************8
\subsection{The Maps $\mf{h}$ and $\mf{k}$ are Diffeomorphism.}

We will show that the maps $\mf{h}$ and $\mf{k}$ are diffeomorphisms
on a punctured neighborhood of the origin. 

\begin{lemma}
\name{hIsSmoothOnAnnuli}
Given $S_r\subset \Omega$, $\mf{h}$ is smooth on a neighborhood of $S_r$.
\end{lemma}
\begin{proof}
Assume that $\mf{h}$ maps $S_r$ to $S_s$.
Let $\mf{C}$ be the (real) foliation of $\disk$ by
concentric circles.
Since $\sigma_f(S_r)$ and $\sigma_g(S_s)$ 
can only be tangent to $\mf{C}$
on a nowhere dense set, then it is possible to choose a point
$z_0\in S_r$ such that $\sigma_f(S_r)$ is transverse to $\mf{C}$
at $\sigma_f(z_0)$, and also such that, letting $w_0=\mf{h}(z_0)$,
$\sigma_g(S_s)$ is transverse to $\mf{C}$ at $\sigma_g(w_0)$.

We now define special coordinates $z(t,\alpha)$ about $S_r$
and $w(s,\beta)$ about $S_s$. We let
$z(t,\alpha)=\sigma_f^{-1}\bigl(\sigma_f(z_0)e^{it}\bigr)\cdot e^{i\alpha}$
for $t$ sufficiently close to $0$ and $\alpha\in S^1$.
We similarly define cordinates $(s,\beta)$ 
by $w(s,\beta)=\sigma_g^{-1}\bigl(\sigma_g(w_0)e^{is}\bigr)\cdot e^{i\beta}$.
It is easy to confirm that these gives smooth coordinates about $S_r$ and about $S_s$.
A direct calculation shows $\mf{h}(z(t,\alpha))=w(t,\alpha)$,
which proves that $\mf{h}$ is smooth in a neighborhood of $S_r$.
\end{proof}

\begin{figure}
\name{smoothCoords}
\centerline{\includegraphics{figureSmoothCoords.jpg}}
\caption{Coordinates used to show smoothness.}
\end{figure}

\begin{proposition}
\name{prop-hAndKDiffeos}
The maps $\mf{h}$ and $\mf{k}$ are diffeomorphisms
on a punctured neighborhood of the origin.
\end{proposition}
\begin{proof}
This is an immediate consequence of Lemma~\ref{hIsSmoothOnAnnuli}
by applying that result to
conclude that $\mf{h}$, $\mf{h}^{-1}$, $\mf{k}$ and $\mf{k}^{-1}$
are each smooth on a neighborhood of any sufficiently small circle.
\end{proof}

\subsection{The maps $\mf{h}$ and $\mf{k}$ are holomorphic.}

%In this subsection we will prove that if $\sigma_f(z)$ and $\sigma_g(z)$
%aren't of the form $\beta z$ for some constant $\beta$ then $\mf{h}$ and
%$\mf{k}$ are necessarily holomorphic, and therefore easily seen to be
%of the form $\beta z$.

We note that $\mf{h}$ and $\mf{k}$ are orientation preserving,
and so the dilatation
$\Big\arrowvert \inlineDilaFrac{\mf{h}}{z} \Big\arrowvert < 1$
on $\disk_{r_h}^*$ 
and the dilatation
$\Big\arrowvert \inlineDilaFrac{\mf{k}}{z} \Big\arrowvert < 1$
on $\disk_{r_k}^*$.

Let $\mu_{\mf{h}}$ and $\mu_{\mf{k}}$ be the conformal structures
obtained by pulling back the standard conformal structure by
$\mf{h}$ and $\mf{k}$ respectively. It is clear from
the commutative diagram~\eqref{commutativeDiagram}
that 
\begin{equation}
\name{eq-complexDilitationRelation}
\sigma_f^*(\mu_{\mf{k}})=\mu_{\mf{h}}.
\end{equation}

Because $\mf{h}$ and $\mf{k}$ commute with rotations,
the conformal structures $\mu_{\mf{h}}$ and 
$\mu_{\mf{k}}$ are rotation invariant. By rotation invariance,
both the conformal structures $\mu_{\mf{h}}$ and
$\mu_{\mf{k}}$ have constant dilatation on any circle.
Since $\sigma_f$ does not map any circle
to a circle, it then follows from Equation~\eqref{eq-complexDilitationRelation}
that $\mu_{\mf{h}}$ and $\mu_{\mf{k}}$ have dilatation
which is locally constant away from the origin, and is therefore constant
away from the origin.

The following Lemma will complete our proof of Proposition~\ref{prop-conjugacyInvariant}.

\begin{lemma}
There exist $\beta,\gamma\in\mb{C}^*$ such that
$\mf{h}(z)=\beta z$ and $\mf{k}(z)=\gamma z$ on some disk about the origin.
\end{lemma}
\begin{proof}
We will prove by contradiction that $\mu_{\mf{h}}$ 
and $\mu_{\mf{k}}$ are holomorphic from which the result
follows. Assume that $\mu_{\mf{k}}$, has nonzero dilation. 
Choose small positive numbers $r$ and $s$ such that
$\sigma_f^{-1}(S_s)$ meets $S_r$ in at least two points. 
Let $z_1$ and $z_2$ be any two points on $S_r \cap \sigma_f^{-1}(S_s)$.
We define rotations $\omega_1(z)=\dfrac{z_2}{z_1}\cdot z$
and $\omega_2(z)=\dfrac{\sigma_f(z_2)}{\sigma_f(z_1)}\cdot z$ so that
$\sigma_f(\omega_1(z_1))=\omega_2(\sigma_f(z_1))=\sigma_f(z_2)$.
Since $\mu_{\mf{h}}$ and $\mu_{\mf{k}}$ are rotation invariant
it follows from Equation~\eqref{eq-complexDilitationRelation}
that $(\sigma_f\circ \omega_1)^*(\mu_{\mf{k}})=(\omega_2\circ \sigma_f)^*(\mu_{\mf{k}})$.
Writing this out explicitly at the point $z_1$ and simplifying (using the assumption
that $\mu_k$ is nonzero) one obtains
\begin{equation}
\name{eq-angleOfIntersection}
\dfrac{z_2\sigma'_f(z_2)}{\sigma_f(z_2)}=t\cdot\dfrac{z_1\sigma'_f(z_1)}{\sigma_f(z_1)}
\end{equation}
for some nonzero real number $t$. At a point $z$ on
$S_r\cap \sigma_f^{-1}(S_s)$, the argument of $\dfrac{z\sigma'_f(z)}{\sigma(z)}$
gives the angle of intersection $s$ between the two curves at $z$, measured
from a counterclockwise pointing tangent along $\sigma_f^{-1}(S_s)$
toward a counterclock pointing tangent along $S_r$. 
Equation~\eqref{eq-angleOfIntersection} states that these angles must always
be equal. 

Let $w$ be any point of $\sigma_f^{-1}(S_s)$ which is further
from the origin than neighboring points.
It is clear from Figure~\ref{forcingHolomorphy}
that the oriented angles $s_1$ and $s_2$ can not be equal
if $z_1$ and $z_2$ are sufficiently close to $w$. 
This is a contradiction.
Thus all points on $\sigma_f^{-1}(S_s)$ 
must be equidistant from the origin, so $\sigma_f$ maps
the circle through $z_1$ to a circle. This contradicts Assumption~\ref{asForProof}.
\end{proof}

\begin{figure}
\name{forcingHolomorphy}
\centerline{\includegraphics{figureForcingHolomorphy.jpg}}
\caption{As shown, $s_1$ and $s_2$ can not be the same oriented angle near $w$.}
\end{figure}

**************************************************}

\subsection{Rigidity Results}

Here we translate the statement of Proposition~\ref{prop-conjugacyInvariant}
back into statements about the two given maps $f$ and $g$ which have a conjugacy $h$ between them which maps 
leaves of $\Fpl{f}$ to leaves of $\Fpl{g}$ and maps leaves of
$\Fmn{f}$ to leaves of $\Fmn{g}$. 

\begin{theorem}
\name{thm-generalRigidity}
Assume we are given two different H\'enon maps $f$ and $g$ satisfying
Condition~\ref{hypothesis}. Assume
$h\colon \upl{f}\cup\umn{f}\to \upl{g}\cup \umn{g}$ is a conjugacy between
$f$ and $g$ such that $h$ maps the leaves of $\Fpl{f}$ and $\Fmn{f}$
 to the leaves of $\Fpl{g}$ and $\Fmn{g}$ respectively.
Choose coordinates for the map $g\colon \mb{C}^2\to \mb{C}^2$
so that $h$ maps $H_f$ to $H_g$ for a pair of primary horizontal critical 
components $H_f$ and $H_g$ of $f$ and $g$.
Finally assume that $h\colon H_f\to H_g$ is orientation preserving
and satisfies Condition~\ref{noInversion}.
Then $h\colon \upl{f}\cap\umn{f}\to \upl{g}\cap\umn{g}$ is 
a biholomorphism. Also, 
$\psi_{g+}\circ h=\inlinefrac{1}{\beta}\psi_{f+}$ 
and $\psi_{g-}\circ h=\inlinefrac{1}{\gamma}\psi_{f-}$ 
on a neighborhood about infinity of $H_f$ where $\beta^{d-1}$ and
$\gamma^{d-1}$ must lie in $\ms{S}$.
\end{theorem}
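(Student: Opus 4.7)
The strategy is to apply Proposition~\ref{prop-conjugacyInvariant} and exploit its second alternative, which asserts that $\mf{h}(z)=\beta z$ and $\mf{k}(z)=\gamma z$ on a neighborhood of the origin. Unwinding the definitions $\chi_{\pm}=1/\psi_{\pm}$ together with $\mf{h}=\chi_{g+}\circ h\circ\chi_{f+}^{-1}$ and $\mf{k}=\chi_{g-}\circ h\circ\chi_{f-}^{-1}$, this linearity translates directly to
\[
 \psi_{g+}\circ h \;=\; \psi_{f+}/\beta \quad \text{on a neighborhood of}\ \infty\in H_f
\]
and likewise $\psi_{g-}\circ h=\psi_{f-}/\gamma$ on a neighborhood of $\infty\in H_f^{\circ}$.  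From these local identities the global conclusion will follow by propagating along the foliations $\Fpl{f}$ and $\Fmn{f}$.

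I expect the main obstacle to be ruling out the first alternative of Proposition~\ref{prop-conjugacyInvariant}, in which $\sigma_f$ and $\sigma_g$ themselves are linear. That scenario would force $\psi_{f-}(w)=\psi_{f+}(w)/\beta_0$ throughout $H_f^{\circ}$, so that the two biholomorphic coordinates $\psi_{f\pm}$ on $H_f^{\circ}$ differ only by a scalar and $H_f^{\circ}$ sits in $H_f$ as a precise round annulus in $\psi_{f+}$-coordinates.  To exclude this I would combine $\psi_{f+}\circ f=\psi_{f+}^{d}$ with $\psi_{f-}\circ f^{-1}=\psi_{f-}^{d}$: transporting the putative identity $\psi_{f-}=\psi_{f+}/\beta_0$ by one application of $f$ places it on the distinct component $f(H_f)\subset\critLoc_f$ with a different proportionality constant, and matching asymptotics at the common point $(\infty,c)\in\mb{P}^1\times\mb{P}^1$ should yield a contradiction with $\psi_{f+}\colon H_f\to\mb{C}\setminus\overline{\disk}$ being a biholomorphism.

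With the second alternative secured, extend $\psi_{g+}\circ h=\psi_{f+}/\beta$ from a neighborhood of $\infty$ in $H_f$ to all of $\upl{f}$ as follows. Since $h$ carries leaves of $\Fpl{f}$ to leaves of $\Fpl{g}$ and $\psi_{g+}$ is constant on leaves of $\Fpl{g}$, one has $\psi_{g+}\circ h=F\circ\psi_{f+}$ for some function $F$.  By part~\ref{part-controllingTangenciesInTheTube} of Lemma~\ref{tubeToolbox}, $H_f$ is transverse to $\Fpl{f}$ and near infinity each leaf of $\Fpl{f}$ meets $H_f$ in a single point, so using $H_f$ as a holomorphic transversal shows that $F$ is holomorphic and satisfies $F(\zeta)=\zeta/\beta$ for large $\zeta$.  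The relation $h\circ f=g\circ h$ combined with $\psi_{f+}\circ f=\psi_{f+}^{d}$ forces $F(\zeta^d)=F(\zeta)^d$, which both propagates $F(\zeta)=\zeta/\beta$ to all of $\mb{C}\setminus\overline{\disk}$ and pins down $\beta^{d-1}=1$ in the single-valued case; allowing for the indeterminacy of the branch of $\psi_{f+}$ outside $V_+$ gives $\beta^{d-1}\in\ms{S}$.  The analogous argument on the negative side yields $\psi_{g-}\circ h=\psi_{f-}/\gamma$ on $\umn{f}$ with $\gamma^{d-1}\in\ms{S}$.

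Finally, observe that off $\critLoc_f$ the pair $(\psi_{f+},\psi_{f-})$ is a local biholomorphic chart (the foliations $\Fpl{f}$ and $\Fmn{f}$ are transverse there by the very definition of the critical locus), and likewise for $g$.  In these charts the identities just established express $h$ as the affine map $(\zeta,w)\mapsto(\zeta/\beta,w/\gamma)$, so $h$ is holomorphic on $(\upl{f}\cap\umn{f})\setminus\critLoc_f$.  Since $h$ is continuous and $\critLoc_f$ is an analytic subvariety of codimension one, Riemann's extension theorem promotes $h$ to a biholomorphism of $\upl{f}\cap\umn{f}$ onto $\upl{g}\cap\umn{g}$.
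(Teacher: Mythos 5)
Your Case~2 argument tracks the paper's closely but organizes it more cleanly: instead of the paper's explicit holonomy maps $\zeta_{f,z_0,w_0}$ and the appeal to Osgood's theorem, you observe that $\psi_{g+}\circ h = F\circ\psi_{f+}$ for some $F$, derive $F(\zeta^d)=F(\zeta)^d$ from the conjugacy, and deduce both the propagation of $F(\zeta)=\zeta/\beta$ and the constraint $\beta^{d-1}\in\ms{S}$ from this functional equation. Using $(\psi_{f+},\psi_{f-})$ as a local biholomorphic chart off $\critLoc_f$ and then applying the Riemann extension theorem is also a perfectly workable alternative to Osgood-on-leaves. With the usual caveats about consistent local branch choices of $\psi_{f\pm}$ and $\psi_{g\pm}$ (the relations hold in $\mb{C}^*/\ms{S}$), this part of the proposal is sound and slightly more streamlined than the paper.

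However, the argument you sketch for ruling out the first alternative of Proposition~\ref{prop-conjugacyInvariant} has a genuine gap. You propose to transport the identity $\psi_{f-}=\psi_{f+}/\beta_0$ on $H_f^\circ$ by one application of $f$ to the component $f(H_f)$ and then to ``match asymptotics at the common point $(\infty,c)\in\mb{P}^1\times\mb{P}^1$.'' But $f(H_f)$ does not pass through $(\infty,c)$: a point of $H_f$ near infinity looks like $(x,c+o(1))$ with $|x|$ large, and applying $f$ sends it to $\bigl(p(x)-a(c+o(1)),\, x\bigr)$, which tends to $(\infty,\infty)$. By Lemma~\ref{tangentSpaceAtInfinity} and Theorem~\ref{thm-tangentSpaceAtInfinity}, $H_f=H_{ca}$ is the \emph{only} component of $\critLoc_a$ passing through $(0,c,a)$ (in $(u,y,a)$ coordinates), so there is no common point at which to compare. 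Moreover, pushing $\psi_{f-}=\psi_{f+}/\beta_0$ forward by $f$ does not yield another scalar relation; the mixed recursions $\psi_{f+}\circ f=\psi_{f+}^d$ and $\psi_{f-}\circ f^{-1}=\psi_{f-}^d$ produce a relation involving a fractional power like $\psi_{f+}^{1/d^2}$ on $f(H_f)$, not a new proportionality constant. What the paper does instead, and what you would need, is to first spread the identity from $H_f$ to a full (two-complex-dimensional) neighborhood of infinity in $\upl{f}$ using the leaves of $\Fpl{f}$ (on which $\psi_{f+}$ is constant), interpret everything modulo $\ms{S}$, and then iterate: from $\frac{1}{\beta}\psi_{f+}\sim\psi_{f-}\sim\psi_{f-}^d\circ f\sim\frac{1}{\beta^d}\psi_{f+}^{d^2}$ one obtains $\psi_{f+}^{d^2-1}\sim\beta^{d-1}$, which forces the continuous local branch of $\psi_{f+}^{d^2-1}$ to be constant — impossible. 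Without this two-dimensional extension and iteration step, Case~1 is not excluded.
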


\begin{corollary}
\name{cor-generalReversingRigidity}
Assume we are given two different H\'enon maps $f$ and $g$ satisfying
Condition~\ref{hypothesis}. Assume
$h\colon \upl{f}\cup\umn{f}\to \upl{g}\cup \umn{g}$ is a conjugacy between
$f$ and $g$ such that $h$ maps the leaves of $\Fpl{f}$ and $\Fmn{f}$
to the leaves of $\Fpl{g}$ and $\Fmn{g}$ respectively.
Choose coordinates for the map $g\colon \mb{C}^2\to \mb{C}^2$
so that $h$ maps $H_f$ to $H_g$ for a pair of primary horizontal critical 
components $H_f$ and $H_g$ of $f$ and $g$.
Finally assume that $h\colon H_f\to H_g$ is orientation reversing
and satisfies Condition~\ref{noInversion}.
Then $\conjMap\circ h\colon \upl{f}\cap\umn{f}\to \conjMap(\upl{g}\cap\umn{g})$ is 
a biholomorphism. Also, 
$\conj{\psi_{g+}\circ h}=\inlinefrac{1}{\beta}\psi_{f+}$ 
and $\conj{\psi_{g-}\circ h}=\inlinefrac{1}{\gamma}\psi_{f-}$ 
on a neighborhood about infinity of $H_f$ where $\beta^{d-1}$ and
$\gamma^{d-1}$ must lie in $\ms{S}$.
\end{corollary}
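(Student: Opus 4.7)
The plan is to reduce this statement to Theorem~\ref{thm-generalRigidity} by composing with the conjugation map $\conjMap$. Set $g' \equiv \conjMap\circ g\circ \conjMap$ and $\tilde h \equiv \conjMap\circ h$. A direct computation shows that $g'$ is the H\'enon map with polynomial $\conj{p}_g$ and Jacobian $\conj{a}_g$; it satisfies Condition~\ref{hypothesis} whenever $g$ does. The observation preceding Reduction~\ref{red-reductionToSimplerCaseByConjugation} shows that $\conjMap$ is itself a leaf-preserving conjugacy from $g$ to $g'$ and satisfies $\psi_{g'\pm} = \conj{\psi_{g\pm}\circ \conjMap}$; in particular $\conjMap$ sends the primary horizontal component $H_g$ to a primary horizontal component $H_{g'}$ of $\critLoc_{g'}$.

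It follows that $\tilde h = \conjMap\circ h$ is a leaf-preserving conjugacy from $f$ to $g'$ carrying $H_f$ to $H_{g'}$. Because $\conjMap$ is antiholomorphic, it reverses orientation on the complex $1$-manifold map $H_g \to H_{g'}$, so composing with the orientation reversing map $h\colon H_f\to H_g$ makes $\tilde h\colon H_f\to H_{g'}$ orientation preserving. The continuous extension of $\conjMap$ to $\mb{P}^1\times\mb{P}^1$ sends the puncture $(\infty,c_g)$ of $H_g$ to the puncture $(\infty,\conj{c}_g)$ of $H_{g'}$, so small neighborhoods of the puncture are carried to small neighborhoods of the puncture; hence Condition~\ref{noInversion} is preserved under composition with $\conjMap$, and $\tilde h$ satisfies Condition~\ref{noInversion} with respect to $H_f$ and $H_{g'}$.

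Theorem~\ref{thm-generalRigidity} now applies to $\tilde h$, producing constants $\beta,\gamma\in\mb{C}^*$ with $\beta^{d-1},\gamma^{d-1}\in\ms{S}$ such that $\tilde h\colon \upl{f}\cap\umn{f}\to \upl{g'}\cap\umn{g'} = \conjMap(\upl{g}\cap\umn{g})$ is a biholomorphism and, on a neighborhood of infinity in $H_f$, $\psi_{g'+}\circ \tilde h = \inlinefrac{1}{\beta}\psi_{f+}$ and $\psi_{g'-}\circ \tilde h = \inlinefrac{1}{\gamma}\psi_{f-}$. Translating back via $\conjMap\circ\conjMap=\mathrm{id}$ yields $\psi_{g'\pm}(\tilde h(z)) = \conj{\psi_{g\pm}\circ \conjMap\circ \tilde h(z)} = \conj{\psi_{g\pm}\circ h(z)}$, which converts the two identities into the claimed $\conj{\psi_{g+}\circ h} = \inlinefrac{1}{\beta}\psi_{f+}$ and $\conj{\psi_{g-}\circ h} = \inlinefrac{1}{\gamma}\psi_{f-}$. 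The only substantive items to verify are the orientation count for $\tilde h$ on $H_f$ and the preservation of Condition~\ref{noInversion} under $\conjMap$; no analytic input beyond Theorem~\ref{thm-generalRigidity} is required.
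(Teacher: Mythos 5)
Your proposal is correct and takes essentially the same approach as the paper, which disposes of the corollary in one line by replacing $g$ with $\conjMap\circ g\circ \conjMap$ and $h$ with $\conjMap\circ h$ and invoking Theorem~\ref{thm-generalRigidity}; you have simply filled in the verifications (orientation flip, preservation of Condition~\ref{noInversion}, and the translation $\psi_{g'\pm}=\conj{\psi_{g\pm}\circ\conjMap}$) that the paper leaves implicit.
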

\begin{proof}
Replace $g$ with $\conjMap\circ g\circ \conjMap$
and $h$ with $\conjMap\circ h$. 
\end{proof}

\begin{proof}[Proof of Theorem.]
Applying
Lemma~\ref{conjugacyPreservesTangency} we see the critical
locus of $f$ maps by $h$ to the critical locus of $g$.
As in Remark~\ref{rem-reductionToSimplerCaseByIterating}
we can change coordinates using iterates of $f$ and $g$
so there are primary horizontal components $H_f$ and $H_g$ such that 
$h$ maps $H_f$ to $H_g$. Then Proposition~\ref{prop-conjugacyInvariant} gives the following cases:

\paragraph{Case 1. $\sigma_f(z)=\beta z$ and $\sigma_g(z)=\gamma z$.}

If we write these two equations out using Definition~\ref{def-sigma}
%of $\sigma_f$ and $\sigma_g$ 
we obtain
$\psi_{f-}=\inlinefrac{1}{\beta}\psi_{f+}$ and 
$\psi_{g-}=\inlinefrac{1}{\gamma}\psi_{g+}$.
Now while $\psi_{f+}\colon \upl{f}\to\mb{C}^*$
is not well defined, the map to the quotient
group $\psi_{f+}\colon \upl{f}\to\mb{C}^*/\ms{S}$
is well defined. Similarly for 
$\psi_{f-}\colon \umn{f}\to\mb{C}^*/\ms{S}$.

Now there exists some $M>0$ such that if 
$z\in H_{f}$ and $\gpl{f}(z)>M$ then 
$\psi_{f-}(z)=\inlinefrac{1}{\beta}\psi_{f+}(z)$. 

Now if $z\in \upl{f}$ and $\gpl{f}(z) > M$ then
there is some leaf $\ell\in\Fpl{f}$ containing
both $z$ and some point $w\in H_{f}$.
Thus $\gpl{w} > M$ so
$\psi_{f-}(z)\sim \psi_{f-}(w)=
\inlinefrac{1}{\beta}\psi_{f+}(w)\sim\inlinefrac{1}{\beta}\psi_{f+}(z)$

But then if $z\in\upl{f}$ and $\gpl{z}>M$ then 
\begin{equation}
\inlinefrac{1}{\beta}\psi_{f+}(z)\sim\psi_{f-}(z)\sim 
\psi_{f-}^{d}\circ f(z)\sim\inlinefrac{1}{\beta^d}\psi_{f+}^d\circ f(z)= 
\inlinefrac{1}{\beta^d}\psi_{f+}^{d^2}(z).
\end{equation}
Hence $\beta^{d-1}\sim \psi_{f+}^{d^2-1}(z)$. However since
$\psi_{f+}^{d^2-1}(z)$ has a locally continuous branch about $z$
one concludes that this branch would have to be constant, and
hence $\psi_{f+}(z)$ must be constant. This is a contradiction.
Thus no Henon map $f$ exists for which
$\psi_{f-}=\inlinefrac{1}{\beta}\psi_{f+}$ 
for a neighborhood of infinity in $H_{f}$.

\paragraph{Case 2. $\mf{h}(z)=\beta z$ and $\mf{k}(z)=\gamma z$.}

If we write these two equations out using Definition~\ref{def-hAndK}
%of $\mf{h}$ and $\mf{k}$
then $\psi_{g+}\circ h=\inlinefrac{1}{\beta}\psi_{f+}$ 
and $\psi_{g-}\circ h=\inlinefrac{1}{\gamma}\psi_{f-}$.
These hold in a neighborhood of infinity in $H_{f}$.
Now, as in Case~1, there exists some $M>0$ such that if $z\in\upl{f}$ 
and $\gpl{f}(z) > M$ then 
$\psi_{g+}\circ h(z)\sim \inlinefrac{1}{\beta}\psi_{f+}(z)$ in $\mb{C}^*/\ms{S}$.
Then 
\begin{multline}
\inlinefrac{1}{\beta}\psi_{f+}^d(z)=\inlinefrac{1}{\beta}\psi_{f+}\circ f(z)
\sim\psi_{g+}\circ h\circ f(z)= \\
\psi_{g+}\circ g\circ h(z)\sim\psi_{g+}^d\circ h(z)
\sim \inlinefrac{1}{\beta^d}\psi_{f+}^d(z)
\end{multline}
from which it follows that $\beta^{d-1}\in \ms{S}$. 

We will show that $h\colon \upl{f}\cap\umn{f}\to \upl{g}\cap \umn{g}$
is a biholomorphism. To do this is suffices to show that 
$h\colon\upl{f}\cap\umn{f}\to \upl{g}\cap \umn{g}$
is holomorphic. To accomplish this is will suffice to 
show that if $z\in \upl{f}\cap\umn{f}$ then $h$
is holomorphic on $\lfpl{f}(z)$ and on $\lfmn{f}(z)$.
From this it will immediately follow from Osgood's
theorem that $h$ is holomorphic on 
$(\upl{f}\cap \umn{f})\setminus\critLoc$.
Since $h\colon \upl{f}\cap \umn{f}\to\upl{g}\cap \umn{g}$
is continuous then by the Riemann extension theorem
it will further follow that 
$h\colon\upl{f}\cap\umn{f}\to \upl{g}\cap \umn{g}$
is holomorphic. In order to prove this we will need the following.

\begin{lemma}
Given $z_0\in (\upl{f}\cap\umn{f})\setminus\critLoc_f$,
assume that $\lfpl{f}(z_0)$ meets $H_f$ at a point $w_0$.
Then there is a neighborhood $U$ of $z_0$ in $\lfmn{f}(z_0)$
such that there is a holomorphic holonomy map $\zeta_{f,z_0,w_0}\colon U\to H_f$
which maps $z\in U$ to the intersection of $\lfpl{f}(z)$ and $H_f$
near $w_0$. Since $\Fpl{f}$ is transverse to $\lfmn{f}(z_0)$
and $H_f$ at $z_0$ and $w_0$ respectively then 
$\zeta_{f,z_0,w_0}$ can be assumed to be a biholomorphism
from $U$ onto its image.
\end{lemma}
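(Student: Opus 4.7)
The plan is to obtain $\zeta_{f,z_0,w_0}$ as the standard holonomy of the holomorphic foliation $\Fpl{f}$ between two holomorphic transversals, namely a neighborhood of $z_0$ in $\lfmn{f}(z_0)$ and a neighborhood of $w_0$ in $H_f$. The two transversality hypotheses needed to set this up are already in hand: since $z_0\in(\upl{f}\cap\umn{f})\setminus\critLoc_f$, the leaves $\lfpl{f}(z_0)$ and $\lfmn{f}(z_0)$ meet transversally at $z_0$, so a small neighborhood of $z_0$ in $\lfmn{f}(z_0)$ is indeed a holomorphic transversal to $\Fpl{f}$ at $z_0$; and by part~\ref{part-controllingTangenciesInTheTube} of Lemma~\ref{tubeToolbox}, $H_f$ is everywhere transverse to $\Fpl{f}$, in particular at $w_0$.

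The core construction is routine flow-box holonomy. First I would fix a smooth path $\gamma\colon[0,1]\to\lfpl{f}(z_0)$ in the common leaf from $z_0$ to $w_0$. Because $\Fpl{f}$ is a holomorphic foliation of the open set $\upl{f}\subset\mb{C}^2$ and $\gamma([0,1])$ is compact, it can be covered by finitely many open bidisks $B_1,\dots,B_n\subset\upl{f}$ in each of which $\Fpl{f}$ is trivialized by a local defining function (one can use suitable branches of $\log\ppl{a}$, or, more elementarily, the local coordinate given by the implicit function theorem applied to $\ed\ppl{a}$). Over each $B_i$ the projection to the local plaque space is holomorphic, and on overlaps the transition maps between plaque spaces are biholomorphisms. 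Composing these transitions along the chain of charts produces a germ of biholomorphism from any holomorphic transversal at $z_0$ to any holomorphic transversal at $w_0$.

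Applying this germ to the specific transversals $T_{z_0}\subset\lfmn{f}(z_0)$ and $T_{w_0}\subset H_f$ yields a biholomorphism $\zeta_{f,z_0,w_0}\colon U\to\zeta_{f,z_0,w_0}(U)\subset H_f$ defined on a neighborhood $U$ of $z_0$ in $\lfmn{f}(z_0)$, where by construction $\zeta_{f,z_0,w_0}(z)$ is the unique intersection of $\lfpl{f}(z)$ with $H_f$ lying near $w_0$. Independence of the path $\gamma$ is automatic here because we are not asserting a global statement, only existence of a germ; shrinking $U$ if necessary we may also ensure that $\zeta_{f,z_0,w_0}$ lands inside the chosen transversal $T_{w_0}\subset H_f$.

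I do not expect any real obstacle: the only points to check are that the flow boxes can be chosen inside $\upl{f}$ (immediate since $\lfpl{f}(z_0)\subset\upl{f}$ is open in $\mb{C}^2$ along $\gamma$), that the two endpoint transversals are genuinely transverse to $\Fpl{f}$ (handled above using the hypothesis $z_0\notin\critLoc_f$ together with Lemma~\ref{tubeToolbox}), and that the composed map is a biholomorphism onto its image (automatic, as it is a composition of biholomorphisms between complex $1$-manifolds).
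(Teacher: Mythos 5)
Your proof is correct and gives the standard flow-box holonomy construction. The paper itself dispenses with the lemma in a single sentence (``This is geometrically self evident''), so there is no competing argument to compare against; what you have written is precisely the content the authors are treating as obvious, namely transversality at both endpoints (from $z_0\notin\critLoc_f$ and from part~\ref{part-controllingTangenciesInTheTube} of Lemma~\ref{tubeToolbox}) plus a chain of plaque transitions along a compact path in the leaf. One small point worth tightening: path-independence is not really ``automatic'' in general, but here the target point is pinned down intrinsically as the unique intersection of $\lfpl{f}(z)$ with $H_f$ near $w_0$ (using transversality at $w_0$), so any two choices of chain that track close to one another yield the same germ, which is all that is claimed.
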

\begin{proof}
This is geometrically self evident.
\end{proof}

%\bignote{The holonomy figure is hidden here} 

\begin{figure}
\name{holonomyPicture}
\centerline{\includegraphics{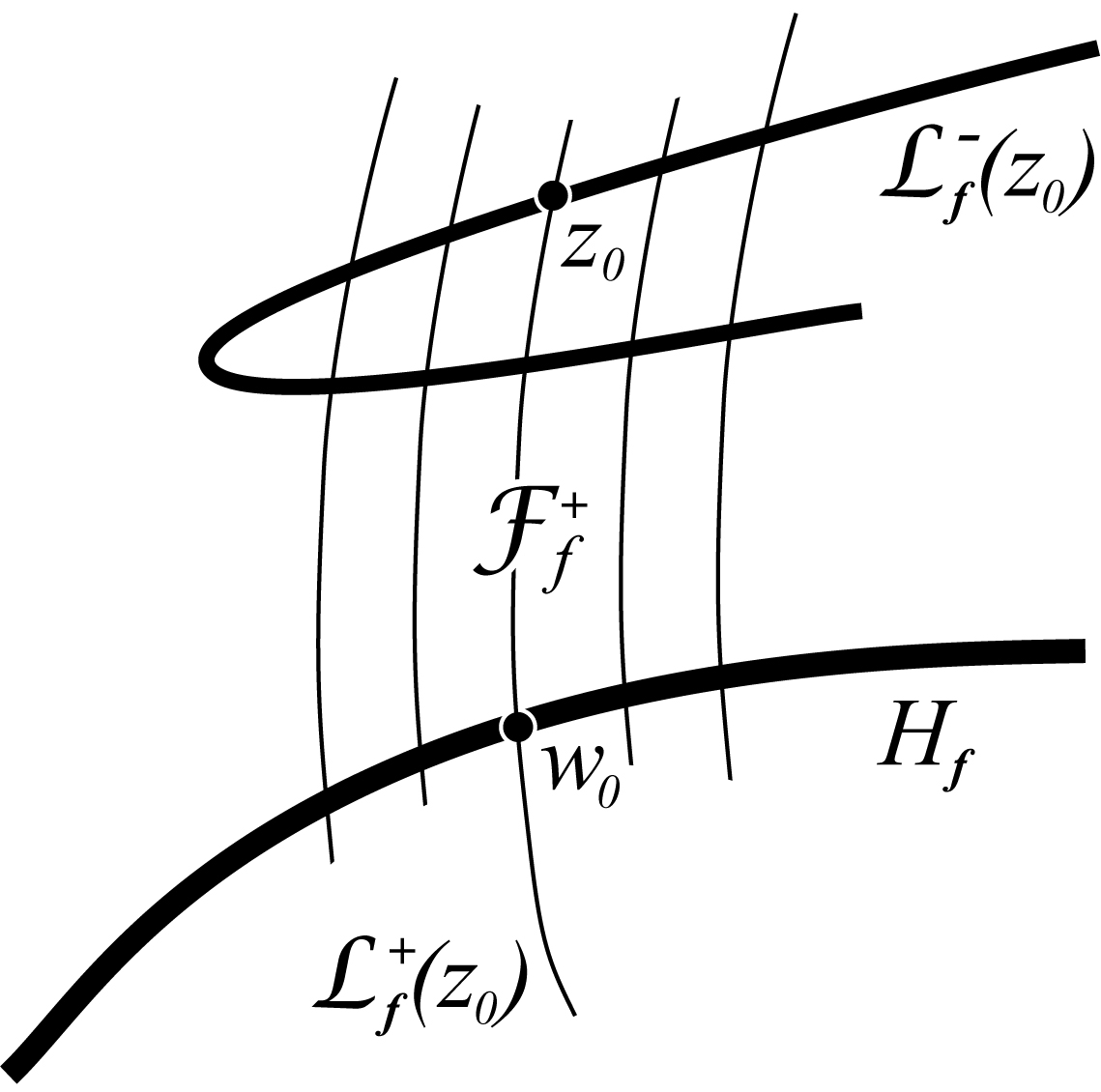}}
\caption{Holonomy using $\Fpl{f}$.}
\end{figure}

Now assume $z_0\in(\upl{f}\cap\umn{f})\setminus\critLoc_f$
and $\gpl{f}(z_0) > M$ and $\gpl{g}(h(z_0)) > M$. Choose 
$w_0\in \lfpl{f}(z_0)\cap H_f$.  Then if $U$ is 
a sufficiently small open neighborhood of $z_0$ in $\lfmn{f}(z_0)$ 
then one can see that
\begin{equation}
\name{eq-hMadeHol}
h\rest{U}=\zeta^{-1}_{g,h(z_0),h(w_0)}\circ h\circ \zeta_{f,z_0,w_0}
\end{equation}
because $h$ maps $H_f$ to $H_g$, maps $\lfmn{f}(z_0)$
to $\lfmn{g}(h(z_0))$ and maps leaves of $\Fpl{f}$
to leaves of $\Fpl{g}$. But because
$h$ in the right hand side of equation \eqref{eq-hMadeHol} is
applied on $H_f$ near infinity as long as $U$ is sufficiently small,
then it is holomorphic. Thus equation \eqref{eq-hMadeHol}
represents $h\rest{U}$ as a composition of three holomorphic functions.
Hence $h\rest{U}$ is holomorphic.

Now in general, if $z_0$ is any point in 
$(\upl{f}\cap\umn{f})\setminus\critLoc{f}$ then 
choosing $n$ sufficiently large that $\gpl{f}(f\cp{n}(z_0))>M$
and $\gpl{g}(h(f\cp{n}(z_0)))>M$ and writing
$h=g^{-n}\circ h\circ f\cp{n}$ one concludes
that $h$ is holomorphic on $\lfmn{f}(z_0)$ near $z_0$.

The proof that $h$ is holomorphic on leaves of $\Fmn{f}$
on $\upl{f}\cap \umn{f}$ is identical.
This completes the proof.
\end{proof}

Eliminating $h$ from the conclusion 
of Theorem~\ref{thm-generalRigidity} we obtain
%we obtain $\psi_{g-}\circ \psi_{g+}^{-1}(z)=\inlinefrac{1}{\gamma}\psi_{f-}\circ\psi_{f+}^{-1}(\beta z)$.
%
%Using the canonical coordinates on $H_f$ and $H_g$ given by 
%$\psi_{f+}\colon H_f\to \mb{C}\setminus\overline{\disk}$
%and
%$\psi_{g+}\colon H_g\to \mb{C}\setminus\overline{\disk}$
%the statement  becomes $h(z)=\inlinefrac{1}{\beta}z$ and 
%$\psi_{g-}(\shortInlineFrac{z}{\beta})=\inlinefrac{1}{\gamma}\psi_{f-}(z)$
%so $\psi_{g-}$ pulls back by a scaling of the uniformizing
%coordinate $\psi_{g+}$ of $H_g$ to give a scaling of $\psi_{f-}$.
%%
%
%
%Recall that in the other case one obtains
\begin{equation}
\name{eq-specialConj}
\gamma\psi_{g-}\circ\psi_{g+}^{-1}(z)=\psi_{f-}\circ\psi_{f+}^{-1}(\beta z)
\end{equation}
for some nonzero constants $\beta$ and $\gamma$.

We now assume that $f$ has degree two, so
$f$ and $g$ are H\'enon maps of the form
$f(x,y)=(x^2+c_1-a_1y,x)$ and $g(x,y)=(x^2+c_2-a_2y,x)$.
Rewriting equation \eqref{eq-specialConj} in terms of 
$\chi_{f+}, \chi_{f-}, \chi_{g+}$ and $\chi_{g-}$ we obtain
\begin{equation}
\name{eq-specialConjEq}
\chi_{g-}\circ \chi_{g+}^{-1}(\beta z)=
\gamma\cdot \chi_{f-}\circ\chi_{f+}^{-1}(z).
\end{equation}
Our first goal will be to find all possible (nondegenerate) 
choices of $f(x,y)=(x^2+c_1-a_1y,x)$, $g(x,y)=(x^2+c_2-a_2y,x)$
and $\beta$ and $\gamma$ nonzero constants so that
equation \eqref{eq-specialConjEq} holds.

The first three nonzero terms of the Taylor series expansion of 
\begin{equation}
\name{eq-diffEq}
\chi_{g-}\circ \chi_{g+}^{-1}(\beta z)-
\gamma\cdot \chi_{f-}\circ\chi_{f+}^{-1}(z)
\end{equation}
are
\begin{multline}
(\gamma a_1^2-\beta a_2^2)z + (\gamma a_1^2c_1-\beta^2a_2^2 c_2)z^2 +\\
(\gamma a_1^2c_1^2-\beta^3a_2^2c_2^2+\frac{1}{2}\gamma a_1^2c_1 -
\frac{1}{2}\gamma a_1^4 c_1 + \frac{1}{2}\beta^3a_2^4c_2-\frac{1}{2}\beta^3a_2^2c_2)z^3
\end{multline}

Since we can assume that each of $a_1,a_2,\beta,\gamma$ are nonzero,
it easily follows from the first two terms that
\begin{equation}
\name{eq-partialSol}
\gamma=\dfrac{a_2^2}{a_1^2}\beta,\quad
c_1=c_2\beta.
\end{equation}

\begin{lemma}
If $\chi_{g-}\circ \chi_{g+}^{-1}(z)=\chi_{f-}\circ\chi_{f+}^{-1}(z)$
for nonsingular
$$
f(x,y)=\bigl(x^2+c_1-a_1y,x\bigr)
           \quad {\mathrm{and}}\quad
g(x,y)=\bigl(x^2+c_2-a_2y,x\bigr)
$$
then $f=g$.
\end{lemma}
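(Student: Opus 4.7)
The hypothesis of the lemma is the $\beta=\gamma=1$ specialization of equation~\eqref{eq-specialConjEq}, so all Taylor coefficients of the difference~\eqref{eq-diffEq} must vanish at $z=0$. My plan is to extract information order by order. The displayed linear coefficient gives $a_1^2=a_2^2$ at once, and the displayed quadratic coefficient, together with $a_1^2=a_2^2$, forces $c_1=c_2$. Writing $c:=c_1=c_2$ and $a_2=\epsilon a_1$ with $\epsilon\in\{\pm 1\}$, and using that $a_1,a_2$ are nonzero by nonsingularity, the lemma reduces to ruling out $\epsilon=-1$.

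The obstruction that prevents concluding at cubic order is that, after imposing $\beta=\gamma=1$, $c_1=c_2$ and $a_1^2=a_2^2$, every monomial in the displayed cubic coefficient of~\eqref{eq-diffEq} involves only even powers of $a_1$ and $a_2$ (the exponents $2$ and $4$), so it vanishes identically regardless of the sign of $\epsilon$. Thus I must push the expansion one further order. Using the telescoping formulas~\eqref{telescopingPl} and~\eqref{telescopingMn} for $\psi_+$ and $\psi_-$, together with the iterate recursions $x_k=p(x_{k-1})-ay_{k-1}$, $y_k=x_{k-1}$ (and the analogous ones for $f^{-1}$), one expands $\chi_{f+}^{-1}(z)$ and the composition $\chi_{f-}\bigl(\chi_{f+}^{-1}(z)\bigr)$ through order four in $z$, and does the same for $g$. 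Subtracting yields the $z^4$-coefficient of~\eqref{eq-diffEq}. The key algebraic expectation is that, after substituting $a_2=-a_1$, $c_2=c_1=c$, this coefficient collapses to a product of a nonzero polynomial in $c$ with an \emph{odd} power of $a_1$---the odd-power terms arising because the prefactor $\eta=1/a$ in the definition of $\psi_-$, together with the correction $a\cdot h(x,y,a)$ of Lemma~\ref{pmnNearZeroA}, breaks the $a\mapsto -a$ symmetry present at lower order. Nonvanishing of this coefficient forces $\epsilon=+1$ and hence $f=g$.

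The hard part, and really the only substantive part, is the bookkeeping of the fourth-order expansion: it is mechanical but error-prone, precisely the sort of calculation one would verify with a computer algebra system. Concretely, one first uses Corollary~\ref{corPplBound} and the asymptotics of $\psi_+$ to write $\chi_{f+}(x,y) = 1/x + O(1/x^2)$ on $V_+$, inverts to obtain $\chi_{f+}^{-1}(z) = (1/z)\bigl(1+\alpha_1 z+\alpha_2 z^2+\dotsb\bigr)$ with the $\alpha_k$ polynomials in $a_1$ and $c$, and substitutes into the expansion of $\chi_{f-}$ on $H_f^\circ$ produced by Lemma~\ref{psiMnBiholAtInfinity}. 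The delicate point is to track correctly the parity of $a$ appearing in each $\alpha_k$ and in the Taylor coefficients of $\chi_{f-}$; the outcome one needs is that modulo the relations $a_1^2=a_2^2$ and $c_1=c_2$ the fourth-order coefficient retains a surviving term of the form $a_1^{2k+1}Q(c)\ne a_2^{2k+1}Q(c)$ for some $k\ge 1$ and nontrivial $Q$, which is precisely the algebraic obstruction that kills the spurious sign $\epsilon=-1$.
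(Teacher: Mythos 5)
Your opening steps match the paper's argument exactly: setting $\beta=\gamma=1$ in~\eqref{eq-partialSol}, the linear coefficient gives $a_1^2=a_2^2$ and the quadratic coefficient then forces $c_1=c_2$, and you are right that the displayed cubic coefficient evaporates once these relations are imposed (every monomial in it has only even exponents of $a_1$, $a_2$). Your heuristic that the eventual obstruction must be an odd power of $a_1$, traceable to the factor $\eta=1/a$ in $\psi_-$, is also sound: the paper's conclusion is indeed that the relevant ideal is $(a_1^3)$.

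The gap is in the claim that the $z^4$ coefficient alone does the job. You write that its surviving part should have the form $a_1^{2k+1}Q(c)$ with $Q$ ``nontrivial,'' and that nonvanishing forces $\epsilon=+1$. But $Q\not\equiv 0$ does not make $Q(c)\ne 0$: a nonconstant $Q$ has roots, and for a parameter $c$ sitting at such a root the $z^4$ coefficient vanishes identically in $a_1$ and your argument gives nothing. This is not a hypothetical worry; the paper explicitly uses the coefficients of $z^4$, $z^5$, \emph{and} $z^7$, stating that \emph{together} they generate the ideal $(a_1^3)$ in the parameter ring after the substitution $a_2=-a_1$, $c_2=c_1$. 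That phrasing signals that no single coefficient is a unit multiple of $a_1^3$ (otherwise the other two would be superfluous): each one is $a_1^3$ times a further polynomial factor, and only by taking all three do the extraneous factors fail to have a common zero. So your plan needs to be upgraded from ``expand to order four and inspect'' to ``expand through at least order seven, reduce modulo $a_1^2=a_2^2$, $c_1=c_2$, $a_2=-a_1$, and compute the ideal generated by the resulting coefficients (e.g.\ via a Gr\"obner basis),'' which is exactly the computation the paper reports.

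A smaller presentational point: you could shorten the setup by noting that once $a_1^2=a_2^2$ and $c_1=c_2$ are known, the two maps $f$ and $g$ already agree unless $a_2=-a_1$, so the whole lemma reduces to the single assertion ``for $a_1\ne 0$ the map $(x,y)\mapsto(x^2+c-a_1y,x)$ does not satisfy $\chi_{-}\circ\chi_{+}^{-1}=\chi_{-}'\circ(\chi_{+}')^{-1}$ against its $a\mapsto -a$ mirror.'' Framed that way the parity heuristic is even cleaner, and one sees immediately that the first possibly nonvanishing difference must carry an odd power of $a_1$ — but one still cannot avoid the multi-coefficient Gr\"obner computation to rule out exceptional $c$.
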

\begin{proof}
From \eqref{eq-partialSol} it follows that
$a_1=\pm a_2$ and $c_1=c_2$. To eliminate the remaining case
assume $a_1=-a_2$. Now \eqref{eq-diffEq}
must vanish, but the coefficients of 
$z^4$, $z^5$ and $z^7$ in \eqref{eq-diffEq}
generate the ideal $(a_1^3)$, so $a_1$ would have to be zero.
\end{proof}

From the third term  of \eqref{eq-diffEq}
one obtains that either $a_2=1$, $a_2=-1$, $c_1=0$,
or $\beta=\dfrac{a_1^2-1}{a_2^2-1}$.

Each of the cases $a_2=1$, $a_2=-1$, and 
$\beta=\dfrac{a_1^2-1}{a_2^2-1}$
can be reduced to the case $c_1=0$ by taking more terms of the Taylor
series of \eqref{eq-diffEq} and the calculating 
Groebner basis of the resulting coefficients. In the case $c_1=0$
it can be shown that $c_2=0$ and that $a_1=a_2$ (so $f$ and $g$ 
are the same H\'enon map), and that $\beta=\gamma$, both of 
which must be the same primative root of unity.

The following table gives the necessary details to 
verify these calulations using \textregistered{Maple}.
Here $n$ is the degree to which we need to calculate the Taylor series for each calculation. 

\begin{center}
\begin{tabular}{ | c | c | l | p{1.05in} | p{.9in} |}
\hline
{\bf{Case}} & {\bf{n}} & {\bf{Ordering}} & {\bf{Relevant Basis Elements}} & {\bf{Conclusion}} \\
\hline
$\beta=\dfrac{a_1^2-1}{a_2^2-1}$ & $7$ & $\operatorname{tdeg}(a_1,c_1,a_2)$ 
& $-a_2^2c_1(a_1^2-1)\cdot$\newline$(a_2^2-1)^5(a_1-a_2)$ & $c_1=0$ \\
\hline 
$a_2=1$ & $8$ & $\operatorname{plex}(\beta,c_1,a_1)$ 
& $\beta c_1(a_1-1)$,\newline$\beta c_1(\beta-1)$ & $c_1=0$  \\
\hline
$a_2=-1$ & $8$ & $\operatorname{plex}(\beta,c_1,a_1)$ 
& $\beta c_1(a_1+1)$,\newline $\beta c_1(\beta-1)$ & $c_1=0$ \\ 
\hline
$c_1=0$ & $13$ & $\operatorname{tdeg}(\beta,a_2,a_1)$ 
& $\beta a_1 a_2^2(a_1-a_2)$,\newline$\beta a_1 a_2^2 (s^3-1)$ 
& $\beta^2+\beta+1=0$\newline $\gamma=\beta$ \newline$a_1=a_2$ \newline$c_1=c_2=0$ \\
\hline
\end{tabular}
\end{center}

We conclude that:
 
\begin{lemma}
\name{problemSimplified}
If equation~\eqref{eq-specialConj} holds with $\beta$ and $\gamma$
nonzero and $f(x,y)=(x^2+c_1-a_1y,x)$ and $g(x,y)=(x^2+c_2-a_2y,x)$ 
nondegenerate then 
the maps $f$ and $g$ must be the same Henon map and either 
(1) $\beta=\gamma=1$ (the trivial solution), or else (2) $a_1=a_2$, $c_1=c_2=0$,
$\beta=\gamma$ and $\beta$ is a primitive cubic root of unity.
\end{lemma}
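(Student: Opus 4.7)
The plan is to analyze the identity
\[
\chi_{g-}\circ \chi_{g+}^{-1}(\beta z) \;=\; \gamma\cdot \chi_{f-}\circ\chi_{f+}^{-1}(z)
\]
as an equation of convergent power series about $z=0$ and extract polynomial constraints on the parameters $a_1,a_2,c_1,c_2,\beta,\gamma$ by coefficient matching. The preparatory step is to compute Taylor expansions of $\chi_{f-}\circ\chi_{f+}^{-1}$ and $\chi_{g-}\circ\chi_{g+}^{-1}$ at the origin to adequate order in terms of the structural parameters. For a quadratic H\'enon map $f(x,y)=(x^2+c-ay,x)$, the recursion relations for $\psi_\pm$ determine the Taylor coefficients of $\chi_\pm=1/\psi_\pm$ (extended holomorphically across the punctures of $H_c$ by Lemma~\ref{psiMnBiholAtInfinity} and Theorem~\ref{thm-extensionToH}) as explicit polynomials in $a$ and $c$; composing gives the coefficients of the transition $\chi_{f-}\circ\chi_{f+}^{-1}$.

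As recorded above, the first three nonzero coefficients of the difference
\[
\chi_{g-}\circ \chi_{g+}^{-1}(\beta z) - \gamma\cdot \chi_{f-}\circ\chi_{f+}^{-1}(z)
\]
already force $\gamma=\beta a_2^2/a_1^2$ and $c_1=\beta c_2$, together with the disjunction $a_2=1$, $a_2=-1$, $c_1=0$, or $\beta=(a_1^2-1)/(a_2^2-1)$. The next step is to verify, case by case, that each of the first three alternatives collapses to $c_1=0$. In each case I would substitute the partial solution, extend the expansion to the degree indicated in the table above, and compute a Gr\"obner basis of the coefficient ideal under a monomial ordering chosen to isolate $c_1$; the displayed basis elements factor so that the nonvanishing of $a_1,a_2,\beta$ forces the factor $c_1$ to vanish.

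With $c_1=c_2=0$ in hand, the final step is to expand the identity to degree $13$ and compute a Gr\"obner basis in the parameter ring under $\operatorname{tdeg}(\beta,a_2,a_1)$. The decisive basis elements factor as multiples of $a_1-a_2$ and of $\beta^3-1$, so, invoking once more that $a_1,a_2,\beta$ are nonzero, we conclude $a_1=a_2$ and $\beta^3=1$. Then $\gamma=\beta a_2^2/a_1^2=\beta$; if $\beta\neq 1$, then $\beta^3=1$ combined with $\beta\neq 1$ gives $\beta^2+\beta+1=0$, i.e., $\beta$ is a primitive cube root of unity. In the remaining case $\beta=\gamma=1$, the identity reduces to $\chi_{g-}\circ \chi_{g+}^{-1}=\chi_{f-}\circ\chi_{f+}^{-1}$, and the preceding lemma yields $f=g$ directly.

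The main obstacle is purely computational: the Taylor coefficients of $\chi_{f-}\circ\chi_{f+}^{-1}$ grow in complexity rapidly with the order, and the degree-$13$ expansion needed to close the $c_1=0$ case is well beyond practical hand calculation. The Gr\"obner basis computations are best executed in a computer algebra system such as Maple, with the orderings chosen as indicated in the table; conceptually the argument is a finite elimination in a polynomial ring, with no analytic subtlety beyond the power series representation of the $\chi_\pm$ already established in earlier sections.
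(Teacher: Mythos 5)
Your proposal is correct and takes essentially the same route as the paper: Taylor-expand the transition maps from the recursion relations for $\psi_\pm$, match coefficients to get $\gamma=\beta a_2^2/a_1^2$ and $c_1=\beta c_2$, split into the alternatives produced by the $z^3$ coefficient, and use Gr\"obner-basis elimination (with the paper's indicated orderings and truncation degrees) to collapse the remaining alternatives to $c_1=c_2=0$ and then to $a_1=a_2$, $\beta^3=1$, invoking the preceding lemma to dispose of the $\beta=\gamma=1$ branch. The only cosmetic difference is in where the trivial case $\beta=\gamma=1$ is logically intercepted: the paper dispatches it with the auxiliary lemma immediately after \eqref{eq-partialSol} (and its Gr\"obner factors for the case $\beta=\dfrac{a_1^2-1}{a_2^2-1}$ implicitly carry an $a_1-a_2$ factor corresponding to that trivial branch), while you fold it in at the end after reducing to $c_1=0$; both are fine since $\beta=1$ with $c_1=c_2=0$ is consistent with the $\beta^3-1$ factor.
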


\begin{theorem}
\name{thm-quadraticRigidity}
Assume $h$ is a conjugacy between quadratic H\'enon maps
$f(x,y)=(x^2+c_1-a_1y,x)$ and $g(x,y)=(x^2+c_2-a_2y,x)$ 
satisfying Condition~\ref{hypothesis}.
Assume further that $h$ maps the leaves of $\Fpl{f}$ and $\Fmn{f}$
to the leaves of $\Fpl{g}$ and $\Fmn{g}$ respectively.
Choose coordinates for the map $g\colon \mb{C}^2\to \mb{C}^2$
so that $h$ maps $H_f$ to $H_g$ for the primary horizontal critical 
components $H_f$ and $H_g$ of $f$ and $g$.
Finally assume that $h\colon H_f\to H_g$ is orientation preserving
and satisfies Condition~\ref{noInversion}.
Then $f=g$ and $h\cp{2}(x,y)$ is the identity map on $\upl{f}\cap\umn{f}$.
\end{theorem}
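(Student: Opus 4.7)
The plan is to invoke Theorem~\ref{thm-generalRigidity}, use Lemma~\ref{problemSimplified} together with $d=2$ to pin down $f=g$ and $\beta=\gamma=1$, and then carry out a local analysis along $H_f$ to show that $h\circ h$ is the identity. Theorem~\ref{thm-generalRigidity} first yields that $h$ is a biholomorphism of $\upl{f}\cap\umn{f}$ onto $\upl{g}\cap\umn{g}$ with $\psi_{g+}\circ h=\beta^{-1}\psi_{f+}$ and $\psi_{g-}\circ h=\gamma^{-1}\psi_{f-}$ on a neighborhood of infinity in $H_f$, where $\beta,\gamma\in\ms{S}$ (since $d-1=1$). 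Eliminating $h$ produces equation~\eqref{eq-specialConjEq}, whose solutions are classified by Lemma~\ref{problemSimplified}: either (i) $\beta=\gamma=1$ and $f=g$, or (ii) $f=g$ is of the form $(x^2-ay,x)$ with $\beta=\gamma$ a primitive cube root of unity. For $d=2$ the group $\ms{S}$ consists of $2^n$-th roots of unity, so case (ii) contradicts $\beta\in\ms{S}$ and is ruled out, leaving $f=g$ and $\beta=\gamma=1$.

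With these values $\psi_{f+}\circ h=\psi_{f+}$ holds on a neighborhood of infinity in $H_f$, and since $\psi_{f+}\rest{H_f}\colon H_f\to\mb{C}\setminus\overline{\disk}$ is a biholomorphism by Theorem~\ref{thm-extensionToH}, $h\rest{H_f}$ is the identity on that neighborhood, and hence on all of the connected curve $H_f$ by analyticity. The same holonomy argument used inside the proof of Theorem~\ref{thm-generalRigidity} (now with $\beta=\gamma=1$) then shows that $\psi_{f\pm}\circ h\equiv\psi_{f\pm}\pmod{\ms{S}}$ globally on $\upl{f}$ and $\umn{f}$ respectively, which is to say that $h$ sends each individual leaf of $\Fpl{f}$ and of $\Fmn{f}$ to itself. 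Consequently $h(z)\in\lfpl{f}(z)\cap\lfmn{f}(z)$ for every $z\in\upl{f}\cap\umn{f}$.

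The last step exploits the local picture along $H_f$ guaranteed by part~\ref{part-controllingTangenciesInTheTube} of Lemma~\ref{tubeToolbox}: near any $w\in H_f$ one chooses holomorphic coordinates $(X,Y)$ centered at $w$ in which $H_f=\{X=0\}$, the leaves of $\Fpl{f}$ are the level sets of $Y$, and the leaves of $\Fmn{f}$ are the level sets of $Y-X^2$ (this normal form is available precisely because the tangency is of order exactly two and $\Fpl{f}$ is transverse to $H_f$). Writing $h(X,Y)=\bigl(A(X,Y),B(X,Y)\bigr)$, leaf preservation gives $B=Y$ and $A^2=X^2$, so $A=X\cdot C(X,Y)$ with $C^2\equiv 1$; connectedness of the coordinate neighborhood forces $C\equiv 1$ or $C\equiv -1$, so $h$ equals either the identity or $(X,Y)\mapsto(-X,Y)$ on a full $2$-dimensional neighborhood of $w$, and $h\circ h$ is the identity in either case. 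The identity theorem then propagates $h\circ h=\mathrm{id}$ to the connected component of $\upl{f}\cap\umn{f}$ containing $H_f$; combining with the intertwining $h\circ h\circ f=f\circ h\circ h$ (valid as $f=g$) and Theorem~\ref{thm-classificationOfCriticalComponents}, which asserts that every critical component is an iterate of $H_f$, the conclusion spreads to every component of $\upl{f}\cap\umn{f}$. The principal obstacle lies in this third paragraph: securing the simultaneous local straightening of both foliations to the simple model above is the technical heart of the argument, since it is this normal form for the order-$2$ tangency that exposes the involutive structure forcing $h\circ h=\mathrm{id}$.
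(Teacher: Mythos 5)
Your proof is correct and follows the paper's proof closely in outline. The first paragraph (invoke Theorem~\ref{thm-generalRigidity} to get $\beta,\gamma\in\ms{S}$ since $d-1=1$, apply Lemma~\ref{problemSimplified}, and rule out the primitive-cube-root case because $\ms{S}$ for $d=2$ contains only $2^n$-th roots of unity) matches the paper's argument. The only real difference lies in the mechanism for the involutivity $h\cp{2}=\id{}$. The paper introduces the map $\theta=(\psi_{f+},\psi_{f-})$ (the printed $\psi_{g+}$ is surely a misprint for $\psi_{f-}$, since with $f=g$ the map $(\psi_{f+},\psi_{g+})$ would be degenerate), observes that the Jacobian of $\theta$ cuts out $H_f$ with multiplicity one so that $\theta$ is locally a $2$-to-$1$ branched cover over its image near a large $z_0\in H_f$, and deduces from $\theta\circ h=\theta$ together with $h\rest{H_f}=\id{}$ that $h$ must either fix or swap the two sheets, giving $h\cp{2}=\id{}$ in a neighborhood. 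You instead make the branched-cover structure explicit by putting the two foliations in the simultaneous local normal form $\{Y=\mathrm{const}\}$, $\{Y-X^2=\mathrm{const}\}$ and reading off $h(X,Y)=(\pm X,Y)$ directly from preservation of individual leaves. This is the same idea rendered in local coordinates: your normal form is precisely the straightening of $\theta$ to the model double cover $(X,Y)\mapsto(Y,\,Y-X^2)$. Your verification that the normal form exists is sound: after straightening $\Fpl{f}$ and moving $H_f$ to $\{X=0\}$, the order-two contact and the nonsingularity of the $\Fmn{f}$-foliation force the defining function into the shape $a(Y)+X^2 v(X,Y)$ with $a'$ and $v$ nonvanishing, and the substitutions $\tilde Y=-a(Y)$, $\tilde X=X\sqrt{v}$ finish the reduction. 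Your version has the advantage of being self-contained and of avoiding any separate argument about continuity of the choice ``fix versus swap'' over the fibers; the paper's is shorter once the $\theta$-map is accepted. Both finish by the identity theorem; your additional appeal to $f$-equivariance and Theorem~\ref{thm-classificationOfCriticalComponents} is a reasonable hedge in case one does not wish to simply take $\upl{f}\cap\umn{f}$ connected (as the paper implicitly does).
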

\begin{proof}
This is easy now, as from Theorem~\ref{thm-generalRigidity}
$\beta\in \ms{S}$. By Lemma~\ref{problemSimplified} the maps
$f$ and $g$ must be the same Henon map, and additionally
$\beta=\gamma=1$ since primitive cubic roots of unity do not
belong to $\ms{S}$. 

To see that conclusion about $h\cp{2}$ one notes that
since $\beta=\gamma=1$ then $h\rest{H_f}$ is the identity map.
If we define $\theta(x,y)=(\psi_{f+}(x,y),\psi_{g+}(x,y))$
then about any sufficiently large point $z_0\in H_f$ and 
the Jacobian of $\theta$ is a defining function for
$H_f$ about $z_0$. Since $H_f$ has multiplicity one then
$\theta$ is locally a two to one map about $H_f$.
Now $\theta(h(z))=\theta(z)$ since $\beta=\gamma=1$.
One concludes that about such a point $z_0$ (which
is fixed by $h$) $h$ must either exchange points in the
fibers of $\theta$ or must leave them fixed. Either way,
$h\cp{2}$ must fix all points in a neighborhood of $z_0$.
Since $h\cp{2}$ is the identity map on an open set, it must
be the identity map everywhere.
\end{proof}

\begin{corollary}
\name{cor-quadraticReversingRigidity}
Assume $h$ is a conjugacy between quadratic H\'enon maps
$f(x,y)=(x^2+c_1-a_1y,x)$ and $g(x,y)=(x^2+c_2-a_2y,x)$ 
satisfying Condition~\ref{hypothesis}.
Assume further that $h$ maps the leaves of $\Fpl{f}$ and $\Fmn{f}$
to the leaves of $\Fpl{g}$ and $\Fmn{g}$ respectively.
Choose coordinates for the map $g\colon \mb{C}^2\to \mb{C}^2$
so that $h$ maps $H_f$ to $H_g$ for the primary horizontal critical 
components $H_f$ and $H_g$ of $f$ and $g$.
Finally assume that $h\colon H_f\to H_g$ is orientation reversing
and satisfies Condition~\ref{noInversion}.
Then $f=\mf{c}\circ g\circ \mf{c}$ and 
$\mf{c}\circ h\circ \mf{c}\circ h$ is the identity map on $\upl{f}\cap\umn{f}$.
\end{corollary}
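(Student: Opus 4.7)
The plan is to reduce to Theorem~\ref{thm-quadraticRigidity} by precomposing with the conjugation map $\mf{c}$, exactly as in the proof of Corollary~\ref{cor-generalReversingRigidity}. Set $\tilde g = \mf{c}\circ g\circ \mf{c}$ and $\tilde h = \mf{c}\circ h$. The first step is to verify that $\tilde h\colon \upl{f}\cup\umn{f}\to \upl{\tilde g}\cup\umn{\tilde g}$ is a conjugacy between $f$ and $\tilde g$: this is the direct computation $\tilde h\circ f = \mf{c}\circ h\circ f = \mf{c}\circ g\circ h = (\mf{c}\circ g\circ \mf{c})\circ(\mf{c}\circ h)=\tilde g\circ \tilde h$.

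Next, I would invoke the observation made just before Reduction~\ref{red-reductionToSimplerCaseByConjugation}: since $\psi_{\tilde g\pm}=\conj{\psi_{g\pm}\circ \mf{c}}$, the map $\mf{c}$ carries the leaves of $\Fpl{g}$ and $\Fmn{g}$ to the leaves of $\Fpl{\tilde g}$ and $\Fmn{\tilde g}$ respectively. Composing with the hypothesis on $h$, it follows that $\tilde h$ sends leaves of $\Fpl{f}$ and $\Fmn{f}$ to the leaves of $\Fpl{\tilde g}$ and $\Fmn{\tilde g}$. By Lemma~\ref{conjugacyPreservesTangency}, $\tilde h$ then maps $\critLoc_f$ to $\critLoc_{\tilde g}$, and in particular $\tilde h(H_f)=\mf{c}(H_g)=H_{\tilde g}$, a primary horizontal component of $\critLoc_{\tilde g}$. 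Condition~\ref{noInversion} transfers from $h$ to $\tilde h$ automatically because $\mf{c}$ is a biholomorphism of $\mb{P}^1\times\mb{P}^1$ preserving the fiber at $(\infty,c)$ up to antiholomorphic equivalence, so small neighborhoods of the puncture go to small neighborhoods of the puncture.

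The key orientation check is that $\tilde h\colon H_f\to H_{\tilde g}$ is orientation preserving. This is because $h\colon H_f\to H_g$ is orientation reversing by assumption and $\mf{c}\colon H_g\to H_{\tilde g}$ is antiholomorphic, hence orientation reversing on these Riemann surfaces; the composition is therefore orientation preserving. At this point all the hypotheses of Theorem~\ref{thm-quadraticRigidity} hold for $\tilde h$, and I would apply it to conclude that $f = \tilde g = \mf{c}\circ g\circ \mf{c}$ and $\tilde h\cp{2} = \id$ on $\upl{f}\cap\umn{f}$. Unpacking the latter gives $(\mf{c}\circ h)\circ(\mf{c}\circ h)=\mf{c}\circ h\circ \mf{c}\circ h = \id$, which is the second assertion.

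I do not anticipate any real obstacle: every step is a direct translation of the orientation-preserving theorem via the involution $\mf{c}$, and the only minor bookkeeping is to check that the puncture-preserving Condition~\ref{noInversion} and the identification of the distinguished horizontal critical components persist under the substitution $g\rightsquigarrow \mf{c}\circ g\circ \mf{c}$, $h\rightsquigarrow \mf{c}\circ h$. Both are immediate from the fact that $\mf{c}$ is a biholomorphism of the ambient $\mb{P}^1\times\mb{P}^1$ fixing the line at infinity as a set.
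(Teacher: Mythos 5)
Your proposal is correct and follows essentially the same route as the paper, whose (implicit) proof is the one-line reduction given for Corollary~\ref{cor-generalReversingRigidity}: replace $g$ by $\conjMap\circ g\circ\conjMap$ and $h$ by $\conjMap\circ h$, then invoke the orientation-preserving rigidity theorem (here Theorem~\ref{thm-quadraticRigidity}). One small terminological slip at the end: $\conjMap$ is an \emph{antiholomorphic} involution of $\mb{P}^1\times\mb{P}^1$, not a biholomorphism, but this does not affect the argument since you only use that it is a real-analytic diffeomorphism preserving the relevant structures up to complex conjugation.
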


\newpage

\appendix
\section*{List of Notations}
\addcontentsline{toc}{part}{List of Notations}

We provide a list of notations here as a reference.

%\medskip
{\small

\newcommand{\widthValueForTable}{2.75in}

\newcommand{\pack}[1]{\parbox{\widthValueForTable}{\vspace{2pt} #1 \vspace{1pt}}}

\begin{tabular}{ c | c | p{\widthValueForTable} }
\bf{Notatation} & \bf{Section} & \bf{Meaning} \\
\hline
$f_a$
 & \ref{defOff}
 & \pack{The H\'enon map under consideration.}\\
\hline
$a$
 & \ref{defOfa}
 & \pack{The Jacobian of $f_a$.} \\
\hline
$p(x)$
 & \ref{defOfp}
 & \pack{A monic polynomial used to define $f_a$.} \\
\hline
$p(x,y)$
 & \ref{defOfHomogeneousP}
 & \pack{The homogeneous version of $p(x)$.} \\
\hline
$d$
 & \ref{defOfd}
 & \pack{The degree of $p(x)$.} \\
\hline
$q(x)$
 & \ref{defOfq}
 & \pack{The polynomial $p(x)$ with its leading term removed.} \\
\hline
$\degq$
 & \ref{defOfDegq}
 & \pack{The degree of $q(x)$.} \\
\hline
$R$
 & \ref{defOfR}
 & \pack{ A large radius % chosen in Subsection~\ref{subsectionTheFoliations} and 
  used to define %$V_0$, 
  $V_+$ and $V_-$.} \\
\hline
%$V_0$, 
 $V_+, V_-$
 & \ref{defOfV}
 & \pack{Regions 
  % described in Subsection~\ref{subsectionTheFoliations} 
  which describe the large scale behaviour of H\'enon maps.} \\
\hline
$\kpl{a},\kmn{a}$
 &
 & \pack{The set of points whose orbit under forward (respectively backward) iteration under $f_a$ remain bounded.} \\
\hline
$\jpl{a},\jmn{a}$
 &
 & \pack{The boundaries of $\kpl{a}$ and $\kmn{a}$ respectively.} \\
\hline
$J_a$
 &
 & \pack{The intersection of $\jpl{a}$ and $\jmn{a}$.} \\
\hline
$\upl{a},\umn{a}$ 
 & \ref{defOfU} 
 & \pack{The set of points whose orbit under forward (respectively backward) iteration under $f_a$ remain bounded.} \\
\hline
$\bigUPl,\bigUMn$ 
 & \ref{defOfU} 
 & \pack{These are subsets of $\mb{C}^2\times \disk_R$ whose restriction to $\mb{C}^2\times \{a\}$
  is $\upl{a}$ and $\umn{a}$ respectively.} \\
\hline
$C(p)$
 &
 & \pack{The curve $x=p(y)$ in $\mb{C}^2$. This equal to $\jmn{0}$.} \\
\hline
$\disk_r$ 
 &
 & \pack{The disk of radius $r$ in $\mb{C}$.} \\
\hline
$\hat{V}_-, \hat{V}_+$
 & \ref{defOfHatVMn},\ref{defOfHatVPl}
 & \pack{The regions $V_+$ and $V_-$ extended to $\mb{P}^1\times \mb{P}^1$.} \\
\hline
$\ppl{a},\pmn{a}$
 & \ref{defOfPplAndPmn}
 & \pack{Holomorphic functions defined on $V_+$ and $V_-$ respectively
  measuring that rate of escape to infinity under forward and backward iteration respectively.} \\
\hline
$\gpl{a},\gmn{a}$
 & \ref{defOfG}
 & \pack{ Plurisubharmonic functions on $\mb{C}^2$ measuring the rate of
  escape to infinity under forward and backward iteration respectively.} \\
\hline
$\smn{k},\spl{k}$
 & \ref{defOfSmn},\ref{defOfSpl}
 & \pack{Auxilliary functions used to construct and study $\ppl{a}$ and $\pmn{a}$.} \\
\hline
$x_n,y_n$
 & \ref{defOfXAndY}
 & \pack{These are given by $(x_n,y_n)\equiv f_a\cp{n}(x,y)$, $n\in \mb{Z}$.} \\
\hline
$ \critLoc_a$
 & \ref{defOfCritLoc}
 & \pack{This is the critical locus of $f_a$. It tangency locus of the foliations $\Fpl{a}$ and $\Fmn{a}$.} \\
\hline
$\mf{V}_-$
 & \ref{defOfMfVMn}
 & \pack{This set is the forward image of $V_-\times \disk_R$ under the map 
  $(x,y,a)\mapsto \bigl(f_a(x,y),a\bigr)$.} \\
\hline
\end{tabular}

%%%%%%%%%%%%%%%%%%%%% Next Page
\newpage
\begin{tabular}{ c | c | p{\widthValueForTable} }
\bf{Notatation} & \bf{Section} & \bf{Meaning} \\
\hline
$U$ 
 & \ref{defOfHobObU}
 & \pack{A neighborhood of $J(p)$ used to construct telescopes.} \\
\hline
$U'$
 & \ref{defOfHobObUPrime}
 & \pack{$U'\equiv p^{-1}(U)$.} \\
\hline
$\delHO$
 & \ref{defOfDelHO}
 & \pack{A small value used to define a neighborhood of $C(p)$.} \\
\hline
$v(x,y)$
 & \ref{defOfv}
 & \pack{$v(x,y)\equiv p(y)-x$} \\
\hline
$V',V'_r$
 & \ref{defOfVPrime},\ref{defOfVPrimeR}
 &  \pack{$V'=\pr_1^{-1}(U)\cap v^{-1}(\disk_{\delHO})$ and \\ $V'_r=\{(x,y)\in V' \big\arrowvert \ \vert v(x,y)\vert < r\delHO\}$} \\
\hline
$\rHubOb$
 & \ref{defOfHubObr}
 & \pack{The radius of telescoping neighborhoods in the Kobayashi metric on $U$} \\
\hline
$\beta$
 & \ref{defOfBeta}
 & \pack{The disk of Euclidean radius $\beta$ about any point $u\in U'$ is mapped biholomorphically by $p$.}\\
\hline
$u(x,y)$
 & \ref{defOfu}
 & \pack{A function on $V'$. The functions $u,v$ give useful coordinates on $V'$.} \\
\hline
$A$
 & \ref{defOfA}
 & \pack{A value choose small enough that the constructions in {\henII} hold when $0<\vert a\vert < A$.}\\
\hline
$U_z$
 & \ref{defOfUz}
 & \pack{The ball of radius $\rHubOb$ about $z\in J(p)$, measured in the Kobashi metric on $U$.} \\
\hline
$\Nax{J}(p)$
 & \ref{defOfNaxJ}
 & \pack{The natural extension of $p\colon J(p)\to J(p)$.} \\
\hline
$\pi_a$
 & \ref{defOfPiA}
 & \pack{A homeomorphism from the natural extension $\Nax{J}(p)$ to $J_a$ for $a$ sufficiently small.} \\
\hline
$g_{z,\pm},\check{g}_{z,\pm}$
 & \ref{defOfg},\ref{defOfgCheck}
 & \pack{Holomorphic functions, the graph of which gives specific local stable/unstable manifolds of the point in question.} \\ 
\hline
$\Delta_{z,a},\Delta_{z,a}(r)$
 & \ref{defOfLocalStableMan}
 & \pack{$\Delta_{z,a}$ is the local stable manifold in $B_z$ for $f_a$. $\Delta_{z,a}(r)$ is a smaller disk
within $\Delta_{z,a}$ (not necessarily about a point of $J_a$).} \\
\hline
$H_{ca}$
 & \ref{defOfH}
 & \pack{The component of the critical locus of $f_a$ asymptotic to $y=c$.} \\
\hline
$\psi_+,\psi_-$
 & \ref{defOfPsi}
 & \pack{Same as $\ppl{a}$, $\pmn{a}$ but with $\pmn{a}$ rescaled to give the simplest iteration formula.} \\
\hline
$\mathscr{V}_+,\mathscr{V}_-$
 & \ref{defOfScriptV}
 & \pack{Subsets of $V_+$ and $V_-$ for which the fibers of $\psi_+$ and $\psi_-$ are disks.} \\
\hline
$\tilde{f}$
 & \ref{defOfFTilde}
 & \pack{ $\tilde{f}(x,y,a)\equiv \bigl(f_a(x,y),a\bigr)$.} \\
\hline
$\ms{S}$
 & \ref{defOfDiadicS}
 & \pack{The set of all roots of unity $\omega$ satisfying $\omega^{d^k}=1$ for some $k\in\mb{N}$.} \\
\hline
\end{tabular}

%%%%%%%%%%%%%%%%%%%%% Next Page
\newpage
\begin{tabular}{ c | c | p{\widthValueForTable} }
\bf{Notatation} & \bf{Section} & \bf{Meaning} \\
\hline
$\mf{m}+$, $\mf{m}_-$
 & \ref{defOfMathFracM}
 & \pack{The monodromy map of the critical locus determined by either $\Fpl{}$ or $\Fmn{}$, along 
       with a pair of points on the critical locus on the same leaf of $\Fpl{}$ or $\Fmn{}$.} \\
\hline
$H_c^\circ$, $\pmnHole$
 & \ref{defOfHCirc}
   %\ref{defOfPmnHole}  
 & \pack{$H_c^\circ$ is a neighborhood of infinity in $H_c$ which is mapped biholomorphically
   to $\mb{C}\setminus\disk_{\pmnHole}$.} \\
\hline
$\mf{c}$
 & \ref{defOfconjMap}
 & \pack{The map $\mf{c}(x,y)=(\conj{x},\conj{y})$.} \\
\hline
$\mf{h}$,$\mf{k}$ 
 & \ref{defOfMathFrach}
 & \pack{There are the restriction of a conjugacy $h$ between two Henon maps to 
a component of the critical locus. $\mf{h}$ and $\mf{k}$ are the same conjugacy
written in different coordinate systems.} \\
\hline
$\sigma$
 & \ref{defOfSigma}
 & \pack{Change of complex coordinate functions. In each case the origin is a fixed point.} \\
%\hline
%$\Omega_1$, $\Omega_2$
% & \ref{defOfOmega}
% & \pack{$\Omega\subset\mb{C}$ is the domain of definition of $\sigma_f\circ\mf{h}$ and $\mf{k}\circ \sigma_g$. $\Omega_1$ and $\Omega_2$ are unspecified domains.} \\
\hline
$S_r$
 & 
 & \pack{The circle of radius $r$ about the origin in $\mb{C}$.} \\
%\hline
%$\alpha$
% & \ref{defOfAlpha}
% & \pack{An auxialliary function attached to each $\sigma$ that measures angles of intersection of preimages of circles
%under $\sigma$ and circles.} \\
\hline

\end{tabular}
}

%\newpage
%\section{List of things to do.}
%\listoftables

\bibliography{refer}
\bibliographystyle{alpha}

\end{document}